    \newcommand{\dnormalspacing}{1.2} 
    \newcommand{\dcompressedspacing}{1.0} 
    \newcommand{\dnormalspacing}{2.0} 
    \newcommand{\dcompressedspacing}{1.2} 
\let\oldquote\quote
\let\endoldquote\endquote
\let\olditemize\itemize
\let\endolditemize\enditemize
\renewenvironment{itemize}
    {\begin{spacing}{\dcompressedspacing}\olditemize}
    {\endolditemize\end{spacing}}
\let\oldenumerate\enumerate
\let\endoldenumerate\endenumerate
\renewenvironment{enumerate}
    {\begin{spacing}{\dcompressedspacing}\oldenumerate}
    {\endoldenumerate\end{spacing}}
\DeclarePairedDelimiter{\abs}{\lvert}{\rvert}
\newlength\fheight
\newlength\fwidth
\def\advisor#1{\gdef\@advisor{#1}}
\def\mastername#1{\gdef\@mastername{#1}}
\def\studentId#1{\gdef\@studentId{#1}}
\def\coadvisorOne#1{\gdef\@coadvisorOne{#1}}
\def\coadvisorsUniversity#1{\gdef\@coadvisorsUniversity{#1}}
\def\academicYear#1{\gdef\@academicYear{#1}}
\def\department#1{\gdef\@department{#1}}
\def\university#1{\gdef\@university{#1}}
\definecolor{SchoolColor}{rgb}{0.71, 0, 0.106} 
\definecolor{chaptergrey}{rgb}{0.61, 0, 0.09} 
\definecolor{midgrey}{rgb}{0.4, 0.4, 0.4}
\newtheorem{theorem}{Theorem}[section]
\newtheorem{lemma}[theorem]{Lemma}
\newtheorem{proposition}[theorem]{Proposition}
\newtheorem{corollary}[theorem]{Corollary}
\newtheorem{remark}[theorem]{Remark}
\newtheorem{definition}[theorem]{Definition}
\DeclarePairedDelimiter{\norma}{\lVert}{\rVert}
\newcommand{\Rn}{\mathbb R^n}
\newcommand{\Rnn}{\mathbb R^{n+1}}
\newcommand{\Sn}{\mathscr{S}(\Rn)}
\newcommand{\Snn}{\mathscr{S}(\Rnn)}
\newcommand{\dive}{\text{div}}
\newcommand\reallywidehat[1]{%
\savestack{\tmpbox}{\stretchto{%
  \scaleto{%
    \scalerel*[\widthof{\ensuremath{#1}}]{\kern-.6pt\bigwedge\kern-.6pt}%
    {\rule[-\textheight/2]{1ex}{\textheight}}
  }{\textheight}%
}{0.5ex}}%
\stackon[1pt]{#1}{\tmpbox}%
}
\newenvironment{proof}{\paragraph{Proof:}}{\hfill$\square$}
\begin{document}


\thispagestyle{empty}
\setcounter{page}{0}

\begin{center}
\includegraphics[width=%
0.3\textwidth]{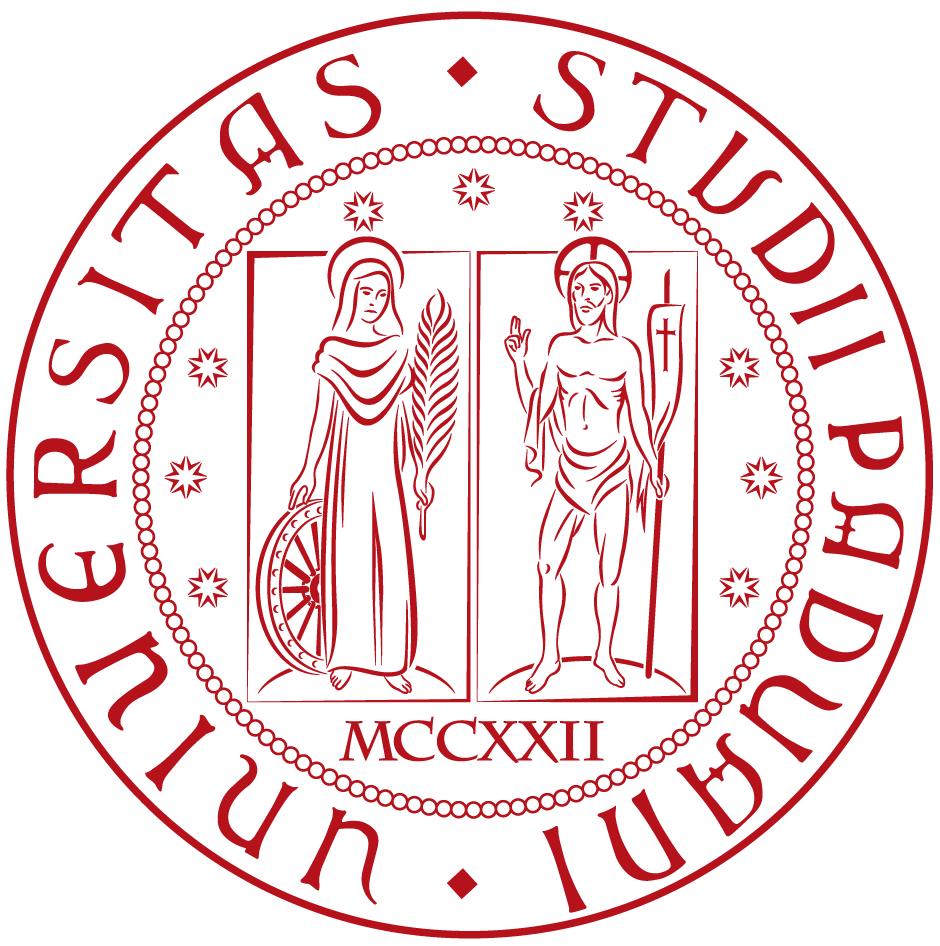}
\end{center}

\begin{center}
\Large
\textsc{University of Padova}\\
\vspace{-0.4cm}
\rule{\textwidth}{0.1mm}\\
\large
\textsc{Department of Civil, Environmental and Architectural Engineering}\\ 
\bigskip
Master Thesis in Mathematical Engineering \\
\vfill 
\Large
\textsc{The extension problem for fractional powers}\\
\textsc{of higher order of some evolutive operators}
\end{center}
\vfill
\begin{tabular}[t]{l}
Master Candidate:\\
Pietro Gallato
\end{tabular}
\hfill 
\begin{tabular}[t]{l}
Supervisor: \\
Prof. Nicola Garofalo
\end{tabular}
\vfill
\begin{center}
\normalsize
\rule{8cm}{0.1mm}\\
\bigskip
Academic Year \\ 2021-2022 \\ 05/10/2022
\end{center}


\phantomsection
\setcounter{page}{2}
\vspace*{\fill}
\rm
\cleardoublepage
\setstretch{\dnormalspacing}
“If people do not believe that mathematics is simple, it is only because they do not realize how complicated life is.” \\
\footnotesize
--- John Von Neumann


\phantomsection
\setcounter{page}{3}
\addcontentsline{toc}{chapter}{Abstract}
\chapter*{Abstract}
This thesis studies the extension problem for higher-order fractional powers of the heat operator $H=\Delta-\partial_t$ in $\mathbb{R}^{n+1}$. Specifically, given $s>0$ and indicating with $[s]$ its integral part, we study the following degenerate partial differential equation in the thick space $\mathbb{R}^{n+1}\times \mathbb{R}_y^+$,
\begin{equation}
    \label{a:1}
    \mathscr{H}^{[s]+1}U= \left( \partial_{yy} +\frac{a}{y}\partial_y +H \right)^{[s]+1}U=0.
\end{equation}
The connection between the Bessel parameter $a$ in \eqref{a:1} and the fractional parameter $s>0$ is given by the equation
\begin{equation*}
    a= 1-2(s-[s]).
\end{equation*}
When $s\in(0,1)$ this equation reduces to the well-known relation $a=1-2s$, and in such case \eqref{a:1} becomes the famous Caffarelli-Silvestre extension problem. Generalising their result, in this thesis we show that the nonlocal operator $(-H)^{\,s}$ can be realised as the Dirichlet-to-Neumann map associated with the solution $U$ of the extension equation \eqref{a:1}.

In this thesis we systematically exploit the evolutive semigroup $\{P_{\tau}^H \}_{\tau>0}$, associated with the Cauchy problem
\begin{equation*}
    \begin{cases}
    \partial_{\tau}u-Hu=0\\
    u((x,t),0)=f(x,t).
    \end{cases}
\end{equation*}
This approach provides a powerful tool in analysis, and it has the twofold advantage of allowing an independent treatment of several complex calculations involving the Fourier transform, while at same time extending to frameworks where the Fourier transform is not available.

\tableofcontents

\phantomsection
\clearpage
\setcounter{chapter}{0}


\chapter{Introduction}
\label{chp:intro}

In his visionary papers \citep{R38} and \citep{R49} Marcel Riesz introduced the fractional powers of the Laplacean in Euclidean and Lorentzian space, developed the calculus of these nonlocal operators and studied the Dirichlet and Cauchy problems for respectively $(-\Delta)^s$ and $(\partial_{tt}-\Delta)^s$.

Pseudo-differential operators such as $(-\Delta)^s$, $(\partial_{tt}-\Delta)^s$, $(\partial_t -\Delta)^s$ play an important role in many branches of the applied sciences ranging from fluid dynamics, to elasticity and to quantum mechanics.

Our objective in this thesis is studying the extension problem for higher-order fractional powers of the heat operator $H=\Delta-\partial_t$ in $\Rnn$. Specifically, given $s>0$ and indicating with $[s]$ its integral part, we study the following degenerate partial differential equation in the thick space $\Rnn \times \mathbb{R}_y^+$,
\begin{equation}
    \label{int1}
    \mathscr{H}^{[s]+1}U= \left( \partial_{yy}+\frac{a}{y}\partial_y+H \right)^{[s]+1}U=0.
\end{equation}
The connection between the Bessel parameter $a$ in \eqref{int1} and the fractional non-integer parameter $s>0$ is given by the equation
\begin{equation*}
    a=1-2(s-[s]).
\end{equation*}
When $s\in(0,1)$ this equation reduces to the well-known relation $a=1-2s$, and in such case \eqref{int1} becomes the famous Caffarelli-Silvestre extension problem. Generalising their result, in this thesis we show that the nonlocal operator $(-H)^s$ can be realised as the Dirichlet-to-Neumann map associated with the solution $U$ of the extension equation \eqref{int1}.

A list of the topics covered by this thesis is provided by the table of contents. Diving deeper into the details:
\begin{enumerate}
    \item In Chapter \ref{chp:2} we are presenting some of the most basic aspects of the operator $(-\Delta)^s$, a complete introduction of which is available at \citep{Gft}. In particular we have
    \begin{itemize}
        \item In Section \ref{s:1.1} we introduce the main pointwise definition of the nonlocal operator $(-\Delta)^s$, see \eqref{fraclap} below. In Proposition \ref{p:decay} we show that the definition \eqref{fraclap} implies a decay at infinity of the fractional Laplacean that plays an important role in its analysis.
        \item Section \ref{s:1.2} constitutes a brief interlude on two important protagonist of classical analysis which also play a central role in this chapter: the Fourier transform and Bessel functions. These two classical subjects are inextricably connected. One the one hand, the Bessel functions are eigenfunctions of the Laplacean. On the other, they also appear as the Fourier transform of the measure carried by the unit sphere. In this connection, and since it is a recurrent ingredient in this note, we recall the classical Fourier-Bessel integral formula due to Bochner, see Theorem \ref{t:FBr} below.
        \item Section \ref{s:1.3} opens with the proof of Proposition \ref{p:pdnfp}, which describes the action of $(-\Delta)^s$ on the Fourier transform side. This result proves an important fact: the fractional Laplacean is a \emph{pseudo-differential operator}. A basic consequence of Proposition \ref{p:pdnfp} is the semigroup property in Corollary \ref{semigroup} and the "integration by parts" Lemma \ref{l:Symmetry}, which shows that $(-\Delta)^s$ is a symmetric operator. We close the section with the computation in Proposition \ref{p:gammacomp} of the normalization constant $\gamma(n,s)$ in the pointwise definition \eqref{fraclap}.
        \item In Section \ref{s:fs} we want to find the fundamental solution of $(-\Delta)^s$, i.e., proving Theorem \ref{t:84}. This can be done in several ways, but we choose to exploit the tools provided in \citep{Gft}.
        \item Section \ref{s:expb} presents in detail the central theme of the analysis of the fractional Laplacean: the extension problem of Caffarelli and Silvestre \eqref{101}. We construct the Poisson kernel for the extension operator, and provide two proofs of \eqref{102}, which characterizes $(-\Delta)^s$ as the weighted Dirichlet-to-Neumann map of the extension problem. The extension procedure is a very powerful tool which has been applied so far in many different directions, and it is hardly possible to accurately describe the impact of this paper in the field.
    \end{itemize}
    \item In Chapter \ref{chp:3} we study the fractional operators, in particular $(-\Delta)^s$ and $(\partial_t-\Delta)^s$, with the systematic use of the heat semigroup $\{P_t\}_{t\ge0}$. The semigroup approach provides a powerful tool in analysis and has a twofold advantage. On one hand it allows a treatment independent of several complex calculations involving the Fourier transform and, more importantly, it extends to frameworks in which the Fourier transform is not available. References on the methods and tools exploited in this chapter can be found at \citep{Gfc}. In particular we have
    \begin{itemize}
        \item In Section \ref{s:61} we define in \eqref{62} the heat semigroup and its main properties. The name is justified by the fact that the function $u(x,t)=P_tf(x)$ solves the Cauchy problem for the heat equation $\partial_tu-\Delta u=0$ in $\Rn \times \mathbb{R}^+$.
        \item Section \ref{s:62} opens with the Proposition \ref{p:610}, which basically shows the ultracontractivity property of the heat semigroup.
        \item In Section \ref{s:64} we define the fractional Laplacean according to the formulation of Balakrishnan in Definition \ref{d:615}. We then proceed with the proof of some properties of the fractional Laplacean, similarly to what is done in Chapter \ref{chp:2}, but now leveraging the advantages that the heat semigroup grants.
        \item In Section \ref{s:65} we show that Balakrishnan's definition of the nonlocal operator $(-\Delta)^{\frac{\alpha}{2}}$ coincides with that introduced by M. Riesz in \citep{R38}. Subsequently, we analyse the asymptotic behaviour of this operator as $\alpha \nearrow 2$ and we show that, unsurprisingly, in the limit we obtain the negative of the Laplace operator $\Delta$.
        \item In semigroup theory a procedure for forming a new semigroup from a given one is that of the evolution semigroup. In Section \ref{s:68} we exploit this idea to introduce a new semigroup that will be used as a building block for: (i) defining the fractional powers of the heat operator $H=\Delta -\partial_t$; (ii) solving the extension problem for such nonlocal operators.
        \item In Section \ref{s:69} we define the fractional heat operator $(\partial_t - \Delta)^{\frac{\alpha}{2}}$ through the evolutive heat semigroup and we show some of its basic properties, similarly to what we have done in Section \ref{s:64}. 
        \item In Section \ref{s:teb} we solve the extension problem for the fractional heat operator. In Definition \ref{d:649} we define the Poisson kernel for the extension problem, and this allows us to define the following function
        \begin{equation*}
            U(x,y,t):=\int_0^{\infty}\int_{\Rn}\mathscr{P}^{(s)}(x-z,y,\tau)f(z,t-\tau)\,dz\,d\tau.
        \end{equation*}
        To reach our goal, i.e. to solve the extension problem, we make the crucial observation that $U$ can be written in the following form using the evolutive heat semigroup $P_{\tau}^H$
        \begin{equation*}
            U(x,y,t):=\frac{1}{2^{1-a}\Gamma \left( \frac{1-a}{2} \right)}y^{1-a}\int_0^{\infty} \frac{1}{\tau^{\frac{3-a}{2}}}e^{-\frac{y^2}{4\tau}}P_{\tau}^H f(x,t)\,d\tau.
        \end{equation*}
    \end{itemize}
    \item In Chapter \ref{chp:4} we first want to define and then solve the extension problem for the  fractional powers of higher order of $\partial_t-\Delta$. In particular we have
    \begin{itemize}
        \item In Section \ref{s:foho} we want to introduce the fractional operators of higher order considered in the previous chapters. In order to do this, we use the Balakrishnan formulation, which permits to give a natural generalization to higher order in both cases.
        \item Our goal in Section \ref{s:epho} is to give the statement of the extension problem of higher order and prove it in Subsection \ref{su:1}
    \end{itemize}
    \item In the concluding remarks \ref{chp:c} we comment on how to generalize the results obtained in Chapter \ref{chp:4}, and specifically in a forthcoming work we are going to solve the extension problem of higher order for the following class of operators
    \begin{equation*}
        \mathscr{K}u:= \text{tr}\,(Q\nabla^2u)+ \braket{BX,\nabla u} -\partial_t u,
    \end{equation*}
    where $Q$ and $B$ are $N\times N$ matrices with real, constant coefficients, with $Q\ge 0, Q=Q^*$.
\end{enumerate}



\chapter{Fractional Laplacean}
\label{chp:2}

\section{The fractional Laplacean} \label{s:1.1}
In this section we introduce the M. Riesz' fractional Laplacean $(-\Delta)^s$, with $0<s<1$. Our first goal is to give a definition of this nonlocal operator.

Our initial observation is the following simple calculus lemma which could be used to provide a probabilistic interpretation oh the classical Laplacean on the real line.
\begin{lemma}\label{l:d2line}
Let $f \in C^{\,2}(a,b)$, then for every $x\in (a,b)$ one has
\begin{equation*}
    -f\,''(x)=\lim_{y\to 0}\frac{2f(x)-f(x+y)-f(x-y)}{y^2}
\end{equation*}
\end{lemma}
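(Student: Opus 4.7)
The claim is a standard fact about the second-order symmetric difference quotient, and the natural route is Taylor's theorem applied to second order around the point $x$. Since $f \in C^{\,2}(a,b)$, Taylor's formula with Lagrange remainder gives, for every $y$ small enough that $x\pm y \in (a,b)$, the existence of intermediate points $\xi_y \in (x, x+y)$ and $\eta_y \in (x-y, x)$ such that
\begin{equation*}
    f(x+y)=f(x)+f'(x)\,y+\tfrac{1}{2}f''(\xi_y)\,y^2,
    \qquad
    f(x-y)=f(x)-f'(x)\,y+\tfrac{1}{2}f''(\eta_y)\,y^2.
\end{equation*}

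The plan is then to add the two expansions: the first-order terms cancel by symmetry, and one obtains
\begin{equation*}
    f(x+y)+f(x-y)-2f(x)=\tfrac{1}{2}\bigl(f''(\xi_y)+f''(\eta_y)\bigr)y^2.
\end{equation*}
Dividing by $-y^2$ yields the finite-difference quotient in the statement as $-\tfrac{1}{2}\bigl(f''(\xi_y)+f''(\eta_y)\bigr)$, and because $\xi_y,\eta_y \to x$ as $y \to 0$, the continuity of $f''$ (guaranteed by the $C^{\,2}$ hypothesis) lets me pass to the limit and recover $-f''(x)$.

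There is no serious obstacle here; the only point worth flagging is the role of the $C^{\,2}$ assumption, which is precisely what ensures that the remainder values $f''(\xi_y)$ and $f''(\eta_y)$ converge to $f''(x)$. An equally clean alternative would be to apply L'Hôpital's rule twice to the indeterminate form $0/0$, differentiating numerator and denominator in $y$ and using again the continuity of $f''$ at $x$; I would, however, prefer the Taylor route since it makes the symmetric cancellation of the first-order term transparent and explains geometrically why one-sided differences would \emph{not} converge to $-f''(x)$ without an extra factor.
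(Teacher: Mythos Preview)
Your proof via Taylor's theorem with Lagrange remainder is correct and is the standard argument for this fact. The paper itself does not supply a proof of this lemma: it is stated as a ``simple calculus lemma'' and taken for granted, so there is no paper proof to compare against. Your write-up would serve perfectly well as the omitted justification.
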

The expression in the right-hand side in the equation in Lemma \ref{l:d2line} is known as the symmetric difference quotient of order two. If we introduce the "spherical" surface and "solid" averaging operators
\begin{equation*}
    \mathscr{M}_y\,f(x)=\frac{f(x+y)+f(x-y)}{2}, \quad \mathscr{A}_y\,f(x)=\frac{1}{2y}\int_{x-y}^{x+y}f(t)\,dt,
\end{equation*}
then we can reformulate the conclusion in Lemma \ref{l:d2line} as follows:
\begin{equation*}
    -f\,''(x)=2\lim_{y\to 0}\frac{f(x)-\mathscr{M}_y\,f(x)}{y^2}=6\lim_{y\to 0}\frac{f(x)-\mathscr{A}_y\,f(x)}{y^2}
\end{equation*}
where it is easily seen that the second equality follows from the first one and L'Hopital's rule. The result that follows generalizes this observation to $n\ge2$.
\begin{proposition}
Let $\Omega \in \mathbb{R}^n$ be an open set. For any $f\in C^2(\Omega)$ and $x\in \Omega$ we have
\begin{equation}\label{lapn}
    -\Delta f(x)=2n\lim_{r\to 0}\frac{f(x)-\mathscr{M}_r\,f(x)}{r^2}=2(n+2)\lim_{r\to 0}\frac{f(x)-\mathscr{A}_r\,f(x)}{r^2}
\end{equation}
where $\Delta f$ is the operator of Laplace.
\end{proposition}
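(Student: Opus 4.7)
The plan is to Taylor-expand $f$ to second order around $x$ and then average, using the symmetries of the sphere and the ball to kill all but the trace term. Since $f\in C^2(\Omega)$, for any $r>0$ small enough that the closed ball $\overline{B_r(x)}\subset \Omega$ and for every $y$ with $|y|\le r$ Peano's form of Taylor's theorem gives
\[
f(x+y)=f(x)+\nabla f(x)\cdot y+\tfrac{1}{2}\sum_{i,j=1}^{n}\partial_{ij}f(x)\,y_i y_j+R(x,y),
\]
with $|R(x,y)|\le \varepsilon(r)\,|y|^2$ where $\varepsilon(r)\to 0$ as $r\to 0$ (by the uniform continuity of $D^2 f$ on $\overline{B_r(x)}$).

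For the first identity in \eqref{lapn}, I would integrate the above identity over $\partial B_r(x)$ against the normalised surface measure. The term $\nabla f(x)\cdot y$ is odd and integrates to zero, and so do the off-diagonal quadratic terms $y_i y_j$, $i\ne j$. Rotational invariance forces $\frac{1}{|\partial B_r|}\int_{\partial B_r} y_i^2\,d\sigma$ to be the same for every $i$, and summing gives $r^2$, so each diagonal moment equals $r^2/n$. Therefore
\[
\mathscr{M}_r f(x)=f(x)+\frac{r^2}{2n}\,\Delta f(x)+o(r^2),
\]
and dividing by $r^2$ and passing to the limit produces $-\Delta f(x)=2n\lim_{r\to 0}\bigl(f(x)-\mathscr{M}_r f(x)\bigr)/r^2$.

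For the second identity, I would run the same argument over the solid ball $B_r(x)$: the only change is that the diagonal moment becomes $\frac{1}{|B_r|}\int_{B_r} y_i^2\,dy=r^2/(n+2)$, which can be checked directly in polar coordinates using $\int_{B_r}|y|^2\,dy=\tfrac{n}{n+2}\,r^2\,|B_r|$. Alternatively, the slicing formula
\[
\mathscr{A}_r f(x)=\frac{n}{r^n}\int_0^{r}\rho^{n-1}\mathscr{M}_\rho f(x)\,d\rho
\]
together with the first identity and L'H\^opital's rule (as in the one-dimensional case recalled right before the statement) directly converts the constant $2n$ into $2(n+2)$.

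No single step is genuinely hard; the only place where care is required is in propagating the $o(|y|^2)$ remainder $R(x,y)$ through the integral to obtain an honest $o(r^2)$ after averaging. This follows immediately from the uniform bound $|R(x,y)|\le \varepsilon(r)|y|^2$ above, which gives $|\mathscr{M}_r f(x)-\ldots|\le \varepsilon(r)r^2$ and similarly for $\mathscr{A}_r f(x)$, so the remainder does not contribute in the limit.
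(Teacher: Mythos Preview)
Your argument is correct and is the standard one: Taylor expand to second order with Peano remainder, exploit the vanishing of odd and off-diagonal moments over the sphere (resp.\ ball), and identify the surviving diagonal moments as $r^2/n$ (resp.\ $r^2/(n+2)$). The alternative derivation of the second identity via the slicing formula and L'H\^opital is also fine and mirrors exactly the remark the paper makes in the one-dimensional case just before the statement.

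There is nothing to compare against: the paper states this proposition without proof, treating it as a classical fact and immediately moving on to the notation \eqref{means} and the reformulation \eqref{fashLap}. Your write-up would serve as a perfectly adequate proof to insert there. One cosmetic point: the appeal to ``uniform continuity of $D^2 f$'' is more than you need---Peano's form of the remainder for a $C^2$ function already gives $R(x,y)=o(|y|^2)$ as $y\to 0$, which is precisely the bound $|R(x,y)|\le \varepsilon(r)|y|^2$ with $\varepsilon(r)\to 0$ that you use.
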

In the equation \eqref{lapn} we have indicated with
\begin{equation} \label{means}
    \mathscr{M}_ru(x)=\frac{1}{\sigma_{n-1}r^{n-1}}\int_{S(x,r)}u(y)\,d\sigma(y), \quad \mathscr{A}_ru(x)=\frac{1}{\omega_{n}r^{n}}\int_{B(x,r)}u(y)\,dy,
\end{equation}
the spherical surface and solid mean-value operators. Here, $B(x,r)=\{y\in\mathbb{R}^n : \abs{y-x}<r\}, \, \, S(x,r)=\partial B(x,r)$, $d\sigma$ is the $(n-1)$-Lebesgue measure on $S(x,r)$, and the numbers $\sigma_{n-1}$ and $\omega_n$ respectively represent the measure of the unit sphere and that of the unit ball in $\mathbb{R}^n$.

Before proceeding, and in preparation for the central definition of this section, let us observe that it is easy to recognize that we can write the second identity in \eqref{lapn} in the more suggestive fashion:
\begin{equation}\label{fashLap}
    -\Delta u(x)= c(n)\lim_{r\to0^+}\int_{\mathbb{R}^n}\frac{2u(x)-u(x+y)-u(x-y)}{r^{n+2}}\mathds{1}_{B(0,r)}(y)\,dy,
\end{equation}
where we have denoted by $\mathds{1}_E$ the indicator function of a set $E\subset\mathbb{R}^n$.

In the applied sciences it is of great importance to be able to consider fractional derivatives of functions. There exist many different definitions of such operations, but perhaps the most prominent one is based on the notion of (Marcel) Riesz' potential of a function. To motivate such operation let us assume that $n\ge3$, and recall that in mathematical physics the Newtonian potential of a function $f\in\mathscr{S}(\mathbb{R}^n)$ is given by 
\begin{equation*}
    I_2(f)(x)=\frac{1}{4\pi^{\frac{n}{2}}}\Gamma \left(\frac{n-2}{2} \right) \int_{\mathbb{R}^n} \frac{f(y)}{\abs{x-y}^{n-2}}\,dy,
\end{equation*}
Now, one recognizes that the convolution kernel $\frac{1}{4\pi^{\frac{n}{2}}}\Gamma \left(\frac{n-2}{2} \right)\frac{1}{\abs{x}^{n-2}}$ in the definition of $I_2(f)$ is just the fundamental solution
\begin{equation*}
    E(x)=\frac{1}{(n-2)\sigma_{n-1}}\frac{1}{\abs{x}^{n-2}}
\end{equation*}
of $-\Delta$. With this observation in mind, we recall the well-known identity of Gauss-Green that says that for any $f\in\mathscr{S}(\mathbb{R}^n)$one has
\begin{equation*}
    I_2(-\Delta f)=f.
\end{equation*}
In other words, the Newtonian potential is the inverse of $-\Delta$. This important observation leads to the introduction of M. Riesz' generalization of the Newtonian potential.
\begin{definition}[Riesz' potentials]
\label{d:Rp}
For any $n\in\mathbb{N}$, let $0<\alpha<n$. The Riesz potential of order $\alpha$ is the operator whose action on a function $f\in\mathscr{S}(\mathbb{R}^n)$ is given by
\begin{equation*}
    I_{\alpha}(f)(x)=\frac{\Gamma \left( \frac{n-\alpha}{2}\right)}{\pi^{\frac{n}{2}}2^{\alpha}\Gamma \left(\frac{\alpha}{2}\right)}\int_{\mathbb{R}^n}\frac{f(y)}{\abs{x-y}^{n-\alpha}}\,dy.
\end{equation*}
\end{definition}
The important reason behind the normalization constant is that such constant is chosen to guarantee the validity of the following crucial result, a kind of fractional fundamental theorem of calculus, stating that for any $f\in\mathscr{S}(\mathbb{R}^n)$ one has in $\mathscr{S}\,'(\mathbb{R}^n)$
\begin{equation}
    \label{rpfo}
    I_{\alpha}(-\Delta)^{\frac{\alpha}{2}}f=(-\Delta)^{\frac{\alpha}{2}}I_{\alpha}f.
\end{equation}
Of course \eqref{rpfo} makes no sense unless we say what we mean by the fractional operator $(-\Delta)^{\frac{\alpha}{2}}$. The most natural way to introduce it is by defining the action of $(-\Delta)^{\frac{\alpha}{2}}$ on the Fourier transform side by the equation
\begin{equation}
    \label{fourierfrac}
    \mathscr{F}((-\Delta)^{\frac{\alpha}{2}}u)=(2\pi\abs{\cdot})^{\alpha}\mathscr{F}(u), \quad u\in\mathscr{S}\,'(\mathbb{R}^n).
\end{equation}
The equation \eqref{rpfo} shows that $I_{\alpha}$ inverts the fractional powers of the Laplacean, i.e.,
\begin{equation}\label{fracint}
    I_{\alpha}=(-\Delta)^{-\frac{\alpha}{2}}, \quad 0<\alpha<n.
\end{equation}
For this reason $I_{\alpha}$ is also called the \emph{fractional integration operator} of order $\alpha$.

Since our focus in this section is the fractional Laplacean $(-\Delta)^s$ in the range $0<s<1$, we will henceforth let $s=\alpha/2$ in the above formulas. Although we have formally introduced such operator in the equation \eqref{fourierfrac} above, such definition has a major drawback:it is not easy understand a given function (or a distribution) by prescribing its Fourier transform. It is for this reason that we begin our story introducing a different pointwise definition of the fractional Laplacean that is more directly connected to the symmetric difference quotient of order two in the opening calculus Lemma \ref{l:d2line}, and with \eqref{fashLap}.
\begin{definition}\label{d:fraclap}
    Let $0<s<1$. The fractional Laplacean of a function $u\in\mathscr{S}(\mathbb{R}^n)$ is the nonlocal operator in $\mathbb{R}^n$ defined by the expression
    \begin{equation}\label{fraclap}
        (-\Delta)^su(x)=\frac{\gamma(n,s)}{2}\int_{\mathbb{R}^n}\frac{2u(x)-u(x+y)-u(x-y)}{\abs{y}^{n+2s}}\,dy,
    \end{equation}
    where $\gamma(n,s)>0$ is a suitable normalization constant that will be given implicitly in the future.
\end{definition}
It is obvious that \eqref{fraclap} defines a linear operator since for any $u,v\in\mathscr{S}(\mathbb{R}^n)$ and $c\in\mathbb{R}$ one has
\begin{equation*}
    (-\Delta)^s(u+v)=(-\Delta)^su+(-\Delta)^sv, \quad (-\Delta)^s(cu)=c(-\Delta)^su.
\end{equation*}
It is also important to observe that the integral in the right-hand side of \eqref{fraclap} is convergent. To see this, it suffices to write
\begin{equation*}
    \begin{split}
        \int_{\mathbb{R}^n}\frac{2u(x)-u(x+y)-u(x-y)}{\abs{y}^{n+2s}}\,dy&=\int_{\abs{y}\le 1}\frac{2u(x)-u(x+y)-u(x-y)}{\abs{y}^{n+2s}}\,dy\\
        &+\int_{\abs{y}> 1}\frac{2u(x)-u(x+y)-u(x-y)}{\abs{y}^{n+2s}}\,dy.
    \end{split}
\end{equation*}
Taylor's formula for $C^2$ functions gives for $\abs{y}\le 1$
\begin{equation*}
    2u(x)-u(x+y)-u(x-y)=-\Braket{\nabla^2u(x)y,y}+o(\abs{y}^2),
\end{equation*}
where we have indicated with $\nabla^2u$ the Hessian matrix of $u$. Therefore,
\begin{equation*}
    \abs*{\int_{\abs{y}\le 1}\frac{2u(x)-u(x+y)-u(x-y)}{\abs{y}^{n+2s}}\,dy}\le C \int_{\abs{y}\le 1} \frac{dy}{\abs{y}^{n-2(1-s)}}<\infty,
\end{equation*}
since $0<s<1$. On the other hand, keeping in mind that $u\in\mathscr{S}(\mathbb{R}^n)$ implies in particular that $u\in L^{\infty}(\mathbb{R}^n)$, we have
\begin{equation*}
    \abs*{\int_{\abs{y}> 1}\frac{2u(x)-u(x+y)-u(x-y)}{\abs{y}^{n+2s}}\,dy}\le 4\norma{u}_{L^{\infty}(\mathbb{R}^n)}\int_{\abs{y}>1}\frac{dy}{\abs{y}^{n+2s}}<\infty.
\end{equation*}
We have seen that for every $u\in \mathscr{S}(\mathbb{R}^n)$ definition \ref{d:fraclap} provides a well-defined function on $\mathbb{R}^n$.

Two basic operations in analysis are the Euclidean translations and dilations
\begin{equation*}
    \tau_hf(x)=f(x+h), \quad h \in \mathbb{R}^n, \qquad \delta_{\lambda}f(x)=f(\lambda x), \quad \lambda>0.
\end{equation*}
The next result clarifies the interplay of $(-\Delta)^s$ with them.
\begin{lemma}
For every function $u\in \mathscr{S}(\mathbb{R}^n)$ we have for every $h\in\mathbb{R}^n$
\begin{equation*}
    (-\Delta)^s(\tau_hu)=\tau_h((-\Delta)^su),
\end{equation*}
and every $\lambda>0$
\begin{equation*}
    (-\Delta)^s(\delta_{\lambda}u)=\lambda^{2s}\delta_{\lambda}((-\Delta)^su).
\end{equation*}
\end{lemma}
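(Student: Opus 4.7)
The plan is to work directly from the pointwise definition \eqref{fraclap}, using only the translation- and dilation-invariance of Lebesgue measure on $\mathbb{R}^n$. Both identities are essentially change-of-variables calculations, and the convergence of the integrals involved has already been established immediately after Definition \ref{d:fraclap}, so there are no integrability issues to revisit.

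For the translation identity, I would plug $\tau_h u$ into \eqref{fraclap}. Since $(\tau_h u)(x) = u(x+h)$, $(\tau_h u)(x+y) = u(x+h+y)$, and $(\tau_h u)(x-y) = u(x+h-y)$, the integrand at the point $x$ is exactly the integrand for $(-\Delta)^s u$ evaluated at the point $x+h$. The integration variable $y$ is untouched, so no change of variables is needed and the conclusion $(-\Delta)^s(\tau_h u)(x) = ((-\Delta)^s u)(x+h) = \tau_h((-\Delta)^s u)(x)$ follows immediately.

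For the dilation identity, substitute $\delta_\lambda u$ into \eqref{fraclap} to obtain
\begin{equation*}
(-\Delta)^s(\delta_\lambda u)(x) = \frac{\gamma(n,s)}{2}\int_{\mathbb{R}^n}\frac{2u(\lambda x)-u(\lambda x+\lambda y)-u(\lambda x-\lambda y)}{\abs{y}^{n+2s}}\,dy.
\end{equation*}
The natural change of variables is $z = \lambda y$, whence $dy = \lambda^{-n}\,dz$ and $\abs{y}^{n+2s} = \lambda^{-(n+2s)}\abs{z}^{n+2s}$. The two factors of $\lambda$ combine to produce an overall $\lambda^{2s}$ in front of the integral, and the remaining integral is precisely $((-\Delta)^s u)(\lambda x) = \delta_\lambda((-\Delta)^s u)(x)$, yielding the stated homogeneity.

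I do not expect a genuine obstacle here: both assertions are formal consequences of the symmetric structure of \eqref{fraclap} and the behaviour of Lebesgue measure under affine transformations of $\mathbb{R}^n$. The only point that deserves a brief remark is that the change of variables in the dilation argument is globally valid on $\mathbb{R}^n$ and does not require splitting the integral into $\{\abs{y}\le 1\}$ and $\{\abs{y}>1\}$ pieces, because the convergence of each piece has already been verified under the standing assumption $u\in\mathscr{S}(\mathbb{R}^n)$.
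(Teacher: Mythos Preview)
Your argument is correct and is exactly the natural one: both identities are immediate change-of-variables computations from \eqref{fraclap}. The paper itself states this lemma without proof, so there is nothing to compare against; your write-up would serve perfectly well as the omitted verification.
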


A fundamental property of the Laplacean $\Delta$ is its invariance with respect to the action of the orthogonal group $\mathbb{O}(n)$ on $\mathbb{R}^n$. This means that if $u$ is a function in $\mathbb{R}^n$, then for every $T\in \mathbb{O}(n)$ one has $\Delta(u \circ T)=\Delta u \circ T$. The following lemma shows that $(-\Delta)^s$ enjoys the same property.
\begin{lemma}
Let $u(x)=f(\abs{x})$ be a function with spherical symmetry in $C^2(\mathbb{R}^n)\cap L^{\infty}(\mathbb{R}^n)$. Then, also $(-\Delta)^su$ has spherical symmetry.
\end{lemma}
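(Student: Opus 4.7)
The plan is to derive the spherical symmetry of $(-\Delta)^s u$ directly from the pointwise definition \eqref{fraclap}, by showing the stronger statement that $(-\Delta)^s$ commutes with the action of the orthogonal group $\mathbb{O}(n)$. A preliminary observation is that, although Definition \ref{d:fraclap} is stated for $u \in \mathscr{S}(\mathbb{R}^n)$, the convergence argument given just after \eqref{fraclap} (Taylor's formula near the origin plus $L^\infty$ control at infinity) applies verbatim to any $u \in C^2(\mathbb{R}^n)\cap L^\infty(\mathbb{R}^n)$, so the right-hand side of \eqref{fraclap} defines a function on $\mathbb{R}^n$ in our setting as well.

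The core step is a change of variables. Fix $T \in \mathbb{O}(n)$ and $x \in \mathbb{R}^n$. From \eqref{fraclap},
\begin{equation*}
(-\Delta)^s u(Tx) = \frac{\gamma(n,s)}{2}\int_{\mathbb{R}^n}\frac{2u(Tx)-u(Tx+y)-u(Tx-y)}{\abs{y}^{n+2s}}\,dy.
\end{equation*}
Performing the substitution $y = Tz$, and using $\abs{\det T}=1$ together with $\abs{Tz} = \abs{z}$, the integral becomes
\begin{equation*}
\frac{\gamma(n,s)}{2}\int_{\mathbb{R}^n}\frac{2u(Tx)-u(T(x+z))-u(T(x-z))}{\abs{z}^{n+2s}}\,dz.
\end{equation*}
Since $u(w) = f(\abs{w})$ depends only on $\abs{w}$ and $T$ is an isometry, we have $u\circ T = u$ pointwise. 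Replacing $u\circ T$ by $u$ in the integrand yields exactly $(-\Delta)^s u(x)$, so
\begin{equation*}
(-\Delta)^s u(Tx)=(-\Delta)^s u(x),\qquad T\in\mathbb{O}(n),\ x\in\mathbb{R}^n.
\end{equation*}

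To conclude, given any $x \in \mathbb{R}^n$ choose $T\in\mathbb{O}(n)$ with $Tx = \abs{x}e_1$; the identity above then shows that $(-\Delta)^s u(x)$ depends only on $\abs{x}$, which is the desired spherical symmetry. The only mild subtlety, and the single place one must be careful, is justifying the integral representation \eqref{fraclap} for $u\in C^2\cap L^\infty$ rather than just for Schwartz functions; this is handled by the same splitting of the domain into $\{\abs{y}\le 1\}$ and $\{\abs{y}>1\}$ used earlier, and requires no new ingredient.
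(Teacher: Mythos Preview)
Your proof is correct and follows essentially the same approach as the paper: both establish $(-\Delta)^s u(Tx)=(-\Delta)^s u(x)$ for every $T\in\mathbb{O}(n)$ via the orthogonal change of variable $z=T^t y$ (equivalently your $y=Tz$) in the defining integral \eqref{fraclap}, exploiting $\abs{Tz}=\abs{z}$ and $u\circ T=u$. The only differences are cosmetic---you perform the substitution before invoking radiality while the paper does it after, and you add the brief justification that \eqref{fraclap} makes sense for $u\in C^2\cap L^\infty$.
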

\begin{proof}
This follows in a simple way from \eqref{fraclap}. In order to prove that $(-\Delta)^su$ is spherically symmetric we need to show that for every $T\in\mathbb{O}(n)$ and every $x\in\Rn$ one has
\begin{equation*}
    (-\Delta)^su(Tx)=(-\Delta)^su(x).
\end{equation*}
We have
\begin{equation*}
    \begin{split}
        (-\Delta)^su(Tx)&= \frac{\gamma(n,s)}{2}\int_{\Rn}\frac{2f(\abs{Tx})-f(\abs{Tx+y})-f(\abs{Tx-y})}{\abs{y}^{n+2s}}\,dy\\
        &=\frac{\gamma(n,s)}{2}\int_{\Rn}\frac{2f(\abs{x})-f(\abs{x+T^ty})-f(\abs{x-T^ty})}{\abs{y}^{n+2s}}\,dy.
    \end{split}
\end{equation*}
If we make the change of variable $z=T^ty$, we conclude
\begin{equation*}
    \begin{split}
        (-\Delta)^su(Tx)&=\frac{\gamma(n,s)}{2}\int_{\Rn}\frac{2f(\abs{x})-f(\abs{x+z})-f(\abs{x-z})}{\abs*{Tz}^{n+2s}}\,dz\\
        &=\frac{\gamma(n,s)}{2}\int_{\Rn}\frac{2f(\abs{x})-f(\abs{x+z})-f(\abs{x-z})}{\abs*{z}^{n+2s}}\,dz\\
        &=(-\Delta)^su(x),
    \end{split}
\end{equation*}
And we are done.
\end{proof}

Before proceeding we note the following alternative expression for $(-\Delta)^s$ that is at times quite useful in the computations.
\begin{proposition}
For any $u\in \Sn$ one has
\begin{equation}
    \label{fraclap2}
    (-\Delta)^su(x)=\gamma(n,s)\, \text{PV}\int_{\Rn}\frac{u(x)-u(y)}{\abs{x-y}^{n+2s}}\,dy,
\end{equation}
where now the integral is taken according to Cauchy's principal value sense
\begin{equation*}
    \text{PV}\int_{\Rn}\frac{u(x)-u(y)}{\abs{x-y}^{n+2s}}\,dy=\lim_{\varepsilon\to0^+}\int_{\abs{y-x}>\varepsilon}\frac{u(x)-u(y)}{\abs{x-y}^{n+2s}}\,dy
\end{equation*}
\end{proposition}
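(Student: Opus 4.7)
The plan is to deduce the principal-value representation \eqref{fraclap2} directly from the defining formula \eqref{fraclap} via a symmetrization argument, exploiting the fact that the integrand in \eqref{fraclap} is already even in $y$ while the new integrand in \eqref{fraclap2} is not.

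First, I would set $z = y - x$ in the right-hand side of \eqref{fraclap2} so that, using $dy = dz$ and $|x-y| = |z|$, the principal-value integral becomes
\begin{equation*}
\text{PV}\int_{\Rn}\frac{u(x)-u(y)}{\abs{x-y}^{n+2s}}\,dy = \lim_{\varepsilon \to 0^+}\int_{\abs{z}>\varepsilon}\frac{u(x)-u(x+z)}{\abs{z}^{n+2s}}\,dz.
\end{equation*}
The reason for introducing the PV here, even though the original integral \eqref{fraclap} converges absolutely, is that after the splitting $2u(x)-u(x+y)-u(x-y) = [u(x)-u(x+y)]+[u(x)-u(x-y)]$ the individual pieces are no longer absolutely integrable near the origin: a Taylor expansion gives $u(x)-u(x+z) = -\braket{\nabla u(x),z} + O(\abs{z}^2)$, whose leading term behaves like $\abs{z}^{1-n-2s}$, non-integrable once $s \ge 1/2$.

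Next, for each fixed $\varepsilon > 0$, I would perform the change of variable $z \mapsto -z$ in the truncated integral (the annulus $\{\abs{z}>\varepsilon\}$ being invariant under it) to obtain
\begin{equation*}
\int_{\abs{z}>\varepsilon}\frac{u(x)-u(x+z)}{\abs{z}^{n+2s}}\,dz = \int_{\abs{z}>\varepsilon}\frac{u(x)-u(x-z)}{\abs{z}^{n+2s}}\,dz.
\end{equation*}
Averaging these two equal expressions I would arrive at
\begin{equation*}
\int_{\abs{z}>\varepsilon}\frac{u(x)-u(x+z)}{\abs{z}^{n+2s}}\,dz = \frac{1}{2}\int_{\abs{z}>\varepsilon}\frac{2u(x)-u(x+z)-u(x-z)}{\abs{z}^{n+2s}}\,dz,
\end{equation*}
which is the crucial symmetrization step: the antisymmetric first-order contribution has been cancelled by pairing with its reflection.

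Finally, since the integrand on the right is now absolutely integrable on $\Rn$ (by the same Taylor estimate and decay-at-infinity argument used earlier in the text to show well-posedness of \eqref{fraclap}), I can pass to the limit $\varepsilon \to 0^+$ by dominated convergence, obtaining
\begin{equation*}
\text{PV}\int_{\Rn}\frac{u(x)-u(y)}{\abs{x-y}^{n+2s}}\,dy = \frac{1}{2}\int_{\Rn}\frac{2u(x)-u(x+z)-u(x-z)}{\abs{z}^{n+2s}}\,dz = \frac{1}{\gamma(n,s)}(-\Delta)^s u(x),
\end{equation*}
the last equality being the definition \eqref{fraclap}. Multiplying through by $\gamma(n,s)$ gives \eqref{fraclap2}. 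The main (and only) subtle point is the existence of the principal-value limit, which is not automatic for an $L^1_{\mathrm{loc}}$ antisymmetric singularity; the symmetrization above is what makes this transparent, and it is the one step in the argument that deserves to be written out carefully.
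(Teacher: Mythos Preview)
Your proof is correct and is essentially the same symmetrization argument as the paper's, just run in the opposite direction: the paper starts from the symmetric integral \eqref{fraclap}, truncates, splits into the two halves $u(x)-u(x+y)$ and $u(x)-u(x-y)$, and identifies each with the PV integral, whereas you start from the PV integral and symmetrize back. Your version is in fact slightly cleaner, since by keeping the truncated integral together until after symmetrization you establish the existence of the principal-value limit as a consequence of the absolute convergence of the symmetric form, rather than tacitly assuming each half-limit exists before recombining.
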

\begin{proof}
The expression \eqref{fraclap2} follows directly from \eqref{fraclap} above as follows
\begin{equation*}
    \begin{split}
        \frac{1}{2}&\int_{\Rn}\frac{2u(x)-u(x+y)-u(x-y)}{\abs{y}^{n+2s}}\,dy=\frac{1}{2}\lim_{\varepsilon\to 0}\int_{\abs{y}>\varepsilon}\frac{2u(x)-u(x+y)-u(x-y)}{\abs{y}^{n+2s}}\,dy\\
        &=\frac{1}{2}\lim_{\varepsilon\to 0} \int_{\abs{y}>\varepsilon}\frac{u(x)-u(x+y)}{\abs{y}^{n+2s}}\,dy+\frac{1}{2}\lim_{\varepsilon\to 0}\int_{\abs{y}>\varepsilon}\frac{u(x)-u(x-y)}{\abs{y}^{n+2s}}\,dy\\
        &=\lim_{\varepsilon\to 0}\int_{\abs{y}>\varepsilon}\frac{u(x)-u(y)}{\abs{x-y}^{n+2s}}\,dy.
    \end{split}
\end{equation*}
However, it is now necessary to take the principal value of the integral since we have eliminated the cancellation of the linear terms in the symmetric difference of order two, and $u(x)-u(y)$ in only $O(\abs{x-y})$. Thus, the smoothness of $u$ no longer guarantees the local integrability, unless we are in the regime $0<s<1/2$.
\end{proof}

Before proceeding we recall that $\Sn$ is the space $C^{\infty}(\Rn)$ endowed with the metric topology
\begin{equation*}
    d(f,g)=\sum_{p=0}^{\infty}2^{-p}\frac{\norma{f-g}_p}{1+\norma{f-g}_p},
\end{equation*}
generated by the countable family of norms
\begin{equation}
    \label{normaS}
    \norma{f}_p= \sup_{\abs{\alpha}\le p} \sup_{x\in\Rn}(1+\abs{x}^2)^{\frac{p}{2}}\abs{\partial^{\alpha}f(x)}, \quad p \in \mathbb{N}\cup\{0\}.
\end{equation}
Now, we can prove that $(-\Delta)^su$ suitably decays at infinity:
\begin{proposition}\label{p:decay}
Let $u\in \Sn$. Then, for every $x\in \Rn$ with $\abs{x}>1$, we have
\begin{equation*}
    \abs{(-\Delta)^su(x)}\le C_{u,n,s}\abs{x}^{-(x+2s)},
\end{equation*}
where with $\norma{x}_p$ as in \eqref{normaS}, we have let
\begin{equation*}
    C_{u,n,s}=C_{n,s}(\norma{u}_{n+2}+\norma{u}_n+\norma{u}_{L^1(\Rn)}).
\end{equation*}
\end{proposition}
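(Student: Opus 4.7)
The plan is to exploit the pointwise definition \eqref{fraclap} directly, splitting the integration domain into a ``near'' region and a ``far'' region relative to the point $x$. Specifically, I would write
\begin{equation*}
    (-\Delta)^s u(x) = \frac{\gamma(n,s)}{2}\left( \int_{\abs{y}\le \abs{x}/2} + \int_{\abs{y}> \abs{x}/2} \right) \frac{2u(x)-u(x+y)-u(x-y)}{\abs{y}^{n+2s}}\,dy =: I_1(x)+I_2(x),
\end{equation*}
and bound each piece by a constant multiple of $\abs{x}^{-(n+2s)}$, the constants involving the Schwartz seminorms listed in the statement.

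For $I_1(x)$, the cancellation in the numerator is essential. By Taylor's formula of order two at $x$ (as already used right after Definition \ref{d:fraclap}), one has
\begin{equation*}
    \abs{2u(x)-u(x+y)-u(x-y)}\le \Big(\sup_{\abs{\xi-x}\le \abs{y}}\abs{\nabla^2 u(\xi)}\Big)\abs{y}^2.
\end{equation*}
On $\{\abs{y}\le \abs{x}/2\}$ with $\abs{x}>1$, every such $\xi$ satisfies $\abs{\xi}\ge \abs{x}/2$, so the seminorm estimate \eqref{normaS} with $p=n+2$ gives $\abs{\nabla^2 u(\xi)}\le C_n \norma{u}_{n+2}\abs{x}^{-(n+2)}$. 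Plugging this in and integrating $\abs{y}^{2-n-2s}$ over $\{\abs{y}\le \abs{x}/2\}$ (which converges because $2-2s>0$ by the assumption $0<s<1$) yields $\abs{I_1(x)}\le C_{n,s}\norma{u}_{n+2}\abs{x}^{-(n+2s)}$.

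For $I_2(x)$ I would split the numerator into the three summands. In the two pieces involving $u(x\pm y)$, the crude estimate $\abs{y}^{n+2s}\ge (\abs{x}/2)^{n+2s}$ lets me pull the decay factor outside; after a translation $z=x\pm y$ the remaining integral is bounded by $\norma{u}_{L^1(\Rn)}$, producing a contribution of order $C_{n,s}\norma{u}_{L^1}\abs{x}^{-(n+2s)}$. For the constant-in-$y$ term $2u(x)$, I would compute explicitly
\begin{equation*}
    \int_{\abs{y}>\abs{x}/2}\frac{dy}{\abs{y}^{n+2s}}= \frac{\sigma_{n-1}}{2s}\left(\frac{\abs{x}}{2}\right)^{-2s},
\end{equation*}
and combine it with the pointwise decay $\abs{u(x)}\le \norma{u}_n\abs{x}^{-n}$ (from \eqref{normaS} with $p=n$ and $\abs{x}>1$), giving a contribution of order $C_{n,s}\norma{u}_n\abs{x}^{-(n+2s)}$.

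Summing the three contributions yields the asserted bound with $C_{u,n,s}=C_{n,s}(\norma{u}_{n+2}+\norma{u}_n+\norma{u}_{L^1(\Rn)})$. The only genuinely delicate point is the choice of the splitting radius $\abs{x}/2$: it is large enough for $\nabla^2 u$ to enjoy its full decay on the near region (so that the $C^2$ cancellation pays off), yet small enough that on the far region the crude bound $\abs{y}\gtrsim \abs{x}$ produces exactly the desired power $\abs{x}^{-(n+2s)}$. Everything else is routine bookkeeping of constants.
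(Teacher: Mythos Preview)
Your proof is correct and follows essentially the same route as the paper: the same splitting at radius $\abs{x}/2$, the same use of Taylor's formula together with the $\norma{u}_{n+2}$ seminorm to control the Hessian on the near region, and the same decomposition of the far region into a $u(x)$ piece handled via $\norma{u}_n$ and a $u(x\pm y)$ piece handled via $\norma{u}_{L^1(\Rn)}$.
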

\begin{proof}
To see this we write
\begin{equation*}
    \begin{split}
        (-\Delta)^su(x)&=\frac{\gamma(n,s)}{2}\int_{\abs{y}<\frac{\abs{x}}{2}}\frac{2u(x)-u(x+y)-u(x-y)}{\abs{y}^{n+2s}}\,dy\\
        &+\frac{\gamma(n,s)}{2}\int_{\abs{y}\ge\frac{\abs{x}}{2}}\frac{2u(x)-u(x+y)-u(x-y)}{\abs{y}^{n+2s}}\,dy.
    \end{split}
\end{equation*}
Taylor's formula gives
\begin{equation*}
    2u(x)-u(x+y)-u(x-y)=-\frac{1}{2}\Braket{\nabla^2u(y^*)y,y}-\frac{1}{2}\Braket{\nabla^2u(y^{**})y,y},
\end{equation*}
where $y^*=x+yt^*$, $y^{**}=x+yt^{**}$, for $t^*,t^{**} \in [0,1]$. We now observe that on the set where $\abs{y}<\abs{x}/2$ we have by the triangle inequality
\begin{equation}
    \label{triang}
    \abs{x}<2\abs{y^*} \qquad \abs{x}<2\abs{y^{**}}.
\end{equation}
Using \eqref{triang} and the definition \eqref{normaS} of the norm $\abs{u}_{n+2}$ in $\Sn$, we find
\begin{equation*}
    \begin{split}
        &\abs*{\int_{\abs{y}<\frac{\abs{x}}{2}}\frac{2u(x)-u(x+y)-u(x-y)}{\abs{y}^{n+2s}}\,dy}\le\frac{1}{2}\int_{\abs{y}<\frac{\abs{x}}{2}}\frac{\abs{\nabla^2u(y^*)}+\abs{\nabla^2u(y^{**})}}{\abs{y}^{n+2s}}\abs{y}^2\,dy\\
        &\le C\norma{u}_{n+2}\left(\int_{\abs{y}<\frac{\abs{x}}{2}}\frac{\abs{y}^2}{(1+\abs{y^*}^2)^{\frac{n+2}{2}}\abs{y}^{n+2s}},dy + \int_{\abs{y}<\frac{\abs{x}}{2}}\frac{\abs{y}^2}{(1+\abs{y^{**}}^2)^{\frac{n+2}{2}}\abs{y}^{n+2s}},dy\right)\\
        &\le C\abs{x}^{-n-2}\norma{u}_{n+2}\int_{\abs{y}<\frac{\abs{x}}{2}}\frac{dy}{\abs{y}^{n+2s-2}}=C\abs{x}^{-n-2}\norma{u}_{n+2}\abs{x}^{2-2s}=C\norma{u}_{n+2}\abs{x}^{-(x+2s)},
    \end{split}
\end{equation*}
where $C=C_{n,s}>0$.

Next, we estimate
\begin{equation*}
    \begin{split}
        &\abs*{\int_{\abs{y}\ge\frac{\abs{x}}{2}}\frac{2u(x)-u(x+y)-u(x-y)}{\abs{y}^{n+2s}}\,dy}\le 2 \int_{\abs{y}\ge\frac{\abs{x}}{2}}\frac{\abs{u(x+y)-u(x)}}{\abs{y}^{n+2s}}\,dy\\
        &\le2 \int_{\abs{y}\ge\frac{\abs{x}}{2}}\frac{\abs{u(x+y)+u(x)}}{\abs{y}^{n+2s}}\,dy.
    \end{split}
\end{equation*}
We have
\begin{equation*}
    \begin{split}
        &\int_{\abs{y}>\frac{\abs{x}}{2}}\frac{\abs{u(x)}}{\abs{y}^{n+2s}}\le \sup_{x\in\Rn}((1+\abs{x}^2)^{\frac{n}{2}}\abs{u(x)})\int_{\abs{y}\ge\frac{\abs{x}}{2}}\frac{dy}{(1+\abs{x}^2)^{\frac{n}{2}}\abs{y}^{n+2s}}\\
        &\le\sup_{x\in\Rn}((1+\abs{x}^2)^{\frac{n}{2}}\abs{x}^{-n}\int_{\abs{y}\ge\frac{\abs{x}}{2}}\frac{dy}{\abs{y}^{n+2s}}\le \frac{C\norma{u}_n}{\abs{x}^{n+2s}},
    \end{split}
\end{equation*}
where $C=C_{n,s}>0$. Finally, we have trivially
\begin{equation*}
    \int_{\abs{y}\ge \frac{\abs{x}}{2}}\frac{\abs{u(x+y)}}{\abs{y}^{n+2s}}\,dy\le\frac{2^{n+2s}}{\abs{x}^{n+2s}}\int_{\abs{y}\ge\frac{\abs{x}}{2}}\abs{u(x+y)}\,dy\le \frac{2^{n+2s}\norma{u}_{L^1(\Rn)}}{\abs{x}^{n+2s}}.
\end{equation*}
This completes the proof.
\end{proof}

Proposition \ref{p:decay} has the following non trivial consequence.
\begin{corollary}\label{L1nat}
Let $u\in \Sn$.Then, $(-\Delta)^s u\in C^{\infty}(\Rn)\cap L^1(\Rn)$.
\end{corollary}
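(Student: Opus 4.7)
The plan is to prove the two assertions separately, leveraging the estimates already developed in this section together with Proposition \ref{p:decay}.

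\textbf{Integrability.} For the $L^1$ part I would split $\Rn$ as $\{\abs{x}\le 1\}\cup\{\abs{x}>1\}$. On the exterior region Proposition \ref{p:decay} already gives $\abs{(-\Delta)^s u(x)}\le C_{u,n,s}\abs{x}^{-(n+2s)}$, and since $n+2s>n$ this pointwise bound is integrable at infinity. On the unit ball it suffices to prove that $(-\Delta)^s u$ is bounded, which is exactly what the splitting argument presented after Definition \ref{d:fraclap} shows: Taylor's formula controls the $\{\abs{y}\le 1\}$ piece of \eqref{fraclap} by $C\,\|\nabla^2 u\|_{L^\infty}\int_{\abs{y}\le 1}\abs{y}^{2-n-2s}dy<\infty$, while the $\{\abs{y}>1\}$ piece is dominated by $4\,\|u\|_{L^\infty}\int_{\abs{y}>1}\abs{y}^{-n-2s}dy<\infty$. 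Since both bounds are independent of $x$, the function $(-\Delta)^s u$ is uniformly bounded on $\Rn$, hence integrable on the unit ball.

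\textbf{Smoothness.} I would show by induction on $\abs{\alpha}$ that $(-\Delta)^s u$ admits the derivative $\partial^\alpha$ and
\begin{equation*}
\partial^\alpha (-\Delta)^s u(x) = (-\Delta)^s(\partial^\alpha u)(x),
\end{equation*}
the right-hand side being well-defined as a function of $x$ since $\partial^\alpha u\in\Sn$. For the base case $\abs{\alpha}=1$ I would differentiate the integral \eqref{fraclap} in the direction $e_i$ via Lebesgue's dominated convergence theorem. Setting $N(x,y):=2u(x)-u(x+y)-u(x-y)$, the mean value theorem in the variable $h$ gives
\begin{equation*}
\frac{N(x+he_i,y)-N(x,y)}{h}=2\partial_i u(\xi_h)-\partial_i u(\xi_h+y)-\partial_i u(\xi_h-y),
\end{equation*}
for some $\xi_h$ between $x$ and $x+he_i$. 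A second-order Taylor expansion around $\xi_h$ then bounds the right-hand side by $C\,\|\nabla^2\partial_i u\|_{L^\infty}\abs{y}^2$ on $\{\abs{y}\le 1\}$ and by $4\,\|\partial_i u\|_{L^\infty}$ on $\{\abs{y}>1\}$, and both dominations are integrable against $\abs{y}^{-n-2s}$. Dominated convergence therefore yields $\partial_i(-\Delta)^s u=(-\Delta)^s\partial_i u$, which is itself continuous by an analogous dominated-convergence argument. Iterating this procedure produces all higher derivatives and gives $(-\Delta)^s u\in C^\infty(\Rn)$.

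\textbf{Main obstacle.} The only subtle point is producing dominations in the smoothness argument that are uniform in the small parameter $h$, i.e.\ in the base point $\xi_h$ of the Taylor expansions. This is routine once one Taylor-expands one order higher than in the definition of $(-\Delta)^s$, but it is the only estimate that is not an immediate rerun of the finiteness bookkeeping for \eqref{fraclap}; everything else reduces to Proposition \ref{p:decay} and the uniform boundedness just recalled.
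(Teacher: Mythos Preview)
Your proof is correct and fills in exactly what the paper leaves implicit: the corollary is stated without proof as an immediate consequence of Proposition \ref{p:decay}, so there is no detailed argument in the paper to compare against. Your $L^1$ argument (decay estimate on $\{\abs{x}>1\}$, uniform boundedness on $\{\abs{x}\le 1\}$) and your $C^\infty$ argument (dominated convergence yielding $\partial^\alpha(-\Delta)^s u=(-\Delta)^s\partial^\alpha u$) are both the natural way to proceed at this point.

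One small comparative remark: the paper later establishes the same commutation identity $\partial^\alpha(-\Delta)^s u=(-\Delta)^s\partial^\alpha u$ in the proof of Lemma \ref{l:81}, but via the Fourier characterisation of Proposition \ref{p:pdnfp}. That route is not available for the $C^\infty$ claim here, since Proposition \ref{p:pdnfp} itself invokes Corollary \ref{L1nat} (at least its $L^1$ part) to justify taking the Fourier transform of $(-\Delta)^s u$. Your direct dominated-convergence argument is therefore the correct choice given the order of the exposition.
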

The estimate in Proposition \ref{p:decay} can be written
\begin{equation*}
    -C_{u,n,s}\abs{x}^{-(n+2s)}\le -(-\Delta)^su(x)\le C_{u,n,s}\abs{x}^{-(n+2s)}.
\end{equation*}
Let us notice that on a nonnegative bump function the estimate from below can be made stronger, a fact that reflects the nonlocal character of $(-\Delta)^s$. Suppose for instance that $u\in C_0^{\infty}(\Rn)$, with $0\le u\le 1$, $u\equiv 1$ on $B(0,1)$ and $\text{supp } u \subset \bar{B}(0,2)$. Then, for $x\in\Rn \setminus B(0,3)$ one has from \eqref{fraclap}
\begin{equation*}
    \begin{split}
        -(-\Delta)^su(x)&=\frac{\gamma(n,s)}{2}\int_{\Rn}\frac{u(x+y)+u(x-y)}{\abs{y}^{n+2s}}\,dy\\
        &\ge \gamma(n,s)\int_{B(0,1)}\frac{dz}{\abs{x-z}^{n+2s}}\,dz.
    \end{split}
\end{equation*}
Since $\abs{x-z}\ge2$, for $\abs{z} \le 1$ we infer $\abs{x}\ge\abs{x-z}-\abs{z}\ge \abs{x-z}-1 \ge \abs{x-z}/2$. This gives some $C(n,s)>0$
\begin{equation*}
    -(-\Delta)^su\ge C(n,s)\abs{x}^{-(n+2s)}>0,
\end{equation*}
which shows that $(-\Delta)^su$ needs not to vanish even far away from the support of $u$. This is clearly impossible for local operators $P(x,\partial_x)$, for which one has the obvious property supp $P(x,\partial_x)u \subset \text{supp }u$.

\section{A brief interlude about very classical stuff}
\label{s:1.2}
To proceed with the analysis of the nonlocal operator $(-\Delta)^s$ we will need some basic properties of two important, and deeply interconnected, protagonist of classical analysis: the Fourier transform and Bessel functions. Since they both play a pervasive role in these thesis, as a help to the reader in this section we recall their definition along with some elementary facts. Before we do that, however, we introduce the ever present Euler's gamma function (see e.g. chapter 1 in \citep{Le72}):
\begin{equation*}
    \Gamma(x)=\int_0^{\infty}t^{x-1}e^{-t}\,dt \quad x>0.
\end{equation*}
The well-known identity $\Gamma(1/2)=\sqrt{\pi}$ is simply a reformulation of the famous integral
\begin{equation*}
    \int_{\mathbb{R}}e^{-x^2}\,dx=\sqrt{\pi}.
\end{equation*}
Of course, $\Gamma(z)$ can be equally defined as holomorphic function for every $z\in\mathbb{C}$ with $\mathfrak{R}z>0$. It easy to check that for such $z$, one has
\begin{equation}
    \label{fact}
    \Gamma(z+1)=z\Gamma(z).
\end{equation}
This formula, and its iterations, can be used to meromorphically extend $\Gamma(z)$ to the whole complex plane having simple poles at $z=-k$, $k\in \mathbb{N}\cup\{0\}$, with residues $(-1)^k$. In particular, when $0<s<1$, one obtains from \eqref{fact}
\begin{equation}
    \label{fact2}
    \Gamma(1-s)=-s\Gamma(-s).
\end{equation}
Furthermore, one has the following basic relations:
\begin{equation}
    \label{fact3}
    \Gamma(z)\Gamma(1-z)=\frac{\pi}{\sin \pi z},
\end{equation}
and
\begin{equation}
    \label{fact4}
    2^{2z-1}\Gamma(z)\Gamma \left( z + \frac{1}{2} \right) = \sqrt{\pi} \Gamma(2z).
\end{equation}
\emph{Stirling's formula} provides the asymptotic behavior of the gamma function for large positive values of its argument
\begin{equation}
    \label{Stirling}
    \Gamma(x)=\sqrt{2\pi}x^{x-\frac{1}{2}}e^{-x}\left( 1 + O\left( \frac{1}{x}\right) \right), \quad \text{as }x\to +\infty.
\end{equation}
We close this brief prelude with a very classical formula which connects the gamma functions to the $(n-1)$-dimensional Hausdorff measure of the unit sphere $\mathbb{S}^{n-1}\subset \Rn$, and the $n$-dimensional volume of the unit ball
\begin{equation}
    \label{ball}
    \sigma_{n-1}=\frac{2\pi^{\frac{n}{2}}}{\Gamma \left( \frac{n}{2} \right)}, \qquad \omega_n=\frac{\sigma_{n-1}}{n}=\frac{\pi^{\frac{n}{2}}}{\Gamma \left( \frac{n}{2}+1 \right)}.
\end{equation}
One identity that we will use is the following
\begin{equation}\label{gammafrom}
    \int_{0}^{\infty} u^{-s-1}(1-e^{-u})\,du=\frac{1}{s}\int_0^{\infty}u^{-s}e^{-u}\,du=\frac{\Gamma(1-s)}{s}.
\end{equation}
Deeply connected with the gamma function is Euler's \emph{beta function} which for $x,y>0$ is defined as follows
\begin{equation}\label{beta}
    B(x,y)=2\int_0^{\frac{\pi}{2}}(\cos \theta)^{2x-1}(\sin \theta)^{2y-1}\,d\theta.
\end{equation}
It is an easy exercise to recognize that
\begin{equation}
    \label{beta2}
    B(x,y)=2\int_0^1(1-\tau^2)^{x-1}\tau^{2y-1}\,d\tau=\int_0^1(1-s)^{x-1}s^{y-1}\,ds.
\end{equation}
The link between the beta and the gamma function is expressed by the following equation
\begin{equation}
    \label{betagamma}
    B(x,y)=\frac{\Gamma(x)\Gamma(y)}{\Gamma(x+y)},
\end{equation}
see e.g. (1.5.6) on p. 14 in \citep{Le72}. A useful integral which is expressed in terms of the beta, or gamma function is contained in the following proposition.
\begin{proposition}\label{p:gammabeta}
Let $b>-n$ and $a>n+b$, then
\begin{equation}
    \label{useprop}
    \int_{\Rn} \frac{\abs{x}^b}{(1+\abs{x}^2)^{\frac{a}{2}}}\,dx= \frac{\pi^{\frac{n}{2}}}{\Gamma \left( \frac{n}{2}\right)} \frac{\Gamma \left(\frac{b+n}{2} \right) \Gamma \left( \frac{a-b-n}{2}\right)}{\Gamma \left(\frac{a+b}{2} \right)}.
\end{equation}
In particular, if $b=0$ and $a=n+1$, then
\begin{equation}
    \label{useprop2}
    \int_{\Rn}\frac{dx}{(1+\abs{x}^2)^{\frac{n+1}{2}}}=\frac{\pi^{\frac{n+1}{2}}}{\Gamma \left(\frac{n+1}{2} \right)}.
\end{equation}
\end{proposition}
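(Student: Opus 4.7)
The plan is to reduce the $n$-dimensional integral to a one-dimensional one via polar coordinates, and then recognize it as a beta integral which we convert to gamma functions using \eqref{betagamma}.

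First I would write $x = r\omega$ with $r = |x| \ge 0$ and $\omega \in \mathbb{S}^{n-1}$, so that $dx = r^{n-1}\,dr\,d\sigma(\omega)$. Since the integrand depends only on $|x|$, integration in $\omega$ produces the factor $\sigma_{n-1}$, and using the formula $\sigma_{n-1} = 2\pi^{n/2}/\Gamma(n/2)$ from \eqref{ball} one obtains
\begin{equation*}
\int_{\Rn}\frac{|x|^b}{(1+|x|^2)^{a/2}}\,dx = \frac{2\pi^{n/2}}{\Gamma(n/2)}\int_0^\infty \frac{r^{n+b-1}}{(1+r^2)^{a/2}}\,dr.
\end{equation*}
The hypotheses $b>-n$ and $a>n+b$ are exactly what is needed to ensure convergence at $r=0$ and $r=\infty$ respectively.

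Next I would apply the substitution $u = r^2$, $du = 2r\,dr$, which turns the one-dimensional integral into
\begin{equation*}
\int_0^\infty \frac{r^{n+b-1}}{(1+r^2)^{a/2}}\,dr = \frac{1}{2}\int_0^\infty \frac{u^{(n+b)/2-1}}{(1+u)^{a/2}}\,du.
\end{equation*}
Then the substitution $u = s/(1-s)$, $du = ds/(1-s)^2$, maps $(0,\infty)$ onto $(0,1)$ and yields
\begin{equation*}
\frac{1}{2}\int_0^1 s^{(n+b)/2-1}(1-s)^{(a-n-b)/2-1}\,ds = \frac{1}{2}B\!\left(\tfrac{n+b}{2},\tfrac{a-n-b}{2}\right),
\end{equation*}
by the second form of the beta function in \eqref{beta2}. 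Applying \eqref{betagamma} with $x=(n+b)/2$ and $y=(a-n-b)/2$ gives $B(x,y) = \Gamma(x)\Gamma(y)/\Gamma(x+y)$, and collecting everything produces the claimed identity \eqref{useprop}.

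There is no genuine obstacle: the whole argument is routine once polar coordinates have been introduced, and the role of the normalization constant $2\pi^{n/2}/\Gamma(n/2)$ is to match the $\pi^{n/2}/\Gamma(n/2)$ factor in the final formula. Finally, the specialization \eqref{useprop2} follows by plugging $b=0$ and $a=n+1$ into \eqref{useprop}: the numerator becomes $\Gamma(n/2)\Gamma(1/2)$, which cancels against $\Gamma(n/2)$ in the denominator and uses $\Gamma(1/2)=\sqrt{\pi}$ to deliver the $\pi^{(n+1)/2}/\Gamma((n+1)/2)$ on the right-hand side.
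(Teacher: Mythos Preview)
Your proof is correct and follows essentially the same route as the paper: pass to polar coordinates, reduce to a one-dimensional beta integral, and then apply \eqref{betagamma} and \eqref{ball}. The only cosmetic difference is the substitution used on the radial integral---the paper sets $r=\tan\xi$ to land on the trigonometric form \eqref{beta}, while you use $u=r^2$ and then $u=s/(1-s)$ to land on the form \eqref{beta2}; both produce $\tfrac{1}{2}B\bigl(\tfrac{b+n}{2},\tfrac{a-b-n}{2}\bigr)$, whose gamma-function expression has $\Gamma(a/2)$ in the denominator (the $\Gamma\bigl(\tfrac{a+b}{2}\bigr)$ in the displayed statement is a typo).
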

\begin{proof}
Let us observe preliminarily that the assumption $b>-n$ serves to guarantee that the integrand belongs to $L_{\text{loc}}^1(\Rn)$, whereas it is in $L^1(\Rn)$ if and only if $a-b>n$. Under these hypothesis we have
\begin{equation*}
    \begin{split}
        &\int_{\Rn}\frac{\abs{x}^b}{(1+\abs{x}^2)^{\frac{a}{2}}}\,dx=\sigma_{n-1}\int_0^{\infty}\frac{r^{b+n-1}}{(1+r^2)^{\frac{a}{2}}}\\
        &=\int_0^{\frac{\pi}{2}}\frac{(\tan \xi)^{b+n-1}}{(1+\tan^2 \xi)^{\frac{a-2}{2}}}\,d\xi=\sigma_{n-1}\int_0^{\frac{\pi}{2}}(\sin \xi)^{b+n-1}(\cos \xi)^{a-b-n-1}\,d\xi\\
        &=\frac{\sigma_{n-1}}{2}B\left(\frac{b+n}{2},\frac{a-b-n}{2} \right),
    \end{split}
\end{equation*}
If we now apply formulas \eqref{ball} and \eqref{betagamma} we obtain \eqref{useprop}. To obtain \eqref{useprop2} it suffices to keep in mind that $\Gamma(1/2)=\sqrt{\pi}$.
\end{proof}

We are ready to introduce the queen of classical analysis: given a function $u\in L^1(\Rn)$, we define its Fourier transform as
\begin{equation*}
    \mathscr{F}(u)(\xi)=\reallywidehat{u}(\xi)=\int_{\Rn}e^{-2\pi\Braket{\xi,x}}u(x)\,dx.
\end{equation*}
We notice that the normalization that we have adopted in the above definition is the one which makes $\mathscr{F}$ an isometry of $L^2(\Rn)$ onto itself, see \citep{SW71}. We recall next some of the basic properties of $\mathscr{F}$. If $\tau_h u(x)=u(x+h)$ and $\delta_{\lambda}u(x)=u(\lambda x)$ are the translation and dilation operators in $\Rn$, then we have
\begin{equation}
    \label{translation}
    \reallywidehat{\tau_yu}(\xi)=e^{2\pi i \Braket{\xi,y}}\reallywidehat{u}(\xi),
\end{equation}
and
\begin{equation}
    \label{dilation}
    \reallywidehat{\delta_{\lambda}u}(\xi)=\lambda^{-n}\reallywidehat{u}\left( \frac{\xi}{\lambda} \right).
\end{equation}
The Fourier transform is also invariant under the action of the orthogonal group $\mathbb{O}(n)$. We have in fact for every $T\in \mathbb{O}(n)$
\begin{equation}
    \label{fourierInvariant}
    \reallywidehat{u\circ T}=\reallywidehat{u}\circ T.
\end{equation}
Formula \eqref{fourierInvariant} says that the Fourier transform of a spherically symmetric function is spherically symmetric as well.

Another crucial property is the Riemann-Lebesgue lemma:
\begin{equation}
    \label{R-L}
    u\in L^1(\Rn) \Longrightarrow \abs{\reallywidehat{u}(\xi)}\to 0 \text{ as } \abs{\xi}\to \infty.
\end{equation}
This result has important consequences when combined with the following two formulas. Let $u\in L^1(\Rn)$ be such that for $\alpha \in \mathbb{N}_0^n$ also $\partial^{\alpha}u\in L^1(\Rn)$. Then,
\begin{equation}
    \label{fourierder}
    \reallywidehat{(\partial^{\alpha}u)}(\xi)=(2\pi i)^{\abs{\alpha}}\xi^{\alpha}\reallywidehat{u}(\xi).
\end{equation}
In particular, \eqref{R-L} and \eqref{fourierder} give: $\abs{\xi^{\alpha}}\abs{\reallywidehat{u}(\xi)}\to 0$ as $\abs{\xi}\to \infty$. Furthermore, if $u\in L^1(\Rn)$ is such that for $\alpha \in \mathbb{N}_0^n$ one has $x \to x^{\alpha}u(x)\in L^1(\Rn)$, then,
\begin{equation}
    \label{fourierder2}
    \partial^{\alpha}\reallywidehat{u}(\xi)=(-2\pi i)^{\abs{\alpha}}\reallywidehat{(\cdot)^{\alpha}u}(\xi).
\end{equation}
In particular, \eqref{R-L} and \eqref{fourierder2} imply that: $\partial^{\alpha}\reallywidehat{u} \in C(\Rn)$ and $\abs{\partial^{\alpha} \reallywidehat{u}(\xi)}\to 0$ as $\abs{\xi}\to \infty$.

Combining these observations one derives one of the central properties of $\mathscr{F}$: it maps continuously $\Sn$ onto itself and is an isomorphism. Its inverse is also continuous, and is given by Fourier inversion formula
\begin{equation*}
    \mathscr{F}^{-1}(u)(x)= \int_{\Rn}e^{2\pi\Braket{\xi,x}}\reallywidehat{u}(\xi)\,d\xi.
\end{equation*}
We next introduce the second main character of this section: the Bessel functions. The book \citep{Le72} provides a rewarding account of this beautiful classical subject.
\begin{definition}
    \label{d:besselfunct}
    For every $v \in \mathbb{C}$ such that $\mathfrak{R}v>-\frac{1}{2}$ we define the Bessel function of the first kind and of complex order $v$ by the formula
    \begin{equation}
        \label{firstkind}
        J_v(z)=\frac{1}{\Gamma \left( \frac{1}{2}\right) \Gamma \left(v+\frac{1}{2} \right)}\left(\frac{z}{2} \right)^v\int_{-1}^1e^{izt}(1-t^2)^{\frac{2v-1}{2}}\,dt,
    \end{equation}
    where $\Gamma(x)$ denotes the Euler gamma function.
\end{definition}
The function $J_v(z)$ in \eqref{firstkind} derives its name from the fact that it solves the linear ordinary differential equation known as \emph{Bessel equation of order} $v$
\begin{equation}
    \label{Besseleq}
    z^2\frac{d^2J}{dz^2}+z\frac{dJ}{dz}+(z^2-v^2)J=0.
\end{equation}
An expression of $J_v$ as a power series for an arbitrary value of $v\in\mathbb{C}$ is provided by
\begin{equation}
    \label{besselsol}
    J_v(z)=\sum_{k=0}^{\infty}(-1)^k \frac{(z/2)^{v+2k}}{\Gamma(k+1)\Gamma(k+v+1)}, \quad \abs{z}<\infty,\,\, \abs{\arg(z)}<\pi,
\end{equation}
see e.g. (5.3.2) on p. 102 in \citep{Le72}. When $v\not \in \mathbb{Z}$, another linearly independent solution of \eqref{Besseleq} is provided by the function $J_{-v}(z)$. When $z\in\mathbb{Z}$ the two functions $J_v$ and $J_{-v}$ are linearly dependent, and in order to find a second solution linearly independent from $J_v$ one has to proceed differently.

The observation that follows is very important in most concrete applications of the theory. Suppose that $\varphi(z)$ be a solution to the Bessel equation \eqref{Besseleq}, and consider the function defined by the transformation
\begin{equation}
    \label{transformation}
    u(y)=y^{\alpha}\varphi(\beta y^{\gamma}).
\end{equation}
Then, one easily verifies that $u(y)$ satisfies the \emph{generalized Bessel equation}
\begin{equation}
    \label{genBesseleq}
    y^2u''(y) + (1-2\alpha)yu'(y)+ [\beta^2\gamma^2y^{2\gamma}+(\alpha^2-v^2\gamma^2)]u(y)=0.
\end{equation}

Returning to Definition \ref{d:besselfunct}, from \eqref{firstkind} and \eqref{beta2} we immediately find
\begin{equation*}
    z^{-v}J_v(z)  \xrightarrow[z \to 0]{}  \frac{2^{-v+1}}{\Gamma \left( \frac{1}{2} \right) \Gamma \left( v+\frac{1}{2} \right)}\int_0^1(1-s^2)^{\frac{2v-1}{2}}\,ds= \frac{2^{-v+1}}{\Gamma \left( \frac{1}{2} \right) \Gamma \left( v+\frac{1}{2} \right)} B\left( v+\frac{1}{2},\frac{1}{2}\right).
\end{equation*}
From this asymptotic relation and \eqref{betagamma} one obtains
\begin{equation}
    \label{asymp}
    J_v(z) \cong \frac{2^{-v}}{\Gamma(v+1)}z^v, \quad \text{ as } z \to 0.
\end{equation}
Unlike the simple expression of the asymptotic of $J_v(z)$ as $z\to0$, the behavior at infinity of $J_v(z)$ is more delicate to come by. We have the following result, see (5.11.6) on p. 122 in \citep{Le72}.
\begin{lemma}\label{l:asymp}
Let $\mathfrak{R}v>-\frac{1}{2}$. One has
\begin{equation}
    \label{asymp1}
    \begin{split}
    J_v(z)=&\sqrt{\frac{2}{\pi z}}\cos \left( z-\frac{\pi v}{2} - \frac{\pi}{4} \right) +O(z^{-\frac{3}{2}})\\
    &\text{as } \abs{z}\to \infty, \quad -\pi+\delta< \arg z <\pi -\delta. 
    \end{split}
\end{equation}
In particular,
\begin{equation}
    \label{asymp2}
    J_v(z)=O(z^{-\frac{1}{2}}), \quad \text{as } z\to \infty, \,\, z\ge 0.
\end{equation}
\end{lemma}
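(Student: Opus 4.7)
The plan is to extract the leading asymptotics directly from the Poisson-type integral representation \eqref{firstkind}, showing that for large $\abs{z}$ the main contribution comes from the endpoints $t = \pm 1$ of the integration domain. Set
\[
I(z) := \int_{-1}^{1} e^{izt}(1-t^2)^{v-1/2}\,dt,
\]
so that $J_v(z)$ is $I(z)$ multiplied by the prefactor in \eqref{firstkind}. The phase $t \mapsto izt$ has no interior critical point in $(-1,1)$, so the only source of non-smoothness, and hence of a slowly decaying contribution, is the weight $(1-t^2)^{v-1/2}$ at the endpoints; the asymptotics should concentrate there, in the spirit of the method of stationary phase and Watson's lemma.

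I would introduce a smooth partition of unity $1 = \chi_+ + \chi_- + \chi_0$ on $[-1,1]$, with $\chi_\pm$ supported in a small neighborhood of $\pm 1$ and $\chi_0$ supported in a compact subset of $(-1,1)$, and split $I(z) = I_+(z) + I_-(z) + I_0(z)$. The interior piece $I_0$ is handled by repeated integration by parts based on $e^{izt} = (iz)^{-1}\partial_t e^{izt}$: since $\chi_0(t)(1-t^2)^{v-1/2}$ is smooth and compactly supported in $(-1,1)$, every integration costs a factor $1/z$ and one obtains $I_0(z) = O(\abs{z}^{-N})$ for every $N$. For the endpoint piece $I_+$ I would substitute $t = 1 - s/z$, factor
\[
(1-t^2)^{v-1/2} = (2s/z)^{v-1/2}\bigl(1 - s/(2z)\bigr)^{v-1/2},
\]
and Taylor-expand the last factor in powers of $s/z$. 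The zeroth-order term reduces $I_+(z)$ to the classical Mellin integral
\[
\int_0^{\infty} e^{-is}\,s^{v-1/2}\,ds = e^{-i\pi(v+1/2)/2}\,\Gamma(v+1/2),
\]
which one obtains by rotating the contour from the positive real axis onto the ray $\arg s = -\pi/2$ (legitimate because $\mathfrak{R} v > -1/2$ controls integrability at $s = 0$). A mirror-image computation with $t = -1 + s/z$ produces the complex-conjugate contribution from $I_-$.

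Adding $I_+(z) + I_-(z)$, the leading exponentials combine into $2\cos(z - \pi v/2 - \pi/4)$; multiplying by the prefactor from \eqref{firstkind} and simplifying with $\Gamma(1/2) = \sqrt{\pi}$ reproduces exactly the leading term $\sqrt{2/(\pi z)}\cos(z - \pi v/2 - \pi/4)$ of \eqref{asymp1}. The first-order Taylor correction at each endpoint, together with the interior piece $I_0$, contributes at order $\abs{z}^{-3/2}$, giving the stated remainder. The estimate \eqref{asymp2} is then an immediate corollary since $\abs{\cos(\cdot)} \le 1$.

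I expect the main obstacle to be the uniformity of these estimates across the complex sector $-\pi + \delta < \arg z < \pi - \delta$ claimed in the statement. When $z$ is not real, the substitution $t = 1 \mp s/z$ forces $s$ onto a complex ray, so one must simultaneously justify the rotation from that ray back to $[0,\infty)$ (to land on the gamma integral) and control the Taylor remainder uniformly in $\arg z$. In essence this is a careful adaptation of Watson's lemma to an oscillatory integral depending on a complex parameter; modulo this technicality the computation is entirely elementary.
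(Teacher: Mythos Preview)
The paper does not actually prove this lemma: it is stated with a bare citation to (5.11.6) on p.~122 of \citep{Le72} and no argument is given. Your proposal, by contrast, supplies a genuine proof sketch, and it is the standard one---endpoint analysis of the Poisson integral \eqref{firstkind} via a localising partition of unity, rapid decay of the interior piece by integration by parts, and extraction of the leading term at each endpoint through the substitution $t=\pm 1\mp s/z$ followed by a contour rotation onto the Euler gamma integral. Your computation of the leading constant is correct and the remainder estimate $O(z^{-3/2})$ is exactly what the first Taylor correction produces. The caveat you raise about uniformity in the sector $-\pi+\delta<\arg z<\pi-\delta$ is the only genuine technical point, and you have identified it accurately: one must check that the contour rotation and the tail estimates in Watson's lemma are uniform in $\arg z$ on compact subsectors, which is routine but requires care. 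This is essentially the proof Lebedev gives in the cited reference, so your approach is not different from the literature---it simply fills in what the paper omits.
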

Along with the Bessel equation \eqref{Besseleq}, in Sections \ref{s:fs} and \ref{s:expb} below we will need the \emph{modified Bessel equation} of order $v\in\mathbb{C}$,
\begin{equation}
    \label{besseleq2}
    z^2\frac{d^2\varphi}{dz^2}+z\frac{d \varphi}{d z}-(z^2+v^2)\varphi=0.
\end{equation}
Two linearly independent solutions of \eqref{besseleq2} are the modified Bessel function of the first kind,
\begin{equation}
    \label{solbesseleq2}
    I_v(z)=\sum_{k=0}^{\infty}\frac{(z/2)^{v+2k}}{\Gamma(k+1)\Gamma(k+v+1)}, \quad \abs{z}<\infty, \,\, \abs{\arg (z)}<\pi,
\end{equation}
and the modified Bessel function of the third kind, or Macdonald function, which for order $v \not = 0,\pm1,\pm2,\dots$, is given by
\begin{equation}
    \label{thirdkind}
    K_v(z)=\frac{\pi}{2}\frac{I_{-v}(z)-I_v(z)}{\sin \pi v}, \quad \abs{\arg (z)}<\pi.
\end{equation}
Notice that $K_v(z)=K_{-v}(z)$.

It easy to verify that if $\varphi(z)$ is a solution to the modified Bessel equation \eqref{besseleq2}, then the function defined by the transformation \eqref{transformation} satisfies the \emph{generalized modified Bessel equation}
\begin{equation}
    \label{gmb}
    y^2u''(y)+(1-2\alpha)yu'(y)+[(\alpha^2-v^2\gamma^2)-\beta^2\gamma^2y^{2\gamma}]u(y)=0.
\end{equation}

As we have stated in the opening of this section the Fourier transform and the Bessel functions are deeply connected. One important instance of this link is the following result which provides a deeper meaning to the invariance of the Fourier transform with respect to the action of the orthogonal group $\mathbb{O}(n)$. We emphasize that the presence of Bessel functions in Theorem \ref{t:FBr} below underscores the interplay between curvature (that of the unit sphere $\mathbb{S}^{n-1}\subset \mathbb{R}^n$) and Fourier analysis. For the following result we refer to Theorem 40 on p. 69 in \citep{BC49}.
\begin{theorem}[Fourier-Bessel representation]
\label{t:FBr}
Let $u(x)=f(\abs{x})$, and suppose that
\begin{equation*}
    t \to t^{\frac{n}{2}}f(t)J_{\frac{n}{2}-1}(t) \in L^1(\mathbb{R}^+),
\end{equation*}
where we have denoted by $J_{\frac{n}{2}-1}$ the Bessel function of order $v=\frac{n}{2}-1$ defined by \eqref{firstkind}. Then,
\begin{equation*}
    \reallywidehat{u}(\xi)=2\pi \abs{\xi}^{-\frac{n}{2}+1}\int_0^{\infty}t^{\frac{n}{2}}f(t)J_{\frac{n}{2}-1}(2\pi \abs{\xi}t)\,dt.
\end{equation*}
\end{theorem}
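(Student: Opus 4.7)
The plan is to compute $\widehat{u}$ directly from its integral definition, passing to polar coordinates and then recognising the resulting angular integral as the Poisson-type integral representation of the Bessel function given in Definition \ref{d:besselfunct}. The integrability hypothesis $t \mapsto t^{n/2} f(t) J_{n/2-1}(t) \in L^1(\mathbb{R}^+)$ will be used at the end to justify the final form, while the intermediate Fubini step follows from $u \in L^1(\mathbb{R}^n)$ (which the reader can either assume separately or extract from the hypothesis together with the asymptotics of $J_{n/2-1}$ in \eqref{asymp} and \eqref{asymp2}).

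First I would exploit \eqref{fourierInvariant}, which says that $\widehat u$ is spherically symmetric whenever $u$ is, so it suffices to evaluate $\widehat u(\xi)$ at $\xi = |\xi| e_n$. Writing $x = t\omega$ with $t = |x| > 0$ and $\omega \in \mathbb{S}^{n-1}$, polar coordinates give
\begin{equation*}
    \widehat u(|\xi|e_n) = \int_0^\infty t^{n-1} f(t) \left( \int_{\mathbb{S}^{n-1}} e^{-2\pi i t |\xi| \, \omega_n}\, d\sigma(\omega) \right) dt.
\end{equation*}
The inner integral depends only on $r := t|\xi|$, and slicing the sphere by hyperplanes $\omega_n = \cos\theta$ (whose sections are $(n-2)$-spheres of radius $\sin\theta$) turns it into a one-dimensional integral:
\begin{equation*}
    \int_{\mathbb{S}^{n-1}} e^{-2\pi i r \omega_n}\, d\sigma(\omega) = \sigma_{n-2} \int_0^\pi e^{-2\pi i r \cos\theta} (\sin\theta)^{n-2}\, d\theta = \sigma_{n-2} \int_{-1}^{1} e^{-2\pi i r u} (1-u^2)^{\frac{n-3}{2}}\, du,
\end{equation*}
after substituting $u = \cos\theta$.

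Next I would identify this last expression with a Bessel function. Since the integrand's dependence on $u$ is invariant under $u \mapsto -u$ in the real part and antisymmetric in the imaginary part (which then drops out), the integral equals $\int_{-1}^{1} e^{2\pi i r u}(1-u^2)^{(n-3)/2}\, du$. Comparing with \eqref{firstkind} for $v = \tfrac{n}{2}-1$ and $z = 2\pi r$, so that $(2v-1)/2 = (n-3)/2$, I read off
\begin{equation*}
    \int_{-1}^1 e^{-2\pi i r u}(1-u^2)^{\frac{n-3}{2}} du \;=\; \Gamma\!\left(\tfrac{1}{2}\right)\Gamma\!\left(\tfrac{n-1}{2}\right) (\pi r)^{-(n/2-1)} J_{n/2-1}(2\pi r).
\end{equation*}
Substituting $\sigma_{n-2} = 2\pi^{(n-1)/2}/\Gamma(\tfrac{n-1}{2})$ from \eqref{ball} and $\Gamma(\tfrac{1}{2}) = \sqrt{\pi}$, the $\Gamma(\tfrac{n-1}{2})$ factor cancels and the surviving powers of $\pi$ collapse to $2\pi$, yielding the clean identity
\begin{equation*}
    \int_{\mathbb{S}^{n-1}} e^{-2\pi i r \omega_n}\, d\sigma(\omega) \;=\; 2\pi\, r^{-(n/2-1)}\, J_{n/2-1}(2\pi r).
\end{equation*}

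Finally, inserting this with $r = t|\xi|$ back into the polar-coordinate expression for $\widehat u$ gives
\begin{equation*}
    \widehat u(\xi) \;=\; 2\pi\, |\xi|^{-n/2+1} \int_0^\infty t^{n/2} f(t)\, J_{n/2-1}(2\pi |\xi| t)\, dt,
\end{equation*}
which is exactly the claim. The main obstacle is purely bookkeeping: correctly matching the exponent $(n-3)/2$ in the sphere-slicing integral with the exponent $(2v-1)/2$ in Definition \ref{d:besselfunct} to get $v = n/2 - 1$, and then tracking the constants $\sigma_{n-2}$, $\Gamma(\tfrac{1}{2})$, and $\Gamma(\tfrac{n-1}{2})$ so that the answer packages into the compact factor $2\pi$. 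The integrability hypothesis on $t^{n/2} f(t) J_{n/2-1}(t)$ is precisely what is needed for the final integral to converge absolutely and for the interchange of integrations to be legitimate.
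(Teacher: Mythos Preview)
Your argument is correct and is in fact the standard proof of this classical identity: pass to polar coordinates, slice the sphere by level sets of $\omega_n$, and recognise the Poisson integral \eqref{firstkind} for $J_{n/2-1}$. The constant bookkeeping is right, and the final formula drops out exactly as you wrote.

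As for comparison: the paper does not actually prove Theorem \ref{t:FBr}. It is stated with a reference to Bochner--Chandrasekharan (Theorem 40 on p.~69 of \cite{BC49}) and then used as a black box in later computations. So there is no ``paper's own proof'' to compare against; your write-up simply supplies what the thesis outsources to the literature. One small caveat worth making explicit: the sphere-slicing step tacitly assumes $n\ge 2$ (so that $\sigma_{n-2}$ and the $(n-2)$-dimensional latitude spheres make sense); the case $n=1$ is elementary and can be checked directly from $J_{-1/2}(z)=\sqrt{2/(\pi z)}\cos z$.
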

To check the integrability assumption in Theorem \ref{t:FBr} we can use the above given asymptotic \eqref{asymp} and \eqref{asymp2} for the Bessel function $J_v$.

Another family of special functions that will be needed in this paper are the so-called \emph{hypergeometric functions}. In order to introduce them we recall the definition of the Pochammer's symbols
\begin{equation*}
    \alpha_0=1, \quad \alpha_k:= \frac{\Gamma(\alpha +k)}{\Gamma(\alpha)}=\alpha(\alpha+1)\dots(\alpha +k-1), \quad k \in \mathbb{N}.
\end{equation*}
Notice that since, as we have said,the gamma function has a pole in $z=0$, we have
\begin{equation*}
    0_k=
    \begin{cases}
    1 \quad \text{ if } \quad k=0\\
    0 \quad \text{ for } \,\, k\ge 0.
    \end{cases}
\end{equation*}
\begin{definition}\label{d:hg}
    Let $p,q\in\mathbb{N}_0$ be such that $p \le q+1$, and let $\alpha_1,\dots,\alpha_p$ and $\beta_1,\dots,\beta_q$ be give parameters such that $-\beta_j \not \in \mathbb{N}_0$ for $j=1,\dots,q$. Given a nummber $z\in\mathbb{C}$, the power series
    \begin{equation*}
        {}_{p}F_q(\alpha_1,\dots,\alpha_p;\beta_1,\dots,\beta_q;z)=\sum_{k=0}^{\infty}\frac{(\alpha_1)_k\dots(\alpha_p)_k}{(\beta_1)_k\dots(\beta_q)_k}\frac{z^k}{k!}
    \end{equation*}
    is called the generalized hypergeometric function. When $p=2$ and $q=1$, then the function ${}_{2}F_1(\alpha_1,\alpha_2;\beta_1;z)$ is the Gauss' hypergeometric function, and it is usually denoted by $F(\alpha_1,\alpha_2;\beta_1;z)$.
\end{definition}
We have
\begin{equation}
    \label{hg1}
    F(\alpha,0;\beta;z)=F(0,\alpha;\beta;z)=1,
\end{equation}
and (see also p. 275 in \citep{Le72})
\begin{equation}
    \label{hg2}
    F(\alpha,\beta;\beta;-z)={}_{1}F_0(\alpha;-z)=(1+z)^{-\alpha}.
\end{equation}
It also interesting to observe that the hypergeometric function ${}_{0}F_1$ is in essence a Bessel function, up to powers and rescaling. One has in fact form \eqref{solbesseleq2} and Definition \ref{d:hg},
\begin{equation}
    \label{bhg}
    I_{v}(z)=\frac{1}{\Gamma(v+1)}\left( \frac{z}{2}\right)^v {}_{0}F_1(v+1;(z/2)^2).
\end{equation}

\section{Fourier, Bessel and fractional Laplacean}
\label{s:1.3}
After our brief interlude on the Fourier transform and Bessel functions, we now return to the main protagonist of this chapter.
\begin{proposition}[Pseudodifferential nature of $(-\Delta)^s$]
\label{p:pdnfp}
Let $\gamma(n,s)>0$ be the number identified by the following formula
\begin{equation}
    \label{gammans}
    \gamma(n,s)\int_{\Rn}\frac{1-\cos (z_n)}{\abs{z}^{n+2s}}\,dz=1.
\end{equation}
Then, for any $u\in \Sn$ we have
\begin{equation}
    \label{ftlp}
    \reallywidehat{(-\Delta)^su}(\xi)=(2\pi\abs{xi})^{2s}\reallywidehat{u}(\xi).
\end{equation}
\end{proposition}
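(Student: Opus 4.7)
The plan is to apply the Fourier transform directly to the pointwise formula \eqref{fraclap} and reduce everything to the normalizing integral in \eqref{gammans}. First I would insert $\mathscr{F}$ under the $y$-integral and invoke the translation rule \eqref{translation} to replace $\reallywidehat{\tau_{\pm y}u}(\xi)$ by $e^{\pm 2\pi i \Braket{\xi, y}}\reallywidehat{u}(\xi)$, obtaining, at least formally,
\begin{equation*}
\reallywidehat{(-\Delta)^s u}(\xi) = \frac{\gamma(n,s)}{2}\,\reallywidehat{u}(\xi) \int_{\Rn} \frac{2 - e^{2\pi i \Braket{\xi, y}} - e^{-2\pi i \Braket{\xi, y}}}{\abs{y}^{n+2s}}\,dy = \gamma(n,s)\,\reallywidehat{u}(\xi) \int_{\Rn} \frac{1 - \cos(2\pi \Braket{\xi, y})}{\abs{y}^{n+2s}}\,dy.
\end{equation*}

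The first technical point is to justify this interchange, since $\abs{y}^{-n-2s}$ fails to be locally integrable at the origin. I would handle it by keeping the integrand in symmetric second-difference form $2u(x)-u(x+y)-u(x-y)$, splitting $\{\abs{y}\le 1\}$ from $\{\abs{y}>1\}$, and applying the Taylor bound $2u(x)-u(x+y)-u(x-y) = -\Braket{\nabla^2 u(x)y,y}+o(\abs{y}^2)$ on the inner region together with the Schwartz decay of $u$ on the outer one, exactly as in the integrability argument following Definition \ref{d:fraclap}. This produces a dominating function in $L^1(\Rn_x \times \Rn_y)$ and legitimizes Fubini, after which $\mathscr{F}$ can indeed be exchanged with the $y$-integral.

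Next, I would evaluate the $\xi$-dependent multiplier by exploiting the two symmetries of the kernel. A rotation $R\in\mathbb{O}(n)$ with $R\xi = \abs{\xi}e_n$ combined with the change of variables $y\mapsto R^t y$ reduces $2\pi\Braket{\xi,y}$ to $2\pi\abs{\xi}y_n$ and leaves $\abs{y}$ unchanged. The subsequent dilation $z = 2\pi \abs{\xi} y$ gives
\begin{equation*}
\int_{\Rn} \frac{1 - \cos(2\pi\Braket{\xi,y})}{\abs{y}^{n+2s}}\,dy = (2\pi\abs{\xi})^{2s}\int_{\Rn}\frac{1-\cos(z_n)}{\abs{z}^{n+2s}}\,dz = \frac{(2\pi\abs{\xi})^{2s}}{\gamma(n,s)},
\end{equation*}
by the very definition \eqref{gammans} of $\gamma(n,s)$. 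Substituting into the earlier identity yields \eqref{ftlp}.

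The main obstacle, and the reason why \eqref{gammans} actually defines a positive number, is the convergence of $\int_{\Rn}(1-\cos(z_n))\abs{z}^{-n-2s}\,dz$, which I would verify separately: near the origin $1-\cos(z_n) = O(z_n^2) = O(\abs{z}^2)$ gives integrability precisely when $s<1$, while at infinity the boundedness of the numerator gives integrability precisely when $s>0$, matching the range used throughout Section \ref{s:1.1}. Once Fubini and this finiteness are in hand, the remainder of the argument is a straightforward symbolic manipulation.
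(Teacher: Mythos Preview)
Your proposal is correct and follows essentially the same route as the paper: translation rule \eqref{translation} to extract the multiplier, rotation invariance to reduce $\Braket{\xi,y}$ to $\abs{\xi}y_n$, the dilation $z=2\pi\abs{\xi}y$ to pull out $(2\pi\abs{\xi})^{2s}$, and the same near/far split to check convergence of the normalizing integral. The only minor difference is in how the interchange of $\mathscr{F}$ with the $y$-integral is justified: the paper simply invokes Corollary \ref{L1nat} to ensure $(-\Delta)^s u\in L^1(\Rn)$ and then applies \eqref{translation} directly, whereas you propose an explicit Fubini argument with a dominating function built from the second-difference Taylor bound; both are valid and lead to the same computation.
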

\begin{proof}
Let us observe that in view of Corollary \eqref{L1nat} we know that $(-\Delta)^su\in L^1(\Rn)$ and thus we can take its Fourier transform in the sense of $L^1$. Having said this, if denote by $\tau_hu(x)=u(x+h)$ the translation operator in $\Rn$, we can rewrite \eqref{fraclap} in the following way
\begin{equation}
    \label{fraclap3}
    (-\Delta)^su(x)=\frac{\gamma(n,s)}{2}\int_{\Rn}\frac{2u(x)-\tau_y(x)-\tau_{-y}(x)}{\abs{y}^{n+2s}}\,dy.
\end{equation}
Using \eqref{translation} we easily find
\begin{equation}
    \label{ftlapfrac2}
    \reallywidehat{(-\Delta)^su}(\xi)=\gamma(n,s)\left( \int_{\Rn}\frac{1-\cos(2\pi \Braket{\xi,y})}{\abs{y}^{n+2s}}\,dy \right)\reallywidehat{u}(\xi)=J(\xi)\reallywidehat{u}(\xi),
\end{equation}
where we have let
\begin{equation*}
    J(\xi)=\gamma(n,s)\int_{\Rn}\frac{1-\cos(2\pi \Braket{\xi,y})}{\abs{y}^{n+2s}}\,dy.
\end{equation*}
We notice that the integral defining $J(\xi)$ only depends on $\abs{\xi}$. For every $T\in\mathbb{O}(n)$ one in fact easily verifies that $J(T\xi)=J(\xi)$. For $\xi\not =0$ we can thus write 
\begin{equation*}
    J(\xi)=\gamma(n,s)\int_{\Rn}\frac{1-\cos\left( \Braket{\frac{\xi}{\abs{\xi}},2\pi\abs{\xi}y}\right)}{\abs{y}^{n+2s}}\,dy.
\end{equation*}
The change of variable $z=2\pi\abs{\xi}y$ now gives
\begin{equation}
    \label{Jcomp}
    \begin{split}
        J(\xi)&=(2\pi\abs{\xi})^{2s}\gamma(n,s)\int_{\Rn}\frac{1-\cos\left( \Braket{\frac{\xi}{\abs{\xi}},z}\right)}{\abs{z}^{n+2s}}\,dz\\
        &=(2\pi\abs{\xi})^{2s}\gamma(n,s)\int_{\Rn}\frac{1-\cos\left( \Braket{e_n,z}\right)}{\abs{z}^{n+2s}}\,dz=(2\pi\abs{\xi})^{2s}\gamma(n,s)\int_{\Rn}\frac{1-\cos z_n}{\abs{z}^{n+2s}}\,dz
    \end{split}
\end{equation}
Notice that the integrand in the right-hand side of the latter equation is nonnegative, and that the integral is convergent. We have in fact
\begin{equation*}
    \begin{split}
        \int_{\Rn}\frac{1-\cos z_n}{\abs{z}^{n+2s}}\,dz&=\int_{\abs{z}\le 1}\frac{1-\cos z_n}{\abs{z}^{n+2s}}\,dz+\int_{\abs{z}>1}\frac{1-\cos z_n}{\abs{z}^{n+2s}}\,dz\\
        &\le C \int_{\abs{z}\le 1}\frac{dz}{\abs{z}^{n-2(1-s)}} + 2 \int_{\abs{z}>1}\frac{dz}{\abs{z}^{n+2s}}<\infty. 
    \end{split}
\end{equation*}
Finally, if we substitute in \eqref{ftlapfrac2} the expression given by \eqref{Jcomp}, it becomes clear that if we choose $\gamma(n,s)>0$ as in \eqref{gammans}, then \eqref{ftlp} holds.
\end{proof}

Equation \eqref{ftlp} in Proposition \ref{p:pdnfp} has the following immediate consequence.
\begin{corollary}[Semigroup property]
\label{semigroup}
Let $0<s,s'<1$, with $s+s'\le 1$. Then, for any $u\in \Sn$ we have
\begin{equation*}
    (-\Delta)^{s+s'}u=(-\Delta)^s(-\Delta)^{s'}u=(-\Delta)^{s'}(-\Delta)^{s}u.
\end{equation*}
\begin{proof}
It is enough to verify the desired equality on the on the Fourier transform side. Using \eqref{ftlp} we find
\begin{equation*}
    \begin{split}
        \mathscr{F}\left( (-\Delta)^{s+s'}u \right)&=(2\pi\abs{x})^{2(s+s')}\reallywidehat{u}=(2\pi\abs{x})^{2s}(2\pi\abs{x})^{2s'}\reallywidehat{u}\\
        &=\mathscr{F}((-\Delta)^s(-\Delta)^{s'}u)=\mathscr{F}((-\Delta)^{s'}(-\Delta)^{s}u).
    \end{split}
\end{equation*}
\end{proof}

\end{corollary}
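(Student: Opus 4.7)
The plan is to reduce everything to a routine algebraic identity on the Fourier side, invoking Proposition \ref{p:pdnfp} twice. The overall strategy is: take Fourier transforms, use that $\mathscr{F}((-\Delta)^s\,\cdot\,)$ is just multiplication by the scalar symbol $(2\pi|\xi|)^{2s}$, observe that scalar multiplication commutes and exponents add, and finally invoke injectivity of $\mathscr{F}$ to transfer the identity back to the spatial side.

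Concretely, first I would note that Corollary \ref{L1nat} tells us $(-\Delta)^{s'}u \in C^{\infty}(\mathbb{R}^n)\cap L^1(\mathbb{R}^n)$, and Proposition \ref{p:decay} provides the decay $|(-\Delta)^{s'}u(x)| \le C|x|^{-(n+2s')}$ for $|x|>1$. This guarantees that the pointwise defining integral \eqref{fraclap} for $(-\Delta)^s$ applied to $v:=(-\Delta)^{s'}u$ converges (the estimate near the singularity uses the Schwartz-like smoothness and decay of $v$, exactly as in the justification of convergence immediately following Definition \ref{d:fraclap}), and that $v$ admits a Fourier transform in the $L^1$-sense. Running through the argument of Proposition \ref{p:pdnfp} with $v$ in place of $u$—the only ingredients being $v\in L^1$, smoothness enough to apply Taylor at the origin, and enough decay to control the tail—one obtains
\begin{equation*}
    \reallywidehat{(-\Delta)^s v}(\xi) = (2\pi|\xi|)^{2s}\,\reallywidehat{v}(\xi).
\end{equation*}

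With this in hand, the core computation is a one-liner. Applying Proposition \ref{p:pdnfp} again to $u\in\mathscr{S}(\mathbb{R}^n)$ gives $\reallywidehat{(-\Delta)^{s'}u}(\xi) = (2\pi|\xi|)^{2s'}\reallywidehat{u}(\xi)$, so combining,
\begin{equation*}
    \reallywidehat{(-\Delta)^s(-\Delta)^{s'}u}(\xi) = (2\pi|\xi|)^{2s}(2\pi|\xi|)^{2s'}\reallywidehat{u}(\xi) = (2\pi|\xi|)^{2(s+s')}\reallywidehat{u}(\xi).
\end{equation*}
Since the assumption $s+s'\le 1$ (with $s,s'>0$) places $s+s'$ in $(0,1]$, Proposition \ref{p:pdnfp} also identifies the right-hand side with $\reallywidehat{(-\Delta)^{s+s'}u}(\xi)$. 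The same chain with $s$ and $s'$ interchanged yields $\reallywidehat{(-\Delta)^{s'}(-\Delta)^su} = (2\pi|\xi|)^{2(s+s')}\reallywidehat{u}$. Injectivity of $\mathscr{F}$ on $\mathscr{S}'(\mathbb{R}^n)$ (applied in the natural ambient space, since all three quantities lie in $C^\infty\cap L^1$) then gives the three-way equality.

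The main obstacle is the technical point in the first paragraph: justifying that the Fourier formula of Proposition \ref{p:pdnfp} applies to $(-\Delta)^{s'}u$, which is not itself a Schwartz function but only lies in $C^\infty\cap L^1$ with polynomial decay $|x|^{-(n+2s')}$. The cleanest route is to isolate the Fourier-side argument (translation covariance plus the change of variable $z = 2\pi|\xi|y$) as a lemma valid for any $v\in L^1\cap C^2$ satisfying a suitable Hessian bound and decay at infinity—conditions that are met here by Proposition \ref{p:decay}. The boundary case $s+s'=1$ deserves a brief comment: there the identity should be read in $\mathscr{S}'(\mathbb{R}^n)$, where $(2\pi|\xi|)^2 \reallywidehat{u}$ is the Fourier transform of $-\Delta u$, and the conclusion is the familiar $(-\Delta)^s(-\Delta)^{1-s}u = -\Delta u$.
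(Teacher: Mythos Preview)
Your proof is correct and follows the same Fourier-side approach as the paper's own argument. In fact, you are more careful than the paper: the paper simply applies \eqref{ftlp} to $(-\Delta)^{s'}u$ without comment, whereas you correctly flag that $(-\Delta)^{s'}u\notin\Sn$ and explain why the symbol identity still holds for functions with the regularity and decay furnished by Proposition~\ref{p:decay} and Corollary~\ref{L1nat}.
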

With Proposition \ref{p:pdnfp} in hands we can now prove the following important "integration by parts" formula.
\begin{lemma}
\label{l:Symmetry}
Let $0<s\le1$. Then, for any $u,v\in\Sn$ we have
\begin{equation}
    \label{symmetry}
    \int_{\Rn}u(x)(-\Delta)^sv(x)\,dx=\int_{\Rn}(-\Delta)^su(x)v(x)\,dx.
\end{equation}
\end{lemma}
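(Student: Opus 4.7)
The plan is to reduce \eqref{symmetry} to an elementary identity on the Fourier transform side via Plancherel's theorem and Proposition \ref{p:pdnfp}. The whole proof will consist of three short moves: check integrability, pass to frequency, exploit that the symbol of $(-\Delta)^s$ is real and symmetric.

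First I would ensure all integrals make sense. Since $u,v \in \mathscr{S}(\mathbb{R}^n)$, by Corollary \ref{L1nat} both $(-\Delta)^s u$ and $(-\Delta)^s v$ belong to $C^\infty(\mathbb{R}^n)\cap L^1(\mathbb{R}^n)$. Each integrand in \eqref{symmetry} is then the product of a Schwartz function and an $L^1$ function, hence integrable; moreover, both Fourier transforms $\widehat{u}$ and $\widehat{v}$ lie in $\mathscr{S}(\mathbb{R}^n)$, so the upcoming frequency-side integrals are absolutely convergent as well.

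Second, I would apply Plancherel's identity to the left-hand side, writing
\begin{equation*}
\int_{\mathbb{R}^n} u(x)\,(-\Delta)^s v(x)\,dx = \int_{\mathbb{R}^n} \widehat{u}(\xi)\,\overline{\widehat{(-\Delta)^s v}(\xi)}\,d\xi.
\end{equation*}
Then Proposition \ref{p:pdnfp} lets me replace $\widehat{(-\Delta)^s v}(\xi) = (2\pi|\xi|)^{2s}\widehat{v}(\xi)$, so the expression becomes
\begin{equation*}
\int_{\mathbb{R}^n} (2\pi|\xi|)^{2s}\,\widehat{u}(\xi)\,\overline{\widehat{v}(\xi)}\,d\xi.
\end{equation*}
Because the multiplier $(2\pi|\xi|)^{2s}$ is real-valued (in fact non-negative), I can now re-associate it with $\widehat{u}$ instead of $\widehat{v}$, recognizing $(2\pi|\xi|)^{2s}\widehat{u}(\xi) = \widehat{(-\Delta)^s u}(\xi)$ by Proposition \ref{p:pdnfp} once more, and then run Plancherel backwards to land on $\int_{\mathbb{R}^n} (-\Delta)^s u(x)\,v(x)\,dx$.

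There is no real obstacle: every step is immediate once Proposition \ref{p:pdnfp} is in hand, and the preliminary integrability afforded by Corollary \ref{L1nat} is precisely what legitimizes the use of Plancherel. If one wanted to sidestep the conjugation and accommodate complex-valued $u,v$ without fuss, an equivalent route is to use the $L^1$-pairing formula $\int \widehat{f}\,g\,dx = \int f\,\widehat{g}\,dx$ (valid whenever $f,g\in L^1(\mathbb{R}^n)$, by Fubini) applied to $f=(-\Delta)^s u$ and $g=v$, in which case the symbol $(2\pi|\xi|)^{2s}$ is simply transferred from one factor to the other inside the frequency integral. Either variant delivers \eqref{symmetry}.
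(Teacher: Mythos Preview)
Your proposal is correct and follows essentially the same approach as the paper: pass to the Fourier side via Proposition \ref{p:pdnfp}, transfer the real multiplier $(2\pi|\xi|)^{2s}$ from one factor to the other, and return. The paper in fact takes precisely your ``alternative route'' using the $L^1$-pairing $\int \widehat{f}\,g = \int f\,\widehat{g}$ rather than Plancherel with conjugation, but this is a cosmetic difference and you already flag it.
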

\begin{proof}
The case $s=1$ is well-known, and it is just integration by parts, so let us focus on $0<s<1$. Since by Corollary \eqref{L1nat} we know $\reallywidehat{(-\Delta)^su},\reallywidehat{(-\Delta)^sv}\in L^{1}(\Rn)$, we can use the following formula, valid for any $f,g\in L^{1}(\Rn)$,
\begin{equation}
    \label{ibpft}
    \int_{\Rn}\reallywidehat{f}(\xi)g(\xi)\,d\xi=\int_{\Rn}f(\xi)\reallywidehat{g}(\xi)\,d\xi.
\end{equation}
Applying \eqref{ibpft} and \eqref{ftlp} in Proposition \ref{p:pdnfp}, we find
\begin{equation*}
    \begin{split}
        \int_{\Rn}&(-\Delta)^su(x)v(x)\,dx=\int_{\Rn}(-\Delta)^su(x)\mathscr{F}(\mathscr{F}^{-1}v)(x)\,dx= \int_{\Rn}\mathscr{F}((-\Delta)^su)(\xi)\mathscr{F}^{-1}v(\xi)\,d\xi\\
        &=\int_{\Rn}(2\pi\abs{\xi})^{2s}\reallywidehat{u}(\xi)\mathscr{F}^{-1}v(\xi)\,d\xi=\int_{\Rn}\reallywidehat{u}(\xi)(2\pi\abs{\xi})^{2s}\mathscr{F}^{-1}v(\xi)\,d\xi.
    \end{split}
\end{equation*}
Using \eqref{ftlp} again we have
\begin{equation}
    \label{symf3}
    \mathscr{F}^{-1}((-\Delta)^sv)(\xi)=(2\pi\abs{\xi})^{2s}\mathscr{F}^{-1}v(\xi).
\end{equation}
Inserting this information in the above equation, and applying \eqref{ftlp} again, we find
\begin{equation*}
    \begin{split}
        \int_{\Rn}&(-\Delta)^su(x)v(x)\,dx= \int_{\Rn}\reallywidehat{u}(\xi) \mathscr{F}^{-1}((-\Delta)^sv)(\xi) \, d\xi\\
        &=\int_{\Rn}\mathscr{F}^{-1}(\reallywidehat{u})(x)(-\Delta)^sv(x)\,dx= \int_{\Rn}u(x)(-\Delta)^sv(x)\,dx.
    \end{split}
\end{equation*}
\end{proof}

We next turn to computing explicitly the constant $\gamma(n,s)$ in \eqref{gammans}.
\begin{proposition}
\label{p:gammacomp}
Let $0<s<1$. Then, we have
\begin{equation}
    \label{gammacomp}
    \gamma(n,s)=\frac{s2^{2s}\Gamma \left( \frac{n+2s}{2} \right)}{\pi^{\frac{n}{2}}\Gamma(1-s)}.
\end{equation}
\end{proposition}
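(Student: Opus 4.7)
The strategy is to compute the integral $I=\int_{\Rn}\frac{1-\cos z_n}{|z|^{n+2s}}\,dz$ that defines $\gamma(n,s)^{-1}$ in \eqref{gammans}, by slicing it according to the direction singled out by $\cos z_n$. This reduces everything to the product of a purely geometric integral on $\mathbb{R}^{n-1}$ (handled by Proposition \ref{p:gammabeta}) and a one-dimensional oscillatory Mellin-type integral.

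\emph{Step 1 (slice and rescale).} Write $z=(z',z_n)\in\mathbb{R}^{n-1}\times\mathbb{R}$, apply Fubini, and perform the substitution $z'=|z_n|w$ in the inner integral, so that
\begin{equation*}
\int_{\mathbb{R}^{n-1}}\frac{dz'}{(|z'|^2+z_n^2)^{(n+2s)/2}}=|z_n|^{-1-2s}\int_{\mathbb{R}^{n-1}}\frac{dw}{(1+|w|^2)^{(n+2s)/2}}.
\end{equation*}
Apply \eqref{useprop} with $n$ replaced by $n-1$, $b=0$, $a=n+2s$, to evaluate the latter as $\pi^{(n-1)/2}\Gamma(s+\tfrac{1}{2})/\Gamma((n+2s)/2)$. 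Thus $I$ is that constant times $A(s):=2\int_0^{\infty}\frac{1-\cos t}{t^{1+2s}}\,dt$.

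\emph{Step 2 (one-dimensional Mellin integral).} For $0<s<\tfrac{1}{2}$, integrate by parts in $A(s)$: since $1-\cos t=O(t^2)$ at the origin and $s>0$ controls the behavior at infinity, the boundary terms vanish and $A(s)=\tfrac{1}{s}\int_0^{\infty}t^{-2s}\sin t\,dt$. The classical sine Mellin identity $\int_0^{\infty}t^{a-1}\sin t\,dt=\Gamma(a)\sin(\pi a/2)$, applied with $a=1-2s$, gives $A(s)=\Gamma(1-2s)\cos(\pi s)/s$. For $\tfrac{1}{2}\le s<1$ the same formula extends by analytic continuation, as both sides are holomorphic in $s$ on the strip $0<\Re s<1$.

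\emph{Step 3 (gamma-function algebra).} Collecting Steps 1 and 2 yields the provisional expression
\begin{equation*}
\gamma(n,s)^{-1}=\frac{\pi^{(n-1)/2}\,\Gamma(s+\tfrac{1}{2})\,\Gamma(1-2s)\,\cos(\pi s)}{s\,\Gamma((n+2s)/2)}.
\end{equation*}
To put it in the form \eqref{gammacomp}, I use the duplication identity \eqref{fact4} with $z=\tfrac{1}{2}-s$, which can be rewritten as $\Gamma(1-2s)=2^{-2s}\pi^{-1/2}\,\Gamma(\tfrac{1}{2}-s)\,\Gamma(1-s)$, together with the reflection identity \eqref{fact3} in the form $\Gamma(s+\tfrac{1}{2})\,\Gamma(\tfrac{1}{2}-s)=\pi/\cos(\pi s)$. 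Substituting these into the provisional expression produces a cancellation of the factor $\cos(\pi s)$, and the powers of $\pi$ collapse correctly to $\pi^{n/2}$, leaving $\gamma(n,s)=s\,2^{2s}\Gamma((n+2s)/2)/(\pi^{n/2}\Gamma(1-s))$.

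The main obstacle is the synthesis in Step 3: one must apply duplication and reflection in exactly the right form so that the $\cos(\pi s)$ factors cancel and all powers of $\pi$ collapse to $\pi^{n/2}$. A secondary care point is extending the Mellin evaluation of $\int_0^\infty t^{-2s}\sin t\,dt$ from $0<s<\tfrac{1}{2}$, where it is elementary, to the whole range $0<s<1$, which is settled cleanly by the holomorphy of both sides in the relevant strip.
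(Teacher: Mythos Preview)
Your proof is correct and takes a genuinely different route from the paper. The paper passes to spherical coordinates, recognises the angular integral as the Poisson integral for the Bessel function $J_{(n-2)/2}$, then integrates by parts in the radial variable, applies a Bessel recursion, and finally evaluates a Hankel-type integral $\int_0^\infty r^{q-\mu}J_\mu(r)\,dr$ from \citep{GR80}. You instead slice along the $z_n$-axis via Fubini, evaluate the transverse $(n-1)$-dimensional integral with Proposition~\ref{p:gammabeta}, and reduce everything to the classical one-dimensional Mellin integral $\int_0^\infty t^{-2s}\sin t\,dt$, which you then rewrite with duplication and reflection. Your route is more elementary---it avoids Bessel functions entirely and needs nothing beyond the gamma identities \eqref{fact3} and \eqref{fact4} already stated in the paper---while the paper's route makes visible the Fourier--Bessel structure that is a recurring theme of the chapter. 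Both arrive at the same cancellation of $\cos(\pi s)$ and the same powers of $\pi$, just packaged differently.
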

\begin{proof}
If we denote by $\theta\in [0,\pi]$ the angle that the vector $z\in \mathbb{R}^n \setminus \{ 0\}$ forms with the positive direction of the $z_n$-axis, then Cavalieri's principle, and Fubini's theorem, give
\begin{equation*}
    \begin{split}
        \int_{\Rn}\frac{1-\cos z_n}{\abs{z}^{n+2s}}\,dz&= \int_0^{\infty}\int_{\mathbb{S}^{n-1}}\frac{1-\cos (r\cos \theta)}{r^{n+2s}}d\,\sigma r^{n-1}\,dr\\
        &=\int_0^{\infty}\frac{1}{r^{1+2s}}\int_0^{\pi}[1-\cos(r\cos \theta)]\int_{L_{\theta}}\,d\sigma'\,d\theta\,dr,
    \end{split}
\end{equation*}
where we have indicated by $L_{\theta}= \{ y\in\mathbb{S}^{n-1}| \Braket{y,e_n}=\cos \theta \}$ the $(n-2)$-dimensional sphere in $\Rn$ with radius $\sin \theta$ obtained by intersecting $\mathbb{S}^{n-1}$ with the hyperplane $y_n=\cos \theta$. Since with $\sigma_{n-2}$ given by \eqref{ball} above we have
\begin{equation*}
    \int_{L_{\theta}}\,d\sigma'=\sigma_{n-2}(\sin \theta)^{n-2},
\end{equation*}
we obtain
\begin{equation}
    \label{calcgamma}
    \begin{split}
        \int_{\Rn}& \frac{1-\cos z_n}{\abs{z}^{n+2s}}\,dz=\sigma_{n-2}\int_{0}^{\infty}\frac{1}{r^{1+2s}}\int_0^{\pi}[1-\cos(r \cos \theta)](\sin \theta)^{n-2}\,d\theta \, dr\\
        &=\sigma_{n-2}\int_0^{\infty}\frac{1}{r^{1+2s}}\int_0^{\pi}[1-\cos(r \cos \theta)](1-\cos^2\theta)^{\frac{n-3}{2}}\sin \theta \, d\theta \, dr \quad (\text{set } \, u=\cos \theta)\\
        &=\sigma_{n-2}\int_0^{\infty}\frac{1}{r^{1+2s}}\int_{-1}^1 [1-\cos (ru)](1-u^2)^{\frac{n-3}{2}}\,du\,dr\\
        &=\sigma_{n-2}\int_0^{\infty}\frac{1}{r^{1+2s}}\left[ \int_{-1}^1(1-u^2)^{\frac{n-3}{2}}\,du - \int_{-1}^1 \cos (ru)(1-u^2)^{\frac{n-3}{2}}\,du \right]\,dr.
    \end{split}
\end{equation}
From \eqref{beta} and \eqref{beta2} we thus find
\begin{equation*}
    \int_{-1}^1(1-s^2)^{\frac{2v-1}{2}}\,ds=2\int_0^1(\cos \theta)^{2v}\,d\theta=B\left( v+\frac{1}{2},\frac{1}{2} \right)=\frac{\Gamma \left(v+\frac{1}{2} \right)\Gamma \left(\frac{1}{2} \right)}{\Gamma(v+1)}.
\end{equation*}
This gives
\begin{equation*}
    \int_{-1}^1(1-u^2)^{\frac{n-3}{2}}\,du=\frac{\Gamma \left(\frac{n-1}{2} \right)\Gamma \left(\frac{1}{2} \right)}{\Gamma \left( \frac{n}{2} \right)}.
\end{equation*}
On the other hand, we have
\begin{equation*}
    \int_{-1}^1 \cos (ru)(1-u^2)^{\frac{n-3}{2}}\,du=\int_{-1}^1 e^{iru}(1-u^2)^{\frac{n-3}{2}}\,du.
\end{equation*}
From this equation and \eqref{firstkind} in Definition \ref{d:besselfunct} we obtain with $v=\frac{n-2}{2}$ and $z=r$,
\begin{equation*}
    \int_{-1}^1 \cos(ru)(1-u^2)^{\frac{n-3}{2}}\,du=\Gamma \left( \frac{n-1}{2} \right) \Gamma\left(\frac{1}{2} \right)\left( \frac{2}{r} \right)^{\frac{n-2}{2}}J_{\frac{n-2}{2}}(r).
\end{equation*}
Substituting in \eqref{calcgamma} above, we find
\begin{equation*}
    \int_{\Rn}\frac{1-\cos z_n}{\abs{z}^{n+2s}}\,dz=\sigma_{n-2}\frac{\Gamma \left(\frac{n-1}{2} \right)\Gamma \left(\frac{1}{2} \right)}{\Gamma \left( \frac{n}{2} \right)} \int_0^{\infty}\frac{1}{r^{1+2s}}\left[ 1- \Gamma \left( \frac{n}{2} \right) \left( \frac{2}{r}\right)^{\frac{n-2}{2}}J_{\frac{n-2}{2}}(r) \right]\,dr.
\end{equation*}
Keeping \eqref{ball} in mind, which gives
\begin{equation*}
    \sigma_{n-2}=\frac{2\pi^{\frac{n-1}{2}}}{\Gamma \left( \frac{n-1}{2} \right)},
\end{equation*}
and $\sqrt{\pi}=\Gamma(1/2)$, we conclude that
\begin{equation*}
    \int_{\Rn}\frac{1-\cos z_n}{\abs{z}^{n+2s}}\,dz=\sigma_{n-1} \int_0^{\infty}\frac{1}{r^{1+2s}}\left[ 1- \Gamma \left( \frac{n}{2} \right) \left( \frac{2}{r}\right)^{\frac{n-2}{2}}J_{\frac{n-2}{2}}(r) \right]\,dr.
\end{equation*}
From this equation and \eqref{gammans} above, it is clear that the constant $\gamma(n,s)$ must be chosen so that
\begin{equation}
    \label{512}
    \gamma(n,s)\sigma_{n-1} \int_0^{\infty}\frac{1}{r^{1+2s}}\left[ 1- \Gamma \left( \frac{n}{2} \right) \left( \frac{2}{r}\right)^{\frac{n-2}{2}}J_{\frac{n-2}{2}}(r) \right]\,dr=1.
\end{equation}
In order to complete the proof, we are thus left with computing explicitly the integral in the right-hand side of \eqref{512}.

With $v=\frac{n}{2}-1$, consider now the function
\begin{equation*}
    \Psi_v(r)=1-\Gamma(v+1)\left( \frac{2}{r} \right)^v J_v(r).
\end{equation*}
From the series expansion of $J_v(r)$, see \eqref{besselsol} above, we have
\begin{equation*}
    J_v(r)=\frac{\left(\frac{r}{2} \right)^v}{\Gamma(v+1)}-\frac{\left(\frac{r}{2} \right)^{v+2}}{\Gamma(v+2)} +\frac{\left(\frac{r}{2} \right)^{v+4}}{\Gamma(v+3)}-\dots
\end{equation*}
This expansion gives for some function $h(r)=O(r^2)$ as $r\to 0$,
\begin{equation}
    \label{Psi0}
    \Psi_v(r)=(1+h(r))\left(\frac{r}{2} \right)^2.
\end{equation}
On the other hand, \eqref{asymp2} implies that as $r \to \infty$
\begin{equation}
    \label{Psiinf}
    \Psi_v(r)=1+O(r^{-(v+\frac{1}{2})}),
\end{equation}
and thus, in particular, $\Psi_v\in L^{\infty}[0,+\infty)$. We thus find
\begin{equation*}
    \begin{split}
    \int_0^{\infty}& \left[1-\Gamma \left( \frac{n}{2}\right)\left( \frac{2}{r}\right)^{\frac{n-2}{2}}J_{\frac{n-2}{2}}(r) \right]\,dr=\int_0^{\infty}\left( \frac{r^{-2s}}{-2s} \right)' \Psi_v(r)\,dr\\
    &=\lim_{R\to\infty}\lim_{\varepsilon\to0^+} \int_{\varepsilon}^{R}\left( \frac{r^{-2s}}{-2s} \right)' \Psi_v(r)\,dr\\
    &=-\lim_{R\to\infty}\frac{R^{-2s}}{2s}\Psi_v(R)+ \lim_{\varepsilon\to 0^+}\frac{\varepsilon^{-2s}}{2s}\Psi_v(\varepsilon) + \int_0^{\infty}\frac{r^{-2s}}{2s}\Psi_v'(r)\,dr.
    \end{split}
\end{equation*}
Since as we have observed $\Psi_v \in L^{\infty}[0,\infty)$, we clearly have
\begin{equation*}
    \lim_{R\to\infty}\frac{R^{-2s}}{2s}\Psi_v(R)=0.
\end{equation*}
From \eqref{Psi0} and the fact that $0<s<1$, we obtain
\begin{equation*}
    \lim_{\varepsilon\to0^+}\frac{\varepsilon^{-2s}}{2s}\Psi_v(\varepsilon)=0.
\end{equation*}
We thus infer that
\begin{equation*}
    \int_0^{\infty} \left[1-\Gamma \left( \frac{n}{2}\right)\left( \frac{2}{r}\right)^{\frac{n-2}{2}}J_{\frac{n-2}{2}}(r) \right]\,dr=\int_0^{\infty}\frac{r^{-2s}}{2s}\Psi_v'(r)\,dr.
\end{equation*}
On the other hand, the recursion formula for $J_v$, see e.g. (5.3.5) on p. 103 in \citep{Le72},
\begin{equation*}
    (z^{-v}J_v(z))'=-z^{-v}J_{v+1}(z),
\end{equation*}
gives
\begin{equation*}
    \Psi'(r)=-2^v\Gamma(v+1)(r^{-v}J_v(r))'=2^v\Gamma(v+1)r^{-v}J_{v+1}(r).
\end{equation*}
We thus find
\begin{equation*}
    \int_0^{\infty} \left[1-\Gamma \left( \frac{n}{2}\right)\left( \frac{2}{r}\right)^{\frac{n-2}{2}}J_{\frac{n-2}{2}}(r) \right]\,dr=\frac{2^v\Gamma(v+1)}{2s}\int_{0}^{\infty}\frac{1}{r^{\frac{n}{2}-1+2s}}J_{\frac{n}{2}}(r)\,dr.
\end{equation*}
Recalling that $v=\frac{n}{2}-1$ we can write the right-hand side as follows
\begin{equation*}
    \frac{2^v\Gamma(v+1)}{2s}\int_0^{\infty}\frac{1}{r^{v+2s}}J_{v+1}(r)\,dr=\frac{2^v\Gamma(v+1)}{2s}\int_0^{\infty}\frac{1}{r^{\mu -q}}J_{\mu}(r)\,dr,
\end{equation*}
where $\mu=v+1=\frac{n}{2}$, and $q=1-2s$. We now invoke the following result, which is formula (17) on p. 684 in \citep{GR80}:
\begin{equation}
    \label{GR80}
    \int_0^{\infty}\frac{1}{r^{\mu -q}}J_{\mu}(ar)\,dr=\frac{\Gamma\left( \frac{q+1}{2} \right)}{2^{\mu-q}a^{q-\mu+1}\Gamma \left( \mu-\frac{q}{2} + \frac{1}{2} \right)},
\end{equation}
provided that
\begin{equation*}
    -1<\mathfrak{R}q<\mathfrak{R}\mu - \frac{1}{2}.
\end{equation*}
With the above values of the parameters $\mu$ and $q$ this condition becomes
\begin{equation*}
    -1<1-2s<\frac{n}{2}-\frac{1}{2}.
\end{equation*}
Now, the former inequality is satisfied since it is equivalent to $s<1$, and the second is also also satisfied since it is equivalent to $s>\frac{1-n}{4}$, which is of course true since $s>0$, whereas $\frac{1-n}{4}\le0$ In conclusion, we obtain from \eqref{GR80}
\begin{equation*}
    \frac{2^v\Gamma(v+1)}{2s}\int_0^{\infty}\frac{1}{r^{v+2s}}J_{v+1}(r)\,dr=\frac{\Gamma \left( \frac{n}{2} \right)}{2s}\frac{\Gamma(1-s)}{2^{2s}\Gamma \left( \frac{n}{2} +s \right)}.
\end{equation*}
Returning to \eqref{512}, and keeping the first identity in \eqref{ball} in mind, we reach the conclusion that the constant $\gamma(n,s)$ is given by the equation
\begin{equation*}
    \gamma(n,s)\frac{2\pi^{\frac{n}{2}}}{\Gamma \left( \frac{n}{2} \right)} \frac{\Gamma \left( \frac{n}{2} \right)}{2s}\frac{\Gamma (1-s)}{2^{2s}\Gamma \left( \frac{n}{2}+s\right)}=1,
\end{equation*}
which finally gives
\begin{equation*}
    \gamma(n,s)=\frac{s2^{2s}\Gamma \left(\frac{n}{2}+s \right)}{\pi^{\frac{n}{2}}\Gamma (1-s)}.
\end{equation*}
This proves \eqref{gammacomp}, thus completing the proposition.
\end{proof}

\section{Fundamental solution}\label{s:fs}
In this section we compute the fundamental solution of the fractional Laplacean operator. 

Before we turn to the proof of the main results we pause for a moment to recall that there exist spaces larger than $\Sn$, or $L^{\infty}(\mathbb{R}^n) \cap C^2(\Rn))$, in which it is still possible to define the nonlocal Laplacean either pointwise or as a tempered distribution. Following Definition 2.3 in \citep{Si07}, given $0<s<1$ we can also consider the linear space of the functions $u\in C^{\infty}(\Rn)$ such that for every multi-index $\alpha \in \mathbb{N}_0^n$
\begin{equation*}
    [u]_{\alpha}=\sup_{x\in \Rn}(1+\abs{x}^{n+2s})\abs{\partial^{\alpha}u(x)}<\infty.
\end{equation*}
We denote by $\mathscr{L}_s(\Rn)$ the space $C^{\infty}(\Rn)$ endowed with the countable family of seminorms $[\cdot]_{\alpha}$, and by $\mathscr{S}_s'(\Rn)$ its topological dual. We clearly have the inclusions
\begin{equation}
    \label{81}
    C_0^{\infty}(\Rn) \hookrightarrow \Sn \hookrightarrow \mathscr{S}_s(\Rn)\hookrightarrow C^{\infty}(\Rn),
\end{equation}
with the dual inclusions give by
\begin{equation}
    \label{82}
    \mathscr{E}'(\Rn) \hookrightarrow \mathscr{S}_s'(\Rn) \hookrightarrow \mathscr{S}'(\Rn)\hookrightarrow \mathscr{D}'(\Rn),
\end{equation}
where we recall that $\mathscr{E}'(\Rn)$ indicates the space of distributions with compact support. The next lemma justifies the introduction of the space $\mathscr{S}_s(\Rn)$.
\begin{lemma}
\label{l:81}
Let $u\in \Sn$. Then, $(-\Delta)^s u \in \mathscr{S}_s(\Rn)$.
\end{lemma}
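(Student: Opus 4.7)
The plan is to reduce the problem to Proposition \ref{p:decay} by commuting derivatives past $(-\Delta)^s$ and then exploiting the decay already established there.

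First I would verify that for every multi-index $\alpha \in \mathbb{N}_0^n$ and every $u \in \Sn$ one has the commutation identity
\begin{equation*}
    \partial^{\alpha}((-\Delta)^s u) = (-\Delta)^s (\partial^{\alpha} u).
\end{equation*}
The cleanest route is via the Fourier transform. By Corollary \ref{L1nat} we know $(-\Delta)^s u \in L^1(\Rn)\cap C^{\infty}(\Rn)$, and by \eqref{ftlp} in Proposition \ref{p:pdnfp} its Fourier transform equals $(2\pi|\xi|)^{2s}\reallywidehat{u}(\xi)$. Multiplying by $(2\pi i \xi)^{\alpha}$ and comparing with \eqref{fourierder}, one sees that $\partial^{\alpha}((-\Delta)^s u)$ and $(-\Delta)^s(\partial^{\alpha} u)$ have the same Fourier transform, hence coincide. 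In particular, since $\partial^{\alpha} u \in \Sn$, Corollary \ref{L1nat} gives $(-\Delta)^s u \in C^{\infty}(\Rn)$.

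Next, I would apply Proposition \ref{p:decay} to the Schwartz function $\partial^{\alpha} u$, which yields a constant $C_{\alpha,n,s}>0$ depending only on a finite collection of the Schwartz seminorms of $\partial^{\alpha} u$ such that
\begin{equation*}
    |(-\Delta)^s (\partial^{\alpha} u)(x)| \le C_{\alpha,n,s}\,|x|^{-(n+2s)}, \qquad |x| > 1.
\end{equation*}
Combined with the commutation identity above, this gives the desired decay of $\partial^{\alpha}((-\Delta)^s u)$ at infinity. On the compact set $\{|x|\le 1\}$, we use that $\partial^{\alpha}((-\Delta)^s u) = (-\Delta)^s(\partial^{\alpha} u)$ is continuous on $\Rn$ (Corollary \ref{L1nat}), hence bounded by a constant $M_{\alpha}$ there. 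Then
\begin{equation*}
    (1 + |x|^{n+2s})|\partial^{\alpha}((-\Delta)^s u)(x)| \le
    \begin{cases}
        2 M_{\alpha}, & |x| \le 1,\\
        2\,C_{\alpha,n,s}, & |x| > 1,
    \end{cases}
\end{equation*}
so $[(-\Delta)^s u]_{\alpha} < \infty$ for every $\alpha$, proving that $(-\Delta)^s u \in \mathscr{S}_s(\Rn)$.

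The only step that requires any real care is the commutation identity, because one must justify the interchange of the principal-value integral defining $(-\Delta)^s$ with the differentiation $\partial^{\alpha}$; the Fourier-transform argument sketched above bypasses this issue cleanly, since both sides are well-defined elements of $L^1(\Rn)$ by Corollary \ref{L1nat} and \eqref{ftlp} characterizes them via the same multiplier $(2\pi|\xi|)^{2s}$. Everything else is a direct transfer of Proposition \ref{p:decay} from $u$ to its derivatives, which lie in $\Sn$ as well.
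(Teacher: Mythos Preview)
Your proposal is correct and follows essentially the same approach as the paper: both establish the commutation identity $\partial^{\alpha}((-\Delta)^s u)=(-\Delta)^s(\partial^{\alpha}u)$ via the Fourier multiplier characterization \eqref{ftlp}, and then invoke Proposition~\ref{p:decay} on the Schwartz function $\partial^{\alpha}u$. The paper phrases this as an induction on $|\alpha|$ while you treat general $\alpha$ directly, and you are slightly more explicit about the compact region $\{|x|\le 1\}$, but the substance is the same.
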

\begin{proof}
From Proposition \ref{p:decay} we know that
\begin{equation*}
    [(-\Delta)^su]_0=\sup_{x \in \Rn}(1+\abs{x}^{n+2s})\abs{(-\Delta)^su(x)}<\infty.
\end{equation*}
Suppose now that $\alpha\in\mathbb{N}_0^n$ and $\abs{\alpha}=1$. We can write $\alpha=e_k$, where $e_k$ indicate one the vectors of the standard basis of $\Rn$. Applying \eqref{ftlp} in Proposition \ref{p:pdnfp} and \eqref{fourierder2}, we have
\begin{equation*}
    \begin{split}
        \partial^{\alpha}(-\Delta)^s u(x) &= \partial_k \mathscr{F}^{-1}\reallywidehat{(-\Delta)^su}(x)=(-2\pi i )\mathscr{F}^{-1}\left(\xi_k \reallywidehat{(-\Delta)^s u} \right)(x)\\
        &=(-2\pi i)\mathscr{F}^{-1}(\xi_k (2\pi \abs{\xi})^{2s}\reallywidehat{u}(\xi)) \quad \text{by \eqref{fourierder}}\\
        &= \mathscr{F}^{-1}\left( (2\pi \abs{\xi})^{2s}\reallywidehat{\partial_ku}(\xi)\right) \quad \text{by \eqref{ftlp} again}\\
        &= \mathscr{F}^{-1}\mathscr{F}((-\Delta)^s\partial_ku)=(-\Delta)^s\partial_ku.
    \end{split}
\end{equation*}
Since $\partial_k u \in \Sn$, again by Proposition \ref{p:decay} we conclude that
\begin{equation*}
    [u]_{e_k}= \sup_{x\in\Rn}(1+\abs{x}^{n+2s})\abs{\partial_ku(x)}<\infty.
\end{equation*}
Proceeding by induction on $\abs{\alpha}$, for all $\alpha\in\mathbb{N}_0^n$, we reach the desired conclusion.
\end{proof}

With Lemma \ref{l:81} in hands we can now extend the notion of solution to distributional ones.
\begin{definition}
    \label{d:82}
    Let $T\in \mathscr{S}'(\Rn)$. We say that a distribution $u\in\mathscr{S}_s'(\Rn)$ solves $(-\Delta)^su=T$ if for every test function $\varphi \in \Sn$ one has 
    \begin{equation*}
        \Braket{u,(-\Delta)^s \varphi}=\Braket{T,\varphi}.
    \end{equation*}
\end{definition}
In the special case in which $T=\delta$, the Dirac delta, then Definition \ref{d:82} leads to the following.
\begin{definition}[Fundamental solution]
    \label{d:83}
    We say that a distribution $E_s\in\mathscr{S}_s'(\Rn)$ is a fundamental solution of $(-\Delta)^s$ if $(-\Delta)^sE_s=\delta$. This means that for every $\varphi\in\Sn$ one has 
    \begin{equation*}
        \Braket{E_s,(-\Delta)^s\varphi}=\varphi(0).
    \end{equation*}
\end{definition}
It is clear from Definition \ref{d:83} that if $E_s\in\mathscr{S}_s'(\Rn)$ is a fundamental solution of $(-\Delta)^s$, then one has $(-\Delta)^sE_s=0$ in $\mathscr{D}'(\Rn \setminus \{0\})$. The following result establishes the existence of an explicit fundamental solution $E_s\in C^{\infty}(\Rn \setminus \{0\})$ of $(-\Delta)^s$.
\begin{theorem}
\label{t:84}
Let $n\ge2$ and $0<s<1$. Denote by
\begin{equation}
    \label{83}
    E_s(x)=\alpha(n,s)\abs{x}^{-(n-2s)},
\end{equation}
where the normalizing constant in \eqref{83} is given by
\begin{equation}
    \label{84}
    \alpha(n,s)=\frac{\Gamma \left(\frac{n}{2}-s \right)}{2^{2s}\pi^{\frac{n}{2}}\Gamma(s)}.
\end{equation}
Then, $E_s$ is a fundamental solution of $(-\Delta)^s$.
\end{theorem}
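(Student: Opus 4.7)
The plan is to show that the Fourier transform of $E_s$, interpreted as a tempered distribution, equals $(2\pi\abs{\xi})^{-2s}$, and then read off the theorem from the pseudo-differential description in Proposition~\ref{p:pdnfp}. First I would verify that $E_s\in\mathscr{S}_s'(\Rn)$: since $n-2s<n$ the function $\abs{x}^{-(n-2s)}$ is locally integrable and it decays at infinity, so the pairing $\Braket{E_s,\psi}$ is well defined for every $\psi\in\mathscr{S}_s(\Rn)$, and in particular $\Braket{E_s,(-\Delta)^s\varphi}$ makes sense for $\varphi\in\Sn$ thanks to Lemma~\ref{l:81}.

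Granting for the moment the Fourier identity $\reallywidehat{E_s}(\xi)=(2\pi\abs{\xi})^{-2s}$, the conclusion is a one-line Parseval argument. Combining the duality relation \eqref{ibpft} with \eqref{symf3}, for any $\varphi\in\Sn$ I would compute
\begin{align*}
\Braket{E_s,(-\Delta)^s\varphi}
&= \int_{\Rn}\reallywidehat{E_s}(\xi)\,\mathscr{F}^{-1}\!\bigl((-\Delta)^s\varphi\bigr)(\xi)\,d\xi \\
&= \int_{\Rn}(2\pi\abs{\xi})^{-2s}(2\pi\abs{\xi})^{2s}\mathscr{F}^{-1}\varphi(\xi)\,d\xi=\varphi(0),
\end{align*}
the final equality being Fourier inversion evaluated at the origin.

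The core of the proof is therefore the computation of $\mathscr{F}(\abs{x}^{-(n-2s)})$. My approach would be Gaussian subordination: from the definition of $\Gamma$, after a rescaling, one derives the representation
\begin{equation*}
\abs{x}^{-(n-2s)}=\frac{\pi^{(n-2s)/2}}{\Gamma((n-2s)/2)}\int_0^\infty t^{(n-2s)/2-1}\,e^{-\pi t\abs{x}^2}\,dt,
\end{equation*}
which exhibits $E_s$ as a superposition of Gaussians. Pairing against $\reallywidehat{\varphi}$ for $\varphi\in\Sn$, exchanging the order of integration by absolute convergence, and using the classical identity $\mathscr{F}(e^{-\pi t\abs{\cdot}^2})(\xi)=t^{-n/2}e^{-\pi\abs{\xi}^2/t}$, the $x$-integral is converted into an integral against $\varphi(\xi)$. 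The remaining inner integral $\int_0^\infty t^{-s-1}e^{-\pi\abs{\xi}^2/t}\,dt$ reduces, via the substitution $u=\pi\abs{\xi}^2/t$, to $\Gamma(s)(\pi\abs{\xi}^2)^{-s}$; collecting the constants yields
\begin{equation*}
\mathscr{F}(\abs{x}^{-(n-2s)})(\xi)=\frac{\pi^{n/2-2s}\,\Gamma(s)}{\Gamma(n/2-s)}\,\abs{\xi}^{-2s}.
\end{equation*}
The normalization $\alpha(n,s)$ in \eqref{84} is designed precisely so that multiplying through telescopes the prefactor to $(2\pi\abs{\xi})^{-2s}$ on the nose.

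The main obstacle is the rigorous handling of the distributional Fourier transform: $\abs{x}^{-(n-2s)}$ belongs neither to $L^1$ nor to $L^2$, so its Fourier transform has to be understood through duality, and every interchange of integrals must be justified. Gaussian subordination is exactly the device that overcomes this difficulty: once paired with a Schwartz function $\reallywidehat{\varphi}$, every expression in sight becomes an absolutely convergent double integral in $(x,t)$ or $(\xi,t)$, and the remaining work is the gamma-function arithmetic collected in Section~\ref{s:1.2}.
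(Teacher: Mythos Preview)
Your argument is correct in spirit and reaches the result, but it is a genuinely different route from the paper's. The paper never computes $\reallywidehat{E_s}$ directly. Instead it introduces the regularisation $E_{s,y}(x)=\alpha(n,s)(y^2+\abs{x}^2)^{-(n-2s)/2}$, proves in Lemma~\ref{l:86} the remarkable identity
\[
(-\Delta)^s E_{s,y}(x)=\frac{\Gamma\!\left(\tfrac{n}{2}+s\right)}{\pi^{n/2}\Gamma(s)}\,\frac{y^{2s}}{(y^2+\abs{x}^2)^{(n+2s)/2}},
\]
and then uses the symmetry Lemma~\ref{l:Symmetry} together with dominated convergence as $y\to0^+$ to recognise the right-hand side as an approximate identity converging to $\delta$. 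Both proofs invoke Gaussian subordination, but the paper applies it to $E_{s,y}$ (Lemma~\ref{l:85}) and ends up with Bessel functions $K_s$, whereas you apply it to $E_s$ itself and stay within Gaussians. Your route is shorter for this theorem alone; the paper's route is longer but produces Lemma~\ref{l:86} as a by-product, which is exactly the statement that the Poisson kernel~\eqref{108} of the extension problem equals $(-\Delta)^s E_{s,y}$, a fact re-used in Section~\ref{s:expb}.

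One point in your write-up deserves more care: the displayed Parseval identity $\Braket{E_s,(-\Delta)^s\varphi}=\int\reallywidehat{E_s}\,\mathscr{F}^{-1}\!\bigl((-\Delta)^s\varphi\bigr)$ cannot be justified by citing \eqref{ibpft}, since $E_s\notin L^1(\Rn)$ and, more importantly, $\mathscr{F}^{-1}\!\bigl((-\Delta)^s\varphi\bigr)(\xi)=(2\pi\abs{\xi})^{2s}\mathscr{F}^{-1}\varphi(\xi)$ fails to be in $\Sn$ (it is not smooth at the origin), so the $\mathscr{S}'$--$\mathscr{S}$ duality for the Fourier transform does not apply directly either. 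The fix is to run the subordination argument on the pairing itself: insert the Gaussian representation of $\abs{x}^{-(n-2s)}$ into $\int E_s(x)(-\Delta)^s\varphi(x)\,dx$, swap by Fubini (legitimate since $\abs{(-\Delta)^s\varphi(x)}\,\abs{x}^{-(n-2s)}\in L^1$ thanks to Proposition~\ref{p:decay}), and for each fixed $t$ use that $e^{-\pi t\abs{\cdot}^2}\in\Sn$ together with Fourier inversion; the $\xi$- and $t$-integrals then collapse exactly as you describe. This is what the paper's regularisation achieves implicitly.
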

The proof of Theorem \ref{t:84} will be given after Lemma \ref{l:86} below.
\begin{lemma}
\label{l:85}
Suppose that either $n\ge2$, or $n=1$ and $0<s<1/2$. For every $y>0$ consider the regularized fundamental solution
\begin{equation}\label{86}
    E_{s,y}(x)=\alpha(n,s)(y^2+\abs{x}^2)^{-\frac{n-2s}{2}}.
\end{equation}
Then,
\begin{equation}
    \label{87}
    \reallywidehat{E_{s,y}}(\xi)=\frac{y^2}{2^{2s-1}\pi^s\Gamma(s)}\abs{\xi}^{-s}K_s(2\pi y\abs{\xi}),
\end{equation}
where we have denoted by $K_v$ the modified Bessel function of the third kind, see \eqref{thirdkind} above. From \eqref{87} we obtain for every $\xi\not=0$
\begin{equation}
    \label{88}
    \reallywidehat{E_s}(\xi)= \lim_{y\to 0^+}\reallywidehat{E_{s,y}}(\xi)=(2\pi\abs{xi})^{-2s}.
\end{equation}
\end{lemma}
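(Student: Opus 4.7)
My plan is to exploit the radiality of $E_{s,y}$ via the Fourier--Bessel representation of Theorem \ref{t:FBr}, and then reduce the resulting Bessel integral to a known Hankel--Basset formula that produces $K_s$. Write $E_{s,y}(x) = f(|x|)$ with $f(t) = \alpha(n,s)(y^2 + t^2)^{-(n-2s)/2}$. The decay of $f$ at infinity together with the asymptotics \eqref{asymp} and \eqref{asymp2} for $J_{n/2-1}$ shows that the integrability hypothesis $t \mapsto t^{n/2}f(t) J_{n/2-1}(t) \in L^1(\mathbb{R}^+)$ in Theorem \ref{t:FBr} is satisfied, so
\begin{equation*}
\widehat{E_{s,y}}(\xi) = 2\pi\, \alpha(n,s)\, |\xi|^{-(\frac{n}{2}-1)} \int_0^\infty \frac{t^{n/2}}{(y^2+t^2)^{(n-2s)/2}}\, J_{\frac{n}{2}-1}(2\pi |\xi| t)\, dt.
\end{equation*}

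The next step is to invoke the classical Hankel--Basset (Lipschitz--Hankel type) identity
\begin{equation*}
\int_0^\infty \frac{t^{\nu+1}}{(y^2+t^2)^{\mu+1}}\, J_\nu(at)\, dt \;=\; \frac{a^\mu\, y^{\nu-\mu}}{2^\mu\, \Gamma(\mu+1)}\, K_{\nu-\mu}(ay),
\end{equation*}
valid for $\nu > -\tfrac12$ and $a, y > 0$ (see, e.g., Gradshteyn--Ryzhik 6.565.4). Applying it with $\nu = \tfrac{n}{2}-1$, $\mu = \tfrac{n-2s}{2} - 1$ (so that $\nu - \mu = s$), and $a = 2\pi|\xi|$, the right-hand side becomes, after combining powers of $2\pi|\xi|$,
\begin{equation*}
\widehat{E_{s,y}}(\xi) \;=\; \alpha(n,s)\, \frac{2\, \pi^{(n-2s)/2}}{\Gamma\!\left(\frac{n-2s}{2}\right)}\, y^s\, |\xi|^{-s}\, K_s(2\pi y |\xi|).
\end{equation*}
Substituting the value of $\alpha(n,s)$ from \eqref{84}, the factor $\Gamma((n-2s)/2)$ cancels and the remaining constants collapse to $\tfrac{1}{2^{2s-1}\pi^s\Gamma(s)}$, yielding \eqref{87}.

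For \eqref{88}, I use the small-argument asymptotic $K_s(z) \sim 2^{s-1}\Gamma(s)\, z^{-s}$ as $z \to 0^+$, which is a direct consequence of \eqref{solbesseleq2} and the definition \eqref{thirdkind}: the dominant contribution comes from $I_{-s}(z) \sim (z/2)^{-s}/\Gamma(1-s)$, so together with the reflection identity \eqref{fact3} one obtains the stated leading order. Plugging into \eqref{87},
\begin{equation*}
y^s K_s(2\pi y |\xi|) \;\longrightarrow\; \frac{2^{s-1}\Gamma(s)}{(2\pi |\xi|)^s} \qquad \text{as } y \to 0^+,
\end{equation*}
for every fixed $\xi \neq 0$, and the constants simplify to give exactly $(2\pi|\xi|)^{-2s}$, which is \eqref{88}.

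The main obstacle here is the Hankel--Basset integral: everything else is bookkeeping. One can either cite it from a standard reference, or derive it by inserting the integral representation \eqref{firstkind} of $J_{\frac{n}{2}-1}$, interchanging integrals via Fubini, evaluating the inner integral in $t$ via a residue computation (or a Laplace transform), and recognizing the resulting expression through the modified Bessel equation \eqref{besseleq2} satisfied by $K_s$ under the transformation \eqref{transformation}. The other delicate point is the passage to the limit in $y$, but since the convergence is pointwise on $\mathbb{R}^n \setminus \{0\}$ and the identification $\widehat{E_s}(\xi) = (2\pi|\xi|)^{-2s}$ is needed only in a distributional sense (consistent with \eqref{81}--\eqref{82} and Definition \ref{d:83}), no uniformity is required.
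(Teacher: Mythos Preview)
Your computation of the constants and the limit argument for \eqref{88} are correct, and your route is genuinely different from the paper's: the paper does \emph{not} use the Fourier--Bessel representation here. Instead it argues distributionally via Bochner subordination, writing
\[
(y^2+|\xi|^2)^{-\frac{n-2s}{2}}=\frac{1}{\Gamma\!\left(\frac{n-2s}{2}\right)}\int_0^\infty t^{\frac{n-2s}{2}}e^{-t(y^2+|\xi|^2)}\,\frac{dt}{t},
\]
pairing against $\hat f$, taking the known Gaussian Fourier transform \eqref{812}, and then identifying the remaining $t$--integral with $K_s$ through the formula $\int_0^\infty t^{v-1}e^{-\beta/t-\gamma t}\,dt=2(\beta/\gamma)^{v/2}K_v(2\sqrt{\beta\gamma})$ (this is \eqref{813}). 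Everything in that argument is absolutely convergent thanks to the Gaussian factors, and the identity \eqref{87} is obtained as an equality of tempered distributions.

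There is, however, a genuine gap in your justification. You assert that the $L^1$ hypothesis of Theorem~\ref{t:FBr} holds, but it does not in general: since $f(t)=\alpha(n,s)(y^2+t^2)^{-(n-2s)/2}$ decays only like $t^{-(n-2s)}$ and $J_{n/2-1}(t)=O(t^{-1/2})$, the integrand $t^{n/2}f(t)J_{n/2-1}(t)$ is of order $t^{\,2s-\frac{n}{2}-\frac12}$ at infinity, which is in $L^1(1,\infty)$ only when $s<\tfrac{n-1}{4}$; this already fails for $n=2$ and any $s\ge\tfrac14$. (Relatedly, $E_{s,y}\notin L^1(\Rn)$, so its Fourier transform must be taken distributionally in the first place.) The Hankel--Basset identity you quote is still valid in the full range as an oscillatory integral or by analytic continuation in $s$, but that requires an argument beyond Theorem~\ref{t:FBr} as stated. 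The paper's subordination approach sidesteps this entirely; if you want to keep your route, you should either regularize (e.g.\ insert a factor $e^{-\varepsilon t}$ or $e^{-\varepsilon t^2}$ and let $\varepsilon\to 0^+$) or invoke the conditional-convergence version of the Hankel transform explicitly.
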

\begin{proof}
To prove \eqref{87} it suffices to show that for every $f\in\Sn$ we have
\begin{equation}
    \label{89}
    \Braket{\reallywidehat{E_{s,y}},f}=\frac{y^s}{2^{2s-1}\pi^s\Gamma(s)}\int_{\Rn}\abs{\xi}^{-s}K_s(2\pi y \abs{\xi})f(\xi)\,d\xi.
\end{equation}
To establish \eqref{89} we use the heat semigroup and Bochner's subordination. The idea is to start from the observation that for every $L>0$ and $\alpha>0$ one has
\begin{equation}
    \label{810}
    \int_0^{\infty}e^{-tL}t^{\alpha}\frac{dt}{t}=\frac{\Gamma(\alpha)}{L^{\alpha}}.
\end{equation}
Using Fubini and \eqref{810} with $L=\abs{\xi}^2+y^2$, we obtain for any $\alpha>0$
\begin{equation*}
    \begin{split}
        \int_0^{\infty}& t^{\alpha} \left( \int_{\Rn} e^{-t(\abs{\xi}^2+y^2)}\reallywidehat{f}(\xi)\,d\xi \right)\frac{dt}{t}\\
        &= \int_{\Rn}\reallywidehat{f}(\xi)\left( \int_0^{\infty}t^{\alpha}e^{-t(\abs{\xi}^2+y^2)}\frac{dt}{t} \right)\,d\xi\\
        &=\Gamma(\alpha)\int_{\Rn} \reallywidehat{f}(\xi)(\abs{\xi}^2+y^2)^{-\alpha}\,d\xi.
    \end{split}
\end{equation*}
The above assumptions $n\ge2$, or $n=1$ and $0<s<1/2$, imply that $\alpha=\frac{n}{2}-s>0$. If we thus let $\alpha=\frac{n}{2}-s$ in the latter formula we find
\begin{equation}
    \label{811}
    \int_0^{\infty}t^{\frac{n}{2}-s}\left( \int_{\Rn} e^{-t(\abs{\xi}^2+y^2)}\reallywidehat{f}(\xi)\,d\xi \right)\frac{dt}{t}=\Gamma \left( \frac{n-2s}{2} \right) \int_{\Rn} \reallywidehat{f}(\xi)(\abs{\xi}^2+y^2)^{-(\frac{n-2s}{2})}\,d\xi.
\end{equation}
On the other hand, \eqref{ibpft} above gives for any $f\in\Sn$ and $y>0$
\begin{equation*}
    \int_{\Rn} \mathscr{F}_{x\to\xi}\left( e^{-t(\abs{x}^s+y^2)}\right)\reallywidehat{f}(\xi)\,d\xi=\int_{\Rn} \reallywidehat{f}(\xi)e^{-t(\abs{\xi}^2+y^2)}\,d\xi.
\end{equation*}
Multiplying both sides of this equation by $t^{\frac{n}{2}-s}$ and integrating between $0$ and $\infty$ with respect to the dilation invariant measure $\frac{dt}{t}$ we obtain
\begin{equation*}
    \int_0^{\infty}t^{\frac{n}{2}-s}\int_{\Rn}\mathscr{F}_{x\to\xi}\left( e^{-t(\abs{x}^2+y^2)} \right)f(\xi)\,d\xi\,\frac{dt}{t}=\int_0^{\infty}t^{\frac{n}{2}-s}e^{-y^2t}\int_{\Rn}\reallywidehat{e^{-t\abs{\cdot}^2}}(\xi)f(\xi)\,d\xi\,\frac{dt}{t}.
\end{equation*}
We next recall the following notable Fourier transform in $\Rn$: for every $t>0$, and every $\xi\in\mathbb{R}^n$, one has
\begin{equation}
    \label{812}
    \reallywidehat{e^{-t\abs{\cdot}^2}}(\xi)=\frac{\pi^{\frac{n}{2}}}{t^{\frac{n}{2}}}\exp \left( -\pi^2 \frac{\abs{\xi}^2}{t} \right).
\end{equation}
Substituting \eqref{812} in the preceding formula, we find
\begin{equation*}
    \begin{split}
        \int_0^{\infty}& t^{\frac{n}{2}-s}\int_{\Rn} \mathscr{F}_{x\to \xi} \left( e^{-t(\abs{x}^2+y^2)} \right) f(\xi)\,d\xi\,\frac{dt}{t}\\
        &=\pi^{\frac{n}{2}}\int_0^{\infty}t^{-s}e^{-y^2t}\int_{\Rn} \exp \left( -\pi^2 \frac{\abs{\xi}^2}{t} \right)f(\xi)\,d\xi\,\frac{dt}{t}\\
        &=\pi^{\frac{n}{2}}\int_{\Rn}f(\xi)\left( \int_0^{\infty} t^{-s}e^{-y^2t}\exp \left( -\pi^2 \frac{\abs{\xi}^2}{t} \right)\frac{dt}{t} \right)\,d\xi.
    \end{split}
\end{equation*}
We now use the following formula that can be found in 9. on p. 340 of \citep{GR80}
\begin{equation}
    \label{813}
    \int_0^{\infty} t^{v-1}e^{-(\frac{\beta}{t}+\gamma t)}\,dt=2\left( \frac{\beta}{\gamma} \right)^{\frac{v}{2}}K_v(2\sqrt{\beta \gamma}),
\end{equation}
provided $\mathfrak{R}\beta,\mathfrak{R}\gamma>0$. Applying \eqref{813} with
\begin{equation*}
    v=-s, \quad \beta=\pi^2 \abs{\xi}^2, \quad \gamma=y^2,
\end{equation*}
and keeping in mind that, as we have already observed, $K_v=K_{-v}$ (see 5.7.10 in \citep{Le72}), we find
\begin{equation}
    \label{814}
    \int_0^{\infty}t^{-s}e^{-y^2t} \exp \left( -\pi^2 \frac{\abs{\xi}^2}{t} \right)\frac{dt}{t}=2\left( \frac{y}{\pi \abs{\xi}} \right)^s K_s(2\pi y \abs{\xi}).
\end{equation}
Substituting \eqref{814} in the above integral, we conclude 
\begin{equation}
    \label{815}
    \int_0^{\infty} t^{\frac{n}{2}-s}\int_{\Rn}\reallywidehat{e^{-t(\abs{\cdot}^2+y^2)}}(\xi)f(\xi)\,d\xi\, \frac{dt}{t}=2 \pi^{\frac{n}{2}-s}y^s \int_{\Rn}\abs{\xi}^{-s}K_s(2\pi y \abs{\xi})f(\xi)\,d\xi.
\end{equation}
Since the integral in the left-hand side of \eqref{815} equals that in the left-hand side of \eqref{811}, we finally have
\begin{equation}
    \label{816}
    \alpha(n,s) \int_{\Rn} \reallywidehat{f}(\xi)(\abs{\xi}^2+y^2)^{-\left( \frac{n-2s}{2} \right)}\,d\xi=
    \alpha(n,s)\frac{2\pi^{\frac{n}{2}-s}y^s}{\Gamma \left( \frac{n-2s}{2} \right)}\int_{\Rn}\abs{\xi}^{-s} K_s(2\pi y \abs{\xi}) f(\xi)\,d\xi.
\end{equation}
Recalling \eqref{84}, which gives $\alpha(n,s)=\frac{\Gamma \left(\frac{n}{2}-s \right)}{2^{2s}\pi^{\frac{n}{2}}\Gamma(s)}$, we infer from \eqref{816} that
\begin{equation}
    \label{817}
    \alpha(n,s) \int_{\Rn} \reallywidehat{f}(\xi)(\abs{\xi}^2+y^2)^{-\left( \frac{n-2s}{2} \right)}\,d\xi=
    \frac{y^s}{2^{2s-1}\pi^s\Gamma(s)}\int_{\Rn}\abs{\xi}^{-s} K_s(2\pi y \abs{\xi}) f(\xi)\,d\xi.
\end{equation}
Keeping \eqref{86} in mind, we can rewrite \eqref{817} as follows
\begin{equation*}
    \Braket{E_{s,y},\reallywidehat{f}}=\frac{y^s}{2^{2s-1}\pi^s\Gamma(s)}\int_{\Rn}\abs{\xi}^{-s} K_s(2\pi y \abs{\xi}) f(\xi)\,d\xi.
\end{equation*}
Since by definition $\Braket{\reallywidehat{E_{s,y}},f}=\Braket{E_{s,y},\reallywidehat{f}}$, we conclude that \eqref{89} holds, thus completing the proof.
\end{proof}

We next prove a remarkable result concerning the function $E_{s,y}$ defined by \eqref{83} and \eqref{84} above.
\begin{lemma}
\label{l:86}
For every $y>0$ the function $E_{s,y}$ satisfies the equation
\begin{equation}
    \label{818}
    (-\Delta)^sE_{s,y}(x)=y^{2s}\frac{\Gamma \left( \frac{n}{2}+s \right)}{\pi^{\frac{n}{2}} \Gamma(s)}(y^2+\abs{x}^2)^{-(\frac{n}{2}+s)}.
\end{equation}
\end{lemma}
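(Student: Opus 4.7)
The plan is to verify \eqref{818} by passing to the Fourier transform and matching both sides. Since $E_{s,y}$ and the right-hand side of \eqref{818} are smooth bounded functions with polynomial decay, they both define tempered distributions, so injectivity of $\mathscr{F}$ on $\mathscr{S}'(\Rn)$ will let me conclude. From Lemma \ref{l:85} I already have
$$\reallywidehat{E_{s,y}}(\xi) = \frac{y^s}{2^{2s-1}\pi^s\Gamma(s)}\abs{\xi}^{-s}K_s(2\pi y\abs{\xi}),$$
so the pseudodifferential identity \eqref{ftlp} of Proposition \ref{p:pdnfp} gives at once
$$\reallywidehat{(-\Delta)^sE_{s,y}}(\xi) = (2\pi\abs{\xi})^{2s}\reallywidehat{E_{s,y}}(\xi) = \frac{2\pi^s y^s}{\Gamma(s)}\abs{\xi}^{s}K_s(2\pi y\abs{\xi}),$$
using the arithmetic simplification $(2\pi)^{2s}/(2^{2s-1}\pi^s)=2\pi^s$.

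The core task is then to compute the Fourier transform of $(y^2+\abs{x}^2)^{-(n/2+s)}$. For this I would simply repeat the Bochner subordination argument carried out in the proof of Lemma \ref{l:85}, starting from the representation
$$(\abs{\xi}^2 + y^2)^{-\alpha} = \frac{1}{\Gamma(\alpha)}\int_0^\infty t^{\alpha}\, e^{-t(\abs{\xi}^2+y^2)}\,\frac{dt}{t},$$
inverting the Gaussian Fourier transform via \eqref{812}, swapping the order of integration by Fubini, and invoking the Macdonald integral \eqref{813} with $v=\alpha-n/2$, $\beta=\pi^2\abs{\xi}^2$, $\gamma=y^2$. Specialising to $\alpha=n/2+s$ one has $v=s>0$, the hypotheses $\mathfrak R\beta,\mathfrak R\gamma>0$ of \eqref{813} are trivially satisfied, and the result is
$$\mathscr{F}\!\left[(y^2+\abs{\cdot}^2)^{-(n/2+s)}\right](\xi) = \frac{2\pi^{n/2+s}}{\Gamma(n/2+s)}\,\frac{\abs{\xi}^s}{y^s}\,K_s(2\pi y\abs{\xi}).$$

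Multiplying this identity through by the normalising constant $y^{2s}\Gamma(n/2+s)/(\pi^{n/2}\Gamma(s))$ that appears on the right-hand side of \eqref{818} produces exactly $\frac{2\pi^s y^s}{\Gamma(s)}\abs{\xi}^{s}K_s(2\pi y\abs{\xi})$, which coincides with the expression computed above for $\reallywidehat{(-\Delta)^sE_{s,y}}$. Fourier inversion then yields \eqref{818}. I do not expect any genuine obstacle: the only substantive step is the Bochner-type integral, and it is the same calculation as in Lemma \ref{l:85} but with the exponent $\alpha=(n-2s)/2$ replaced by $\alpha=n/2+s$. In contrast with the earlier case, this new exponent is automatically positive, so no dimensional or range restriction on $(n,s)$ enters at this stage; the lemma's implicit restriction $n>2s$ is already built into the definition of $\alpha(n,s)$ through $\Gamma(n/2-s)$.
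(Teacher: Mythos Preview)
Your argument is correct and genuinely differs from the paper's. Both proofs begin the same way: apply \eqref{ftlp} together with \eqref{87} from Lemma \ref{l:85} to obtain
\[
\reallywidehat{(-\Delta)^sE_{s,y}}(\xi)=\frac{2\pi^s y^s}{\Gamma(s)}\abs{\xi}^{s}K_s(2\pi y\abs{\xi}).
\]
From here the paper inverts this Fourier transform directly: it invokes the Fourier--Bessel representation (Theorem \ref{t:FBr}), then evaluates the resulting Hankel-type integral $\int_0^\infty t^{\frac{n}{2}+s}K_s(2\pi yt)J_{\frac{n}{2}-1}(2\pi\abs{x}t)\,dt$ via a Gradshteyn--Ryzhik formula that produces a Gauss hypergeometric function, which is finally simplified using \eqref{hg2}. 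You instead compute the Fourier transform of the \emph{right-hand} side of \eqref{818} by rerunning the Bochner subordination of Lemma \ref{l:85} with the exponent $\alpha=\frac{n}{2}+s$ in place of $\frac{n-2s}{2}$, and match. Your route is more economical: it recycles machinery already in hand and sidesteps Theorem \ref{t:FBr}, the Hankel integral, and the hypergeometric identity entirely. The trade-off is that the paper's longer computation establishes the intermediate identity \eqref{824}, which is reused verbatim later in the proof of Theorem \ref{t:101} (see \eqref{1015}); your approach would leave that step to be done separately.
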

\begin{proof}
In order to establish \eqref{818} we begin by computing the function
\begin{equation*}
    F_{s,y}(x):=(-\Delta)^sE_{s,y}(x).
\end{equation*}
With this objective in mind we appeal to \eqref{ftlp}, which gives
\begin{equation}
    \label{819}
    \reallywidehat{F_{s,y}}(\xi)=\reallywidehat{(-\Delta)^sE_{s,y}}(\xi)=(2\pi\abs{\xi})^{2s}\reallywidehat{E_{s,y}}(\xi).
\end{equation}
We now use \eqref{87} in Lemma \ref{l:85}. Inserting such equation in \eqref{819} we obtain
\begin{equation}
    \label{820}
    \reallywidehat{F_{s,y}}(\xi)=(2 \pi \abs{\xi})^{2s}\frac{y^s}{2^{2s-1}\pi^s\Gamma(s)}\abs{\xi}^{-s}K_s(2\pi y \abs{\xi})=\frac{2y^s\pi^s}{\Gamma(s)}\abs{\xi}^sK_s(2\pi y\abs{\xi}).
\end{equation}
Using Theorem \ref{t:FBr} we find from \eqref{820}
\begin{equation}
    \label{821}
    F_{s,y}(x)=\frac{4y^s\pi^{s+1}}{\Gamma(s)}\frac{1}{\abs{x}^{\frac{n}{2}-1}}\int_0^{\infty}t^{\frac{n}{2}+s}K_s(2\pi y t)J_{\frac{n}{2}-1}(2\pi\abs{x}t)\,dt.
\end{equation}
If we now let
\begin{equation*}
    \lambda=-\frac{n}{2}-s, \quad \mu=s, \quad v=\frac{n}{2}-1,
\end{equation*}
then we can wirte the integral in the right-hand side of \eqref{821} in the form 
\begin{equation*}
    \int_0^{\infty} t^{-\lambda}K_{\mu}(at)J_v(bt)\,dt,
\end{equation*}
with
\begin{equation*}
    a=2\pi y, \quad b=2\pi \abs{x}.
\end{equation*}
Under the assumption $v-\lambda+1>\abs{\mu}$, that is presently equivalent to $n+s>s$, which is obviously true, we can appeal to formula 3. in 6.576 on p. 693 in \citep{GR80}. Such formula states that 
\begin{equation}
    \label{822}
    \begin{split}
        \int_0^{\infty}& t^{-\lambda}K_{\mu}(at)J_v(bt)\,dt=\\
        &=\frac{b^v\Gamma \left( \frac{v-\lambda +\mu +1}{2} \right) \Gamma \left( \frac{v-\lambda -\mu +1}{2} \right)}{2^{\lambda+1}a^{v-\lambda +1}\Gamma(1+v)}\, F\left( \frac{v-\lambda +\mu +1}{2},\frac{v-\lambda -\mu +1}{2}; v+1;-\frac{b^2}{a^2} \right),
    \end{split}
\end{equation}
where, we recall, $F(\alpha,\beta;\gamma;z)$ indicates the hypergeometric function ${}_{2}F_{1}(\alpha,\beta;\gamma;z)$, see Definition \ref{d:hg} above. Since
\begin{equation*}
    \frac{v-\lambda+\mu+1}{2}=\frac{n}{2}+s, \quad \frac{v-\lambda-\mu+1}{2}=\frac{n}{2},
\end{equation*}
from \eqref{821} and \eqref{822} we obtain
\begin{equation}
    \label{823}
    \begin{split}
        \int_0^{\infty}& t^{\frac{n}{2}+s}K_{s}(2\pi y t)J_{\frac{n}{2}-1}(2\pi \abs{x}t)\,dt=\\
        &=\frac{(2\pi\abs{x})^{\frac{n}{2}-1}\Gamma(\frac{n}{2}+s)}{2^{-\frac{n}{2}-s+1}(2\pi y)^{n+s}}\,F\left( \frac{n}{2}+s,\frac{n}{2};\frac{n}{2};-\frac{\abs{x}^2}{y^2} \right).
    \end{split}
\end{equation}
We now apply \eqref{hg2} to find
\begin{equation*}
    F\left( \frac{n}{2}+s,\frac{n}{2};\frac{n}{2};-\frac{\abs{x}^2}{y^2} \right)= \left( 1+\frac{\abs{x}^2}{y^2} \right)^{-\left( \frac{n}{2}+s \right)}.
\end{equation*}
Inserting this information into \eqref{823} we have
\begin{equation}
    \label{824}
    \int_0^{\infty} t^{\frac{n}{2}+s}K_{s}(2\pi y t)J_{\frac{n}{2}-1}(2\pi \abs{x}t)\,dt=\frac{(2\pi\abs{x})^{\frac{n}{2}-1}\Gamma(\frac{n}{2}+s)}{2^{-\frac{n}{2}-s+1}(2\pi y)^{n+s}}\left( 1+\frac{\abs{x}^2}{y^2} \right)^{-\left( \frac{n}{2}+s \right)}.
\end{equation}
From \eqref{821} and \eqref{824} we finally conclude
\begin{equation*}
    F_{s,y}(x)=\frac{\Gamma \left( \frac{n}{2}+s \right)}{y^n\pi^{\frac{n}{2}}\Gamma(s)}\left( 1+\frac{\abs{x}^2}{y^2} \right)^{-\left( \frac{n}{2}+s \right)}= \frac{y^{2s}\Gamma\left( \frac{n}{2}+s \right)}{\pi^{\frac{n}{2}}\Gamma(s)}(y^2+\abs{x}^2)^{-\left( \frac{n}{2}+s \right)}.
\end{equation*}
This establishes \eqref{818}, thus completing the proof.
\end{proof}

We are now ready to provide the 
\begin{proof}[Proof of Theorem \ref{t:84}]
Our objective is establishing
\begin{equation}
    \label{825}
    \int_{\Rn} E_s(x)(-\Delta)^s \phi(x)\,dx=\phi(0),
\end{equation}
for every test function $\phi\in \Sn$. We begin by observing that, since we are assuming that $n\ge2$, we automatically have that $0<s<\frac{n}{2}$. For $y>0$ we now consider the regularization $E_{s,y}$ of the distribution $E_s$ defined by \eqref{83} and \eqref{84} above. Notice that $E_{s,y}\in C^{\infty}(\Rn)$ and decays at $\infty$ like $\abs{x}^{-(n-2s)}$. Since for $\phi \in \Sn$ we know from Lemma \ref{l:81} that $(-\Delta)^s\phi\in \mathscr{S}_s(\Rn)$, it should be  clear that Lebesgue dominated convergence theorem gives
\begin{equation*}
    \int_{\Rn}E_{s,y}(x)(-\Delta)^s\phi(x)\,dx \longrightarrow \int_{\Rn}E_s(x)(-\Delta)^s\phi(x)\,dx
\end{equation*}
as $y\to 0^+$. On the other hand, Lemma \ref{l:Symmetry} (which continues to be valid in the present situation) gives
\begin{equation}
    \label{826}
    \int_{\Rn}E_{s,y}(x)(-\Delta)^s\phi(x)\,dx=\int_{\Rn}(-\Delta)^sE_{s,y}(x)\phi(x)\,dx.
\end{equation}
Therefore, in view of \eqref{826}, in order to complete the proof it will suffice to show that as $y\to0^+$
\begin{equation}
    \label{827}
    \int_{\Rn}(-\Delta)^sE_{s,y}(x)\phi(x)\,dx\longrightarrow \phi (0).
\end{equation}
To establish \eqref{827} we use \eqref{818} in Lemma \ref{l:86} which gives 
\begin{equation*}
    \begin{split}
        \int_{\Rn}& (-\Delta)^sE_{s,y}(x)\phi(x)\,dx=\frac{\Gamma \left( \frac{n}{2}+s \right)}{y^n\pi^{\frac{n}{2}}\Gamma(s)}\int_{\Rn}\left( 1+\frac{\abs{x}^2}{y^2} \right)^{-\left( \frac{n}{2}+s\right)}\phi(x)\,dx\\
        &= \frac{\Gamma \left( \frac{n}{2}+s \right)}{\pi^{\frac{n}{2}}\Gamma(s)}\int_{\Rn}(1+\abs{x'}^2)^{-\left( \frac{n}{2}+s \right)}\phi(yx')\,dx'\\
        &\longrightarrow \phi(0) \frac{\Gamma \left( \frac{n}{2}+s \right)}{\pi^{\frac{n}{2}}\Gamma(s)}\int_{\Rn}(1+\abs{x'}^2)^{-\left( \frac{n}{2}+s \right)}\,dx',
    \end{split}
\end{equation*}
where in the last equality we have used Lebesgue dominated convergence theorem. To complete the proof of \eqref{827} it would be sufficient to prove that
\begin{equation}
    \label{828}
    \frac{\Gamma \left( \frac{n}{2}+s \right)}{\pi^{\frac{n}{2}}\Gamma(s)}\int_{\Rn}(1+\abs{x'}^2)^{-\left( \frac{n}{2}+s \right)}\,dx'=1.
\end{equation} 
Now, the validity of \eqref{828} follows from a straightforward application of Proposition \ref{p:gammabeta} with the choice $a=n+2s$, $b=0$.
\end{proof}

\section{Traces of Bessel processes}\label{s:expb}

When dealing with nonlocal operators such as $(-\Delta)^s$ a major difficulty is represented by the fact the they do not act on functions like differential operators do, but instead through nonlocal integral formulas such as \eqref{fraclap}. As a consequence, the rules of differentiation are not readily available. In this perspective it would be highly desirable to have some kind of procedure that allows to connect nonlocal problems to ones for which the rules of differential calculus are available. Exploring this connection is the principal objective of this section.

During the past decade there has been an explosion of interest in the analysis of nonlocal operators such as \eqref{fraclap} in connection with various problems from the applied sciences, analysis and geometry. The majority of these developments has been motivated by the remarkable 2007 "extension paper" \citep{CS07} by Caffarelli and Silvestre. In that paper the authors introduced a method that allows to convert nonlocal problems in $\Rn$ into ones that involve a certain (degenerate) differential operator in $\mathbb{R}_+^{n+1}$. Precisely, it was shown in \citep{CS07} that if for a given $0<s<1$ and $u\in\Sn$ one considers the function $U(x,y)$ that solves the following Dirichlet problem in the half-space $\mathbb{R}_+^{n+1}$:
\begin{equation}
    \label{101}
    \begin{cases}
    L_aU(x,y)=\dive_{x,y}(y^a\nabla_{x,y}U)=0 \quad x\in\Rn,y>0,\\
    U(x,0)=u(x),
    \end{cases}
\end{equation}
where now $a=1-2s$, then one can recover $(-\Delta)^su(x)$ by the following "trace" relation
\begin{equation}
    \label{102}
    -\frac{2^{2s-1}\Gamma(s)}{\Gamma(1-s)}\lim_{y\to0^+}y^{1-2s}\frac{\partial U}{\partial y}(x,y)=(-\Delta)^su(x).
\end{equation}
Thus, remarkably, \eqref{102} provides yet another way of characterizing $(-\Delta)^su(x)$ as thw weighted \emph{Dirichlet-to-Neumann} map of the extension problem \eqref{101}.

One key observation is that the second order degenerate elliptic equation in \eqref{101} can also be written in nondivergence form in the following way
\begin{equation}
    \label{105}
    \begin{cases}
    -\Delta_x U= \mathscr{B}_a U, \qquad \qquad \,\, (x,y)\in \mathbb{R}_+^{n+1}\\
    U(x,0)=u(x), \qquad \qquad \,\,\, x\in \Rn,\\
    U(x,y)\to 0, \text{ as } y\to \infty, \quad \, x\in \Rn,
    \end{cases}
\end{equation}
where we have denoted by
\begin{equation}
    \label{106}
    \mathscr{B}_a=\frac{\partial^2}{\partial y^2} + \frac{a}{y}\frac{\partial}{\partial y}
\end{equation}
the generator of the Bessel semigroup on $(\mathbb{R}^+,y^a\,dy)$.
\begin{theorem}
\label{t:101}
Let $u \in \Sn$. Then, the solution $U$ to the extension problem \eqref{101} is given by
\begin{equation}
    \label{107}
    U(x,y)=P_s(\cdot,y)\star u(x)=\int_{\Rn}P_s(x-z,y)u(z)\,dz,
\end{equation}
where
\begin{equation}
    \label{108}
    P_s(x,y)=\frac{\Gamma \left( \frac{n}{2}+s \right)}{\pi^{\frac{n}{2}}\Gamma(s)}\frac{y^{2s}}{(y^2+\abs{x}^2)^{\frac{n+2s}{2}}}
\end{equation}
is the Poisson kernel for the extension problem in the half-space $\Rnn_+$. For $U$ as in \eqref{107} one has
\begin{equation}
    \label{109}
    (-\Delta)^su(x)=-\frac{2^{2s-1}\Gamma(s)}{\Gamma(1-s)}\lim_{y\to0^+}y^a \frac{\partial U}{\partial y}(x,y).
\end{equation}
\end{theorem}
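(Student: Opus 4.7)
My plan is to reduce \eqref{101} to a one-parameter family of ODEs in $y$ by taking the Fourier transform in $x$, recognize the resulting equation as the generalized modified Bessel equation \eqref{gmb}, and match the boundary data using the small-argument asymptotic of the Macdonald function $K_s$. Once the explicit formula for $\widehat{U}(\xi,y)$ is in hand, the identification with the convolution \eqref{107}--\eqref{108} will follow from Lemmas \ref{l:85} and \ref{l:86}, and the Dirichlet-to-Neumann identity \eqref{109} from the subleading term in the expansion of $K_s$ near $0$.

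The first step is to Fourier transform in $x$ the nondivergence form \eqref{105}: for each fixed $\xi \in \Rn$ this produces
\begin{equation*}
    y^2 \partial_{yy}\widehat{U} + a\,y\,\partial_y\widehat{U} - (2\pi|\xi|)^2 y^2 \widehat{U} = 0,\qquad \widehat{U}(\xi,0) = \widehat{u}(\xi), \qquad \widehat{U}(\xi,y)\to 0\ \text{as}\ y\to\infty,
\end{equation*}
which is precisely \eqref{gmb} with $\alpha = s$, $\gamma = 1$, $\beta = 2\pi|\xi|$, $v = s$. By \eqref{transformation} its solutions take the form $y^s\varphi(2\pi|\xi| y)$ where $\varphi$ satisfies the modified Bessel equation of order $s$; the decay at infinity forces $\varphi = K_s$. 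Using the small-argument behavior
\begin{equation*}
    K_s(z) = \frac{\Gamma(s)}{2}\left(\frac{z}{2}\right)^{-s} + \frac{\Gamma(-s)}{2}\left(\frac{z}{2}\right)^{s} + O(z^{2-s})\qquad (z\to 0^+),
\end{equation*}
which follows from \eqref{solbesseleq2} and \eqref{thirdkind}, the boundary condition uniquely fixes the multiplicative constant and yields
\begin{equation*}
    \widehat{U}(\xi,y) = \frac{2\pi^s |\xi|^s y^s}{\Gamma(s)}\, K_s(2\pi|\xi|y)\,\widehat{u}(\xi).
\end{equation*}
At this point, combining \eqref{ftlp} with \eqref{87} shows that the prefactor of $\widehat{u}(\xi)$ equals $\mathscr{F}\bigl((-\Delta)^s E_{s,y}\bigr)(\xi)$, and Lemma \ref{l:86} identifies $(-\Delta)^s E_{s,y}(x) = P_s(x,y)$ with $P_s$ exactly as in \eqref{108}. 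Inverting the Fourier transform produces \eqref{107}.

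For the Dirichlet-to-Neumann relation, I substitute the above expansion of $K_s$ into the explicit formula for $\widehat U$ and invoke $\Gamma(-s) = -\Gamma(1-s)/s$ from \eqref{fact2}: the leading $(z/2)^{-s}$ term reproduces $\widehat u(\xi)$, while the subleading $(z/2)^{s}$ term yields
\begin{equation*}
    \widehat{U}(\xi,y) = \widehat{u}(\xi) - \frac{\Gamma(1-s)}{s\,2^{2s}\Gamma(s)}\,y^{2s}(2\pi|\xi|)^{2s}\widehat{u}(\xi) + O(y^2).
\end{equation*}
Differentiating in $y$, multiplying by $y^{1-2s}$ and invoking \eqref{ftlp} gives
\begin{equation*}
    y^{1-2s}\partial_y\widehat{U}(\xi,y) = -\frac{\Gamma(1-s)}{2^{2s-1}\Gamma(s)}\,\mathscr{F}\bigl((-\Delta)^s u\bigr)(\xi) + o(1),
\end{equation*}
and Fourier inversion produces \eqref{109}.

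The step I expect to be the main obstacle is tracking the constants correctly through the subleading term of the $K_s$-expansion, since this is precisely what determines the prefactor $-2^{2s-1}\Gamma(s)/\Gamma(1-s)$ in \eqref{109}; a slip in the Bessel asymptotics would propagate into a wrong constant. A secondary technical point is the rigorous exchange of the limit $y\to 0^+$ with the inverse Fourier transform, but the exponential decay of $K_s$ at infinity together with $u\in\Sn$ makes this routine. As a reassuring cross-check, the Dirichlet-to-Neumann step can be carried out entirely on the kernel side: Proposition \ref{p:gammabeta} with $a = n+2s$, $b = 0$ gives $\int_{\Rn} P_s(\cdot,y)\,dx = 1$ for every $y>0$, so by the symmetry $P_s(-z,y)=P_s(z,y)$ one has
\begin{equation*}
    U(x,y) - u(x) = \frac{1}{2}\int_{\Rn}P_s(z,y)\bigl[u(x+z)+u(x-z)-2u(x)\bigr]\,dz;
\end{equation*}
differentiating under the integral and using Proposition \ref{p:gammacomp} to match the resulting constant with $\gamma(n,s)$ recovers the pointwise expression \eqref{fraclap} and closes the proof.
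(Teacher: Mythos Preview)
Your proof is correct and follows essentially the same route as the paper: Fourier transform in $x$, identification with the generalized modified Bessel equation \eqref{gmb} with parameters $\alpha=\gamma=1$, $v=s$, $\beta=2\pi|\xi|$, selection of $K_s$ via decay, and matching constants through the small-argument behavior of $K_s$. The only cosmetic differences are that the paper identifies the Poisson kernel by invoking \eqref{824} through the Fourier--Bessel representation (rather than packaging it via \eqref{87} and Lemma~\ref{l:86}, which amounts to the same computation), and for \eqref{109} the paper uses the derivative and recurrence relations $K_s'(z)=\tfrac{s}{z}K_s(z)-K_{s+1}(z)$ and $\tfrac{2s}{z}K_s-K_{s+1}=-K_{1-s}$ instead of your direct two-term expansion of $K_s$; your kernel-side cross-check is exactly the content of Remark~\ref{r:105}.
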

\begin{proof}
Consider the extension problem \eqref{101}, written in the form \eqref{105}. If we take a partial Fourier transform of the latter with respect to the variable $x\in\Rn$, we find
\begin{equation}
    \label{1010}
    \begin{cases}
    \frac{\partial^2\reallywidehat{U}}{\partial y^2}(\xi,y)+\frac{a}{y}\frac{\partial \reallywidehat{U}}{\partial y}(\xi,y)- 4\pi^2 \abs{\xi}^2\reallywidehat{U}(\xi,y)=0 \quad \text{in }\Rnn_+,\\
    \reallywidehat{U}(\xi,0)=\reallywidehat{u}(\xi), \quad \reallywidehat{U}(\xi,y) \to 0, \textbf{ as }y \to \infty, \quad x \in \, \Rn,
    \end{cases}
\end{equation}
where we have denoted 
\begin{equation*}
    \reallywidehat{U}(\xi,y)=\int_{\Rn}e^{-2\pi i \Braket{\xi,x}}U(x,y)\,dx.
\end{equation*}
In order to solve \eqref{1010} we fix $\xi \in \Rn \setminus \{ 0\}$, and with $Y(y)=Y_{\xi}(y)=\reallywidehat{U}(\xi,y)$, we write \eqref{1010} as 
\begin{equation}
    \label{1011}
    \begin{cases}
    y^2 Y''(y)+ayY'(y)-4\pi^2\abs{\xi}^2 y^2 Y(y) =0,\\
    Y(0)= \reallywidehat{u}(\xi,\sigma),\\
    Y(y)\to 0, \text{ as } y \to \infty.
    \end{cases}
\end{equation}
Comparing \eqref{1011} with the generalized modified Bessel equation in \eqref{gmb} above we see that the former fits into the general form of the latter provided that
\begin{equation*}
    \alpha=s, \quad \gamma=1, \quad v=s, \quad \beta=2\pi\abs{\xi}.
\end{equation*}
Thus, according to \eqref{gmb}, two linearly independent solutions of \eqref{1011} are given by
\begin{equation*}
    u_1(y)=y^s I_s(2 \pi \abs{\xi}y), \qquad u_2(y)=y^sK_s(2\pi\abs{\xi}y).
\end{equation*}
It ensues that, for every $\xi \not =0$, the general solution of \eqref{1010} is given by
\begin{equation*}
    \reallywidehat{U}(\xi,y)=Ay^sI_s(2\pi \abs{\xi}y) + By^sK_s(2\pi\abs{\xi}y).
\end{equation*}
The condition $\reallywidehat{U}(\xi,y)\to0$ as $y\to\infty$ forces $A=0$ (see e.g. formulas (5.11.9) and (5.11.10) on p. 123 of \citep{Le72} for the asymptotic behavior at $\infty$ of $K_s$ and $I_s$), and thus
\begin{equation}
\label{1012}
    \reallywidehat{U}(\xi,y)=By^sK_s(2\pi\abs{\xi}y).
\end{equation}
Next, we use the condition $\reallywidehat{U}(\xi,0)=\reallywidehat{u}(\xi)$ to fix the constant $B$. When $y\to0^+$ we have
\begin{equation*}
    \reallywidehat{U}(\xi,y)=By^sK_s(2\pi\abs{\xi}y)=B\frac{\pi y^s}{2}\frac{I_{-s}(2\pi \abs{\xi} y)-I_s(2\pi \abs{\xi}y)}{\sin \pi s }\to \frac{B\pi 2^{s-1}}{\Gamma(1-s)\sin \pi s}(2\pi \abs{\xi})^{-s},
\end{equation*}
Now from formula (5.7.1) on p. 108 of \citep{Le72}, we have as $z\to 0$
\begin{equation*}
    I_s(z)\cong \frac{1}{\Gamma(s+1)}\left( \frac{z}{2} \right)^s, \qquad I_{-s}(z)\cong \frac{1}{\Gamma(1-s)}\left( \frac{z}{2} \right)^{-s}.
\end{equation*}
Using this asymptotic, along with the formula \eqref{fact3} above, we find that as $y\to 0^+$,
\begin{equation*}
    \reallywidehat{U}(\xi,y)=By^sK_s(2\pi\abs{\xi}y)\to \frac{B\pi2^{s-1}}{\Gamma(1-s)\sin \pi s}(2\pi \abs{\xi})^{-s}=B2^{s-1}\Gamma(s)(2\pi \abs{\xi})^{-s}.
\end{equation*}
In order to fulfill the condition $\reallywidehat{U}(\xi,0)=\reallywidehat{u}(\xi)$ we impose that the right-hand side of the latter equation equal $\reallywidehat{u}(\xi)$. For this to happen we must have 
\begin{equation*}
    B=\frac{(2\pi\abs{\xi})^s\reallywidehat{u}(\xi)}{2^{s-1}\Gamma(s)}.
\end{equation*}
Substituting such value of $B$ in \eqref{1012}, we finally obtain
\begin{equation}
    \label{1013}
    \reallywidehat{U}(\xi,y)= \frac{(2\pi\abs{\xi})^s\reallywidehat{u}(\xi)}{2^{s-1}\Gamma(s)} y^s K_s(2\pi\abs{\xi}y).
\end{equation}
At this point we want to invert the Fourier transform in \eqref{1013}. In fact, it is clear from the latter equation that the function $U(x,y)$ will be given by \eqref{107}, with $P_s(x,y)$ as in\eqref{108}, if we can show that
\begin{equation}
    \label{1014}
    \mathscr{F}_{\xi \to x}^{-1} \left( \frac{(2\pi \abs{\xi})^s}{2^{s-1}\Gamma(s)}y^s K_s(2\pi\abs{\xi}y) \right)=\frac{\Gamma \left( \frac{n}{2}+s \right)}{\pi^{\frac{n}{2}}\Gamma(s)}\frac{y^{2s}}{(y^2+\abs{\xi}^2)^{\frac{n+2s}{2}}}.
\end{equation}
In view of Theorem \ref{t:FBr}, the latter identity is equivalent to
\begin{equation}
    \label{1015}
    \frac{2^2\pi^{s+1}y^s}{\abs{x}^{\frac{n}{2}-1}}\int_0^{\infty} t^{\frac{n}{2}+s}K_s(2\pi y t)J_{\frac{n}{2}-1}(2\pi \abs{x} t)\,dt= \frac{\Gamma \left( \frac{n}{2}+s \right)}{\pi^{\frac{n}{2}}\Gamma(s)}\frac{y^{2s}}{(y^2+\abs{\xi}^2)^{\frac{n+2s}{2}}}.
\end{equation}
We are thus left with proving \eqref{1015}. Remarkably, this identity has been already been established in \eqref{824} above. Therefore, \eqref{1015} does hold and, with it, \eqref{107} and \eqref{108} as well.

In order to complete the proof of the theorem we are thus left with establishing \eqref{109}. With this objective in mind we note that in view of \eqref{ftlp} in Proposition \ref{p:pdnfp}, proving \eqref{109} is equivalent to showing
\begin{equation}
    \label{1016}
    (2\pi\abs{\xi})^{2s}\reallywidehat{u}(\xi)=-\frac{2^{2s-1}\Gamma(s)}{\Gamma(1-s)}\lim_{y\to0^+}y^a\frac{\partial \reallywidehat{U}}{\partial y}(\xi,y).
\end{equation}
Keeping in mind that $a=1-2s$, and usind the formula
\begin{equation*}
    K_s'(z)=\frac{s}{z}K_s(z)-K_{s+1}(z)
\end{equation*}
(see (5.7.9) on p. 110 of \citep{Le72}), we obtain
\begin{equation*}
    y^a\frac{\partial \reallywidehat{U}}{\partial y}(\xi,y)=\frac{(2\pi \abs{\xi})^{s+1}\reallywidehat{u}(\xi)}{2^{s-1}\Gamma(s)}y^{1-s} \left[ \frac{2s}{(2\pi\abs{\xi})y}K_s(2\pi\abs{\xi}y)-K_{s+1}(2\pi\abs{\xi}y) \right].
\end{equation*}
Since
\begin{equation*}
    \frac{2s}{z}K_s(z)-K_{s+1}(z)=-K_{s-1}(z)=-K_{1-s}(z)
\end{equation*}
(again, by (5.7.9) on p. 110 of \citep{Le72}), we finally have
\begin{equation*}
    y^a\frac{\partial \reallywidehat{U}}{\partial y}(\xi,y)=-\frac{(2\pi \abs{\xi})^{s+1}\reallywidehat{u}(\xi)}{2^{s-1}\Gamma(s)}y^{1-s}K_{1-s}(2\pi \abs{\xi}y).
\end{equation*}
Now, as before, we have as $y\to0^+$,
\begin{equation*}
    y^{1-s}K_{1-s}(2\pi\abs{\xi}y) \longrightarrow 2^{-s}\Gamma(1-s)(2\pi\abs{\xi})^{s-1}.
\end{equation*}
We finally reach the conclusion that, as $y\to 0^+$,
\begin{equation*}
    y^a\frac{\partial \reallywidehat{U}}{\partial y}(\xi,y) \longrightarrow - \frac{\Gamma(1-s)}{2^{2s-1}\Gamma(s)}(2\pi\abs{\xi})^{2s}\reallywidehat{u}(\xi).
\end{equation*}
This proves \eqref{1016}, thus completing the proof.
\end{proof}

\begin{remark}\label{r:102}
Using Proposition \ref{p:gammabeta} with the choice $b=0$ and $a=n+2s$, it is easy to recognize from \eqref{108} that
\begin{equation}
    \label{1017}
    \norma{P_s(\cdot,y)}_{L^1(\Rn)}=\int_{\Rn}P_s(x,y)\,dx=1, \qquad \text{for every }y>0.
\end{equation}
\end{remark}
\begin{remark}\label{r:103}
Notice that when $s=1/2$ we have $a=1-2s=0$, and the extension operator $L_a$ becomes the standard Laplacean $L_a=\Delta_x+ \partial_y^2$ in $\Rnn$. From formula \eqref{108} we obtain in such case
\begin{equation*}
    P_{\frac{1}{2}}(x,y)=\frac{\Gamma \left( \frac{n+1}{2} \right)}{\pi^{\frac{n+1}{2}}}\frac{y}{(y^2+\abs{x}^2)^{\frac{n+1}{2}}},
\end{equation*}
which is in fact the standard Poisson kernel for the upper half-space $\Rnn_+$.
\end{remark}
\begin{remark}
\label{r:104}
If we compare the expression of the Poisson kernel in \eqref{108} with \eqref{818} in Lemma \ref{l:86}, we conclude that, remarkably, we have shown that
\begin{equation}
    \label{1018}
    P_s(x,y)=(-\Delta)^sE_{s,y}(x),
\end{equation}
where for $y>0$ the function $E_{s,y}=c(n,s)(y^2+\abs{x}^2)^{-\frac{n-2s}{2}}$ is the $y$-regularization of the fundamental solution of $(-\Delta)^s$. If we combine \eqref{1018} with \eqref{827} above, we see that we can reformulate \eqref{827} as follows
\begin{equation*}
    \lim_{y\to0^+}P_s(\cdot,y)=\delta \qquad \text{in }\, \mathscr{S}'(\Rn),
\end{equation*}
or, equivalently, for any $\phi \in \Sn$
\begin{equation*}
    \lim_{y\to0^+}\int_{\Rn}P_s(x,y)\phi(x)\,dx=\phi(0).
\end{equation*}
If we let $\tilde{\phi} (x)=\phi(-x)$, then we obtain from the latter limit relation
\begin{equation}
    \label{1019}
    P_s(\cdot,y)\star \phi(x)= \int_{\Rn}P_s(z,y)\tau_{-x}\tilde{\phi}(z)\,dz \longrightarrow \tau_{-x}\tilde{\phi}(0)=\phi(x).
\end{equation}
\end{remark}
\begin{remark}[Alternative proof of \eqref{109}]
\label{r:105}
Usiin the property \eqref{1019} of the Poisson kernel $P_s(x,y)$ we can provide another "short" proof of \eqref{109} along the following lines, see Section 3.1 in \citep{CS07}. Let $u\in \Sn$ and consider the solution $U(x,y)=P_s(\cdot,y)\star u(x)$ to the extension problem \eqref{101}, see \eqref{107}. Using \eqref{1017} we can write
\begin{equation*}
    U(x,y)=\frac{\Gamma \left( \frac{n}{2}+s \right)}{\pi^{\frac{n}{2}}\Gamma(s)}\int_{\Rn}\frac{y^{2s}}{(y^2+\abs{x-z}^2)^{\frac{n+2s}{2}}}\,dz+u(x).
\end{equation*}
Differentiating both sides of this formula with respect to $y$ and keeping in mind that $a=1-2s$, we obtain that as $y \to 0^+$
\begin{equation*}
    y^a\frac{\partial U}{\partial y}(x,y)= 2s \frac{\Gamma \left( \frac{n}{2}+s \right)}{\pi^{\frac{n}{2}}\Gamma(s)}\int_{\Rn}\frac{u(z)-u(x)}{(y^2+\abs{z-x}^2)^{\frac{n+2s}{2}}}\,dz + O(y^2).
\end{equation*}
Letting $y\to 0^+$ and using Lebesgue dominated convergence theorem, we thus find
\begin{equation*}
    \begin{split}
        \lim_{y\to 0^+} y^a\frac{\partial U}{\partial y}(x,y)&=2s \frac{\Gamma \left( \frac{n}{2}+s \right)}{\pi^{\frac{n}{2}}\Gamma(s)}\text{ PV }\int_{\Rn}\frac{u(z)-u(x)}{\abs{z-x}^{n+2s}}\,dz\\
        &=-2s \frac{\Gamma \left( \frac{n}{2}+s \right)}{\pi^{\frac{n}{2}}\Gamma(s)}\gamma(n,s)^{-1}(-\Delta)^su(x),
    \end{split}
\end{equation*}
where in the second equality we have used \eqref{fraclap2} above. If in the latter equation we now replace the expression \eqref{gammacomp} of the constant $\gamma(n,s)$, we reach the conclusion that \eqref{109} is valid.
\begin{flushright}
$\square$
\end{flushright}
\end{remark}
The Poisson kernel $P_s(x,y)$ is of course a solution of $L_aP_s=0$ in $\Rnn_+$. What is instead not obvious is that the $y$-regularization $E_{s,y}$ of the fundamental solution $E_s$ of $(-\Delta)^s$ introduced in \eqref{86} in Lemma \ref{l:85} is also a solution of the extension operator $L_a$. It was shown in \citep{CS07} that, up to a constant, such function is in fact the fundamental solution of $L_a$. The heuristic motivation behind this is that, with $x\in \Rn$, and $\eta \in \mathbb{R}^{a+1}$, if $y=\abs{\eta}$ then the operator
\begin{equation}
    \label{1020}
    y^{-a}L_a= \Delta_x + \frac{\partial^2}{\partial y^2}+\frac{a}{y}\frac{\partial}{\partial y}
\end{equation}
ca be thought of as the Laplacean in the fractional dimension $N=n+a+1$ acting on functions $U(x,\abs{\eta})$. Such heuristic is confirmed by the following result.
\begin{proposition}
\label{p:106}
For $y\in\mathbb{R}$ consider the function $G(x,y)=(\abs{x}^2+y^2)^{-\frac{n-2s}{2}}$, see \eqref{86}. Then, for every $(x,y)\in \Rnn_+$, with $a=1-2s$ we have
\begin{equation*}
    L_aG(x,y)=0.
\end{equation*}
\end{proposition}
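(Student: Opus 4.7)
The plan is a direct computation in non-divergence form, guided by the dimensional heuristic stated just before the proposition. First, since $y > 0$ on $\mathbb{R}^{n+1}_+$, I would observe that $L_a G = 0$ is equivalent to
\begin{equation*}
\left(\Delta_x + \partial_{yy} + \frac{a}{y}\partial_y\right) G = 0,
\end{equation*}
which is the non-divergence form $y^{-a} L_a$ from \eqref{1020}. Because $G(x,y) = r^{-(n-2s)}$ depends only on $r = \sqrt{|x|^2 + y^2}$, the three differential operations will each produce two kinds of terms, of orders $r^{-(n-2s)-2}$ and $r^{-(n-2s)-4}$.

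Concretely, I would compute, setting $\alpha = n - 2s$:
\begin{equation*}
\Delta_x G = -n\alpha\, r^{-\alpha-2} + \alpha(\alpha+2)|x|^2 r^{-\alpha-4}, \qquad \partial_{yy} G = -\alpha\, r^{-\alpha-2} + \alpha(\alpha+2) y^2 r^{-\alpha-4},
\end{equation*}
\begin{equation*}
\frac{a}{y}\partial_y G = -a\alpha\, r^{-\alpha-2}.
\end{equation*}
Adding these, the $r^{-\alpha-4}$ pieces combine via $|x|^2 + y^2 = r^2$ into $\alpha(\alpha+2) r^{-\alpha-2}$, while the $r^{-\alpha-2}$ pieces sum to $-\alpha(n+1+a) r^{-\alpha-2}$. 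The total collapses to
\begin{equation*}
\alpha \bigl[(\alpha+2) - (n+1+a)\bigr] r^{-\alpha-2} = \alpha(1 - 2s - a)\, r^{-\alpha-2},
\end{equation*}
which vanishes identically by the standing relation $a = 1 - 2s$.

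There is no real obstacle beyond careful bookkeeping of the constants; the computation works out because of the dimensional identification hinted at by \eqref{1020}. Thinking of $y = |\eta|$ for $\eta \in \mathbb{R}^{a+1}$, the operator $y^{-a} L_a$ acts on functions $U(x, |\eta|)$ as the Laplacean in the (possibly fractional) dimension $N = n + a + 1 = n + 2 - 2s$. Since $G(x,y) = r^{-(N-2)}$ is exactly the radial profile of the fundamental solution of the Laplacean in dimension $N$, it is harmonic away from the origin, which is a conceptual explanation for the algebraic cancellation above. If desired, one can present the argument in this more structural way by verifying that a smooth radial function $\varphi(r)$ satisfies $y^{-a} L_a \varphi(r) = \varphi''(r) + \frac{N-1}{r}\varphi'(r)$ and then specializing to $\varphi(r) = r^{-(N-2)}$; this avoids splitting into $x$- and $y$-derivatives altogether.
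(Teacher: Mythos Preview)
Your proof is correct and is essentially the same direct computation as the paper's, just organized slightly differently: the paper writes $G$ as a function of $(|x|,y)$ and shows that $G_{rr}+\tfrac{n-1}{r}G_r$ and $G_{yy}+\tfrac{a}{y}G_y$ are exact negatives of one another, whereas you work with the single radial variable $r=\sqrt{|x|^2+y^2}$ and collect the $r^{-\alpha-2}$ and $r^{-\alpha-4}$ terms. Your closing remark recasting the cancellation via the Laplacean in fractional dimension $N=n+a+1$ is a nice addition that makes the heuristic preceding the proposition precise.
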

\begin{proof}
It is convenient to use the expression of \eqref{1020} on functions depending on $r=\abs{x}$ and $y$
\begin{equation*}
    y^{-a}L_a=\frac{\partial^2}{\partial r^2} + \frac{n-1}{r}\frac{\partial}{\partial r} + \frac{\partial^2}{\partial y^2} + \frac{a}{y}\frac{\partial}{\partial y}.
\end{equation*}
Then, the proof becomes a simple computation. Abusing the notation we write 
\begin{equation*}
    G(x,y)=G(r,y)=(r^2+y^2)^{-\frac{n-2s}{2}}.
\end{equation*} 
We have
\begin{equation*}
    G_r=-(n+a-1)(r^2+y^2)^{-\frac{n+a-1}{2}-1}r,
\end{equation*}
\begin{equation*}
    G_{rr}=(n+a-1)(r^2+y^2)^{-\frac{n+a-1}{2}-2}((n+a)r^2-y^2).
\end{equation*}
This gives
\begin{equation*}
    G_{rr}+\frac{n-1}{r}G_r=(n+a-1)(r^2+y^2)^{-\frac{n+a-1}{2}-2}((1+a)r^2-ny^2).
\end{equation*}
On the other hand, a similar computation gives
\begin{equation*}
    G_{yy}+\frac{a}{y}G_y=-(n+a-1)(r^2+y^2)^{-\frac{n+a-1}{2}-2}((1+a)r^2-ny^2).
\end{equation*}
Adding the latter two equations gives the desired conclusion $L_a G=0$.
\end{proof}



\chapter{Fractional calculus}
\label{chp:3}

\section{The heat semigroup}\label{s:61}
Given a set $X$, a \emph{dynamical system} is a family $\{T(t) \}_{t\ge 0}$ of mappings $T(t):X \to X$ such that
\begin{itemize}
    \item $T(t+s)=T(t)T(s)$ for all $t,s \ge 0$,
    \item $T(0)=I_X$.
\end{itemize}
One can interpret $X$ as the set of all states of a system, $t \in [0,\infty)$ as time, and $T(t)$ as the map describing the change of a state $x\in X$ at time $t=0$ into the state $T(t)x$ at time $t>0$. When the state space $X$ is a vector space and each $T(t)$ is a linear operator on $X$, then $\{T(t) \}_{t\ge 0}$ is called a (one-parameter) \emph{semigroup} of operators. When $X$ is a normed space, we say that it is a semigroup of \emph{contractions} on $X$ if for every $t\ge0$
\begin{equation*}
    \norma{T(t)x}\le \norma{x}, \qquad x \in X.
\end{equation*}
When the normed space $X$ is a Banach space we say that a semigroup of bounded linear operators $\{ T(t)\}_{t\ge0}$ on $X$ is \emph{strongly continuous} if for every $x\in X$ its orbit map
\begin{equation*}
    t \longrightarrow T(t)x
\end{equation*}
is continuous from $[0,\infty)$ into X. Strongly continuous semigroups are important because they represent a generalisation of the exponential function $t \to e^{tA}$ of a matrix $A\in M_{n \times n}(\mathbb{C})$. Just as exponential functions provide a solutions of a scalar linear constant coefficient ordinary differential equations, strongly continuous semigroups provide solutions of linear constant coefficient ordinary differential equations in Banach spaces. Typically, such differential equations in Banach spaces arise from PDEs. 


We begin our discussion by considering the ubiquitous Gaussian
\begin{equation*}
    K(x)=(4\pi)^{-\frac{n}{2}}e^{-\frac{\abs{x}^2}{4}}.
\end{equation*}
Obviously, $K\in L^1(\Rn)$ and we easily have
\begin{equation*}
    \int_{\Rn}K(x)\,dx=1.
\end{equation*}
We next consider the following approximate identity associated with such kernel $K$
\begin{equation}
    \label{61}
    G(x,t):= t^{-\frac{n}{2}}K \left( \frac{x}{\sqrt{t}} \right)=(4\pi t)^{-\frac{n}{2}}e^{-\frac{\abs{x}^2}{4t}} \qquad t>0.
\end{equation}
The inquisitive reader might winder why we have scaled by $\sqrt{t}$ and not just $t$. This is due to the fact that the function $G(x,t)$ introduced in \eqref{61} is the fundamental solution of the heat operator $\partial_t-\Delta_x$, and for such operator the correct scaling is provided by the non-homogeneous (parabolic) dilations $\lambda \to (\lambda x, \lambda^2 t)$.

The next proposition contains an elementary but very important property of the function defined by \eqref{61}.
\begin{proposition}[Chapman-Kolmogorov equation]
\label{p:62}
For every $s,t >0$, $x,y\in\Rn$ one has
\begin{equation*}
    G(x-y,t+s)= \int_{\Rn} G(x-z,t)G(z-y,s)\,dz.
\end{equation*}
\end{proposition}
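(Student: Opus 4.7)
My plan is to prove the Chapman--Kolmogorov identity by direct computation, exploiting the explicit Gaussian form of $G(x,t)$. Substituting the definition \eqref{61} on the right-hand side, one reduces to the integral
\begin{equation*}
    (4\pi t)^{-\frac{n}{2}}(4\pi s)^{-\frac{n}{2}} \int_{\Rn} \exp\!\left(-\frac{\abs{x-z}^2}{4t}-\frac{\abs{z-y}^2}{4s}\right) dz.
\end{equation*}
The strategy is to complete the square in the variable $z$ in the exponent, separate a factor independent of $z$, and then evaluate the resulting Gaussian integral via the classical identity $\int_{\Rn} e^{-\alpha\abs{w}^2}\,dw=(\pi/\alpha)^{n/2}$.

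Concretely, I would first expand $\abs{x-z}^2/t+\abs{z-y}^2/s$ and group the terms in $z$ to obtain
\begin{equation*}
    \frac{t+s}{ts}\abs{z}^2 - 2\Braket{z,\tfrac{x}{t}+\tfrac{y}{s}} + \frac{\abs{x}^2}{t}+\frac{\abs{y}^2}{s}.
\end{equation*}
Completing the square produces
\begin{equation*}
    \frac{t+s}{ts}\abs*{z-\frac{sx+ty}{t+s}}^2 + \frac{\abs{x-y}^2}{t+s},
\end{equation*}
where the last reduction uses the algebraic identities $\tfrac{1}{t}-\tfrac{s}{t(t+s)}=\tfrac{1}{t+s}$ and its symmetric counterpart to combine $\abs{x}^2/t+\abs{y}^2/s$ with the correction coming from the square completion. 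The integral then factorizes as the $z$-independent exponential $\exp(-\abs{x-y}^2/(4(t+s)))$ times a Gaussian integral with parameter $\alpha=(t+s)/(4ts)$.

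The main remaining step is a bookkeeping check on the normalizing constants: combining $(4\pi t)^{-n/2}(4\pi s)^{-n/2}$ with the Gaussian integral value $(4\pi ts/(t+s))^{n/2}$, the powers of $4\pi$ collapse to $(4\pi)^{-n/2}$ and the factor $(ts)^{-n/2}(ts)^{n/2}=1$, leaving precisely $(4\pi(t+s))^{-n/2}$. Putting the two factors together yields $G(x-y,t+s)$, which is the desired identity.

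I do not expect any genuine obstacle, since the computation is just a Gaussian convolution. If one preferred a more structural argument, an alternative would be to observe that $G(\cdot,t)\star G(\cdot,s)$ and $G(\cdot,t+s)$ have the same Fourier transform---namely $e^{-4\pi^2 t\abs{\xi}^2}e^{-4\pi^2 s\abs{\xi}^2}=e^{-4\pi^2(t+s)\abs{\xi}^2}$ in view of \eqref{812}---and then invoke Fourier inversion. The only point requiring any care in the direct approach is making sure the square-completion is done correctly so that the cross term $-2\Braket{x,y}/(t+s)$ combines with the diagonal terms to reproduce $\abs{x-y}^2/(t+s)$; everything else is mechanical.
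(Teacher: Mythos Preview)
Your proposal is correct and follows essentially the same route as the paper: both arguments substitute the explicit Gaussian, complete the square in the integration variable, and evaluate the resulting Gaussian integral. The only cosmetic differences are that the paper first translates to $y=0$ and writes the completed square in the slightly more opaque form $\abs*{(\tfrac{s}{t(t+s)})^{1/2}x-(\tfrac{t+s}{ts})^{1/2}z}^2$ rather than your $\tfrac{t+s}{ts}\abs*{z-\tfrac{sx+ty}{t+s}}^2$; your version is in fact cleaner, and your Fourier-transform alternative is also a legitimate (and shorter) route that the paper does not take.
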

\begin{proof}
We note that, by translation, it suffices to prove such identity when $y=0$. We thus need to show that for every $x\in\Rn$ and $t,s>0$ we have
\begin{equation*}
     G(x,t+s)= \int_{\Rn} G(x-z,t)G(z,s)\,dz,
\end{equation*}
or equivalently 
\begin{equation*}
    (4 \pi (t+s))^{-\frac{n}{2}}e^{-\frac{\abs{x}^2}{4(t+s)}} = (4 \pi t)^{-\frac{n}{2}}(4 \pi s)^{-\frac{n}{2}}\int_{\Rn} e^{-\left( \frac{\abs{x-z}^2}{4t}+ \frac{\abs{z}^2}{4s} \right)}\,dz.
\end{equation*}
We now perform some elementary manipulations in the exponential in the integral in the right-hand side to find
\begin{equation*}
    \begin{split}
    \frac{\abs{x-z}^2}{4t}+\frac{\abs{z}^2}{4s}&= \frac{\abs{x}^2}{4(t+s)} + \frac{1}{4t}\left( \frac{4s}{4(t+s)}\abs{x}^2 + \abs{z}^2 -2\braket{x,z}+ \frac{\abs{z}^2}{4s}4t\right)\\
    &=\frac{\abs{x}^2}{4(t+s)} + \abs*{ \left( \frac{4s}{4t4(t+s)} \right)^{\frac{1}{2}}x - \left( \frac{4(t+s)}{4t4s} \right)^{\frac{1}{2}}z }^2.
    \end{split}
\end{equation*}
This gives
\begin{equation*}
    \begin{split}
        (4\pi t )^{-\frac{n}{2}}&(4\pi s)^{-\frac{n}{2}} \int_{\Rn} e^{-\left( \frac{\abs{x-z}^2}{4t}+ \frac{\abs{z}^2}{4s} \right)}\,dz\\
        &=(4\pi t )^{-\frac{n}{2}}(4\pi s)^{-\frac{n}{2}}e^{-\frac{\abs{x}^2}{4(t+s)}} \int_{\Rn} e^{- \abs*{ \left( \frac{4s}{4t 4(t+s)} \right)^{\frac{1}{2}}x - \left( \frac{4(t+s)}{4t 4s} \right)^{\frac{1}{2}}z}^2}\,dz.
    \end{split}
\end{equation*}
The change of variable
\begin{equation*}
    z \longrightarrow \xi = \left( \frac{4s}{4t 4(t+s)} \right)^{\frac{1}{2}}x - \left( \frac{4(t+s)}{4t 4s} \right)^{\frac{1}{2}}z,
\end{equation*}
for which we have
\begin{equation*}
    d\xi=\left( \frac{4(t+s)}{4t 4s} \right)^{\frac{n}{2}}\,dz,
\end{equation*}
now gives
\begin{equation*}
    \begin{split}
        (4\pi t )^{-\frac{n}{2}}&(4\pi s)^{-\frac{n}{2}} \int_{\Rn} e^{-\left( \frac{\abs{x-z}^2}{4t}+ \frac{\abs{z}^2}{4s} \right)}\,dz\\
        &=\frac{\pi^{-\frac{n}{2}}}{(4(t+s))^{\frac{n}{2}}}\int_{\Rn}e^{-\abs{\xi}^2}\,d\xi=(4\pi (t+s))^{-\frac{n}{2}}e^{-\frac{\abs{x}^2}{4(t+s)}},
    \end{split}
\end{equation*}
which finally proves the desired conclusion.
\end{proof}

The next result expresses a fundamental property of the function $G(x,t)$.
\begin{lemma}
\label{l:63}
For every $x \in Rn$ and $t>0$ one has
\begin{equation*}
    \partial_t G(x,t) - \Delta G(x,t)=0.
\end{equation*}
\end{lemma}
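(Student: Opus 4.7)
The plan is to verify the identity by direct differentiation, since $G(x,t)$ is an explicit Gaussian and both sides are smooth on $\mathbb{R}^n \times (0,\infty)$. The most efficient route is to exploit the logarithmic derivative: writing $\log G(x,t) = -\frac{n}{2}\log(4\pi t) - \frac{|x|^2}{4t}$ reduces the whole computation to two short one-line derivatives in $t$ and in each $x_i$.

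First I would compute the time derivative. From the logarithmic derivative,
\begin{equation*}
    \partial_t G(x,t) = G(x,t)\left(-\frac{n}{2t} + \frac{|x|^2}{4t^2}\right).
\end{equation*}
Next I would compute the spatial gradient and Laplacean. Since $\partial_{x_i}\log G = -\frac{x_i}{2t}$, we get $\partial_{x_i}G = -\frac{x_i}{2t}G$, and then
\begin{equation*}
    \partial_{x_ix_i}G(x,t) = \left(-\frac{1}{2t} + \frac{x_i^2}{4t^2}\right)G(x,t).
\end{equation*}
Summing in $i=1,\dots,n$ yields $\Delta G(x,t) = G(x,t)\left(-\frac{n}{2t} + \frac{|x|^2}{4t^2}\right)$, which coincides with $\partial_t G(x,t)$. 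Subtracting gives the claim.

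There is essentially no obstacle here: the calculation is routine and everything in sight is smooth for $t>0$, so one can differentiate under the exponential freely. One minor point worth stating for cleanliness is that the power prefactor $(4\pi t)^{-n/2}$ contributes the term $-\frac{n}{2t}G$ in $\partial_t G$, while the exponent contributes $\frac{|x|^2}{4t^2}G$; on the spatial side the roles are reversed, with the $n$ copies of $-\frac{1}{2t}$ coming from differentiating the linear factor $x_i$ in $\partial_{x_i}G$, and the $\frac{|x|^2}{4t^2}$ coming from the exponential. This structural matching is really the content of the lemma, and it is also the reason the "correct" parabolic scaling $\lambda \mapsto (\lambda x,\lambda^2 t)$ was used in the definition \eqref{61}.
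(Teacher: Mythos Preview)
Your proof is correct and follows essentially the same approach as the paper: both compute $\partial_t G$ and $\Delta G$ directly and observe that each equals $G(x,t)\bigl(-\tfrac{n}{2t}+\tfrac{|x|^2}{4t^2}\bigr)$. The only cosmetic difference is that the paper organizes the spatial computation via $\Delta G=-\tfrac{1}{2t}\,\dive(G(\cdot,t)x)$ rather than through the logarithmic derivative, but the content is identical.
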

\begin{proof}
From the definition \eqref{61} it is immediate too verify that
\begin{equation*}
    \nabla G(x,t)= -\frac{x}{2t}G(x,t).
\end{equation*}
Then we find
\begin{equation*}
    \Delta G(x,t)= -\frac{1}{2t} \text{div }(G(\cdot,t)x)= -\frac{n}{2t}G(x,t)+\frac{\abs{x}^2}{4t}G(x,t).
\end{equation*}
On the other hand, differentiating in $t$ we easily find
\begin{equation*}
    \partial_t G(x,t)= -\frac{n}{2t}G(x,t) + \frac{\abs{x}^2}{4t}G(x,t). 
\end{equation*}
The desired conclusion follows.
\end{proof}

For reasons that will become clear subsequently we now introduce a special notation for the convolution with $G(\cdot,t)$
\begin{equation}
    \label{62}
    P_t f(x)= G(\cdot,t) \star f(x)= \int_{\Rn}G(x-y,t)f(y)\,dy,
\end{equation}
where $f$ is a measurable function on $\Rn$ for which the integral \eqref{62} makes sense. We can see that, as a linear operator, $P_t : L^p(\Rn) \to L^p(\Rn)$ for $1 \le p \le \infty$ and moreover
\begin{equation}
    \label{63}
    \norma{P_t f}_{L^p(\Rn)}\le \norma{f}_{L^p(\Rn)}.
\end{equation}
Furthermore, we have the following.
\begin{proposition}
\label{p:64}
Let $t,s>0$. For every $f\in L^p(\Rn)$ we have
\begin{equation*}
    P_{t+s}f=P_t(P_sf).
\end{equation*}
\end{proposition}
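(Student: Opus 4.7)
The plan is to obtain the semigroup identity by directly unwinding the definition of $P_t$ in terms of convolution with the Gaussian kernel $G(\cdot,t)$ and then invoking the Chapman--Kolmogorov identity from Proposition~\ref{p:62}. The computation is essentially a change of order of integration combined with a one-line application of the previous result, so I would organise it as follows.

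First, starting from \eqref{62}, for a fixed $x \in \mathbb{R}^n$ I would write
\begin{equation*}
    P_t(P_s f)(x) = \int_{\mathbb{R}^n} G(x-z,t)\,P_s f(z)\,dz = \int_{\mathbb{R}^n} G(x-z,t) \left( \int_{\mathbb{R}^n} G(z-y,s)\,f(y)\,dy \right)\,dz.
\end{equation*}
The aim is then to swap the order of integration and apply Proposition~\ref{p:62} to the inner $z$-integral, which gives $\int_{\mathbb{R}^n} G(x-z,t) G(z-y,s)\,dz = G(x-y,t+s)$. After the swap one recognises
\begin{equation*}
    P_t(P_s f)(x) = \int_{\mathbb{R}^n} G(x-y,t+s)\, f(y)\,dy = P_{t+s}f(x),
\end{equation*}
which is the desired conclusion.

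The only step requiring a small amount of care is the use of Fubini's theorem to interchange the two integrals, and this is where I would place the technical content of the proof. Since $G(\cdot,\cdot) > 0$, applying Tonelli's theorem to $\lvert f \rvert$ reduces matters to checking that the iterated integral of $G(x-z,t)\,G(z-y,s)\,\lvert f(y)\rvert$ is finite. One way is to observe that by the $L^p$-contractivity \eqref{63} applied to $\lvert f\rvert$, the function $z \mapsto P_s(\lvert f\rvert)(z)$ lies in $L^p(\mathbb{R}^n)$, so $P_t(P_s \lvert f\rvert)(x)$ is finite for a.e.\ $x$; alternatively, one may first reduce to $f \in \mathscr{S}(\mathbb{R}^n)$, where absolute integrability is immediate from the Gaussian decay, and then extend to $f \in L^p(\mathbb{R}^n)$ by density together with the contractivity estimate \eqref{63} (which guarantees continuity of both sides of the claimed equality in the $L^p$-norm).

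I do not expect any genuine obstacle: the Chapman--Kolmogorov identity does all the heavy lifting, and Fubini is routine once the nonnegativity of $G$ is exploited. The mildly delicate point, if one wants to cover the full range $1 \le p \le \infty$ at once, is the case $p = \infty$, where one cannot appeal to density in $L^\infty$; but here $P_s f$ is bounded by $\|f\|_{L^\infty}$, and Fubini can be applied directly since $G(x-\cdot,t)\,G(\cdot - y,s) \in L^1(\mathbb{R}^n \times \mathbb{R}^n)$ in $(z,y)$ against the bounded function $f(y)$ on any region where $|y|$ is controlled and on the complement one uses the decay of $G$.
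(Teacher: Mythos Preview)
Your proof is correct and follows essentially the same approach as the paper: both arguments reduce the identity to the Chapman--Kolmogorov relation of Proposition~\ref{p:62} together with an interchange of integrals. The paper's version is slightly terser (it starts from $P_{t+s}f$ and expands outward rather than starting from $P_t(P_sf)$, and it does not spell out the Fubini justification), but the substance is the same.
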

\begin{proof}
In view of \eqref{63} and Proposition \ref{p:62} we have
\begin{equation*}
    \begin{split}
        P_{t+s}f(x) &= \int_{\Rn} G(x-y,t+s)f(y)\, dy = \int_{\Rn} \int_{\Rn} G(x-z,t)G(z-y,s)\,dzf(y)\,dy\\
        &=\int_{\Rn}G(x-z,t)P_sf(z)\,dz=P_t(P_sf)(x).
    \end{split}
\end{equation*}
\end{proof}

\begin{theorem}
\label{t:65}
Let $1\le p < \infty$. If $f \in L^p(\Rn)$ we have
\begin{equation}
    \label{64}
    \lim_{t \to 0^+}\norma{P_t f -f}_{L^p(\Rn)}=0.
\end{equation}
If instead $f\in L^{\infty}(\Rn)$, then we have
\begin{equation}
    \label{65}
    \lim_{t \to 0^+}P_tf(x)=f(x)
\end{equation}
at every point $x \in \Rn$ of continuity for $f$.
\end{theorem}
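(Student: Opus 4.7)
The plan is to prove this via the classical approximation-to-identity argument, exploiting two structural facts about the Gaussian kernel $G(\cdot,t)$: it has total mass $1$ and it concentrates near the origin as $t\to 0^+$. Concretely, $\int_{\mathbb{R}^n} G(y,t)\,dy = 1$ for all $t>0$ (a consequence of the computation $\int_{\mathbb{R}^n} K(x)\,dx = 1$ after the change of variables $x = y/\sqrt{t}$), and moreover for every fixed $\delta > 0$ one has $\int_{|y| \ge \delta} G(y,t)\,dy \to 0$ as $t \to 0^+$. The latter follows at once from the scaling $G(y,t) = t^{-n/2} K(y/\sqrt{t})$: the change of variable $z = y/\sqrt{t}$ gives $\int_{|y|\ge\delta} G(y,t)\,dy = \int_{|z|\ge\delta/\sqrt{t}} K(z)\,dz$, which tends to $0$ by dominated convergence since $K \in L^1(\mathbb{R}^n)$.

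For the $L^p$ statement with $1 \le p < \infty$, since $\int G(y,t)\,dy = 1$ I would rewrite
\begin{equation*}
    P_t f(x) - f(x) = \int_{\mathbb{R}^n} G(y,t)\bigl[f(x-y)-f(x)\bigr]\,dy,
\end{equation*}
and then apply Minkowski's integral inequality to obtain
\begin{equation*}
    \norma{P_t f - f}_{L^p(\mathbb{R}^n)} \le \int_{\mathbb{R}^n} G(y,t)\,\norma{\tau_{-y} f - f}_{L^p(\mathbb{R}^n)}\,dy.
\end{equation*}
Rescaling $y = \sqrt{t}\,z$, the right-hand side becomes $\int_{\mathbb{R}^n} K(z)\,\norma{\tau_{-\sqrt{t}\,z} f - f}_{L^p}\,dz$. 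Now I invoke the standard continuity of translations in $L^p$ for $1\le p < \infty$: $\norma{\tau_h f - f}_{L^p} \to 0$ as $|h|\to 0$. Since $\norma{\tau_{-\sqrt{t}\,z} f - f}_{L^p} \le 2\norma{f}_{L^p}$ uniformly and $K \in L^1$, dominated convergence yields $\norma{P_t f - f}_{L^p} \to 0$ as $t\to 0^+$.

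For the pointwise statement at a point $x$ of continuity of $f \in L^\infty(\mathbb{R}^n)$, I would use the same identity
\begin{equation*}
    P_t f(x) - f(x) = \int_{\mathbb{R}^n} G(y,t)\bigl[f(x-y)-f(x)\bigr]\,dy,
\end{equation*}
and split the integral at $|y| = \delta$. Given $\varepsilon > 0$, choose $\delta > 0$ so that $|f(x-y) - f(x)| < \varepsilon$ for $|y| < \delta$; the contribution of the inner region is then bounded by $\varepsilon \int G(y,t)\,dy = \varepsilon$. The contribution of $\{|y|\ge \delta\}$ is bounded by $2\norma{f}_{L^\infty} \int_{|y|\ge \delta} G(y,t)\,dy$, which by the concentration property above tends to $0$ as $t\to 0^+$. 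Letting $t\to 0^+$ and then $\varepsilon\to 0$ yields \eqref{65}.

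The only genuinely non-routine ingredient is the $L^p$-continuity of translations $\norma{\tau_h f - f}_{L^p}\to 0$; this is the standard density argument (approximate $f$ by $C_c$ functions), and it fails for $p=\infty$, which is precisely why the two regimes require separate statements. Everything else reduces to the two structural properties of $G$ noted at the outset, together with dominated convergence.
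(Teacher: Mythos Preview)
Your argument is correct and is exactly the classical approximate-identity proof. The paper, however, states Theorem~\ref{t:65} without proof (it is presented as a standard fact about the heat semigroup, immediately followed by its consequence Proposition~\ref{p:66}), so there is no paper proof to compare against. Your write-up would serve perfectly well as the omitted justification; the only ingredient worth flagging explicitly is that the $L^p$-continuity of translations for $1\le p<\infty$ is precisely what drives the dichotomy with the $p=\infty$ case, and you have already noted this.
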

As a consequence of Proposition \ref{p:64}, \eqref{63} and Theorem \ref{t:65} we obtain the following basic result.
\begin{proposition}
\label{p:66}
For every $1\le p\le \infty$ the one-parameter family $\{ P_t \}_{t \ge 0}$ is a semigroup of contractions on $L^p(\Rn)$. The semigroup is strongly continuous when $1\le p < \infty$.
\end{proposition}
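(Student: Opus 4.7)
The plan is to assemble Proposition \ref{p:66} directly from the three ingredients already available in the excerpt: the semigroup identity $P_{t+s}=P_tP_s$ of Proposition \ref{p:64}, the $L^p$-contraction bound \eqref{63}, and the approximation-of-identity statement \eqref{64} in Theorem \ref{t:65}. The identity element of the family is supplied by declaring $P_0=I$, which is consistent with Theorem \ref{t:65} since the latter asserts that $P_tf\to f$ in $L^p$ as $t\to 0^+$ for $f\in L^p(\Rn)$, $1\le p<\infty$.

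First I would record the two algebraic conditions for a semigroup of contractions: $P_0=I$ (by definition) together with $P_{t+s}=P_tP_s$ for all $t,s\ge 0$ (Proposition \ref{p:64}), and the contraction estimate $\norma{P_tf}_{L^p(\Rn)}\le\norma{f}_{L^p(\Rn)}$ for every $t\ge 0$ and every $f\in L^p(\Rn)$, $1\le p\le\infty$, which is \eqref{63}. Together these say that $\{P_t\}_{t\ge 0}$ is a one-parameter semigroup of contractions on $L^p(\Rn)$ for every $1\le p\le\infty$, and no further work is needed for this portion.

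For strong continuity in the range $1\le p<\infty$, what must be shown is that for each fixed $f\in L^p(\Rn)$ the orbit map $t\mapsto P_tf$ is continuous from $[0,\infty)$ into $L^p(\Rn)$. Continuity at the origin is exactly the content of \eqref{64} in Theorem \ref{t:65}. For continuity at an arbitrary $t_0>0$, I would split into right- and left-continuity: for $h>0$ the semigroup property and the contractivity \eqref{63} give
\begin{equation*}
\norma{P_{t_0+h}f-P_{t_0}f}_{L^p(\Rn)}=\norma{P_{t_0}(P_hf-f)}_{L^p(\Rn)}\le\norma{P_hf-f}_{L^p(\Rn)},
\end{equation*}
which tends to $0$ as $h\to 0^+$ by \eqref{64}; and for $0<h<t_0$,
\begin{equation*}
\norma{P_{t_0-h}f-P_{t_0}f}_{L^p(\Rn)}=\norma{P_{t_0-h}(f-P_hf)}_{L^p(\Rn)}\le\norma{f-P_hf}_{L^p(\Rn)}\longrightarrow 0
\end{equation*}
again by \eqref{64}. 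This gives continuity of the orbit map at every $t_0\ge 0$ and completes the proof.

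The case $p=\infty$ is excluded from strong continuity on purpose: \eqref{65} in Theorem \ref{t:65} only guarantees pointwise convergence $P_tf(x)\to f(x)$ at continuity points of $f$, and in general $\norma{P_tf-f}_{L^\infty(\Rn)}\not\to 0$ (for instance if $f$ is merely bounded and discontinuous). I do not expect any real obstacle in this proof: all the analytic content, in particular the nontrivial $L^p$-convergence of the Gaussian approximate identity, has already been absorbed into Theorem \ref{t:65}, and the step of promoting continuity at $t=0$ to continuity at every $t_0>0$ is the standard one-line argument based on the semigroup law and the contractivity.
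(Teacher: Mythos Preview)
Your proof is correct and follows exactly the route the paper takes: the paper does not spell out a proof of Proposition \ref{p:66} but simply declares it a consequence of Proposition \ref{p:64}, \eqref{63}, and Theorem \ref{t:65}, which is precisely the assembly you perform. Your additional one-line argument promoting continuity at $t=0$ to continuity at every $t_0>0$ is the standard step the paper leaves implicit.
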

The family of operators $\{ P_t \}_{t \ge 0}$ is called the \emph{heat semigroup} in $\Rn$. The name is justified by the fact that the function $u(x,t)=P_t f(x)$ solves the Cauchy problem for the heat equation $\partial_t u - \Delta u=0$ in $\Rn \times \mathbb{R}^+$. Denote the semigroup $P_t$ with the symbol $e^{t\Delta}$. Clearly, if we formally let $t=0$ we find $u(x,0)=f(x)$. Furthermore, by differentiating formally in $t$ one has
\begin{equation}
    \label{66}
    u_t(x,t)=\Delta e^{t \Delta}f(x)= \Delta u(x,t),
\end{equation}
therefore $u(x,t)=P_tf(x)$ satisfies the heat equation $\partial_t u - \Delta u=0$ in $\Rn \times \mathbb{R}^+$.
\begin{proposition}
\label{p:67}
Given $f \in C(\Rn) \cap L^{\infty}(\Rn)$, the function $u(x,t)=P_t f(x)$ is a solution of the Cauchy problem for the heat equation
\begin{equation*}
    \begin{cases}
    \partial_t u - \Delta u=0 \quad \text{ in } \Rn \times \mathbb{R}^+,\\
    u(x,0)=f(x).
    \end{cases}
\end{equation*}
\end{proposition}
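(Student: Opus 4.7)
The plan is to verify the two requirements of the Cauchy problem separately: first, that $u(x,t) := P_t f(x)$ satisfies the heat equation in $\Rn \times \mathbb R^+$, and second, that $u(x,t) \to f(x)$ as $t \to 0^+$. The second part is essentially free from the machinery already developed: since $f \in C(\Rn) \cap L^{\infty}(\Rn)$, every $x \in \Rn$ is a point of continuity of $f$, so the pointwise convergence \eqref{65} from Theorem \ref{t:65} gives $\lim_{t \to 0^+} P_t f(x) = f(x)$ for every $x \in \Rn$.

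For the PDE, the strategy is to push the differential operator $\partial_t - \Delta_x$ inside the convolution integral \eqref{62} and then invoke Lemma \ref{l:63}, which tells us that the integrand is annihilated. Concretely, one wants to show that for any $(x_0,t_0) \in \Rn \times \mathbb R^+$,
\begin{equation*}
    (\partial_t - \Delta_x) \int_{\Rn} G(x-y,t) f(y)\,dy \Big|_{(x_0,t_0)} = \int_{\Rn} (\partial_t - \Delta_x) G(x-y,t) \Big|_{(x_0,t_0)} f(y)\,dy = 0.
\end{equation*}
The first equality is the heart of the matter; the second is Lemma \ref{l:63}.

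The main technical step, then, is the justification of differentiation under the integral sign. I would proceed by fixing an arbitrary $(x_0,t_0) \in \Rn \times \mathbb R^+$ and a small neighborhood $B(x_0,r) \times [t_0 - \delta, t_0 + \delta] \subset \Rn \times \mathbb R^+$. On this compact set, one verifies by direct computation from \eqref{61} that $|\partial_t G(x-y,t)|$ and $|\partial_{x_i} G(x-y,t)|$, $|\partial_{x_i x_j}^2 G(x-y,t)|$ are bounded by a constant multiple (depending on $r, \delta, t_0$) of a Gaussian of the form $C(t_0,\delta)\,e^{-c(t_0,\delta)|y-x_0|^2}$ for $|y-x_0|$ large, which belongs to $L^1(\Rn,dy)$. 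Since $f \in L^{\infty}(\Rn)$, the product of this Gaussian bound with $|f(y)|$ is an integrable majorant, and the standard theorem on differentiation under the integral (via dominated convergence applied to difference quotients) applies. Iterating the argument, one in fact shows $u \in C^{\infty}(\Rn \times \mathbb R^+)$.

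The trickiest piece is the bookkeeping of the Gaussian estimates: one must control the explosion of the $1/t$ factors coming from differentiating $G$ uniformly on $[t_0 - \delta, t_0 + \delta]$, which is why it is essential that $t_0 > 0$ and that we work on compact subsets of the open half-space $\Rn \times \mathbb R^+$. I do not expect any conceptual obstacle — only a careful but routine estimation of $|\partial_t G|$ and $|\nabla^2_x G|$. Once the exchange of derivative and integral is justified, Lemma \ref{l:63} closes the argument instantly, and combined with the initial condition handled above, the proposition follows.
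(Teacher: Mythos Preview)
Your proposal is correct and is the standard rigorous route. Note, however, that the paper does not actually supply a proof for this proposition: it is stated without a proof environment, and the only justification offered is the formal computation \eqref{66} in the discussion preceding it, together with the appeal to Theorem~\ref{t:65} for the initial datum. Your argument --- dominated convergence to push $\partial_t - \Delta_x$ through the convolution, then Lemma~\ref{l:63} to kill the integrand, and \eqref{65} for the boundary condition --- is exactly what is needed to make that heuristic rigorous, and nothing in it is problematic.
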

Another fact suggested by \eqref{66} is the commutation property $P_t \Delta=\Delta P_t$. The next proposition establishes this.
\begin{proposition}
\label{p:68}
For every $f\in C_0^{\infty}(\Rn)$ one has for any $x\in \Rn$ and $t>0$
\begin{equation*}
    P_t(\Delta f)(x)=\Delta (P_t f)(x).
\end{equation*}
Moreover in general, if $f \in \Sn$ and $\beta \in \mathbb{N}_0^n$ one has
\begin{equation*}
    P_t(\partial^{\beta}f)(x)=\partial^{\beta}(P_tf)(x).
\end{equation*}
\end{proposition}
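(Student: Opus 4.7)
The plan is to exploit the convolution structure $P_t f = G(\cdot,t)\star f$ and pass the derivative through the integral onto the factor $f$. Concretely, after the change of variable $z=x-y$ one has
\begin{equation*}
P_t f(x) = \int_{\Rn} G(z,t)\, f(x-z)\, dz,
\end{equation*}
so the dependence on $x$ sits entirely in $f$, and a formal differentiation under the integral yields $\partial^{\beta}P_t f(x)=\int_{\Rn}G(z,t)\,\partial^{\beta}f(x-z)\,dz=P_t(\partial^{\beta}f)(x)$. The Laplacean case is then obtained by taking $\beta=2e_k$ and summing over $k=1,\dots,n$.

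To justify differentiation under the integral sign, I would use the standard induction on $|\beta|$: it suffices to treat $\beta=e_k$ and then iterate. Fix $t>0$ and a compact set $K\subset\Rn$; I want to show that for $x\in K$ one may interchange $\partial_{x_k}$ with the integral. For $f\in C_0^{\infty}(\Rn)$ this is immediate: the difference quotient $h^{-1}(f(x+he_k-z)-f(x-z))$ is uniformly bounded by $\norma{\partial_k f}_{L^{\infty}}$ and is supported, for $|h|\le 1$ and $x\in K$, in a fixed compact set in $z$, so the dominated convergence theorem applies with the integrable dominating function $G(z,t)\norma{\partial_k f}_{L^{\infty}}\mathds{1}_{K'}(z)$.

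For $f\in\Sn$ one argues similarly but uses the Schwartz decay rather than compact support: by the mean value theorem,
\begin{equation*}
\abs*{\frac{f(x+he_k-z)-f(x-z)}{h}}\le \sup_{|\tau|\le 1}\abs{\partial_k f(x+\tau he_k-z)}\le C_N(1+\abs{x-z})^{-N}
\end{equation*}
for any $N\in\mathbb{N}$, uniformly in $x\in K$ and $|h|\le 1$, by \eqref{normaS}. Since for $x$ in a compact set the function $z\mapsto G(z,t)(1+\abs{x-z})^{-N}$ is dominated by an $L^1$ function independent of $x$, Lebesgue's dominated convergence theorem allows the limit $h\to 0$ to enter the integral, giving $\partial_k P_t f(x)=P_t(\partial_k f)(x)$. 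Iterating this argument on $|\beta|$ yields $\partial^{\beta}P_t f=P_t(\partial^{\beta}f)$ for every multi-index $\beta\in\mathbb{N}_0^n$, and the identity $P_t(\Delta f)=\Delta(P_t f)$ for $f\in C_0^{\infty}(\Rn)$ follows as a special case.

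The only real obstacle is bookkeeping: checking that the Gaussian $G(\cdot,t)$ together with the Schwartz decay of $\partial^{\beta}f$ and its difference quotients provides a dominating function uniform in $x$ on compact sets. No deep ingredient beyond dominated convergence is needed.
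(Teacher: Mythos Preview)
Your proof is correct, and in fact it proves the more general $\partial^{\beta}$ identity directly. The paper takes a slightly different route for the Laplacean case: instead of changing variables to $z=x-y$ and differentiating under the integral sign, it keeps the integration in the $y$-variable, integrates by parts twice to move $\Delta_y$ from $f$ onto the Gaussian (using compact support to kill boundary terms), then exploits the radial symmetry $\Delta_y e^{-\abs{y-x}^2/4t}=\Delta_x e^{-\abs{y-x}^2/4t}$ to swap $\Delta_y$ for $\Delta_x$ and pull it outside the integral. The paper does not spell out a proof of the general $\partial^{\beta}$ statement. Your approach handles both claims uniformly and avoids integration by parts entirely, at the cost of the dominated-convergence bookkeeping you describe; the paper's argument is specific to $\Delta$ (or more precisely to operators symmetric in $x$ and $y$ when applied to a function of $x-y$) and is slightly slicker in that it needs no justification of differentiation under the integral sign, only the standard Green identity on a ball.
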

\begin{proof}
Let $f \in C_0^2(\Rn)$ and fix $R>0$ so large that $\text{supp } f \subset B(0,R)$. Then
\begin{equation*}
    \begin{split}
        P_t(\Delta f)(x) &= (4 \pi t )^{-\frac{n}{2}}\int_{B(0,R)}e^{-\frac{\abs{y-x}^2}{4t}}\Delta_y f(y)\,dy\\
        &=(4 \pi t )^{-\frac{n}{2}}\int_{B(0,R)}\Delta_y \left( e^{-\frac{\abs{y-x}^2}{4t}}\right)f(y)\,dy\\
        &=(4 \pi t )^{-\frac{n}{2}}\int_{B(0,R)}\Delta_x \left( e^{-\frac{\abs{y-x}^2}{4t}}\right)f(y)\,dy\\
        &= (4 \pi t )^{-\frac{n}{2}} \Delta_x\int_{B(0,R)}e^{-\frac{\abs{y-x}^2}{4t}}f(y)\,dy\\
        &= \Delta (P_t f)(x).
    \end{split}
\end{equation*}
\end{proof}

The following result is a simple but useful consequence of the commutation property in Proposition \ref{p:68}.
\begin{lemma}
\label{l:69}
Let $1 \le p \le \infty$. Given any $f \in \Sn$ for any $t \in [0,1]$ we have
\begin{equation*}
    \norma{P_t f - f}_p \le \norma{\Delta f}_p t.
\end{equation*}
\end{lemma}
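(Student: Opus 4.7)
The plan is to exploit that $u(x,\tau) := P_\tau f(x)$ solves the heat equation, so its time derivative can be rewritten as the spatial Laplacean, which in turn commutes with the semigroup. This lets us express $P_t f - f$ as a time integral of $P_\tau \Delta f$ and then apply the contraction property \eqref{63}.

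First I would write, using the fundamental theorem of calculus (applied pointwise in $x$, which is legitimate since $f\in\mathscr{S}(\mathbb{R}^n)$ makes $(x,\tau)\mapsto P_\tau f(x)$ smooth on $\mathbb{R}^n\times(0,\infty)$ and continuous up to $\tau=0$),
\begin{equation*}
    P_t f(x) - f(x) = \int_0^t \partial_\tau P_\tau f(x)\,d\tau.
\end{equation*}
By Proposition \ref{p:67}, $\partial_\tau P_\tau f = \Delta P_\tau f$, and by the commutation property of Proposition \ref{p:68} (which applies since $f\in\mathscr{S}(\mathbb{R}^n)$), $\Delta P_\tau f = P_\tau \Delta f$. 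Therefore
\begin{equation*}
    P_t f(x) - f(x) = \int_0^t P_\tau(\Delta f)(x)\,d\tau.
\end{equation*}

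Next I would take $L^p$ norms on both sides. For $1\le p<\infty$, Minkowski's integral inequality yields
\begin{equation*}
    \norma{P_t f - f}_p \le \int_0^t \norma{P_\tau(\Delta f)}_p\,d\tau,
\end{equation*}
and the same bound holds trivially for $p=\infty$ by taking the sup inside the integral. Since $\Delta f\in\mathscr{S}(\mathbb{R}^n)\subset L^p(\mathbb{R}^n)$, the contraction estimate \eqref{63} gives $\norma{P_\tau(\Delta f)}_p\le \norma{\Delta f}_p$ for every $\tau\ge 0$. Substituting, we obtain
\begin{equation*}
    \norma{P_t f - f}_p \le \int_0^t \norma{\Delta f}_p\,d\tau = t\,\norma{\Delta f}_p,
\end{equation*}
which is the claim (and, incidentally, holds for every $t\ge 0$, not only $t\in[0,1]$).

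The only mildly delicate point is the passage from the pointwise identity to the integral inequality in $L^p$: one must verify that $\tau\mapsto P_\tau(\Delta f)$ is strongly measurable as an $L^p$-valued map so that Minkowski's integral inequality applies. This is immediate since, by Proposition \ref{p:66}, the semigroup is strongly continuous on $L^p$ for $1\le p<\infty$, and for $p=\infty$ one may simply estimate the modulus of the integrand pointwise before integrating. No genuine obstacle is anticipated; the whole proof rests on the three ingredients already established: the semigroup solves the heat equation, $P_\tau$ commutes with $\Delta$ on Schwartz functions, and $P_\tau$ is an $L^p$-contraction.
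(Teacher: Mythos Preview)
Your proof is correct and follows essentially the same approach as the paper: write $P_t f - f$ as $\int_0^t \partial_\tau P_\tau f\,d\tau$, use the heat equation and the commutation $\Delta P_\tau = P_\tau \Delta$ to replace the integrand by $P_\tau(\Delta f)$, and then apply the contraction estimate \eqref{63} under the integral. Your additional remarks on Minkowski's inequality and strong measurability are more explicit than the paper's presentation but do not change the argument.
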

\begin{proof}
By proposition \ref{p:67} and \ref{p:68} we have for any $f \in \Sn$,
\begin{equation*}
    P_t f(x) - f(x)= \int_0^t \frac{d}{d \tau} P_{\tau}f(x)\,d\tau=\int_0^t \Delta P_{\tau}f(x)\, d\tau= \int_0^t P_{\tau}\Delta f (x)\,d\tau.
\end{equation*}
This gives for any $0 \le t \le 1$,
\begin{equation*}
    \norma{P_t f - f}_p \le \int_0^t \norma{P_{\tau}\Delta f}_p \, d\tau \le \norma{\Delta f}_p \int_0^t\,d\tau= \norma{\Delta f}_p t.
\end{equation*}
where in the second inequality we have used \eqref{63}.
\end{proof}

\section{Ultracontractivity}\label{s:62}
We next establish a basic property of the semigroup $\{ P_t \}_{t \ge 0}$.
\begin{proposition}[Ultracontractivity]
\label{p:610}
Let $1\le p < \infty$ and $f \in L^p(\Rn)$. For every $x\in \Rn$ and $t>0$ we have
\begin{equation}
    \label{67}
    \abs{P_t f(x)} \le \frac{c(n,p)}{t^{\frac{n}{2p}}}\norma{f}_p.
\end{equation}
for a certain constant $c(n,p)>0$ (when $p=1$ one has $c(n,1)=(4\pi)^{-\frac{n}{2}}$). In particular, for any $f \in L^p(\Rn)$ and $x \in Rn$ one has
\begin{equation}
    \label{68}
    \lim_{t \to \infty}\abs{P_t f (x)}=0.
\end{equation}
\end{proposition}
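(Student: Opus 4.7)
The plan is to recognize $P_tf(x)$ as a convolution and bound it by H\"older's inequality using the explicit Gaussian kernel $G(\cdot,t)$. Writing $1/p+1/p'=1$, I would estimate
\begin{equation*}
    \abs{P_tf(x)}=\abs*{\int_{\Rn}G(x-y,t)f(y)\,dy}\le \norma{G(x-\cdot,t)}_{L^{p'}(\Rn)}\norma{f}_{L^p(\Rn)},
\end{equation*}
and the task then reduces to computing $\norma{G(\cdot,t)}_{p'}$ explicitly (translation invariance of Lebesgue measure kills the dependence on $x$).

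To evaluate the $L^{p'}$ norm of the Gaussian, I would note that for $1<p'<\infty$ one has
\begin{equation*}
    \norma{G(\cdot,t)}_{p'}^{p'}=(4\pi t)^{-\frac{np'}{2}}\int_{\Rn}e^{-\frac{p'\abs{y}^2}{4t}}\,dy=(4\pi t)^{-\frac{np'}{2}}\left( \frac{4\pi t}{p'}\right)^{\frac{n}{2}},
\end{equation*}
where I used the standard Gaussian integral $\int_{\Rn}e^{-a\abs{y}^2}\,dy=(\pi/a)^{n/2}$. Since $(p'-1)/p'=1/p$, taking the $p'$-th root yields $\norma{G(\cdot,t)}_{p'}=c(n,p)\,t^{-\frac{n}{2p}}$, with $c(n,p)=(4\pi)^{-\frac{n}{2p}}(p')^{-\frac{n}{2p'}}$. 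The limit case $p=1$ is even simpler: $\norma{G(\cdot,t)}_{\infty}=G(0,t)=(4\pi t)^{-n/2}$, which recovers $c(n,1)=(4\pi)^{-n/2}$ and is consistent with the statement.

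Combining these gives \eqref{67}. The second statement \eqref{68} then follows immediately: since $1\le p<\infty$, the exponent $n/(2p)>0$ (assuming $n\ge 1$), so the right-hand side of \eqref{67} tends to $0$ as $t\to\infty$, uniformly in $x$. I do not expect any serious obstacle here: the only mildly delicate point is the bookkeeping of the exponents to make sure the power of $t$ comes out as $n/(2p)$ and not, say, $n/(2p')$; a quick consistency check with the endpoint $p=1$ (giving $t^{-n/2}$, which matches the pointwise maximum of $G(\cdot,t)$) confirms the computation.
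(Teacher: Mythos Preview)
Your proof is correct and follows essentially the same route as the paper: apply H\"older's inequality to the convolution and compute $\norma{G(\cdot,t)}_{p'}$ explicitly, obtaining the same constant $c(n,p)=(4\pi)^{-n/(2p)}(p')^{-n/(2p')}$. The only cosmetic difference is that the paper carries out the Gaussian integral via the change of variable $z=\sqrt{p'/(4t)}(y-x)$ rather than quoting $\int_{\Rn}e^{-a\abs{y}^2}\,dy=(\pi/a)^{n/2}$, and you treat the endpoint $p=1$ (i.e.\ $p'=\infty$) explicitly while the paper leaves it implicit.
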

\begin{proof}
Applying H\"{o}lder's inequality to \eqref{62} we find
\begin{equation*}
    \abs{P_t f(x)} \le \norma{f}_p \left( \int_{\Rn}G(x-y,t)^{p'}\,dy \right)^{\frac{1}{p'}}= \norma{f}_p (4 \pi t)^{-\frac{n}{2}}\left( \int_{\Rn} e^{-\frac{p'\abs{x-y}^2}{4t}}\,dy \right)^{\frac{1}{p'}},
\end{equation*}
with $1/p+1/p'=1$. By the change of variable $z=\sqrt{\frac{p'}{4t}}(y-x)$, for which $dz=\left(\frac{p'}{4t}\right)^{\frac{n}{2}}\,dy$, the desired conclusion immediately follows with $c(n,p)=\left( \frac{1}{p'}\right)^{\frac{n}{2p'}}(4\pi)^{-\frac{n}{2p}}$.
\end{proof}

Combining Proposition \ref{p:68} and \ref{p:610} we now establish the following remarkable instance of the \emph{subordination principle} of Bochner.

\begin{proposition}
\label{p:611}
Let $n\ge3$. Then the following equation holds in $\mathscr{D}'(\Rn)$
\begin{equation*}
    \Delta_y \int_0^{\infty}G(x-y,t)\,dt=-\delta_x,
\end{equation*}
where $\delta_x$ indicates the Dirac delta at $x$.
\end{proposition}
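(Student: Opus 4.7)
The plan is to test against an arbitrary $\varphi \in C_0^\infty(\Rn)$ and reduce everything to the heat equation satisfied by $P_t \varphi$, using the semigroup machinery already built in the section.

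First I would establish that the function $K(x,y) := \int_0^\infty G(x-y,t)\,dt$ is locally integrable in $y$ for each fixed $x$ (so that it defines a distribution), and that Fubini's theorem applies in the pairing with $\Delta\varphi$. A direct change of variable $u = |x-y|^2/(4t)$ in the $t$-integral gives the explicit identity
\begin{equation*}
\int_0^\infty G(x-y,t)\,dt = \frac{\Gamma\!\left(\tfrac{n}{2}-1\right)}{4\pi^{n/2}}\,|x-y|^{-(n-2)},
\end{equation*}
which is locally integrable precisely because $n\geq 3$ (so $n-2 < n$). Since $\Delta\varphi$ is bounded and compactly supported, the double integral $\int_{\Rn}\int_0^\infty G(x-y,t)|\Delta\varphi(y)|\,dt\,dy$ is finite, which justifies interchanging the order of integration.

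Having Fubini in hand, the core computation is then
\begin{equation*}
\Braket{\Delta_y \!\int_0^\infty \! G(x-y,t)\,dt,\,\varphi}
=\int_0^\infty \!\int_{\Rn} G(x-y,t)\,\Delta\varphi(y)\,dy\,dt
=\int_0^\infty P_t(\Delta\varphi)(x)\,dt.
\end{equation*}
Now I would apply Proposition \ref{p:68} to commute $\Delta$ past $P_t$, obtaining $P_t(\Delta\varphi)(x) = \Delta P_t\varphi(x)$, and then invoke Proposition \ref{p:67}, which says that $u(x,t) := P_t\varphi(x)$ solves $\partial_t u = \Delta u$. Therefore the integrand becomes $\partial_t P_t\varphi(x)$, and by the fundamental theorem of calculus,
\begin{equation*}
\int_0^\infty \partial_t P_t\varphi(x)\,dt = \lim_{T\to\infty} P_T\varphi(x) - \lim_{\varepsilon\to 0^+} P_\varepsilon\varphi(x).
\end{equation*}
The first limit vanishes by the ultracontractivity estimate \eqref{68} in Proposition \ref{p:610} applied with $p=1$ (say), since $\varphi \in L^1(\Rn)$ and $n\geq 3$ ensures $t^{-n/(2p)} \to 0$. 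The second limit equals $\varphi(x)$ by the pointwise convergence \eqref{65} in Theorem \ref{t:65}, as $\varphi$ is continuous at $x$. Hence the expression collapses to $-\varphi(x) = \langle -\delta_x,\varphi\rangle$, which is exactly what we wanted.

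The main obstacle is the justification of Fubini, which needs the precise integrability statement above; the constraint $n \geq 3$ enters in two places (local integrability of $|x-y|^{-(n-2)}$ is automatic, but the decay $t^{-n/2}$ at infinity of $G(0,t)$ is what makes the time integral converge, and equivalently is what makes ultracontractivity give $P_T\varphi(x) \to 0$ for $\varphi \in L^1$). Everything else is a clean application of the Propositions already proved in Section \ref{s:61}.
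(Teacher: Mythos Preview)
Your proof is correct and follows essentially the same route as the paper: compute the time integral of the Gaussian explicitly to identify the Newtonian kernel (hence local integrability), apply Fubini, commute $\Delta$ past $P_t$ via Proposition~\ref{p:68}, convert to $\partial_t P_t\varphi$ via Proposition~\ref{p:67}, and evaluate by the fundamental theorem of calculus using \eqref{65} and \eqref{68}. If anything, you are slightly more explicit than the paper about why Fubini is justified.
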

\begin{proof}
To establish the proposition we need to show that:
\begin{enumerate}
    \item for every $x \in \Rn$ the function $y\to \int_0^{\infty}G(x-y,t)\,dt$ defines an element of $\mathscr{D}'(\Rn)$;
    \item for every $f\in C_0^{\infty}(\Rn)$ one has 
    \begin{equation*}
        \braket{\int_0^{\infty}G(x-y,t)\,dt,\Delta_y f}=-f(x).
    \end{equation*}
\end{enumerate}
Concerning (1), we notice the following remarkable fact
\begin{equation}
    \label{69}
    \int_0^{\infty} G(x-y,t)\,dt= \frac{\Gamma \left( \frac{n-2}{2} \right)}{4\pi^{\frac{n}{2}}}\frac{1}{\abs{x-y}^{n-2}}.
\end{equation}
Equation \eqref{69} follows simply by the change of variable $\tau=\frac{\abs{x-y}^2}{4t}$ in the integral
\begin{equation*}
    \int_0^{\infty} G(x-y,t)\,dt=(4\pi)^{-\frac{n}{2}}\int_0^{\infty}\frac{1}{t^{\frac{n-2}{2}}}e^{-\frac{\abs{x-y}^2}{4t}}\frac{dt}{t}.
\end{equation*}
Since the function $y\to \abs{x-y}^{2-n}$ belongs to $L_{\text{loc}}^1(\Rn)$, conclusion (1) follows from \eqref{69}. To prove (2) let $f\in C_0^{\infty}(\Rn)$. We have by Fubini's theorem
\begin{equation*}
    \begin{split}
        &\braket{\Delta_y \int_0^{\infty} G(x-y,t)\,dt,f}=\braket{\int_0^{\infty} G(x-y,t)\,dt,\Delta_yf}\\
        &=\int_{\Rn}\int_0^{\infty}G(x-y,t)\,dt \Delta_y f(y)\,dy=\int_0^{\infty}\int_{\Rn}G(x-y,t) \Delta_y f(y)\,dy\,dt\\
        &=\int_0^{\infty}P_t(\Delta f)(x)\,dt=\int_0^{\infty}\Delta P_t f(x)\,dt,
    \end{split}
\end{equation*}
where in the last equality we have used \eqref{68}. Since Proposition \ref{p:67} gives $\Delta P_tf(x)=\partial_t P_t f(x)$, by the above computation we find 
\begin{equation*}
    \begin{split}
        &\braket{\Delta_y  \int_0^{\infty} G(x-y,t)\,dt,f}= \int_0^{\infty}\partial_{t}P_tf(x)\,dt=\lim_{T\to\infty,\, \epsilon \to 0^+}\int_{\epsilon}^{T}\partial_tP_tf(x)\,dt\\
        &=\lim_{T\to\infty,\, \epsilon \to 0^+}[P_T f(x)-P_{\epsilon}f(x)]=-f(x),
    \end{split}
\end{equation*}
where in the last equality we have used \eqref{65} and \eqref{640}.
\end{proof}

\section{Fractional powers of the Laplacian}\label{s:64}
In the Chapter \ref{chp:2} we introduced the fractional Laplacian, see \eqref{fraclap}. In this section, we will not follow the original presentation of the subject, but instead use the semigroup $\{ P_t\}_{t\ge0}$ and only subsequently recognise the equivalence of the two notions. The next definition was originally set forth by Balakrishnan and can be be deduced from his seminal works \citep{Bal59}, \citep{Bal60}.
\begin{definition}[Balakrishnan]
\label{d:615}
Let $0<\alpha<2$. The fractional Laplacian of order $\frac{\alpha}{2}$ is defined on a function $f \in \Sn$ by the formula
\begin{equation*}
    (-\Delta)^{\frac{\alpha}{2}}f(x)=-\frac{\frac{\alpha}{2}}{\Gamma \left( 1- \frac{\alpha}{2}\right)} \int_0^{\infty} \frac{1}{t^{1+\frac{\alpha}{2}}}(P_tf(x)-f(x))\,dt.
\end{equation*}
\end{definition}
Our first observation is that Definition \ref{d:615} makes sense, i.e., that the integral in the right-hand side is finite. This will be a consequence of the following result.
\begin{lemma}
\label{l:616}
There exists a constant $C(n)>0$ such that for every $f \in C^2(\Rn)$, with second derivatives in $L^{\infty}(\Rn)$, we have
\begin{equation*}
    \abs{P_tf(x)-f(x)}\le C(n)\norma{\nabla^2f}_{L^{\infty}(\Rn)}t.
\end{equation*}
\end{lemma}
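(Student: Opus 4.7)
The plan is to exploit the fact that $G(\cdot,t)$ is a symmetric probability density, so its convolution with $f$ replaces $f(x)$ by a weighted average of $f$ around $x$, and the deviation of this average from $f(x)$ is controlled by the Hessian through a Taylor expansion.

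First I would use that $\int_{\Rn} G(x-y,t)\,dy=1$ (which follows from \eqref{61} via the standard Gaussian integral, or from \eqref{64} applied to $f\equiv 1$) to write
\begin{equation*}
P_tf(x)-f(x)=\int_{\Rn}G(x-y,t)(f(y)-f(x))\,dy.
\end{equation*}
Changing variable $y=x+z$ and using the radial symmetry $G(z,t)=G(-z,t)$, I would symmetrize by averaging the representation with its $z\mapsto -z$ counterpart, obtaining
\begin{equation*}
P_tf(x)-f(x)=\frac{1}{2}\int_{\Rn}G(z,t)\bigl(f(x+z)+f(x-z)-2f(x)\bigr)\,dz.
\end{equation*}

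The point of this symmetrization is that the linear term in Taylor's formula cancels. Since $f\in C^2(\Rn)$ with bounded second derivatives, a second-order Taylor expansion about $x$ gives
\begin{equation*}
f(x+z)+f(x-z)-2f(x)=\tfrac{1}{2}\bigl(\langle\nabla^2 f(\xi_1)z,z\rangle+\langle\nabla^2 f(\xi_2)z,z\rangle\bigr)
\end{equation*}
for suitable intermediate points $\xi_1,\xi_2$, and hence
\begin{equation*}
\bigl|f(x+z)+f(x-z)-2f(x)\bigr|\le \norma{\nabla^2 f}_{L^\infty(\Rn)}\abs{z}^2.
\end{equation*}

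Inserting this pointwise bound into the symmetrized integral yields
\begin{equation*}
\bigl|P_tf(x)-f(x)\bigr|\le \tfrac{1}{2}\norma{\nabla^2 f}_{L^\infty(\Rn)}\int_{\Rn}G(z,t)\abs{z}^2\,dz.
\end{equation*}
The remaining step is the computation of the second moment of the Gaussian. Using the scaling $z=2\sqrt{t}\,\zeta$ in the definition \eqref{61}, one reduces to the standard integrals $\int_{\Rn}e^{-\abs{\zeta}^2}\,d\zeta=\pi^{n/2}$ and $\int_{\Rn}\abs{\zeta}^2 e^{-\abs{\zeta}^2}\,d\zeta=\tfrac{n}{2}\pi^{n/2}$ (the latter by an integration by parts or polar coordinates), giving
\begin{equation*}
\int_{\Rn}G(z,t)\abs{z}^2\,dz=2nt.
\end{equation*}
Combining the two estimates produces the claim with $C(n)=n$. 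There is no genuine obstacle here: the only subtlety is remembering to symmetrize in $z$ before invoking Taylor, so that the first-order term is eliminated and the naive bound $O(\sqrt{t})$ coming from $\abs{z}$ is upgraded to the sharp $O(t)$ bound coming from $\abs{z}^2$.
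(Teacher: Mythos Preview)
Your proof is correct and follows essentially the same approach as the paper: establish the symmetrized representation \eqref{612}, bound the second-order difference via Taylor, and estimate the second moment of the Gaussian. You actually supply more detail than the paper does, deriving \eqref{612} explicitly and computing the constant $C(n)=n$, whereas the paper simply claims \eqref{612} and leaves the final Gaussian moment as an unspecified $C(n)t$.
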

\begin{proof}
As a first observation we claim that 
\begin{equation}
    \label{612}
    P_t f(x)-f(x)= \frac{1}{2} \int_{\Rn} G(y,t) [f(x+y)+f(x-y)-2f(x)]\,dy.
\end{equation}
Next, we observe that the $C^2$ Taylor formula gives
\begin{equation*}
    \begin{split}
        &f(x+y)=f(x)+ \braket{\nabla f(x),y} + \frac{1}{2}\braket{\nabla^2 f(y^{\star})y,y},\\
        &f(x-y)=f(x)- \braket{\nabla f(x),y} + \frac{1}{2}\braket{\nabla^2 f(y^{\star\star})y,y}.
    \end{split}
\end{equation*}
This implies
\begin{equation*}
    \abs{f(x+y) + f(x-y) -2f(x)}\le \norma{\nabla^2 f}_{L^{\infty}(\Rn)}\abs{y}^2.
\end{equation*}
Substituting this estimate in \eqref{612} we obtain
\begin{equation*}
    \abs{P_tf(x)-f(x)}\le \frac{1}{2}\norma{\nabla^2f}_{L^{\infty}(\Rn)}\int_{\Rn}G(y,t)\abs{y}^2\,dy\le C(n)\norma{\nabla^2f}_{L^{\infty}(\Rn)}t,
\end{equation*}
This proves the desired result.
\end{proof}

With Lemma \ref{l:616} we can now show that the integral defining $(-\Delta)^{\frac{\alpha}{2}}f(x)$ is finite for every $x\in Rn$. We have in fact
\begin{equation*}
    \begin{split}
        \int_0^{\infty}& \frac{1}{t^{1+\frac{\alpha}{2}}}(P_tf(x)-f(x))\,dt\\
        &=\int_0^{1} \frac{1}{t^{1+\frac{\alpha}{2}}}(P_tf(x)-f(x))\,dt+\int_1^{\infty} \frac{1}{t^{1+\frac{\alpha}{2}}}(P_tf(x)-f(x))\,dt.
    \end{split}
\end{equation*}
The integral on $(0,1)$ is finite thanks to Lemma \ref{l:616}. The integral on $(1,\infty)$ is trivially estimated as follows
\begin{equation*}
    \abs*{\int_1^{\infty} \frac{1}{t^{1+\frac{\alpha}{2}}}(P_tf(x)-f(x))\,dt}\le 2 \norma{f}_{L^{\infty}(\Rn)}\int_1^{\infty}\frac{1}{t^{1+\frac{\alpha}{2}}}\,dt < \infty,
\end{equation*}
where we have used \eqref{63} to infer $\norma{P_t f}_{L^{\infty}(\Rn)}\le \norma{f}_{L^{\infty}(\Rn)}$.

We close this section by establishing an "integration by parts" formula for the operator $(-\Delta)^{\frac{\alpha}{2}}$.
\begin{proposition}
\label{p:621}
Let $0<\alpha<2$. For any $f,g\in \Sn$ one has
\begin{equation*}
    \int_{\Rn}\left[ g(-\Delta)^{\frac{\alpha}{2}}f-f(-\Delta)^{\frac{\alpha}{2}}g \right]\,dx=0.
\end{equation*}
\end{proposition}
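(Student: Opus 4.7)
The plan is to unfold the Balakrishnan definition on both sides, swap the order of integration via Fubini, and then exploit the self-adjointness of the heat semigroup $\{P_t\}_{t\ge 0}$ on $L^2(\Rn)$. More precisely, using Definition \ref{d:615} I would write
\begin{equation*}
\int_{\Rn} g(x)(-\Delta)^{\frac{\alpha}{2}}f(x)\,dx
= -\frac{\alpha/2}{\Gamma(1-\alpha/2)}\int_{\Rn} g(x)\int_0^{\infty}\frac{P_tf(x)-f(x)}{t^{1+\frac{\alpha}{2}}}\,dt\,dx,
\end{equation*}
and then aim to interchange the $t$ and $x$ integrals. Once this is allowed, the core of the argument is the symmetry identity
\begin{equation*}
\int_{\Rn} g(x) P_tf(x)\,dx \;=\; \int_{\Rn} f(x) P_tg(x)\,dx,
\end{equation*}
which follows directly from Fubini's theorem applied to the defining convolution \eqref{62}, since the Gauss-Weierstrass kernel satisfies $G(x-y,t)=G(y-x,t)$. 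Subtracting $\int gf$ (which is trivially symmetric in $f,g$) gives $\int g(P_tf-f)\,dx=\int f(P_tg-g)\,dx$. Reinserting this into the $t$-integral and undoing Fubini yields exactly $\int f\,(-\Delta)^{\alpha/2}g\,dx$, proving the claim.

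The main (and only genuine) obstacle is justifying Fubini, i.e.\ showing that the double integral
\begin{equation*}
\iint_{\Rn\times(0,\infty)} \frac{|g(x)|\,|P_tf(x)-f(x)|}{t^{1+\frac{\alpha}{2}}}\,dt\,dx
\end{equation*}
is finite. I would split the $t$-integral at $t=1$. For the near-zero part $(0,1)$ I use Lemma \ref{l:616}, which gives $|P_tf(x)-f(x)|\le C(n)\,\|\nabla^2 f\|_{L^\infty(\Rn)}\,t$ uniformly in $x$; combined with $g\in\Sn\subset L^1(\Rn)$ and the bound $\int_0^1 t\cdot t^{-1-\alpha/2}\,dt=\int_0^1 t^{-\alpha/2}\,dt<\infty$ (recall $0<\alpha<2$), this part is controlled by $C\|\nabla^2 f\|_\infty\|g\|_{L^1(\Rn)}$. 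For the tail $(1,\infty)$ I bound crudely $|P_tf(x)-f(x)|\le 2\|f\|_{L^\infty(\Rn)}$ using the contractivity estimate \eqref{63}, and then $\int_1^\infty t^{-1-\alpha/2}\,dt<\infty$ together with $g\in L^1(\Rn)$ closes the estimate. Because $f,g\in\Sn$, all of these norms are finite, so Fubini applies and the swap is legitimate.

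After justifying Fubini, the rest is mechanical: pull the $x$-integral inside, apply the symmetry $\int g\,P_tf\,dx=\int f\,P_tg\,dx$ pointwise in $t$, re-apply Fubini in the opposite direction (the integrability check being symmetric in $f$ and $g$), and identify the resulting expression as $\int_{\Rn} f(x)(-\Delta)^{\alpha/2}g(x)\,dx$. Subtracting gives the stated identity
\begin{equation*}
\int_{\Rn}\bigl[g\,(-\Delta)^{\frac{\alpha}{2}}f-f\,(-\Delta)^{\frac{\alpha}{2}}g\bigr]\,dx=0,
\end{equation*}
completing the proof.
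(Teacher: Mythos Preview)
Your proposal is correct and follows essentially the same approach as the paper: unfold Balakrishnan's definition, apply Fubini, use the symmetry $\int g\,P_tf\,dx=\int f\,P_tg\,dx$ coming from the evenness of the Gaussian kernel, and fold back. In fact you are more careful than the paper, which simply writes the swap of integrals without comment; your splitting at $t=1$ with Lemma~\ref{l:616} on $(0,1)$ and the $L^\infty$ bound on $(1,\infty)$ is exactly the right justification of the Fubini step.
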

\begin{proof}
We have
\begin{equation*}
    \begin{split}
        \int_{\Rn}& g(x) (-\Delta)^{\frac{\alpha}{2}}f(x)\,dx= - \frac{\frac{\alpha}{2}}{\Gamma \left( 1-\frac{\alpha}{2}\right)} \int_{\Rn}g(x)\int_0^{\infty}\frac{1}{t^{1+\frac{\alpha}{2}}}(P_tf(x)-f(x))\,dt\,dx\\
        &=- \frac{\frac{\alpha}{2}}{\Gamma \left( 1-\frac{\alpha}{2}\right)} \int_0^{\infty}\frac{1}{t^{1+\frac{\alpha}{2}}}\int_{\Rn}g(x)(P_tf(x)-f(x))\,dx\,dt\\
        &=- \frac{\frac{\alpha}{2}}{\Gamma \left( 1-\frac{\alpha}{2}\right)} \int_0^{\infty}\frac{1}{t^{1+\frac{\alpha}{2}}}\int_{\Rn}f(x)(P_tg(x)-g(x))\,dx\,dt\\
        &=\int_{\Rn} f(x) (-\Delta)^{\frac{\alpha}{2}}g(x)\,dx.
    \end{split}
\end{equation*}
\end{proof}

We note that Proposition \ref{p:621} continues to be true if we replace the hypothesis $g\in\Sn$ with $g\in C^2(\Rn)$ with bounded second derivatives. With this observation we obtain the following.
\begin{corollary}
\label{c:622}
For any $f \in \Sn$ one has 
\begin{equation*}
    \int_{\Rn} (-\Delta)^{\frac{\alpha}{2}}f\,dx=0.
\end{equation*}
\end{corollary}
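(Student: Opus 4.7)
The plan is to apply Proposition \ref{p:621} in the extended form mentioned in the remark immediately preceding the corollary, with the test function taken to be the constant function $g \equiv 1$. Since $g$ is smooth with bounded (in fact vanishing) derivatives of every order, the hypothesis ``$g \in C^2(\mathbb{R}^n)$ with bounded second derivatives'' is clearly satisfied.

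The key observation is that $(-\Delta)^{\alpha/2} g = 0$ identically when $g \equiv 1$. Indeed, because the Gaussian kernel $G(\cdot,t)$ integrates to $1$ on $\mathbb{R}^n$, we have $P_t 1(x) = 1$ for every $x \in \mathbb{R}^n$ and every $t > 0$. Therefore, directly from Definition \ref{d:615}, the integrand $t^{-1-\alpha/2}(P_t g(x) - g(x))$ vanishes identically in $t$ and $x$, so $(-\Delta)^{\alpha/2} g \equiv 0$.

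With this choice, Proposition \ref{p:621} (in its extended form) reads
\begin{equation*}
    \int_{\mathbb{R}^n} \bigl[\,1\cdot (-\Delta)^{\alpha/2} f - f \cdot (-\Delta)^{\alpha/2} 1\,\bigr]\, dx = 0,
\end{equation*}
and the second term is zero by the above. This yields the desired identity $\int_{\mathbb{R}^n} (-\Delta)^{\alpha/2} f\, dx = 0$.

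The only non-routine point to address is the legitimacy of taking $g \equiv 1$: the integration by parts derivation uses Fubini's theorem on the joint integrand $g(x)\, t^{-1-\alpha/2}(P_t f(x) - f(x))$, so one must check that this is absolutely integrable on $\mathbb{R}^n \times (0,\infty)$ even when $g$ is only bounded (not Schwartz). This amounts to verifying that $(-\Delta)^{\alpha/2} f \in L^1(\mathbb{R}^n)$ when $f \in \mathscr{S}(\mathbb{R}^n)$, which is the analogue of Corollary \ref{L1nat} in the semigroup framework; it follows by splitting the $t$-integral at $t=1$, using Lemma \ref{l:616} together with $\norma{\nabla^2 f}_{L^\infty} < \infty$ on $(0,1)$, and the ultracontractive bound from Proposition \ref{p:610} with $p=1$ to control $P_t f(x) - f(x)$ on $(1,\infty)$ by a function integrable in $x$. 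Once this integrability is in hand, Fubini applies and the argument goes through verbatim, which is the only mildly technical step.
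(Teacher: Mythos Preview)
Your proposal is correct and follows exactly the approach intended in the paper: the corollary is obtained from the extended version of Proposition \ref{p:621} by choosing $g\equiv 1$, noting that $P_t 1 = 1$ forces $(-\Delta)^{\alpha/2}1 = 0$. Your additional care in justifying the absolute integrability needed for Fubini when $g$ is merely bounded is a welcome elaboration of a point the paper leaves implicit.
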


\section{Balakrishnan met M. Riesz}\label{s:65}
In this section we show that Balakrishnan's definition of the nonlocal operator $(-\Delta)^{\frac{\alpha}{2}}$ coincides with that introduced by M. Riesz in \citep{R38}. Subsequently, we analyse the asymptotic behaviour of this operator as $\alpha \nearrow 2$ and we show that, unsurprisingly, in the limit we obtain the negative of the Laplace operator $\Delta$.
\begin{proposition}
\label{p:623}
Let $0<\alpha<2$. For every $f \in \Sn$ one has
\begin{equation*}
    (-\Delta)^{\frac{\alpha}{2}}f(x)=\frac{\alpha 2^{\alpha-2}\Gamma \left( \frac{n+\alpha}{2} \right)}{\pi^{\frac{n}{2}}\Gamma \left( 1-\frac{\alpha}{2} \right)}\int_{\Rn}\frac{2f(x)-f(x+y)-f(x-y)}{\abs{y}^{n+\alpha}}\,dy.
\end{equation*}
\end{proposition}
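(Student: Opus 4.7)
The plan is to start from Balakrishnan's definition and massage the integrand so that the Gaussian kernel $G(y,t)$ appears explicitly against the symmetric second difference of $f$. The crucial identity is \eqref{612}, i.e.
\begin{equation*}
 P_tf(x)-f(x) = \frac{1}{2}\int_{\Rn} G(y,t)\bigl[f(x+y)+f(x-y)-2f(x)\bigr]\,dy,
\end{equation*}
which holds because $G(\cdot,t)$ is even, so the linear terms in the Taylor expansion of $f(x+y)$ and $f(x-y)$ are killed by symmetrization. Substituting into Definition \ref{d:615} yields the iterated integral
\begin{equation*}
 (-\Delta)^{\alpha/2}f(x) = -\frac{\alpha/2}{2\,\Gamma(1-\alpha/2)} \int_0^\infty \int_{\Rn} \frac{G(y,t)}{t^{1+\alpha/2}}\bigl[f(x+y)+f(x-y)-2f(x)\bigr]\,dy\,dt.
\end{equation*}

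Next, I would invoke Fubini to exchange the order of integration, and reduce the matter to computing
\begin{equation*}
 I(y) := \int_0^\infty \frac{G(y,t)}{t^{1+\alpha/2}}\,dt = (4\pi)^{-n/2}\int_0^\infty t^{-1-\alpha/2 - n/2}\, e^{-|y|^2/(4t)}\,dt.
\end{equation*}
The substitution $\tau = |y|^2/(4t)$ converts the time integral into a standard Gamma integral: a direct calculation gives
\begin{equation*}
 I(y) = \frac{2^\alpha\,\Gamma\!\left(\frac{n+\alpha}{2}\right)}{\pi^{n/2}\,|y|^{n+\alpha}}.
\end{equation*}
Plugging this back and collecting the constants $\alpha/2$, $1/2$, $2^\alpha$, $\Gamma(\tfrac{n+\alpha}{2})$, $\pi^{-n/2}$, $\Gamma(1-\alpha/2)^{-1}$, and inverting the sign by writing $-(f(x+y)+f(x-y)-2f(x)) = 2f(x)-f(x+y)-f(x-y)$, produces exactly the claimed normalization $\alpha\,2^{\alpha-2}\Gamma(\tfrac{n+\alpha}{2})/\bigl(\pi^{n/2}\Gamma(1-\alpha/2)\bigr)$.

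The only real obstacle is justifying the Fubini step, since we need absolute integrability of the double integrand against $dy\,dt/(t^{1+\alpha/2})$. Here I would split $(0,\infty) = (0,1]\cup(1,\infty)$ in $t$. On $(0,1]$, the $C^2$-Taylor estimate already used in the proof of Lemma \ref{l:616} gives $|f(x+y)+f(x-y)-2f(x)|\le \|\nabla^2 f\|_\infty |y|^2$, so $\int_{\Rn} G(y,t)|y|^2\,dy = Cnt$ and we are left with $\int_0^1 t^{-\alpha/2}\,dt < \infty$ because $\alpha<2$. On $(1,\infty)$, the brutal estimate $|f(x+y)+f(x-y)-2f(x)|\le 4\|f\|_\infty$ together with $\int_{\Rn} G(y,t)\,dy = 1$ gives $\int_1^\infty t^{-1-\alpha/2}\,dt < \infty$ since $\alpha>0$. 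Thus Fubini applies, and the identification is complete.
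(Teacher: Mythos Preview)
Your proof is correct and follows exactly the same route as the paper: insert \eqref{612} into Balakrishnan's formula, swap the order of integration, and evaluate the $t$-integral of $t^{-1-\alpha/2}G(y,t)$ as a Gamma integral (the paper records this as \eqref{616}). If anything, your version is more complete, since you actually justify the Fubini step via the $(0,1]\cup(1,\infty)$ split, whereas the paper simply invokes Fubini without comment.
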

\begin{proof}
Using equation \eqref{612} and Fubini's theorem we find
\begin{equation*}
    \begin{split}
        (-\Delta)^{\frac{\alpha}{2}}&f(x)=- \frac{\frac{\alpha}{2}}{2 \Gamma \left( 1-\frac{\alpha}{2} \right)}\int_0^{\infty}\frac{1}{t^{1+\frac{\alpha}{2}}}\int_{\Rn}G(y,t)[f(x+y)+f(x-y)-2f(x)]\,dy\,dt\\
        &=\frac{\frac{\alpha}{2}}{2 \Gamma \left( 1-\frac{\alpha}{2} \right)}\int_{\Rn}[f(x+y)+f(x-y)-2f(x)]\int_0^{\infty}\frac{1}{t^{1+\frac{\alpha}{2}}}G(y,t)\,dt\,dy.
    \end{split}
\end{equation*}
To complete the proof all is needed at this point is the following elementary computation
\begin{equation}
    \label{616}
    \begin{split}
        \int_0^{\infty}& \frac{1}{t^{1+\frac{\alpha}{2}}}G(y,t)\,dt=(4\pi)^{-\frac{n}{2}}\int_0^{\infty}\frac{1}{t^{\frac{n+\alpha}{2}}}e^{-\frac{\abs{y}^2}{4t}}\,\frac{dt}{t}\\
        &=(4\pi)^{-\frac{n}{2}}2^{n+\alpha}\Gamma \left( \frac{n+\alpha}{2} \right)\abs{y}^{-(n+\alpha)}.
    \end{split}
\end{equation}
\end{proof}

We next analyse the limit of $(-\Delta)^{\frac{\alpha}{2}}$ as $\alpha \nearrow 2$. 
\begin{proposition}
\label{p:624}
Let $f\in \Sn$. Then for any $x\in \Rn$ one has
\begin{equation*}
    \lim_{\alpha \nearrow 2}(-\Delta)^{\frac{\alpha}{2}}f(x)=-\Delta f(x).
\end{equation*}
\end{proposition}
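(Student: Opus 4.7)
The plan is to invoke the pointwise representation of Proposition \ref{p:623}, namely
\begin{equation*}
(-\Delta)^{\alpha/2}f(x) = c_{n,\alpha}\int_{\Rn}\frac{2f(x)-f(x+y)-f(x-y)}{\abs{y}^{n+\alpha}}\,dy,\qquad c_{n,\alpha}:=\frac{\alpha\,2^{\alpha-2}\,\Gamma\!\left(\frac{n+\alpha}{2}\right)}{\pi^{n/2}\,\Gamma\!\left(1-\frac{\alpha}{2}\right)},
\end{equation*}
and to exploit the fact that as $\alpha\nearrow 2$ two competing singularities appear: the constant $c_{n,\alpha}$ vanishes because $\Gamma(1-\alpha/2)\to +\infty$, while the kernel $\abs{y}^{-(n+\alpha)}$ loses integrability at the origin. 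The key is to isolate the cancellation between these two phenomena.

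First, using the functional identity $\Gamma(1-\alpha/2)=\Gamma(2-\alpha/2)/(1-\alpha/2)$, I rewrite
\begin{equation*}
\frac{c_{n,\alpha}}{2-\alpha}=\frac{\alpha\,2^{\alpha-2}\Gamma\!\left(\frac{n+\alpha}{2}\right)}{2\pi^{n/2}\Gamma\!\left(2-\frac{\alpha}{2}\right)}\ \xrightarrow[\alpha\to 2]{}\ \frac{(n/2)\Gamma(n/2)}{\pi^{n/2}}=\frac{n}{\sigma_{n-1}},
\end{equation*}
where in the last equality I use \eqref{ball}. So $c_{n,\alpha}$ vanishes precisely like $\frac{n}{\sigma_{n-1}}(2-\alpha)$ as $\alpha\nearrow 2$.

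Next I split the defining integral at a fixed radius $\delta>0$ into $I_1(\alpha)=\int_{\abs{y}<\delta}$ and $I_2(\alpha)=\int_{\abs{y}\ge\delta}$. For $I_2$, since $f\in\Sn\subset L^\infty$, the numerator is bounded by $4\norma{f}_{L^\infty}$ and hence $\abs{I_2(\alpha)}\le 4\norma{f}_{L^\infty}\sigma_{n-1}\,\delta^{-\alpha}/\alpha$, which stays bounded as $\alpha\nearrow 2$; multiplied by $c_{n,\alpha}\to 0$, this contribution disappears in the limit. For $I_1$, I use the fourth-order symmetric Taylor expansion (odd-order terms cancel in the symmetric difference)
\begin{equation*}
2f(x)-f(x+y)-f(x-y)=-\Braket{\nabla^2 f(x)y,y}+R(x,y),\qquad \abs{R(x,y)}\le C_f\abs{y}^4\ \text{for }\abs{y}<\delta,
\end{equation*}
where $C_f$ depends only on the $C^4$-norm of $f$ on a compact set containing $x$ plus $\bar B(0,\delta)$. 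Passing to spherical coordinates and using the isotropy identity $\int_{\mathbb{S}^{n-1}}\omega_i\omega_j\,d\sigma=(\sigma_{n-1}/n)\delta_{ij}$ (which implies $\int_{\mathbb{S}^{n-1}}\Braket{A\omega,\omega}d\sigma=(\sigma_{n-1}/n)\,\mathrm{tr}(A)$), I compute exactly
\begin{equation*}
\int_{\abs{y}<\delta}\frac{\Braket{\nabla^2f(x)y,y}}{\abs{y}^{n+\alpha}}\,dy=\frac{\sigma_{n-1}}{n}\,\Delta f(x)\int_0^\delta r^{1-\alpha}\,dr=\frac{\sigma_{n-1}}{n(2-\alpha)}\,\delta^{2-\alpha}\,\Delta f(x),
\end{equation*}
while the remainder contributes at most $C_f\sigma_{n-1}\,\delta^{4-\alpha}/(4-\alpha)$, which remains bounded uniformly in $\alpha$.

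Combining, I get
\begin{equation*}
(-\Delta)^{\alpha/2}f(x)=-\frac{c_{n,\alpha}}{2-\alpha}\cdot\frac{\sigma_{n-1}}{n}\,\delta^{2-\alpha}\,\Delta f(x)+c_{n,\alpha}\cdot O(1),
\end{equation*}
and letting $\alpha\nearrow 2$ the prefactor tends to $(n/\sigma_{n-1})\cdot(\sigma_{n-1}/n)=1$ and $\delta^{2-\alpha}\to 1$, while the error is killed by $c_{n,\alpha}\to 0$, yielding $-\Delta f(x)$. The main obstacle is essentially bookkeeping: one must verify that the error estimates from both the tail $I_2$ and the Taylor remainder are locally uniform in $\alpha$ near $2$, so that multiplying by $c_{n,\alpha}\to 0$ legitimately kills them, and one must handle the $\Gamma(1-\alpha/2)$ singularity cleanly via the functional equation rather than computing an asymptotic by hand.
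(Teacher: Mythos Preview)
Your proof is correct, but it follows a genuinely different route from the paper's. The paper works directly with Balakrishnan's time-integral representation (Definition \ref{d:615}): it splits $\int_0^\infty t^{-1-\alpha/2}(P_tf-f)\,dt$ at $t=1$, kills the tail using $1/\Gamma(1-\alpha/2)\to 0$, and on $(0,1)$ integrates by parts in $t$, invoking $\partial_t P_t f=P_t\Delta f$ and the expansion $P_t\Delta f(x)=\Delta f(x)+O(t)$ to extract the limit. You instead go through the equivalent space-integral formula of Proposition \ref{p:623}, split at a fixed radius $\delta$ in the $y$-variable, and Taylor-expand the symmetric second difference to fourth order; the limit then comes from the explicit angular identity $\int_{\mathbb S^{n-1}}\langle A\omega,\omega\rangle\,d\sigma=(\sigma_{n-1}/n)\,\mathrm{tr}(A)$ matched against the asymptotic $c_{n,\alpha}\sim (n/\sigma_{n-1})(2-\alpha)$. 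Your argument is more classical and self-contained in spirit---it would work in any setting where the Riesz pointwise formula is available---whereas the paper's argument is tailored to the semigroup viewpoint of the chapter and generalises more readily to operators for which one has a good heat semigroup but no explicit pointwise kernel. One small remark: since $f\in\Sn$ the fourth derivatives are globally bounded, so your $C_f$ can simply be taken as $\sup_{\abs{\beta}=4}\norma{\partial^\beta f}_{L^\infty(\Rn)}$ without any reference to a compact set.
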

\begin{proof}
We write
\begin{equation*}
    \begin{split}
        (-\Delta)^{\frac{\alpha}{2}}f(x)&= - \frac{\frac{\alpha}{2}}{ \Gamma \left( 1-\frac{\alpha}{2} \right)}\int_0^{\infty} \frac{1}{t^{1+\frac{\alpha}{2}}}(P_tf(x)-f(x))\,dt\\
        &=- \frac{\frac{\alpha}{2}}{ \Gamma \left( 1-\frac{\alpha}{2} \right)}\int_0^{1} \frac{1}{t^{1+\frac{\alpha}{2}}}(P_tf(x)-f(x))\,dt - \frac{\frac{\alpha}{2}}{ \Gamma \left( 1-\frac{\alpha}{2} \right)}\int_1^{\infty} \frac{1}{t^{1+\frac{\alpha}{2}}}(P_tf(x)-f(x))\,dt.\\
        &= \mathbb{I}(\alpha) + \mathbb{II}(\alpha).
    \end{split}
\end{equation*}
The second term is easily estimated as follows
\begin{equation*}
    \abs{\mathbb{II}(\alpha)}\le 2 \norma{f}_{L^{\infty}(\Rn)}\frac{\frac{\alpha}{2}}{ \Gamma \left( 1-\frac{\alpha}{2} \right)}\int_1^{\infty}\frac{1}{t^{1+\frac{\alpha}{2}}}\,dt \to 0, \quad \text{as } \alpha \to 2,
\end{equation*}
since $\left( 1-\frac{\alpha}{2}\right)\Gamma \left( 1-\frac{\alpha}{2} \right)=1+o(1)$ as $\alpha \nearrow 2$. For the first term we argue in the following way. By Lemma \ref{l:616} we can integrate by parts obtaining
\begin{equation*}
    \begin{split}
        \mathbb{I}(\alpha)&=-\frac{\frac{\alpha}{2}}{ \Gamma \left( 1-\frac{\alpha}{2} \right)} \int_0^1\left( \frac{t^{-\frac{\alpha}{2}}}{-\frac{\alpha}{2}} \right)' (P_t f(x) - f(x))\,dt\\
        &=\frac{1}{ \Gamma \left( 1-\frac{\alpha}{2} \right)}(P_1f(x)-f(x)) -\frac{1}{ \Gamma \left( 1-\frac{\alpha}{2} \right)} \int_0^1 t^{-\frac{\alpha}{2}}\frac{d}{dt}P_t f(x)\,dt\\
        &=\frac{1}{ \Gamma \left( 1-\frac{\alpha}{2} \right)}(P_1f(x)-f(x)) -\frac{1}{ \Gamma \left( 1-\frac{\alpha}{2} \right)} \int_0^1 t^{-\frac{\alpha}{2}}\Delta P_t f(x)\,dt\\
        &=\frac{1}{ \Gamma \left( 1-\frac{\alpha}{2} \right)}(P_1f(x)-f(x)) -\frac{1}{ \Gamma \left( 1-\frac{\alpha}{2} \right)} \int_0^1 t^{-\frac{\alpha}{2}} P_t \Delta f(x)\,dt.
    \end{split}
\end{equation*}
where in the last two equalities we have used Proposition \ref{p:67} ans \ref{p:68}. Since $\Delta f \in \Sn$, by Lemma \ref{l:616} again we can write for $t\in[0,1]$
\begin{equation*}
    P_t \Delta f(x)= \Delta f(x) + O(t).
\end{equation*}
Therefore
\begin{equation*}
    \frac{1}{ \Gamma \left( 1-\frac{\alpha}{2} \right)} \int_0^1 t^{-\frac{\alpha}{2}} P_t \Delta f(x)\,dt = \frac{1}{ \left( 1-\frac{\alpha}{2} \right) \Gamma \left( 1-\frac{\alpha}{2} \right)} \Delta f(x) + o(1)
\end{equation*}
as $\alpha \nearrow 2$. Substituting in the above expression of $\mathbb{I}(\alpha)$ we conclude that 
\begin{equation*}
    \mathbb{I}(\alpha) \to -\Delta f(x), \qquad \text{as } \alpha \nearrow 2,
\end{equation*}
thus completing the proof.
\end{proof}


\section{The evolutive semigroup}
\label{s:68}
In semigroup theory a procedure for forming a new semigroup from a given one is that of evolution semigroup. In this section we exploit this idea to introduce a new semigroup that will be used as a building block for: (i) defining the fractional powers of the heat operator $H=\Delta-\partial_t$; (ii) solve the extension problem for such nonlocal operators. 

Let us to introduce the following operator on functions
\begin{equation}
    \label{632}
    P_{\tau}^Hf(x,t)=\int_{\Rn}G(x-y,\tau)f(y,t-\tau)\,dy=P_{\tau}(\Lambda_{-\tau}f(\cdot,t))(x),
\end{equation}
and call it the \emph{evolutive semigroup}. The motivation for such name is in the fact that $\{P_{\tau}^H\}_{\tau >0}$ does in fact define a semigroup of contractions on $L^p(\Rnn)$, for $1\le p \le \infty$, where hereafter we use the notation $\Rnn$ to indicate the space $\Rn \times \mathbb{R}$ with respect the variables $(x,t)$. One has in fact from $\eqref{632}$ for $\tau,\sigma >0$
\begin{equation}
    \label{633}
    \begin{split}
        P_{\tau +\sigma}^H f(x,t)&= P_{\tau+\sigma}(\Lambda_{-\tau-\sigma}f(\cdot,t))(x)\\
        &=P_{\tau}(P_{\sigma}(\Lambda_{-\tau-\sigma}f(\cdot,t)))(x)\\
        &=P_{\tau}(\Lambda_{-\tau}(P_{\sigma}(\Lambda_{-\sigma}f(\cdot,t))))(x)\\
        &=P_{\tau}^H(P_{\sigma}^Hf)(x,t).
    \end{split}
\end{equation}
Furthermore, one has
\begin{equation}
    \label{634}
    \lim_{\tau \to 0^+}P_{\tau}^Hf(x,t)=f(x,t).
\end{equation}
The following two lemmas summarise the semigroup $P_{\tau}^H$ properties that have already been proved for $P_t$.
\begin{lemma}
\label{l:636}
For any $t>0$ we have:
\begin{enumerate}
    \item $H(\Snn)\subset\Snn$ and $P_{\tau}^H(\Snn)\subset\Snn$;
    \item For any $f \in \Snn$ and $(x,t)\in \Rnn$ one has $\partial_{\tau}P_{\tau}^Hf(x,t)=HP_{\tau}^Hf(x,t)$;
    \item For every $f \in \Snn$ and $(x,t)\in \Rnn$ the commutation property is true
        \begin{equation*}
            HP_{\tau}^Hf(x,t)=P_{\tau}^HHf(x,t).
        \end{equation*}
\end{enumerate}
\end{lemma}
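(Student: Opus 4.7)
}
The plan is to handle the three assertions in order, exploiting the factorization $P_\tau^H = P_\tau \circ \Lambda_{-\tau}$ implicit in \eqref{632} together with results already proved for the heat semigroup $\{P_t\}_{t\ge 0}$.

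For (1), the inclusion $H(\Snn) \subset \Snn$ is immediate because partial differentiation in any of the $n+1$ variables preserves the Schwartz class, so both $\Delta_x$ and $\partial_t$ send $\Snn$ into itself. For the harder claim $P_\tau^H(\Snn) \subset \Snn$ I would first observe that translation in $t$, i.e.\ $\Lambda_{-\tau}$, trivially preserves $\Snn$, reducing the problem to showing that convolution in $x$ with the Gaussian kernel $G(\cdot,\tau)$ preserves Schwartz decay jointly in $(x,t)$. Smoothness and the identity $\partial^\alpha_x \partial^\beta_t (G(\cdot,\tau) \star_x f)(x,t) = (G(\cdot,\tau) \star_x \partial^\alpha_x \partial^\beta_t f)(x,t)$ follow from differentiation under the integral, which is valid thanks to the Gaussian decay of $G(\cdot,\tau)$ and the Schwartz decay of $f$. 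The joint polynomial bound in $(x,t)$ is then obtained from the elementary splitting $(1+\abs{x}^2+t^2)^{N/2} \lesssim_N (1+\abs{x-y}^2)^{N/2} + (1+\abs{y}^2+t^2)^{N/2}$, which decouples the weight into one piece absorbed by $G(\cdot,\tau)$ and another absorbed by $f$.

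For (2), I would differentiate under the integral sign in \eqref{632}. The Gaussian decay of $G$ and of its $\tau$-derivative, together with the Schwartz decay of $f$ and $\partial_t f$, provide the dominated bounds required to justify the exchange, yielding
\begin{equation*}
    \partial_\tau P_\tau^H f(x,t) = \int_{\Rn}\! \partial_\tau G(x-y,\tau)\, f(y,t-\tau)\, dy \;-\; \int_{\Rn}\! G(x-y,\tau)\, \partial_t f(y,t-\tau)\, dy.
\end{equation*}
By Lemma \ref{l:63}, $\partial_\tau G = \Delta_x G$, and pulling $\Delta_x$ outside the first integral (legitimate by part (1)) turns it into $\Delta_x P_\tau^H f(x,t)$, while the second integral equals $\partial_t P_\tau^H f(x,t)$. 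Subtracting gives $\partial_\tau P_\tau^H f = (\Delta-\partial_t)P_\tau^H f = H P_\tau^H f$.

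For (3), the cleanest route is again through the factorization $P_\tau^H = P_\tau \circ \Lambda_{-\tau}$. Proposition \ref{p:68} provides $P_\tau \Delta = \Delta P_\tau$ on $\Snn$ (applying the result pointwise in $t$), and $P_\tau$ commutes with $\partial_t$ since it acts only in $x$. The translation $\Lambda_{-\tau}$ commutes with $\Delta_x$ trivially and with $\partial_t$ because $\partial_t(\Lambda_{-\tau}f) = \Lambda_{-\tau}(\partial_t f)$. Composing these commutations yields $H P_\tau^H f = P_\tau \Lambda_{-\tau}(Hf) = P_\tau^H H f$ on $\Snn$. The only genuinely non-formal step in the whole argument is the joint Schwartz estimate in (1); once that is secured, (2) and (3) are routine applications of Fubini and dominated convergence.
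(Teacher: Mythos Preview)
Your proof is correct. The main divergence from the paper is in part (1): the paper exploits the Fourier transform isomorphism on $\Snn$, computing
\begin{equation*}
    \reallywidehat{P_{\tau}^Hf}(\xi,\sigma)=e^{-\tau(4\pi^2\abs{\xi}^2 + 2\pi i \sigma)}\reallywidehat{f}(\xi,\sigma),
\end{equation*}
and observing that multiplication by this symbol (all of whose derivatives are bounded for fixed $\tau$) preserves $\Snn$. This is quicker than your real-variable argument with the weight-splitting estimate, and it also feeds directly into (3), since on the Fourier side both $H$ and $P_\tau^H$ act as multiplication by functions of $(\xi,\sigma)$ and therefore commute trivially. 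Your approach, by contrast, stays entirely in physical space and relies on the factorisation $P_\tau^H = P_\tau \circ \Lambda_{-\tau}$ together with the previously established commutation $\Delta P_t = P_t\Delta$; the paper in fact mentions this route as an alternative for (3). The trade-off is that your argument is self-contained and would survive in settings where a Fourier transform is unavailable (which is the stated philosophy of the chapter), while the paper's Fourier computation is shorter and makes the Schwartz preservation in (1) almost immediate.
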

\begin{proof}
(1) The first part is obvious. For the second part it suffices to show that $\reallywidehat{P_{\tau}^Hf}\in \Snn$ if $f \in \Snn$, and this follows from the following formula,
\begin{equation}
    \label{635}
    \reallywidehat{P_{\tau}^Hf}(\xi,\sigma)=e^{-\tau(4\pi^2\abs{\xi}^2 + 2\pi i \sigma)}\reallywidehat{f}(\xi,\sigma).
\end{equation}
(2) Is a consequence of the definition of $P_{\tau}^H$\\
(3) Follows immediately from \eqref{635} and the fact that
\begin{equation*}
    \reallywidehat{Hf}(\xi,\sigma)=-(4\pi^2\abs{\xi}^2 + 2\pi i \sigma)\reallywidehat{f}(\xi,\sigma),
\end{equation*}
or from the commutation property $\Delta P_t=P_t \Delta$ in Proposition \ref{p:68}, and from the relations $P_{\tau}^Hf=P_{\tau}(\Lambda_{-\tau}f)$, $H\Lambda_{-\tau}=\Lambda_{-\tau}H$.
\end{proof}

Henceforth, we will often use some mixed Lebesgue spaces which represent the appropriate substitute for the standard $L^p$ spaces when dealing with anisotropic partial differential operators such as the heat operator. Given a measurable function $f(x,t)$ on $\Rnn$, and exponents $1\le p$, $q \le \infty$, we will write $f \in L^{p}(\mathbb{R},L^q(\Rn))$ to indicate the fact that
\begin{equation*}
    \norma{f}_{L^p(\mathbb{R},L^q(\Rn))}=\left( \int_{\mathbb{R}} \norma{f(\cdot,t)}_{L^q(\Rn)}^p \,dt \right)^{\frac{1}{p}} < \infty,
\end{equation*}
with obvious changes when $p=\infty$. It is clear that $L^p(\mathbb{R},L^q(\Rn))=L^p(\Rnn)$.
\begin{lemma}
\label{l:637}
The following properties hold:
\begin{enumerate}
    \item For every $(x,t) \in \Rnn$ and $\tau>0$ we have $P_{\tau}^H 1(x,t)=1$;
    \item We have $P_{\tau+\sigma}^H=P_{\tau}^H \circ P_{\sigma}^H$ for every $\sigma,\tau >0$.
    \item Let $1\le p \le \infty$, then $P_{\tau}^H:L^p(\Rnn) \to L^p(\Rnn)$ with $\norma{P_{\tau}^H}_{L^p \to L^p}\le 1$. Therefore, $\{ P_{\tau}^H \}_{\tau >0}$ is a semigroup of contractions on $L^p(\Rnn)$ when $1\le p \le \infty$.
\end{enumerate}
\end{lemma}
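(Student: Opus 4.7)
The three assertions are of very different depths, so I would handle them in order of increasing difficulty and reuse what was already established in the discussion preceding the statement.

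For (1) the proof is immediate: plug $f\equiv 1$ into the definition \eqref{632} and observe that, by standard properties of the Gauss kernel, $\int_{\Rn} G(x-y,\tau)\,dy=1$ for every $\tau>0$, so $P_\tau^H 1(x,t)=1$. Part (2) is essentially free of work, because the chain of equalities in \eqref{633} already proves the semigroup identity $P_{\tau+\sigma}^H=P_\tau^H\circ P_\sigma^H$: I would simply point back to that computation, noting that it relied only on the semigroup property of $\{P_t\}_{t\ge 0}$ established earlier in Proposition \ref{p:64}, together with the translation identity $P_\tau \circ \Lambda_{-\tau-\sigma}=\Lambda_{-\tau}\circ P_\tau \circ \Lambda_{-\sigma}$.

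The substance is in (3). For $p=\infty$ the bound is trivial: factor $\|f\|_{L^\infty(\Rnn)}$ out of the integral in \eqref{632} and use $\int_{\Rn} G(x-y,\tau)\,dy=1$. For $1\le p<\infty$ I would use Jensen's inequality with respect to the probability measure $G(x-y,\tau)\,dy$, which gives
\begin{equation*}
|P_\tau^H f(x,t)|^p \le \int_{\Rn} G(x-y,\tau)\,|f(y,t-\tau)|^p\,dy.
\end{equation*}
Integrating in $(x,t)\in\Rnn$ and applying Fubini (the integrand is nonnegative, so no integrability is needed a priori), the $x$-integration hits only the Gauss factor and yields $1$, leaving $\int_\Rn\int_\mathbb{R}|f(y,t-\tau)|^p\,dt\,dy$. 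A translation in the $t$-variable then recovers $\|f\|_{L^p(\Rnn)}^p$, and extracting the $p$-th root delivers the contraction estimate. The case $p=1$ can also be done directly by Fubini without invoking Jensen, and I would briefly note this. Together with (2) and the strong continuity suggested by \eqref{634}, this establishes that $\{P_\tau^H\}_{\tau>0}$ is a semigroup of contractions on each $L^p(\Rnn)$.

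I do not expect a genuine obstacle: the only point requiring a hair of care is the Fubini step, which is cleanly justified by the nonnegativity of $G$ and of $|f|^p$ (Tonelli), so no integrability hypothesis beyond $f\in L^p(\Rnn)$ is needed. The only other thing worth flagging is that the argument shows $P_\tau^H$ maps $L^p(\Rnn)$ into itself as a bounded operator of norm at most one, but not that the semigroup is strongly continuous in $L^p$ for $p<\infty$; that statement, if desired, would be a separate (standard) density argument and is not part of the claim here.
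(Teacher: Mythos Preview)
Your proposal is correct and follows essentially the same approach as the paper: parts (1) and (2) are dispatched by referring to the definition and to \eqref{633}, and part (3) is handled via Tonelli plus translation invariance in $t$. The only cosmetic difference is that for $1\le p<\infty$ the paper slices in $t$ and quotes the $L^p(\Rn)$-contractivity \eqref{63} of $P_\tau$, whereas you prove the pointwise bound directly via Jensen with respect to the probability measure $G(x-y,\tau)\,dy$; both amount to the same computation.
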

\begin{proof}
The proof (1) and (2) have already been given. We only provide the details of (3). If $p=\infty$, then it is immediate to see
\begin{equation}
    \label{636}
    \norma{P_{\tau}^Hf}_{L^{\infty}(\Rnn)}\le \norma{f}_{L^{\infty}(\Rnn)},
\end{equation}
thus we assume that $1\le p < \infty$. Using the second equality in \eqref{632}, \eqref{63} and Tonelli's theorem we have for any $f \in L^p(\Rnn)$
\begin{equation*}
    \begin{split}
        \norma{P_{\tau}^H f}_{L^q (\Rnn)} &= \left( \int_{\mathbb{R}} \norma{P_{\tau}(\Lambda_{-\tau}f(\cdot,t))}_{L^p(\Rn)}^p\,dt \right)^{\frac{1}{p}} \le \left( \int_{\mathbb{R}} \norma{\Lambda_{-\tau}f(\cdot,t)}_{L^p(\Rn)}^p\,dt \right)^{\frac{1}{p}}\\
        &= \left( \int_{\mathbb{R}} \norma{f(\cdot,t)}_{L^p(\Rn)}^p\,dt \right)^{\frac{1}{p}} =\norma{f}_{L^p(\Rnn)}.
    \end{split}
\end{equation*}
\end{proof}

We conclude this section with the analogue of Lemma \ref{l:69} for the semigroup $\{ P_{\tau}^H\}_{\tau >0}$ and an important consequence of it.
\begin{lemma}
\label{l:638}
For every $f\in \Snn$ and $(x,t) \in \Rnn$ we have
\begin{equation*}
    \abs{P_{\tau}^Hf(x,t)-f(x,t)} \le \norma{Hf}_{L^{\infty}(\Rnn)}\tau.
\end{equation*}
\end{lemma}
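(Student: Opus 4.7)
The plan is to mimic the proof of Lemma \ref{l:69}, but with the semigroup $\{P_\tau^H\}_{\tau>0}$ in place of $\{P_t\}_{t\ge 0}$ and the heat operator $H$ in place of $\Delta$. All the needed ingredients have already been established in Lemma \ref{l:636} and Lemma \ref{l:637}.

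First, I would fix $f\in\Snn$ and $(x,t)\in\Rnn$, and apply the fundamental theorem of calculus to the orbit map $\sigma\mapsto P_\sigma^H f(x,t)$, which is smooth on $[0,\tau]$ by Lemma \ref{l:636}(1)--(2). This yields
\begin{equation*}
    P_\tau^H f(x,t) - f(x,t) = \int_0^\tau \frac{d}{d\sigma} P_\sigma^H f(x,t)\,d\sigma = \int_0^\tau H P_\sigma^H f(x,t)\,d\sigma,
\end{equation*}
where the second equality uses Lemma \ref{l:636}(2). Next, I would invoke the commutation property in Lemma \ref{l:636}(3) to move $H$ past $P_\sigma^H$, obtaining
\begin{equation*}
    P_\tau^H f(x,t) - f(x,t) = \int_0^\tau P_\sigma^H H f(x,t)\,d\sigma.
\end{equation*}

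Finally, I would take absolute values and use the $L^\infty$ contractivity of $P_\sigma^H$ from Lemma \ref{l:637}(3) (that is, \eqref{636}) applied to the function $Hf\in\Snn\subset L^\infty(\Rnn)$. This gives $|P_\sigma^H H f(x,t)|\le\norma{Hf}_{L^\infty(\Rnn)}$ uniformly in $\sigma\in[0,\tau]$, and therefore
\begin{equation*}
    \abs{P_\tau^H f(x,t) - f(x,t)} \le \int_0^\tau \norma{Hf}_{L^\infty(\Rnn)}\,d\sigma = \norma{Hf}_{L^\infty(\Rnn)}\,\tau,
\end{equation*}
which is the desired estimate.

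There is really no hard step here: everything is a direct transcription of the argument for Lemma \ref{l:69}, and the two nontrivial facts (differentiability of $\sigma\mapsto P_\sigma^H f$ with derivative $HP_\sigma^H f$, and the commutation $H P_\sigma^H=P_\sigma^H H$ on $\Snn$) are supplied by Lemma \ref{l:636}. The only point requiring a slight care is justifying that the orbit map is $C^1$ up to $\sigma=0$ (so that we may start the integral at $0$); this follows because $f\in\Snn$ implies $Hf\in\Snn$, so $P_\sigma^H Hf(x,t)$ extends continuously to $\sigma=0$ by \eqref{634}, making the integrand continuous on $[0,\tau]$ and the fundamental theorem of calculus applicable without any limiting argument.
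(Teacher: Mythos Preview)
Your proof is correct and follows essentially the same approach as the paper: write $P_\tau^H f(x,t)-f(x,t)=\int_0^\tau P_\sigma^H Hf(x,t)\,d\sigma$ via Lemma \ref{l:636}(2)--(3), then bound the integrand using the $L^\infty$ contractivity \eqref{636}. Your added remark about continuity of the integrand at $\sigma=0$ is a welcome clarification that the paper leaves implicit.
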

\begin{proof}
We have
\begin{equation*}
    P_{\tau}^H f(x,t)-f(x,t)= \int_0^{\tau}\frac{d}{d \sigma}P_{\sigma}^Hf(x,t)\,d\sigma=\int_0^{\tau}HP_{\sigma}^Hf(x,t)\,d\sigma=\int_0^{\tau}P_{\sigma}^HHf(x,t)\,d\sigma,
\end{equation*}
where in the last equality we have used the commutation property (3) in Lemma \ref{l:636}. This gives, using \eqref{636},
\begin{equation*}
    \begin{split}
        \abs{P_{\tau}f(x,t)-f(x,t)}&\le \int_0^{\tau} \abs{P_{\sigma}^HHf(x,t)}\,d\sigma \le \int_0^{\tau} \norma{P_{\sigma}^HHf(x,t)}_{L^{\infty}(\Rnn)}\,d\sigma\\
        &\le \int_0^{\tau} \norma{Hf(x,t)}_{L^{\infty}(\Rnn)}\,d\sigma=\norma{Hf(x,t)}_{L^{\infty}(\Rnn)}\tau.
    \end{split}
\end{equation*}
\end{proof}

Arguing in a similar way one proves the following.
\begin{lemma}
\label{l:639}
Let $1\le p \le \infty$. Given any $f \in \Snn$ for any $\tau \in [0,1]$ we have
\begin{equation*}
    \norma{P_{\tau}^Hf-f}_{L^p(\Rnn)}\le \norma{H f}_{L^p(\Rnn)}\tau.
\end{equation*}
\end{lemma}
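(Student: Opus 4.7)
The plan is to mimic the argument used for Lemma \ref{l:638}, but replacing the pointwise sup-norm bound with an $L^p$-norm bound obtained via Minkowski's integral inequality. The key inputs are already in place: parts (2) and (3) of Lemma \ref{l:636} give $\partial_{\sigma} P_{\sigma}^H f = H P_{\sigma}^H f = P_{\sigma}^H H f$ pointwise in $\Rnn$, and part (3) of Lemma \ref{l:637} supplies the contraction estimate $\norma{P_{\sigma}^H g}_{L^p(\Rnn)}\le \norma{g}_{L^p(\Rnn)}$. Also, by Lemma \ref{l:636}(1), $f\in\Snn$ implies $Hf\in\Snn\subset L^p(\Rnn)$, so the right-hand side of the target inequality is finite.

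First, I would write the fundamental theorem of calculus identity
\begin{equation*}
    P_{\tau}^H f(x,t) - f(x,t)=\int_0^{\tau}\partial_{\sigma} P_{\sigma}^H f(x,t)\,d\sigma=\int_0^{\tau}P_{\sigma}^H H f(x,t)\,d\sigma,
\end{equation*}
exactly as in Lemma \ref{l:638}. Then I would take the $L^p(\Rnn)$ norm of both sides and apply Minkowski's integral inequality to pull the norm inside the $\sigma$-integral:
\begin{equation*}
    \norma{P_{\tau}^H f - f}_{L^p(\Rnn)}\le \int_0^{\tau}\norma{P_{\sigma}^H H f}_{L^p(\Rnn)}\,d\sigma.
\end{equation*}
Finally, applying Lemma \ref{l:637}(3) to each integrand gives $\norma{P_{\sigma}^H H f}_{L^p(\Rnn)}\le \norma{H f}_{L^p(\Rnn)}$, and integrating the constant bound in $\sigma$ over $[0,\tau]$ yields the desired estimate $\norma{P_{\tau}^H f - f}_{L^p(\Rnn)}\le \norma{Hf}_{L^p(\Rnn)}\tau$.

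The case $p=\infty$ can either be read off directly from Lemma \ref{l:638} (taking the sup over $(x,t)$), or treated by the same argument using the $L^{\infty}$ version of the contraction property \eqref{636}. I do not expect any genuine obstacle: the interchange of integration order that underlies Minkowski's inequality is justified by Tonelli's theorem applied to $\abs{P_{\sigma}^H H f(x,t)}$, which is jointly measurable and integrable on $[0,\tau]\times\Rnn$ since $Hf\in\Snn$ and each $P_{\sigma}^H$ is an $L^p$-contraction. Thus the proof reduces to a one-line differentiation identity followed by Minkowski and the contractivity already established.
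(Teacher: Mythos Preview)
Your proposal is correct and follows exactly the approach the paper intends: the statement of Lemma~\ref{l:639} is immediately preceded by ``Arguing in a similar way one proves the following,'' and the argument you give (fundamental theorem of calculus, Minkowski's integral inequality, contractivity from Lemma~\ref{l:637}(3)) is precisely the $L^p$ analogue of the proof of Lemma~\ref{l:638}, matching also the proof of the earlier Lemma~\ref{l:69} for $P_t$.
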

\begin{corollary}
\label{c:640}
Let $1\le p < \infty$. For any $f\in L^p(\Rnn)$ one has 
\begin{equation*}
    \lim_{\tau \to 0^+}\norma{P_{\tau}^H f -f }_{L^p(\Rnn)}=0.
\end{equation*}
As a consequence, $\{ P_{\tau}^H \}_{\tau>0}$ is a strongly continuous semigroup of contractions on $L^p(\Rnn)$.
\end{corollary}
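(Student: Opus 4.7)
The plan is a standard density/three-$\varepsilon$ argument, assembled from three ingredients that have already been established in the excerpt: (a) the contractivity $\norma{P_{\tau}^H h}_{L^p(\Rnn)} \le \norma{h}_{L^p(\Rnn)}$ from Lemma \ref{l:637}(3), (b) the quantitative rate $\norma{P_{\tau}^H g - g}_{L^p(\Rnn)} \le \tau \norma{Hg}_{L^p(\Rnn)}$ for Schwartz $g$, provided by Lemma \ref{l:639}, and (c) the density of $\Snn$ in $L^p(\Rnn)$ for $1 \le p < \infty$, which is classical. Observe also that $Hg \in \Snn$ whenever $g \in \Snn$, by Lemma \ref{l:636}(1), so $\norma{Hg}_{L^p(\Rnn)} < \infty$.

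Given $f \in L^p(\Rnn)$ and $\varepsilon > 0$, I would pick $g \in \Snn$ with $\norma{f - g}_{L^p(\Rnn)} < \varepsilon$. Writing
\begin{equation*}
P_{\tau}^H f - f = P_{\tau}^H(f - g) + (P_{\tau}^H g - g) + (g - f),
\end{equation*}
and applying the triangle inequality together with the contractivity of $P_{\tau}^H$ and Lemma \ref{l:639}, I obtain for every $\tau \in (0,1]$
\begin{equation*}
\norma{P_{\tau}^H f - f}_{L^p(\Rnn)} \le 2 \norma{f - g}_{L^p(\Rnn)} + \tau \norma{Hg}_{L^p(\Rnn)} < 2\varepsilon + \tau \norma{Hg}_{L^p(\Rnn)}.
\end{equation*}
Letting $\tau \to 0^+$ gives $\limsup_{\tau \to 0^+}\norma{P_{\tau}^H f - f}_{L^p(\Rnn)} \le 2\varepsilon$, and since $\varepsilon$ is arbitrary the first claim follows.

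For the strong continuity statement I would then upgrade continuity at $\tau=0$ to continuity at an arbitrary $\tau_0 > 0$ using the semigroup property (Lemma \ref{l:637}(2)) and contractivity: for $h > 0$,
\begin{equation*}
\norma{P_{\tau_0+h}^H f - P_{\tau_0}^H f}_{L^p(\Rnn)} = \norma{P_{\tau_0}^H (P_h^H f - f)}_{L^p(\Rnn)} \le \norma{P_h^H f - f}_{L^p(\Rnn)},
\end{equation*}
which tends to $0$ by the first part of the statement; a symmetric computation (factoring out $P_{\tau_0 - |h|}^H$) handles $h \to 0^-$. Combined with Lemma \ref{l:637}(3), this gives a strongly continuous contraction semigroup on $L^p(\Rnn)$.

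I do not expect any real obstacle: the whole point of Lemma \ref{l:639} is precisely to supply a quantitative equicontinuity estimate on a dense subclass, so the density argument goes through verbatim. The only point requiring a brief justification is the density of $\Snn$ in $L^p(\Rnn)$, which follows from approximating an arbitrary $L^p$ function first by a compactly supported continuous function and then by convolution with a standard mollifier in the $(x,t)$ variables.
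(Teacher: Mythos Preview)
Your proof is correct and follows essentially the same approach as the paper: a density/three-$\varepsilon$ argument combining the contractivity from Lemma~\ref{l:637}(3), the rate estimate from Lemma~\ref{l:639}, and the density of $\Snn$ in $L^p(\Rnn)$. Your additional remarks on extending continuity from $\tau=0$ to arbitrary $\tau_0>0$ and on justifying density go slightly beyond what the paper writes out, but the core argument is identical.
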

\begin{proof}
Since $\Snn$ is dense in $L^p(\Rnn)$, for every $\epsilon>0$ there exists $\psi \in \Snn$ such that 
\begin{equation*}
    \norma{f-\psi}_{L^p(\Rnn)} < \frac{\epsilon}{3}.
\end{equation*}
Having fixed $\psi$ in this way, in view of Lemma \ref{l:639} there exists $\tau_0=\tau_0(\epsilon)>0$ such that for every $0<\tau<\tau_0$ we have
\begin{equation*}
    \norma{P_{\tau}^H \psi-\psi}_{L^p(\Rnn)} < \frac{\epsilon}{3}.
\end{equation*}
This gives for every $0<\tau<\tau_0$
\begin{equation*}
    \begin{split}
        \norma{P_{\tau}^H f -f }_{L^{p}(\Rnn)}&\le \norma{P_{\tau}^H (f -\psi) }_{L^{p}(\Rnn)} + \norma{P_{\tau}^H \psi -\psi }_{L^{p}(\Rnn)} +\norma{\psi -f }_{L^{p}(\Rnn)}\\
        &\le \norma{\psi -f }_{L^{p}(\Rnn)} +\frac{\epsilon}{3} +\frac{\epsilon}{3}<\epsilon,
    \end{split}
\end{equation*}
where we have used (3) of Lemma \ref{l:637}.
\end{proof}

\section{The fractional heat operator}\label{s:69}
With the results of Section \ref{s:68} in hand we are now ready to introduce the fractional powers $(\partial_t-\Delta)^{\frac{\alpha}{2}}$.
\begin{definition}
\label{d:641}
Let $0<\alpha<2$. The fractional heat operator of order $\frac{\alpha}{2}$ is defined on a function $f \in \Snn$ by the formula
\begin{equation*}
    (\partial_t-\Delta)^{\frac{\alpha}{2}}f(x,t)=-\frac{\frac{\alpha}{2}}{\Gamma \left(1-\frac{\alpha}{2} \right)}\int_0^{\infty}\frac{1}{\tau^{1+\frac{\alpha}{2}}}(P_{\tau}^Hf(x,t)-f(x,t))\,d\tau.
\end{equation*}
\end{definition}
We observe right-away that, thanks to \eqref{636} and Lemma \ref{l:638} the integral in the right-hand side of \eqref{d:641} is finite. It is also worth observing that $f(x,t)=f(x)$, then
\begin{equation*}
    (\partial_t-\Delta)^{\frac{\alpha}{2}}f(x,t)=(-\Delta)^{\frac{\alpha}{2}}f(x).
\end{equation*}
Next, we observe that if we presently define the parabolic dilations
\begin{equation*}
    \delta_{\lambda}f(x,t)=f(\lambda x, \lambda^2 t),
\end{equation*}
the simple manipulations show that
\begin{equation}
    \label{637}
    P_{\tau}^H(\delta_{\lambda}f)(x,t)=P_{\lambda^2\tau}^Hf(\lambda x, \lambda^2 t).
\end{equation}
One easily obtains from \eqref{637}
\begin{equation}
    \label{638}
    (\partial_t-\Delta)^{\frac{\alpha}{2}}(\delta_{\lambda}f)(x,t)=\lambda^{\alpha}(\partial_t-\Delta)^{\frac{\alpha}{2}}f(\lambda x,\lambda^2t),
\end{equation}
which shows that the fractional heat is an operator of order $\alpha$ with respect to the anisotropic parabolic dilations.

We conclude this section with the next result, which is an useful version of Proposition \ref{p:610}
\begin{proposition}[weak Ultracontractivity]
\label{p:643}
Let $1\le q< \infty$ and $f \in L^{\infty}(\mathbb{R},L^q(\Rn))$. For every $(x,t)\in \Rnn$ and $\tau>0$ we have
\begin{equation}
    \label{639}
    \abs{P_{\tau}^Hf(x,t)}\le \frac{c(n,q)}{\tau^{\frac{n}{2q}}}\norma{f}_{L^{\infty}(\mathbb{R},L^q(\Rn))},
\end{equation}
for a certain constant $c(n,q)>0$. In particular, for any $f \in L^{\infty}(\mathbb{R},L^q(\Rn))$ and $(x,t)\in \Rnn$ one has
\begin{equation}
    \label{640}
    \lim_{\tau \to \infty}\abs{P_{\tau}^Hf(x,t)}=0.
\end{equation}
\end{proposition}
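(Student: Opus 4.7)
My plan is to mimic the proof of Proposition \ref{p:610} almost verbatim, with the only change being the replacement of the $L^p(\Rn)$ norm by the mixed norm $L^{\infty}(\mathbb{R},L^q(\Rn))$, which simply provides a uniform bound on the $L^q$-norm of the time slices of $f$.

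The starting point is the first expression in \eqref{632},
\begin{equation*}
P_{\tau}^H f(x,t) = \int_{\Rn} G(x-y,\tau)\,f(y,t-\tau)\,dy.
\end{equation*}
Fix $(x,t)\in \Rnn$ and $\tau>0$. Applying H\"older's inequality in the $y$-variable with conjugate exponents $q, q'$ gives
\begin{equation*}
\abs{P_{\tau}^H f(x,t)} \le \norma{f(\cdot, t-\tau)}_{L^q(\Rn)} \left(\int_{\Rn} G(x-y,\tau)^{q'}\,dy\right)^{1/q'}.
\end{equation*}
By definition of the $L^{\infty}(\mathbb{R},L^q(\Rn))$ norm, the first factor is bounded (for a.e.\ $t-\tau$, hence uniformly in $t$ and $\tau$) by $\norma{f}_{L^{\infty}(\mathbb{R},L^q(\Rn))}$.

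For the Gaussian factor I would perform exactly the computation carried out in the proof of Proposition \ref{p:610}: using \eqref{61} and the change of variable $z=\sqrt{q'/(4\tau)}\,(y-x)$, with Jacobian $(q'/(4\tau))^{n/2}$, one obtains
\begin{equation*}
\left(\int_{\Rn} G(x-y,\tau)^{q'}\,dy\right)^{1/q'} = (4\pi\tau)^{-n/2}\left(\frac{4\tau}{q'}\right)^{n/(2q')}\pi^{n/(2q')} = \frac{c(n,q)}{\tau^{n/(2q)}},
\end{equation*}
with $c(n,q)=\left(\tfrac{1}{q'}\right)^{n/(2q')}(4\pi)^{-n/(2q)}$. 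Combining these two estimates yields \eqref{639}.

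For the second assertion \eqref{640}, the hypothesis $q<\infty$ forces $n/(2q)>0$, so the right-hand side of \eqref{639} tends to $0$ as $\tau\to\infty$. I do not anticipate any real obstacle: the only conceptual point is recognising that the correct hypothesis on $f$ in this anisotropic setting is membership in the mixed space $L^{\infty}(\mathbb{R},L^q(\Rn))$, which is precisely what is needed so that the Gaussian convolution in the spatial variable picks up the smoothing factor $\tau^{-n/(2q)}$ uniformly in the temporal slice.
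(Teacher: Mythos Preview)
Your proof is correct and follows essentially the same approach as the paper: apply H\"older's inequality to the convolution formula \eqref{632}, bound $\norma{f(\cdot,t-\tau)}_{L^q(\Rn)}$ by the mixed norm, and compute $\norma{G(\cdot,\tau)}_{L^{q'}(\Rn)}$ exactly as in Proposition~\ref{p:610}. If anything, you give slightly more detail than the paper by writing out the explicit constant $c(n,q)$.
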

\begin{proof}
Applying H\"{o}lder's inequality to \eqref{632} we find
\begin{equation*}
    \begin{split}
        \abs{P_{\tau}^H f(x,t)} & \le \int_{\Rn}G(x-y,\tau)\abs{f(y,t-\tau)}\,dy\\
        &\le \norma{f(\cdot,t-\tau)}_{L^q(\Rn)}\left( \int_{\Rn} G(x-y,\tau)^{q'}\,dy \right)^{\frac{1}{q'}}\\
        &\le c(n,q) \tau^{-\frac{n}{2q}}\norma{f}_{L^{\infty}(\mathbb{R},L^q(\Rn))},
    \end{split}
\end{equation*}
where $\frac{1}{q}+\frac{1}{q'}=1$.
\end{proof}

\section{The extension problem} \label{s:teb}
In their seminal 2007 paper \citep{CS07} Caffarelli and Silvestre introduced the \emph{extension problem} for the fractional powers of the Laplacian, that we have already proved in section \ref{s:expb}. In line with the spirit of the present chapter we are going to use the heat semigroup to solve, again, the \emph{extension problem}.

In what follows we consider the upper half-space $\Rnn_+ = \Rn_x \times \mathbb{R}_y^+ $, with variable $X=(x,y)$, where $x\in \Rn$ and $y>0$. Let $0< \alpha <2$ and introduce what we call the \emph{Bessel parameter} $a=1 -\alpha \in (-1,1)$. Given a function $f \in \Snn$ we want to find a function $U \in C^{\infty}(\Rnn \times \mathbb{R}^+)$ such that
\begin{equation}
    \label{651}
    \begin{cases}
    y^a\partial_t U - \dive_X(y^a \nabla_XU)=0 \quad \text{in }\Rnn \times \mathbb{R}^+,\\
    U(x,t,0)=f(x,t) \qquad \qquad \quad (x,t) \in \Rnn. 
    \end{cases}
\end{equation}
When $\alpha=1$, and therefore $a=0$, the problem \eqref{651} was first introduced and solved by Frank Jones in his beautiful but apparently not so well-known 1968 paper \citep{Jon68}. He also proved the following identity
\begin{equation}
    \label{652} 
    -\lim_{y \to 0^+} \frac{\partial U}{\partial y}(x,t,y)= (\partial_t - \Delta)^{\frac{1}{2}}f(x,t).
\end{equation}
More recently, Nystr\"om \& Sande \citep{NS} and Stinga \& Torrea \citep{ST} have independently generalized Jones' results to all fractional powers $0 < s < 1$. Let us notice right-away that since the right-hand side of the PDE in \eqref{651} is zero, we can factor $y^a$ out and write the problem in the equivalent form
\begin{equation}
    \label{653}
    \begin{cases}
    \mathfrak{B}^{(a)}U=\partial_{yy}U + \frac{a}{y}\partial_yU + \Delta_x U - \partial_t U=0 \quad \text{in }\Rnn \times \mathbb{R}^+,\\
    U(x,t,0)=f(x,t) \qquad \qquad \quad (x,t) \in \Rnn. 
    \end{cases}
\end{equation}
We call $\mathfrak{B}^{(a)}$ the \emph{extension operator}. To find its heat kernel we make the following formal considerations, which can be justified a posteriori. Denote by $w$ a point in the Euclidean space $\mathbb{R}^{a+1}$ with fractional dimension $a+1$. Never mind for the time being the fact that this really makes no sense. If we denote by $y=\abs{w}$ (again, this is purely formal), then the Laplacian in the variable $w$, restricted to functions having spherical symmetry, takes the form
\begin{equation*}
    \Delta_w= \partial_{yy}+\frac{a}{y}\partial_y.
\end{equation*}
This suggests that we should look at the following PDE in $\mathbb{R}^{n+a+1}\times \mathbb{R}^+$
\begin{equation}
    \label{654}
    \Delta_w U + \Delta_x U - \partial_t U=0.
\end{equation}
The heat kernel for \eqref{654} is given by by the Gaussian in $\mathbb{R}^{n+a+1}\times \mathbb{R}^+$
\begin{equation}
    \label{655}
    G^{(a)}((x,w),t)=(4 \pi t)^{-\frac{n+a+1}{2}}e^{-\abs{(x,w)}^2}4t=(4\pi t)^{-\frac{n+a+1}{2}}e^{-\frac{\abs{x}^2+y^2}{4t}},
\end{equation}
where we have used the "assumption" that $y=\abs{w}$. Notice that the function \eqref{655} is nothing but the product of the heat operator in a product space such as \eqref{654}. Since the \emph{Bessel operator} on the half-line $\mathbb{R}_y^+$
\begin{equation*}
    \mathscr{B}^{(a)}=\partial_{yy}+\frac{a}{y}\partial_y
\end{equation*}
is self-adjoint with respect to the measure $y^a\,dy$, and since from \eqref{655} we have
\begin{equation*}
    \int_{\Rn_x \times \mathbb{R}_y^+} G^{(a)}((x,w),t)y^a\,dy\,dx=\frac{\Gamma \left( \frac{a+1}{2} \right)}{2\pi^{\frac{a+1}{2}}},
\end{equation*}
we normalise $G^{(a)}((x,w),t)$ as follows
\begin{equation}
    \label{656}
    \mathscr{G}^{(a)}(x-z,y,t)=\frac{2\pi^{\frac{a+1}{2}}}{\Gamma \left( \frac{a+1}{2} \right)} (4 \pi t)^{-\frac{n+a+1}{2}}e^{-\frac{\abs{x-z}^2+y^2}{4t}}.
\end{equation}
In this way we have for every $x \in \Rn$ and $t>0$
\begin{equation}
    \label{657}
    \int_{\Rn_x \times \mathbb{R}_y^+} \mathscr{G}^{(a)}(x-z,y,t)y^a\,dy\,dz=1.
\end{equation}
For reasons that will soon be clear, along with the partial differential operator $\mathfrak{B}^{(a)}$ in \eqref{653} we ought to also consider its \emph{intertwined} operator in $\Rnn \times \mathbb{R}^+$
\begin{equation}
    \label{658}
    \mathfrak{B}^{(2-a)}U=\partial_{yy}U + \frac{2-a}{y}\partial_y U + \Delta_x U - \partial_t U,
\end{equation}
whose heat kernel in $\mathbb{R}^{n+3-a}\times \mathbb{R}^+$ is given by
\begin{equation}
    \label{659}
    G^{(2-a)}((x,w),t)=(4 \pi t)^{-\frac{n+3-a}{2}}e^{-\abs{(x,w)}^2}4t=(4\pi t)^{-\frac{n+3-a}{2}}e^{-\frac{\abs{x}^2+y^2}{4t}}.
\end{equation}
The motivation for introducing $\eqref{658}$ is in the following intertwining equation for the Bessel operators
\begin{equation}
    \label{660}
    \mathscr{B}^{(a)}(y^{1-a}U)=y^{1-a}\mathscr{B}^{(2-a)}U,
\end{equation}
that the reader can easily verify. The equation \eqref{660} shows that $U$ solves $\mathscr{B}^{(2-a)}U=0$ if and only if $\mathscr{B}^{(a)}(y^{1-a}U)=0$. As a consequence, we have the corresponding intertwining relation
\begin{equation}
    \label{661}
    \mathfrak{B}^{(a)}(y^{1-a}U)=y^{1-a}\mathfrak{B}^{(2-a)}U.
\end{equation}
This lead us to introduce the following.
\begin{definition}
\label{d:649}
We define the Poisson kernel of the operator $\mathfrak{B}^{(a)}$ as the function 
\begin{equation*}
    \mathscr{P}^{(a)}(x-z,y,t)= \frac{1}{2^{1-a}\Gamma \left( \frac{1-a}{2}\right)}\frac{y^{1-a}}{t^{\frac{3-a}{2}}}e^{-\frac{y^2}{4t}}G(x-z,t).
\end{equation*}
\end{definition}
We note right-away that, since up to a constant function
\begin{equation*}
    (x,y,t) \to \frac{1}{t^{\frac{3-a}{2}}}G(x-z,t)
\end{equation*}
is for every fixed $x \in\Rn$ the heat kernel \eqref{659}, it is in particular a solution of the equation $\mathfrak{B}^{(2-a)}U=0$ in $\Rn_x \times \mathbb{R}_y^+ \times \mathbb{R}_t^+$. In view of \eqref{661} we deduce that for every $x \in \Rn$ the function
\begin{equation*}
    (z,y,t) \to \mathscr{P}^{(a)}(x-z,y,t)
\end{equation*}
is a solution of the equation
\begin{equation}
    \label{662}
    \mathfrak{B}^{(a)}\mathscr{P}^{(a)}(x-z,y,t)=0
\end{equation}
in $\Rn_x \times \mathbb{R}_y^+ \times \mathbb{R}_t^+$. Furthermore, we have the following.
\begin{proposition}
\label{p:650}
For every $x \in \Rn$ and $y>0$ we have
\begin{equation*}
    \int_{\Rn}\int_0^{\infty}\mathscr{P}^{(a)}(x-z,y,t)\,dz\,dt=1.
\end{equation*}
\end{proposition}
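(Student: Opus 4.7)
The plan is to exploit the product structure of $\mathscr{P}^{(a)}$: it is, up to the prefactor, the Gauss-Weierstrass kernel $G(x-z,t)$ times a one-dimensional weight in $(y,t)$. So I would first swap the order of integration (this is legitimate by Tonelli, since the integrand is nonnegative), and carry out the $z$-integration by means of the well-known identity $\int_{\Rn} G(x-z,t)\,dz=1$ valid for every $t>0$, which we have already exploited several times in this chapter. The identity to be proved then reduces to a purely one-dimensional claim, namely
\begin{equation*}
    \frac{1}{2^{1-a}\Gamma\left(\frac{1-a}{2}\right)}\int_0^{\infty} \frac{y^{1-a}}{t^{\frac{3-a}{2}}}e^{-\frac{y^{2}}{4t}}\,dt=1.
\end{equation*}

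The second step is to reduce the remaining $t$-integral to Euler's Gamma. The natural change of variable is $u=\frac{y^{2}}{4t}$, so that $t=\frac{y^{2}}{4u}$ and $dt=-\frac{y^{2}}{4u^{2}}\,du$, with the limits of integration swapped. Substituting and collecting powers of $y$ and of $4$, the explicit dependence on $y$ cancels out entirely, and the integral becomes
\begin{equation*}
    \int_0^{\infty} \frac{y^{1-a}}{t^{\frac{3-a}{2}}}e^{-\frac{y^{2}}{4t}}\,dt=2^{1-a}\int_0^{\infty} u^{\frac{1-a}{2}-1} e^{-u}\,du=2^{1-a}\,\Gamma\!\left(\frac{1-a}{2}\right),
\end{equation*}
using the integral representation of the Gamma function from the opening of Section \ref{s:1.2}. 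Here the condition $a\in(-1,1)$ (equivalent to $0<\alpha<2$) is used precisely to guarantee $\frac{1-a}{2}>0$, so the Gamma integral converges.

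Finally, one just observes that the value of the one-dimensional integral above matches exactly, and therefore cancels, the normalizing constant $\frac{1}{2^{1-a}\Gamma((1-a)/2)}$ that appears in Definition \ref{d:649}, yielding $1$. In fact, the normalization of $\mathscr{P}^{(a)}$ was chosen \emph{precisely} so that this cancellation occurs; the present proposition therefore amounts to verifying that this choice is the correct one. There is no real obstacle in the argument: the only point to watch is the convergence of the $t$-integral near $t=0$, which is ensured by the exponential factor $e^{-y^{2}/(4t)}$ for $y>0$, and near $t=\infty$, which is ensured by the power $t^{-(3-a)/2}$ combined with $a<1$ so that $(3-a)/2>1$.
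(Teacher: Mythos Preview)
Your proof is correct and follows essentially the same approach as the paper: both apply Tonelli to swap the order of integration, use $\int_{\Rn}G(x-z,t)\,dz=1$ to dispose of the $z$-integral, and then reduce to the one-dimensional $t$-integral via the substitution $u=y^2/(4t)$ (the paper uses the letter $\sigma$ and leaves the details of this last step to the reader, which you have spelled out in full).
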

\begin{proof}
By Definition \ref{d:649} and the theorem of Tonelli we have
\begin{equation*}
    \begin{split}
        \int_{\Rn}\int_0^{\infty}& \mathscr{P}^{(a)}(x-z,y,t)\,dz\,dt= \frac{y^{1-a}}{2^{1-a}\Gamma \left(\frac{1-a}{2} \right)}\int_{\Rn}\int_0^{\infty}\frac{1}{t^{\frac{3-a}{2}}}e^{-\frac{y^2}{4t}}G(x-z,t)\,dz\,dt\\
        &= \frac{y^{1-a}}{2^{1-a}\Gamma \left(\frac{1-a}{2} \right)}  \int_0^{\infty}\left( \int_{\Rn} G(x-z,t)\,dz\right)\frac{1}{t^{\frac{3-a}{2}}}e^{-\frac{y^2}{4t}}\,dt\\
        &= \frac{y^{1-a}}{2^{1-a}\Gamma \left(\frac{1-a}{2} \right)}  \int_0^{\infty}\frac{1}{t^{\frac{3-a}{2}}}e^{-\frac{y^2}{4t}}\,dt=1,
    \end{split}
\end{equation*}
where the reader can easily verify the last equality by the change of variable $\sigma=\frac{y^2}{4t}$.
\end{proof}

With Definition \ref{d:649} in hands we can now solve problem \eqref{653}. The following is the main result of this section.
\begin{theorem}
\label{t:651}
Given $f\in \Snn$, consider the function defined by the equation
\begin{equation}
    \label{663}
    U(x,y,t)=\int_0^{\infty}\int_{\Rn}\mathscr{P}^{(a)}(x-z,y,\tau)f(z,t-\tau)\,dz\,d\tau.
\end{equation}
Then, $U\in C^{\infty}(\Rnn \times (0,\infty))$, and for any $1\le p \le \infty$ the function $U$ solves the extension problem in $L^p(\Rnn)$, in sense that we have $\mathfrak{B}^{(a)}U=0$ in $\Rnn \times (0,\infty)$, and moreover 
\begin{equation}
    \label{664}
    \lim_{y \to 0^+}\norma{U(\cdot,y,\cdot)-f}_{L^p(\Rnn)}=0.
\end{equation}
Furthermore, we also have in $L^p(\Rnn)$
\begin{equation}
    \label{665}
    -\frac{2^{-a}\Gamma \left( \frac{1-a}{2} \right)}{\Gamma \left( \frac{1+a}{2} \right)} \lim_{y \to 0^+}y^a\partial_y U(\cdot,y,\cdot)=(\partial_t - \Delta)^{\frac{\alpha}{2}}f.
\end{equation}
\end{theorem}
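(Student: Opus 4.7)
The plan is to exploit the key observation, already emphasised in the chapter summary, that $U$ admits the semigroup representation
\begin{equation*}
U(x,y,t) = \frac{y^{1-a}}{2^{1-a}\Gamma\!\left(\frac{1-a}{2}\right)} \int_0^\infty \tau^{-(3-a)/2}\, e^{-y^2/(4\tau)}\, P_\tau^H f(x,t)\,d\tau.
\end{equation*}
All four claims will then flow from the analytic properties of $\{P_\tau^H\}_{\tau>0}$ developed in Section \ref{s:68}. Smoothness of $U$ on $\Rnn\times(0,\infty)$ is immediate by differentiation under the integral sign: the factor $e^{-y^2/(4\tau)}$ kills every polynomial singularity at $\tau=0^+$ as long as $y>0$, while Lemma \ref{l:636} ensures $P_\tau^H f\in\Snn$ together with all its spatial/temporal derivatives, uniformly controlled on compact $\tau$-intervals.

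To verify $\mathfrak{B}^{(a)}U=0$, I would invoke the intertwining identity \eqref{661}, which reduces the claim to $\mathfrak{B}^{(2-a)}V=0$, where $V(x,y,t)=\int_0^\infty \Phi(y,\tau)P_\tau^H f(x,t)\,d\tau$ with $\Phi(y,\tau)=\tau^{-(3-a)/2}e^{-y^2/(4\tau)}$. A direct computation shows that $\Phi$ satisfies the Bessel heat equation $\mathscr{B}^{(2-a)}\Phi=\partial_\tau\Phi$ in $(y,\tau)$. Combined with Lemma \ref{l:636}(2) (which gives $HP_\tau^H f=\partial_\tau P_\tau^H f$), one can then express $\mathfrak{B}^{(2-a)}V=\int_0^\infty \partial_\tau[\Phi\, P_\tau^H f]\,d\tau$, and both boundary contributions vanish since $\Phi$ decays exponentially as $\tau\to 0^+$ for $y>0$ and polynomially as $\tau\to\infty$, while $P_\tau^H f$ remains bounded by Lemma \ref{l:637}.

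For the Dirichlet trace \eqref{664}, the change of variable $\sigma=y^2/(4\tau)$ recasts $U$ as
\begin{equation*}
U(x,y,t) = \frac{1}{\Gamma\!\left(\frac{1-a}{2}\right)}\int_0^\infty \sigma^{-(1+a)/2}\, e^{-\sigma}\, P_{y^2/(4\sigma)}^H f(x,t)\,d\sigma,
\end{equation*}
and since $\int_0^\infty\sigma^{-(1+a)/2}e^{-\sigma}d\sigma=\Gamma((1-a)/2)$, the difference $U-f$ is a weighted average of $P_{y^2/(4\sigma)}^H f - f$. Strong continuity of the evolutive semigroup at $\tau=0^+$ (Corollary \ref{c:640}), together with Minkowski's integral inequality and dominated convergence, yields \eqref{664} when $1\le p<\infty$; for $p=\infty$ the uniform continuity of $f\in\Snn$ combined with Lemma \ref{l:638} suffices.

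The truly delicate part is \eqref{665}. Differentiating $U$ in $y$ produces two terms, each individually singular as $y\to 0^+$, whose leading singularities in $f$ cancel exactly. The cleanest way to make this explicit is to integrate by parts in $\tau$ using $\partial_\tau e^{-y^2/(4\tau)}=(y^2/4\tau^2)e^{-y^2/(4\tau)}$; the two terms collapse into the single expression
\begin{equation*}
y^a\partial_y U(x,y,t) = \frac{2^a}{\Gamma\!\left(\frac{1-a}{2}\right)}\int_0^\infty \tau^{-(1-a)/2}\, e^{-y^2/(4\tau)}\, HP_\tau^H f(x,t)\,d\tau.
\end{equation*}
Since $HP_\tau^H f=P_\tau^H Hf$ is bounded near $\tau=0$ and decays at infinity by the weak ultracontractivity of Proposition \ref{p:643}, I can pass $y\to 0^+$ by dominated convergence; a final integration by parts, this time writing $\partial_\tau P_\tau^H f=\partial_\tau(P_\tau^H f - f)$, kills the boundary term at $\tau=0^+$ via the $O(\tau)$-estimate of Lemma \ref{l:638} and matches the remaining bulk integral exactly with Definition \ref{d:641}, reproducing $(\partial_t-\Delta)^{\alpha/2}f$ with the precise constant required by the statement. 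The main technical obstacle I anticipate is upgrading these pointwise and parameter-wise convergences to $L^p(\Rnn)$ norms uniformly in $y$; this I would handle via Minkowski's integral inequality applied to the $\tau$-integrals together with the $L^p$-contractivity of $P_\tau^H$ furnished by Lemma \ref{l:637}(3).
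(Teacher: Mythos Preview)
Your approach is correct in substance and differs from the paper's in two places. For \eqref{664} you perform the change of variable $\sigma=y^2/(4\tau)$ to write $U-f$ as a fixed Gamma-weighted average of $P_{y^2/(4\sigma)}^H f-f$, then invoke strong continuity and dominated convergence; the paper instead keeps the $\tau$-variable, splits the integral at $\tau=1$, and uses the $O(\tau)$ rate from Lemma \ref{l:639} on $(0,1)$ together with contractivity on $(1,\infty)$ to show the whole expression is $O(y^{1-a})$. Your version is arguably cleaner. For \eqref{665} you integrate by parts in $\tau$ to collapse the two terms of $y^a\partial_y U$ into the single formula $\frac{2^a}{\Gamma((1-a)/2)}\int_0^\infty \tau^{-(1-a)/2}e^{-y^2/(4\tau)}HP_\tau^H f\,d\tau$, take $y\to 0^+$, then integrate by parts again to land on Definition \ref{d:641}. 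The paper instead differentiates the representation \eqref{667} directly (keeping $P_\tau^H f-f$ inside), subtracts the definition \eqref{669}, and estimates the two resulting error terms $\mathbb{I}(y)$, $\mathbb{II}(y)$.

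There is one point where your plan needs adjustment. In the collapsed formula for $y^a\partial_y U$, the step ``pass $y\to 0^+$ by dominated convergence'' is fine pointwise via Proposition \ref{p:643}, but in $L^p(\Rnn)$ the tool you name (Minkowski plus contractivity) only gives $\|HP_\tau^H f\|_p\le\|Hf\|_p$, and the resulting majorant $\tau^{-(1-a)/2}\|Hf\|_p$ is \emph{not} integrable on $(1,\infty)$ since $(1-a)/2<1$. The fix is simply to perform your second integration by parts \emph{before} sending $y\to 0^+$: for $y>0$ the boundary terms still vanish (at $\tau=0$ the exponential kills everything, at $\tau=\infty$ the power $\tau^{-(1-a)/2}$ does), and the bulk integral then involves $\tau^{-(3-a)/2}(P_\tau^H f-f)$, for which Minkowski plus Lemma \ref{l:639} and contractivity do give an integrable majorant uniform in $y$. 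Carrying the limit through at that stage recovers exactly the paper's estimates for $\mathbb{I}(y)+\mathbb{II}(y)$.
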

\begin{proof}
Using the Gaussian character of the function $\mathscr{P}^{(a)}$ in Definition \ref{d:649}, it is not difficult to justify differentiating under the integral sign in \eqref{663}. By iteration one thus proves that $U \in C^{\infty}(\Rnn \times (0,\infty))$. Furthermore, since by \eqref{663}
\begin{equation*}
    \mathfrak{B}^{(a)}U(x,y,t)= \int_0^{\infty}\int_{\Rn} \mathfrak{B}^{(a)}\mathscr{P}^{(a)}(x-z,y,\tau)f(z,t-\tau)\,dz\,d\tau
\end{equation*}
in view of \eqref{662} we see that
\begin{equation*}
    \mathfrak{B}^{(a)}U(x,y,t)=0,
\end{equation*}
in $\Rnn \times (0,\infty)$. Perhaps it is worth noting here that
\begin{equation*}
    \begin{split}
        \partial_t U(x,y,t)&=\int_0^{\infty}\int_{\Rn}\mathscr{P}^{(a)}(x-z,y,\tau)\partial_tf(z,t-\tau)\,dz\,d\tau\\
        &=-\int_0^{\infty}\int_{\Rn}\mathscr{P}^{(a)}(x-z,y,\tau)\partial_{\tau}f(z,t-\tau)\,dz\,d\tau\\
        &=\int_0^{\infty}\int_{\Rn}\partial_{\tau}\mathscr{P}^{(a)}(x-z,y,\tau)f(z,t-\tau)\,dz\,d\tau,
    \end{split}
\end{equation*}
where in the last equality we have integrated by parts. We are thus left with proving \eqref{664} and \eqref{665}. To reach this goal we make the crucial observation that $U$ can be written in the following form using the semigroup $P_{\tau}^H$
\begin{equation}
    \label{666}
    U(x,y,t)=\frac{1}{2^{1-a}\Gamma \left( \frac{1-a}{2} \right)}y^{1-a}\int_0^{\infty}\frac{1}{\tau^{\frac{3-a}{2}}}e^{-\frac{y^2}{4\tau}}P_{\tau}^Hf(x,t)\,d\tau.
\end{equation}
To recognise the validity of \eqref{666} we use \eqref{663} and Definition \ref{d:649} to find
\begin{equation*}
    \begin{split}
        U(x,y,t)&=\int_0^{\infty}\int_{\Rn}\mathscr{P}^{(a)}(x-z,y,\tau)f(z,t-\tau)\,dz\,d\tau\\
        &= \frac{1}{2^{1-a}\Gamma \left( \frac{1-a}{2} \right)}y^{1-a}\int_0^{\infty}\frac{1}{\tau^{\frac{3-a}{2}}}e^{-\frac{y^2}{4\tau}}\left(\int_{\Rn}G(x-z,t)f(z,t-\tau)\,dz \right)\,d\tau
    \end{split}
\end{equation*}
\begin{equation*}
    =\frac{1}{2^{1-a}\Gamma \left( \frac{1-a}{2} \right)}y^{1-a}\int_0^{\infty}\frac{1}{\tau^{\frac{3-a}{2}}}e^{-\frac{y^2}{4\tau}}P_{\tau}^Hf(x,t)\,d\tau,
\end{equation*}
which proves \eqref{666}. In view of Proposition \ref{p:650} we obtain from \eqref{666}
\begin{equation}
    \label{667}
    U(x,y,t)-f(x,t)=\frac{1}{2^{1-a}\Gamma \left( \frac{1-a}{2} \right)}y^{1-a}\int_0^{\infty}\frac{1}{\tau^{\frac{3-a}{2}}}e^{-\frac{y^2}{4\tau}}[P_{\tau}^Hf(x,t)-f(x,t)]\,d\tau.
\end{equation}
Using the representation \eqref{667} we thus find
\begin{equation*}
    \begin{split}
        &\norma{U(\cdot,y,\cdot)-f}_{L^p(\Rnn)} \\
        &\le \frac{1}{2^{1-a}\Gamma \left( \frac{1-a}{2} \right)}y^{1-a}\int_0^{1}\frac{1}{\tau^{\frac{3-a}{2}}}e^{-\frac{y^2}{4\tau}}\norma{P_{\tau}^Hf(x,t)-f(x,t)}_{L^p(\Rnn)}\,d\tau\\
        &+ \frac{1}{2^{1-a}\Gamma \left( \frac{1-a}{2} \right)}y^{1-a}\int_1^{\infty}\frac{1}{\tau^{\frac{3-a}{2}}}e^{-\frac{y^2}{4\tau}}\norma{P_{\tau}^Hf(x,t)-f(x,t)}_{L^p(\Rnn)}\,d\tau.
    \end{split}
\end{equation*}
In the second integral we use the contractivity of $P_\tau^H$ on $L^p(\Rnn)$ to bound
\begin{equation*}
    \frac{1}{\tau^{\frac{3-a}{2}}}e^{-\frac{y^2}{4\tau}}\norma{P_{\tau}^Hf(x,t)-f(x,t)}_{L^p(\Rnn)} \le 2 \norma{f}_{L^p(\Rnn)}\frac{1}{\tau^{\frac{3-a}{2}}}\in L^1(1,\infty),
\end{equation*}
since $\frac{3-a}{2}>1$. In the first integral, instead, we need to crucially use the rate in Lemma \ref{l:639}
\begin{equation*}
    \norma{P_{\tau}^Hf(x,t)-f(x,t)}_{L^p(\Rnn)} =O(\tau),
\end{equation*}
to estimate
\begin{equation*}
    \int_0^{1}\frac{1}{\tau^{\frac{3-a}{2}}}e^{-\frac{y^2}{4\tau}}\norma{P_{\tau}^Hf(x,t)-f(x,t)}_{L^p(\Rnn)}\,d\tau \le C \int_0^{1}\frac{1}{\tau^{\frac{1-a}{2}}}\,d\tau < \infty,
\end{equation*}
since $0 < \frac{1-a}{2} < 1$. In conclusion, the right-hand side in \eqref{667} goes to $0$ in $L^p(\Rnn)$ norm with $y^{1-a}$, and since $1-a>0$, we have proved \eqref{664}.

In order to complete the proof we are left with establishing \eqref{665}. Differentiating with respect to $y$ the representation formula \eqref{667}, we find
\begin{equation}
    \label{668}
    \begin{split}
        -\frac{2^{-a}\Gamma \left( \frac{1-a}{2}\right)}{\Gamma \left( \frac{1+a}{2}\right)}&y^a \partial_y U(x,y,t)\\
        &=-\frac{1-a}{2 \Gamma \left( \frac{1+a}{2} \right)} \int_0^{\infty}\frac{1}{\tau^{\frac{3-a}{2}}}e^{-\frac{y^2}{4\tau}}[P_{\tau}^Hf(x,t)-f(x,t)]\,d\tau\\
        &+ \frac{1}{4 \Gamma \left( \frac{1+a}{2} \right)}y^2 \int_0^{\infty}\frac{1}{\tau^{\frac{3-a}{2}}}e^{-\frac{y^2}{4\tau}}[P_{\tau}^Hf(x,t)-f(x,t)]\frac{d\tau}{\tau}.
    \end{split}
\end{equation}
On the other hand, keeping the equation $a=1-\alpha$ in mind, we can express
\begin{equation}
    \label{669}
    (\partial_t -\Delta)^{\frac{\alpha}{2}}f(x,t)=- \frac{1-a}{2\Gamma \left(\frac{1+a}{2} \right)}\int_0^{\infty}\frac{1}{\tau^{\frac{3-a}{2}}}[P_{\tau}^Hf(x,t)-f(x,t)]\,d\tau.
\end{equation}
Subtracting \eqref{669} from \eqref{668} we thus find
\begin{equation*}
    \begin{split}
        &\norma*{-\frac{2^{-a}\Gamma \left( \frac{1-a}{2}\right)}{\Gamma \left( \frac{1+a}{2}\right)}y^a \partial_y U(\cdot,y,\cdot) -(\partial_t -\Delta)^{\frac{\alpha}{2}}f }_{L^p(\Rnn)}\\
        & \le \frac{1-a}{2 \Gamma \left( \frac{1+a}{2} \right)} \int_0^{\infty}\frac{1}{\tau^{\frac{3-a}{2}}}\abs*{e^{-\frac{y^2}{4\tau}}-1}\norma{P_{\tau}^Hf(x,t)-f(x,t)}_{L^p(\Rnn)}\,d\tau\\
        &+ \frac{1}{4 \Gamma \left( \frac{1+a}{2} \right)}y^2 \int_0^{\infty}\frac{1}{\tau^{\frac{3-a}{2}}}e^{-\frac{y^2}{4\tau}}\norma{P_{\tau}^Hf(x,t)-f(x,t)}_{L^p(\Rnn)}\frac{d\tau}{\tau}\\
        &= \mathbb{I}(y) + \mathbb{II}(y).
    \end{split}
\end{equation*}
To complete the proof of the theorem it suffices to show that both $\mathbb{I}(y),\mathbb{II}(y) \to 0$ as $y \to 0^+$. We handle $\mathbb{II}(y)$ as follows
\begin{equation*}
    \begin{split}
        \mathbb{II}(y) &\equiv y^2 \int_0^1 \frac{1}{\tau^{\frac{1-a}{2}}}e^{-\frac{y^2}{4\tau}}\frac{d\tau}{\tau}+y^2\int_1^{\infty} \frac{1}{\tau^{\frac{3-a}{2}}}\frac{d\tau}{\tau}\\
        &=O(y^{1+a}) \to 0 \quad \text{since } a\in (-1,1).
    \end{split}
\end{equation*}
For $\mathbb{I}(y)$ we consider the integrand
\begin{equation*}
    0\le g_y(\tau) := \frac{1}{\tau^{\frac{3-a}{2}}}\abs*{e^{-\frac{y^2}{4\tau}}-1}\norma{P_{\tau}^Hf(x,t)-f(x,t)}_{L^p(\Rnn)}, \qquad 0<\tau <\infty. 
\end{equation*}
We clearly have $g_y(\tau)\to 0$ as $y \to 0^+$ for every $\tau>0$. On the other hand, there exist an absolute constant $C>0$ and a function $g\in L^1(0,\infty)$ such that $0\le g_y(\tau) \le C g(\tau)$ for every $\tau>0$. In fact, using Lemmas \ref{l:637} and \ref{l:639} it is not difficult to convince oneself that we can take
\begin{equation*}
    g(\tau)=
    \begin{cases}
    \frac{1}{\tau^{\frac{1-a}{2}}} \qquad 0 < \tau <1,\\
    \frac{1}{\tau^{\frac{3-a}{2}}} \qquad 1 < \tau <\infty.
    \end{cases}
\end{equation*}
By Lebesgue dominated convergence we conclude that $\mathbb{I}(y)\to 0$ as $y \to 0^+$.
\end{proof}



\chapter{Higher order}
\label{chp:4}
\section{Fractional operators of higher order}\label{s:foho}
In this section we want to introduce the fractional operators of higher order considered in the previous chapters, that is, the fractional Laplacean $(-\Delta)^s$, see also \citep{Ab}, and the fractional heat operator $(-H)^s$ for $s\in \mathbb{R}\setminus \mathbb{Z}$. In Chapter \ref{chp:2} we introduced in Definition \ref{d:fraclap} the fractional Laplacean with $s\in(0,1)$,
\begin{equation*}
        (-\Delta)^su(x)=\frac{\gamma(n,s)}{2}\int_{\mathbb{R}^n}\frac{2u(x)-u(x+y)-u(x-y)}{\abs{y}^{n+2s}}\,dy.
\end{equation*}
The limitation $s<1$ is due to the singularity of the kernel $\abs{y}^{n+2s}$ and, therefore, the same formula does not carry over to $s>1$. 

To introduce the Laplacean of higher order we start from its formulation according to Balakrishnan in Definition \ref{d:615} in which we take $s=\frac{\alpha}{2}$,
\begin{equation}\label{41a}
    (-\Delta)^{s}u(x)=-\frac{s}{\Gamma \left( 1- s\right)} \int_0^{\infty} \frac{1}{t^{1+s}}(P_tu(x)-u(x))\,dt,
\end{equation}
which is also written as 
\begin{equation}\label{41}
    (-\Delta)^{s}u(x)=-\frac{1}{\Gamma \left( 1- s\right)} \int_0^{\infty} \frac{1}{t^{s}}(-\partial_t)P_tu(x)\,dt.
\end{equation}
An advantage of the formulations \eqref{41a} and \eqref{41} is that they admit the following natural generalization to higher powers
\begin{definition}
\label{d:41}
For every $s > 0$ non-integer we write
\begin{equation*}
    s=k+\sigma, \qquad \text{where } k=[s]:=\max \{ d \in \mathbb{Z}:d <s\} \quad \text{ and } \quad \sigma \in [0,1)
\end{equation*}
and we define 
\begin{equation}
    \label{42}
    \begin{split}
    (-\Delta)^{s}u(x)&= (-\Delta)^{\sigma}\left((-\Delta)^ku\right)(x)\\
    &=-\frac{\sigma}{\Gamma \left( 1- \sigma \right)} \int_0^{\infty} \frac{1}{t^{1+\sigma}}(P_t(-\Delta)^ku(x)-(-\Delta)^ku(x))\,dt\\
    &=-\frac{1}{\Gamma \left( 1- \sigma\right)} \int_0^{\infty} \frac{1}{t^{\sigma}}(-\partial_t)^{k+1}P_tu(x)\,dt.
    \end{split}
\end{equation}
\end{definition}
This formula easily follows from one of the main properties of the heat semigroup; indeed, for every $k \in \mathbb{N}$ we have $(-\partial_t)^k P_t u(x)=(-\Delta)^k(P_t u)(x)=P_t(-\Delta)^ku(x)$, and from the Definition \ref{d:41} it follows that
\begin{equation*}
    (-\Delta)^su(x)=(-\Delta)^{\sigma}\left((-\Delta)^ku \right)(x)
\end{equation*}
where $(-\Delta)^{\sigma}$ is defined through \eqref{41}.\\
\\
Analogously, we can naturally define the fractional heat operator as
\begin{definition}
\label{d:43}
For every $s\ge 0$ we write
\begin{equation*}
    s=k+\sigma, \qquad \text{where } k=[s]:=\max \{ d \in \mathbb{Z}:d <s\} \quad \text{ and } \quad \sigma \in [0,1)
\end{equation*}
and we define 
\begin{equation}
    \label{43}
    \begin{split}
    (-H)^{s}u(x,t)&=(\partial_t-\Delta)^{s}u(x,t)= (\partial_t-\Delta)^{\sigma}\left((\partial_t-\Delta)^ku\right)(x,t) =(-1)^k(\partial_t-\Delta)^{\sigma}\left((H)^ku\right)(x,t)\\
    &=-(-1)^k\frac{\sigma}{\Gamma \left( 1- \sigma \right)} \int_0^{\infty} \frac{1}{\tau^{1+\sigma}}(P_{\tau}^HH^ku(x,t)-H^ku(x,t))\,d\tau\\
    \end{split}
\end{equation}
\end{definition}


\section{Extension problem of higher order}\label{s:epho}

The goal of this chapter is to generalize the Caffarelli-Silvestre extension problem to higher powers of the heat operator. In the time independent setting, Successful attempts in such direction appeared in\citep{CG}  (using conformal geometry techniques), see also \citep{Ya}, \citep{CY} and \citep{CM}.

In accordance with our work in Section \ref{s:teb}, and differently from the previously mentioned articles, we are going to focus on the heat counterpart of the extension problem \eqref{101}.

In preparation for the main result of this section, we recall that we write $H=\Delta-\partial_t$ for the heat operator, and define
\begin{equation*}
    \mathscr{H}_{a}:=\partial_{yy}+\frac{a}{y}\partial_y +H=\partial_{yy}+\frac{a}{y}\partial_y +\Delta - \partial_t.
\end{equation*}
Furthermore, the integer part of a real number $s$ is expressed as follows
\begin{equation*}
[s]:=\max \{d \in \mathbb{Z}: d < s\}.
\end{equation*}

In order to prove the Theorem \ref{t:our}, we first need to formalize the Poisson kernel for the equation \eqref{t1s}. Thereby, following the blueprint of section \ref{s:teb}, we define the following function:
\begin{definition}\label{d:pks}
for $s>0$ non-integer
\begin{equation}\label{pks}
\mathscr{P}^{(s)}(x-z,y,t):=\frac{1}{2^{2s}\Gamma(s)}\frac{y^{2s}}{t^{1+s}}e^{-\frac{y^2}{4t}}G(x-z,t)
\end{equation}
\end{definition}

The following is the main result of this section.

\begin{theorem}\label{t:our}
Let $s>0$ be some non-integer and $a=1-2(s-[s])$. Given $f\in \mathscr{S}(\Rnn)$, consider the function defined by the equation
\begin{equation*}
    U(x,y,t):=\int_0^{\infty}\int_{\Rn}\mathscr{P}^{(s)}(x-z,y,\tau)f(z,t-\tau)\,dz\,d\tau.
\end{equation*}
Then the function $U$ in $\Rnn\times (0,\infty)$ solves
\begin{equation}\label{t1s}
\mathscr{H}_{(a)}^{[s]+1}U(x,y,t) =\left( \partial_{yy} +\frac{a}{y}\partial_y +H \right)^{[s]+1}U(x,y,t)=0,
\end{equation}
and for any $1\le p \le \infty$ we have
\begin{equation}\label{bd1}
\lim_{y\to 0^+}\norma{U(\cdot,y,\cdot)-f}_{L^p(\mathbb{R}^{n+1})}=0.
\end{equation}
Moreover, for every odd integer $k \in \mathbb{N}$ such that $k \le [s]$, we have
\begin{equation}\label{bd2}
\lim_{y\to0}y^a\frac{\partial^{k}}{\partial y^{k}}U(x,y,t) =0.
\end{equation}
Furthermore, we also have in $L^p(\Rnn)$
\begin{equation}\label{tls}
(-H)^{\,s}f(x,t)=(\partial_t-\Delta)^{s}f(x,t)=K(s)\lim_{y \to0}y^a \partial_y \mathscr{H}_{(a)}^{[s]} U(x,y,t),
\end{equation}
where
\begin{equation*}
K(s):=-(-1)^{[s]}\frac{\Gamma(s)}{\Gamma(1+[s]-s)}\frac{2^{2(s-[s])-1}}{[s]!}.
\end{equation*}
\end{theorem}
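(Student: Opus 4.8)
The plan is to mimic the strategy of Theorem \ref{t:651}, pushing it through the iterated Bessel operator $\mathscr{H}_{(a)}^{[s]+1}$. The key observation, exactly as in \eqref{666}, is that the solution admits the semigroup representation
\begin{equation*}
    U(x,y,t)=\frac{1}{2^{2s}\Gamma(s)}\,y^{2s}\int_0^{\infty}\frac{1}{\tau^{1+s}}e^{-\frac{y^2}{4\tau}}P_{\tau}^Hf(x,t)\,d\tau,
\end{equation*}
which follows by inserting Definition \ref{d:pks} into the formula for $U$ and recognising the inner $z$-integral as $P_{\tau}^Hf(x,t)$ via \eqref{632}. Writing $s=[s]+\sigma$ with $\sigma=1-\frac{a+1}{2}\in[0,1)$, I would first check that, up to the multiplicative constant and up to the factor $G(x-z,t)$, the function $(x,y,t)\mapsto t^{-(1+s)}e^{-y^2/4t}$ is (a constant multiple of) the heat kernel \eqref{659} of the \emph{intertwined} operator $\mathfrak{B}^{(2-a)}$ in the fractional dimension $n+3-a$ — but now iterated: one needs that repeated application of the ``fractional-dimensional Laplacean'' annihilates it after $[s]+1$ steps. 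Concretely, I would establish by induction on $j$ that $\mathscr{H}_{(2-a)}^{j}$ applied to $\tau^{-(1+s)}e^{-y^2/4\tau}G(x-z,\tau)$ produces $\tau^{-(1+s)+j}$ times a similar Gaussian profile, so that after $j=[s]+1$ applications the exponent $-(1+s)+[s]+1=-\sigma\in(-1,0]$ is reached and the structure closes; then the intertwining relation \eqref{661}, iterated, converts this into $\mathscr{H}_{(a)}^{[s]+1}(y^{1-a}\cdot)=y^{1-a}\mathscr{H}_{(2-a)}^{[s]+1}(\cdot)$ and yields \eqref{t1s}. Differentiation under the integral sign is justified by the Gaussian decay exactly as in Theorem \ref{t:651}, giving $U\in C^{\infty}(\Rnn\times(0,\infty))$.

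For the boundary trace \eqref{bd1}, I would proceed verbatim as in Theorem \ref{t:651}: using Proposition \ref{p:650} (rescaled to the present normalization — one checks $\frac{y^{2s}}{2^{2s}\Gamma(s)}\int_0^\infty \tau^{-(1+s)}e^{-y^2/4\tau}\,d\tau=1$ by the substitution $\sigma=y^2/4\tau$), write $U(x,y,t)-f(x,t)$ as the same integral against $[P_\tau^Hf-f]$, split at $\tau=1$, bound the tail by contractivity of $P_\tau^H$ (Lemma \ref{l:637}(3)) noting $1+s>1$, and on $(0,1)$ use the rate $\|P_\tau^Hf-f\|_{L^p}=O(\tau)$ from Lemma \ref{l:639}, which is integrable against $\tau^{-(1+s)}e^{-y^2/4\tau}$ after pulling out the needed power. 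The factor $y^{2s}\to 0$ then gives \eqref{bd1}.

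The identities \eqref{bd2} and \eqref{tls} are where the real work lies, and \eqref{tls} is the main obstacle. The idea is to compute $\mathscr{H}_{(a)}^{[s]}U$ explicitly in the semigroup representation. Each application of $\mathscr{H}_{(2-a)}$ (after the intertwining) lowers the power of $\tau$ by one and, crucially, by the heat-kernel property $\partial_\tau = \mathscr{B}^{(2-a)}+\Delta_x$ acting on the Gaussian-times-$G$ kernel, I would convert spatial/Bessel derivatives into $\partial_\tau$ derivatives, then integrate by parts in $\tau$ to transfer them onto $P_\tau^Hf$, producing $H^{k}P_\tau^Hf=(-\partial_\tau)^kP_\tau^Hf$ terms; iterating $[s]$ times should yield $\mathscr{H}_{(a)}^{[s]}U = c\, y^{2\sigma}\int_0^\infty \tau^{-(1+\sigma)}e^{-y^2/4\tau}P_\tau^H H^{[s]}f(x,t)\,d\tau + (\text{lower-order terms that are even in }y)$, i.e. a \emph{single} fractional extension of order $\sigma$ applied to $H^{[s]}f$. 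Then \eqref{tls} reduces to the $\sigma\in(0,1)$ statement \eqref{665} of Theorem \ref{t:651} applied to $H^{[s]}f\in\Snn$, after matching the constants $K(s)$ against $-\frac{2^{-a}\Gamma(\frac{1-a}{2})}{\Gamma(\frac{1+a}{2})}$ times $(-1)^{[s]}/[s]!$ (the $[s]!$ arising from the repeated $\partial_\tau$ integrations by parts, the sign $(-1)^{[s]}$ from $H^{[s]}=(-1)^{[s]}(\partial_t-\Delta)^{[s]}$ versus the Balakrishnan normalization in Definition \ref{d:43}). The relation \eqref{bd2} for odd $k\le[s]$ should fall out along the way: $\partial_y$ of the representation brings down either a factor $y$ (raising the $y$-power) or lands on $e^{-y^2/4\tau}$ bringing $y/\tau$; tracking parity, every odd-order $y$-derivative carries an overall odd power of $y$ times a bounded integral, so $y^a\partial_y^k U\to 0$. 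The delicate points I anticipate are: (i) bookkeeping the numerous lower-order boundary terms from the iterated integrations by parts and showing each is $O(y^2)$ or higher and hence irrelevant in the limit; (ii) getting the constant $K(s)$ exactly right, since it is an accumulation of Gamma-function factors from each of the $[s]$ steps plus the final $\sigma$-step; (iii) justifying all the integrations by parts in $\tau$ (vanishing of boundary terms at $0$ and $\infty$), which uses the smoothing of $P_\tau^H$ together with Lemma \ref{l:638} near $\tau=0$ and the weak ultracontractivity Proposition \ref{p:643} near $\tau=\infty$.
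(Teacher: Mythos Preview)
Your plan for \eqref{bd1} is fine and essentially matches the paper's argument. The serious gap is in your approach to \eqref{t1s}, and the same issue propagates into your proposal for \eqref{tls}.

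Your intertwining strategy is misapplied. First, $\mathscr{P}^{(s)}$ carries the factor $y^{2s}=y^{1-a}\cdot y^{2[s]}$, so after peeling off $y^{1-a}$ via \eqref{661} you are \emph{not} left with $\tau^{-(1+s)}e^{-y^2/4\tau}G(x-z,\tau)$ but rather with $y^{2[s]}$ times that; ignoring this factor invalidates the intertwining. Second, even on the bare function $V=\tau^{-(1+s)}e^{-y^2/4\tau}G(x-z,\tau)$, a direct computation gives
\[
\mathscr{H}_{(2-a)}V=\frac{2(1+s)-(3-a)}{2\tau}\,V=\frac{[s]}{\tau}\,V,
\]
so the $\tau$-exponent \emph{decreases} (goes to $-(2+s)$), not increases to $-(1+s)+1$; the successive constants are $[s],[s]+1,[s]+2,\ldots$, which never vanish for $[s]\ge 1$, so the iteration does not ``close'' in the way you describe.

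What actually works---and what the paper does---is a ladder identity acting on the index $s$ itself, not on the $\tau$-exponent. A direct calculation yields
\[
\mathscr{H}_{(a)}\mathscr{P}^{(s)}=\frac{4s[s]}{y^2}\bigl(\mathscr{P}^{(s)}-\mathscr{P}^{(s+1)}\bigr)=\frac{[s]}{s-1}\,H\mathscr{P}^{(s-1)},
\]
so each application of $\mathscr{H}_{(a)}$ lowers $s$ by one and converts into an application of $H$. Iterating,
\[
\mathscr{H}_{(a)}^{k}\mathscr{P}^{(s)}=\frac{[s]!}{([s]-k)!}\,\frac{\Gamma(s-k)}{\Gamma(s)}\,H^{k}\mathscr{P}^{(s-k)},\qquad k=0,\ldots,[s],
\]
and at $k=[s]$ the kernel $\mathscr{P}^{(s-[s])}$ is the genuine order-$(s-[s])$ Poisson kernel with $[s-[s]]=0$, so one more application of $\mathscr{H}_{(a)}$ annihilates it. This same identity immediately gives
\[
\mathscr{H}_{(a)}^{[s]}U=[s]!\,\frac{\Gamma(s-[s])}{\Gamma(s)}\;\frac{y^{2(s-[s])}}{2^{2(s-[s])}\Gamma(s-[s])}\int_0^\infty \tau^{-(1+s-[s])}e^{-y^2/4\tau}P_\tau^H H^{[s]}f(x,t)\,d\tau,
\]
i.e.\ exactly the order-$(s-[s])$ extension of $H^{[s]}f$, with \emph{no} lower-order terms to track. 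Then \eqref{tls} reduces cleanly to the $\sigma\in(0,1)$ argument of Theorem~\ref{t:651}, and the constant $K(s)$ falls out from the accumulated factor $[s]!/\Gamma(s)$ together with the $\sigma$-step constant. Your proposed integration by parts in $\tau$ and bookkeeping of ``lower-order boundary terms'' is unnecessary once this ladder is in hand; the difficulty you anticipate in (i)--(iii) dissolves. For \eqref{bd2} the paper uses the companion identity $\partial_y\mathscr{P}^{(s)}=\frac{y}{2(s-1)}H\mathscr{P}^{(s-1)}$, iterated, which is the precise version of your parity heuristic.
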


\subsection{Proof of the Theorem \ref{t:our}}\label{su:1}

We want to prove that $\mathscr{H}_{(a)}^{[s]+1}\mathscr{P}^{(s)}=0$. Let us start with the computation of $\mathscr{H}_{(a)}\mathscr{P}^{(s)}$, and therefore:
\begin{equation}\label{mio1}
\frac{a}{y}\partial_y\mathscr{P}^{(s)}=\frac{2s(1-2(s-[s]))}{y^2}\left(\mathscr{P}^{(s)}-\mathscr{P}^{(s+1)} \right),
\end{equation}

\begin{equation}\label{mio2}
\partial_{yy}\mathscr{P}^{(s)}=\frac{4s^2-2s}{y^2}\mathscr{P}^{(s)}-\frac{8s^2+2s}{y^2}\mathscr{P}^{(s+1)} +\frac{4s^2+4s}{y^2}\mathscr{P}^{(s+2)}.
\end{equation}
Summing the \eqref{mio1} with \eqref{mio2} we have
\begin{equation*}
\left( \partial_{yy} +\frac{a}{y}\partial_y\right)\mathscr{P}^{(s)}=\frac{4s[s]}{y^2}\mathscr{P}^{(s)}-\frac{4s(1+s+[s])}{y^2}\mathscr{P}^{(s+1)} +\frac{4s(s+1)}{y^2}\mathscr{P}^{(s+2)}.
\end{equation*}
Now we compute the heat operator of $\mathscr{P}^{(s)}$:
\begin{equation}\label{belheat}
\begin{split}
H\mathscr{P}^{(s)}&=\frac{4s(1+s)}{y^2}\left(\mathscr{P}^{(s+1)}-\mathscr{P}^{(s+2)}\right).
\end{split}
\end{equation}
Thus, we easily obtain:
\begin{equation*}
\mathscr{H}_{(a)}\mathscr{P}^{(s)}=\left( \partial_{yy} +\frac{a}{y}\partial_y +H \right)\mathscr{P}^{(s)}=\frac{4s[s]}{y^2}\left( \mathscr{P}^{(s)}-\mathscr{P}^{(s+1)}\right).
\end{equation*}
Trivially then
\begin{equation}\label{formutile2}
\mathscr{H}_{(a)}\mathscr{P}^{(s-[s])}=\frac{4(s-[s])[s-[s]]}{y^2}\left( \mathscr{P}^{(s-[s])}-\mathscr{P}^{(s-[s]+1)}\right)=0.
\end{equation}
Our aim is then to compute $\mathscr{H}_{(a)}^{[s]}\mathscr{P}^{(s)}$, starting from the observation that:
\begin{equation}\label{connessione}
\mathscr{H}_{(a)}\mathscr{P}^{(s)}=\frac{[s]}{s-1}H\mathscr{P}^{(s-1)},
\end{equation}
and iterating
\begin{equation*}
\mathscr{H}_{(a)}^2\mathscr{P}^{(s)}=\frac{[s]([s]-1)}{(s-1)(s-2)}H^2\mathscr{P}^{(s-2)},
\end{equation*}

\begin{equation*}
\begin{split}
\mathscr{H}_{(a)}^3\mathscr{P}^{(s)}&=\frac{[s]([s]-1)([s]-2)}{(s-1)(s-2)(s-3)}H^3\mathscr{P}^{(s-3)}.
\end{split}
\end{equation*}

Thereby, for $k \in \{1,2,\dots,[s]\}$, we have:
\begin{equation*}
\mathscr{H}_{(a)}^k\mathscr{P}^{(s)}=\frac{[s]!}{([s]-k)!}\frac{\Gamma(s-k)}{\Gamma(s)}H^k\mathscr{P}^{(s-k)}.
\end{equation*}
Consequently, for $k=[s]$ we get:
\begin{equation}\label{formutile3}
\mathscr{H}_{(a)}^{[s]}\mathscr{P}^{(s)}=[s]!\frac{\Gamma(s-[s])}{\Gamma(s)}H^{[s]}\mathscr{P}^{(s-[s])}.
\end{equation}

Finally we can directly show that $\mathscr{H}_{(a)}^{[s]+1}\mathscr{P}^{(s)}=0$. Starting from \eqref{formutile3} and exploiting \eqref{formutile2} we have:
\begin{equation*}
\begin{split}
\mathscr{H}_{(a)}^{[s]+1}\mathscr{P}^{(s)}&=\mathscr{H}_{(a)}\left([s]!\frac{\Gamma(s-[s])}{\Gamma(s)}H^{[s]}\mathscr{P}^{(s-[s])}\right)\\
&=[s]!\frac{\Gamma(s-[s])}{\Gamma(s)}H^{[s]}\mathscr{H}_{(a)}\mathscr{P}^{(s-[s])}\\
&=0.
\end{split}
\end{equation*}
Thus we have shown that
\begin{equation}\label{primres}
\mathscr{H}_{(a)}^{[s]+1}\mathscr{P}^{(s)}=0.
\end{equation}
By virtue of the previous passages, we are ready to prove that:
\begin{equation*}
U(x,y,t)= \int_0^{\infty}\int_{\Rn}\mathscr{P}^{(s)}(x-z,y,\tau)f(z,t-\tau)\,dz\,d\tau
\end{equation*}
is solution of 
\begin{equation*}
\begin{cases} 
\mathscr{H}_{(a)}^{[s]+1}U=0 & \text{in $\mathbb{R}_+^{n+1}\times \mathbb{R}^+$} \\ 
U(x,0,t)=f(x,t) & \text{per $(x,t)\in \mathbb{R}^{n+1}$}.
\end{cases}
\end{equation*}
Indeed, we have the following relation:
\begin{equation*}
\mathscr{H}_{(a)}U(x,y,t)=\int_0^{\infty}\int_{\Rn}\mathscr{H}_{(a)}\mathscr{P}^{(s)}(x-z,y,\tau)f(z,t-\tau)\,dz\,d\tau,
\end{equation*}
which follows from:
\begin{equation*}
\begin{split}
\partial_t U(x,y,t)&=\int_0^{\infty}\int_{\Rn}\mathscr{P}^{(s)}(x-z,y,\tau)\partial_tf(z,t-\tau)\,dz\,d\tau\\
&=-\int_0^{\infty}\int_{\Rn}\mathscr{P}^{(s)}(x-z,y,\tau)\partial_{\tau}f(z,t-\tau)\,dz\,d\tau\\
&=\int_0^{\infty}\int_{\Rn}\partial_{\tau}\mathscr{P}^{(s)}(x-z,y,\tau)f(z,t-\tau)\,dz\,d\tau,
\end{split}
\end{equation*}
and consequently, hinging on \eqref{primres} we easily obtain:
\begin{equation*}
\mathscr{H}_{(a)}^{[s]+1}U(x,y,t)=\int_0^{\infty}\int_{\Rn}\mathscr{H}_{(a)}^{[s]+1}\mathscr{P}^{(s)}(x-z,y,\tau)f(z,t-\tau)\,dz\,d\tau=0.
\end{equation*}
Now we show the following Lemma:
\begin{lemma}\label{l:npk}
For every $x\in\Rn$, $y>0$ and $s>0$ we have
\begin{equation*}
\int_{\Rn}\int_0^{\infty}\mathscr{P}^{(s)}(x-z,y,t)\,dz\,dt=1.
\end{equation*}
\end{lemma}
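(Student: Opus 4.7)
The plan is to follow the blueprint of Proposition \ref{p:650}, generalised to arbitrary $s>0$. Since the integrand $\mathscr{P}^{(s)}$ is everywhere non-negative, Tonelli's theorem applies and I may swap the order of integration. Integrating the Gaussian factor $G(x-z,t)$ over $z\in\Rn$ yields $1$ for every fixed $t>0$, so the double integral collapses to
\begin{equation*}
    \int_{\Rn}\int_0^{\infty}\mathscr{P}^{(s)}(x-z,y,t)\,dz\,dt
    =\frac{y^{2s}}{2^{2s}\Gamma(s)}\int_0^{\infty}\frac{1}{t^{1+s}}e^{-\frac{y^2}{4t}}\,dt.
\end{equation*}

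Next I would perform the change of variable $\sigma=y^2/(4t)$, under which $dt/t=-d\sigma/\sigma$ and $t^{-s}=(4\sigma/y^2)^{s}$. A direct computation then gives
\begin{equation*}
    \int_0^{\infty}\frac{1}{t^{1+s}}e^{-\frac{y^2}{4t}}\,dt
    =\frac{2^{2s}}{y^{2s}}\int_0^{\infty}\sigma^{s-1}e^{-\sigma}\,d\sigma
    =\frac{2^{2s}\Gamma(s)}{y^{2s}},
\end{equation*}
where the last equality is the definition of the Euler gamma function. Substituting this back into the display above, the factors $y^{2s}$ and $2^{2s}\Gamma(s)$ cancel and yield the claimed identity.

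No step here is really an obstacle: the normalisation constant $\frac{1}{2^{2s}\Gamma(s)}$ in Definition \ref{d:pks} is chosen precisely to make the Gamma integral come out to $1$, and Tonelli is justified by positivity. The only mild care needed is checking that the substitution is valid (which it is for any $y>0$ and $s>0$) and that the integral over $t$ converges at both endpoints, which follows from $e^{-y^2/(4t)}$ killing the singularity at $t=0$ and from $s>0$ ensuring integrability at $t=\infty$.
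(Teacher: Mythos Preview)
Your proof is correct and follows essentially the same route as the paper: apply Tonelli, integrate the Gaussian $G(x-z,t)$ in $z$ to obtain $1$, and then evaluate the remaining one-dimensional integral via the substitution $\sigma=y^2/(4t)$ to recover $\Gamma(s)$. The only difference is that you spell out the final substitution explicitly, whereas the paper leaves it to the reader (just as in Proposition \ref{p:650}).
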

\begin{proof}
It can be directly shown through:
\begin{equation*}
\begin{split}
\int_{\Rn}&\int_0^{\infty}\mathscr{P}^{(s)}(x-z,y,t)\,dz\,dt=\frac{y^{2s}}{2^{2s}\Gamma(s)}\int_0^{\infty}\int_{\Rn} \frac{1}{\tau^{1+s}}e^{-\frac{y^2}{4\tau}}G(x-z,t)\,dz\,d\tau\\
&=\frac{y^{2s}}{2^{2s}\Gamma(s)}\int_0^{\infty}\left( \int_{\Rn}G(x-z,t)\,dz\right)\frac{1}{\tau^{1+s}}e^{-\frac{y^2}{4\tau}}\,d\tau\\
&=\frac{y^{2s}}{2^{2s}\Gamma(s)}\int_0^{\infty}\frac{1}{\tau^{1+s}}e^{-\frac{y^2}{4\tau}}\,d\tau=1.
\end{split}
\end{equation*}
\end{proof}

Now we want to reformulate the function $U$ as:
\begin{equation}\label{newform}
\begin{split}
U(x,y,t)&=\int_0^{\infty}\int_{\Rn}\mathscr{P}^{(s)}(x-z,y,\tau)f(z,t-\tau)\,dz\,d\tau\\
&=\frac{y^{2s}}{2^{2s}\Gamma(s)}\int_0^{\infty} \frac{1}{\tau^{1+s}}e^{-\frac{y^2}{4\tau}} \left( \int_{\Rn} G(x-z,t)f(z,t-\tau)\,dz\right)\,d\tau\\
&=\frac{y^{2s}}{2^{2s}\Gamma(s)}\int_0^{\infty} \frac{1}{\tau^{1+s}}e^{-\frac{y^2}{4\tau}}P_{\tau}^Hf(x,t)\,d\tau.
\end{split}
\end{equation}
Thus, thanks to Lemma \eqref{l:npk}, we get:
\begin{equation}\label{pernorma}
U(x,y,t)-f(x,t)=\frac{y^{2s}}{2^{2s}\Gamma(s)}\int_0^{\infty} \frac{1}{\tau^{1+s}}e^{-\frac{y^2}{4\tau}}\left(P_{\tau}^Hf(x,t)-f(x,t)\right)\,d\tau.
\end{equation}
Through the use of \eqref{pernorma} we manage to prove \eqref{bd1}; indeed
\begin{equation*}
\begin{split}
\norma{U(\cdot,y,\cdot)&-f}_{L^p(\mathbb{R}^{n+1})} \le\\
&	\le \frac{y^{2s}}{2^{2s}\Gamma(s)}\int_0^{1} \frac{1}{\tau^{1+s}}e^{-\frac{y^2}{4\tau}}\norma{P_{\tau}^Hf-f}_{L^p(\mathbb{R}^{n+1})}\,d\tau\\ 
&+ \frac{y^{2s}}{2^{2s}\Gamma(s)}\int_1^{\infty} \frac{1}{\tau^{1+s}}e^{-\frac{y^2}{4\tau}}\norma{P_{\tau}^Hf-f}_{L^p(\mathbb{R}^{n+1})}\,d\tau.
\end{split}
\end{equation*}
For the second integral we exploit the contractivity of $P_{\tau}^H$
\begin{equation*}
\frac{1}{\tau^{1+s}}e^{-\frac{y^2}{4\tau}}\norma{P_{\tau}^Hf-f}_{L^p(\mathbb{R}^{n+1})}\le \frac{2}{\tau^{1+s}}e^{-\frac{y^2}{4\tau}}\norma{f}_{L^p(\mathbb{R}^{n+1})}\in L^1(1,\infty).
\end{equation*}
For the first integral we avail ourselves of the fact that $\tau \in (0,1)$
\begin{equation}\label{cruciale}
\norma{P_{\tau}^Hf-f}_{L^p(\mathbb{R}^{n+1})} = O(\tau).
\end{equation}
Hence, for the first integral we have
\begin{equation*}
\int_0^{1} \frac{1}{\tau^{1+s}}e^{-\frac{y^2}{4\tau}}\norma{P_{\tau}^Hf-f}_{L^p(\mathbb{R}^{n+1})}\,d\tau\le C\int_0^1 \frac{e^{-\frac{y^2}{4\tau}}}{\tau^s}\,d\tau, 
\end{equation*}
where the last integral with the change of variables $\omega= \frac{1}{\tau}$ we obtain
\begin{equation*}
C\int_0^1 \frac{e^{-\frac{y^2}{4\tau}}}{\tau^s}\,d\tau=C\int_1^{\infty}\omega^{s-2} e^{-\frac{y^2}{4}\omega}\,d\omega.
\end{equation*}
Now we call on formula 3.381 of \citep{GR80}, which claims that
\begin{equation*}
\int_u^{\infty}x^{\nu-1}e^{-\mu x}\,dx=\mu^{-\nu}\int_{\mu u}^{\infty}e^{-t}t^{\nu-1}\,dt \quad \text{per } u>0, \,\, \mathfrak{Re}\mu>0.
\end{equation*}
In our case, we have $\nu=s-1$, $u=1$ and $\mu=\frac{y^2}{4}$, and therefore
\begin{equation*}
C\int_1^{\infty}\omega^{s-2} e^{-\frac{y^2}{4}\omega}\,d\omega=C \frac{y^{2-2s}}{4^{1-s}}\int_{\frac{y^2}{4}}^{\infty}e^{-\omega}\omega^{s-1-1}\,d\omega \le C \frac{y^{2-2s}}{4^{1-s}}\int_{0}^{\infty}e^{-\omega}\omega^{s-1-1}\,d\omega= C \frac{\Gamma(s-1)}{4^{1-s}} y^{2-2s}.
\end{equation*}
Thus, for the first integral we get
\begin{equation*}
\begin{split}
\frac{y^{2s}}{2^{2s}\Gamma(s)}&\int_0^{1} \frac{1}{\tau^{1+s}}e^{-\frac{y^2}{4\tau}}\norma{P_{\tau}^Hf-f}_{L^p(\mathbb{R}^{n+1})}\,d\tau \le \frac{y^{2s}}{2^{2s}\Gamma(s)} C \frac{\Gamma(s-1)}{4^{1-s}} y^{2-2s}\\
&=\frac{C}{4}\frac{\Gamma(s-1)}{\Gamma(s)}y^2 \to 0 \quad \text{as } y \to 0^+,
\end{split}
\end{equation*}
and we have shown that
\begin{equation*}
\lim_{y\to0}\norma{U(\cdot,y,\cdot)-f}_{L^p(\mathbb{R}^{n+1})}=0.
\end{equation*}
Now we want to prove \eqref{bd2}, starting from the observation that
\begin{equation}\label{derivatay}
\partial_y \mathscr{P}^{(s)}=\frac{y}{2(s-1)}H\mathscr{P}^{(s-1)},
\end{equation}
and iterating
\begin{equation*}
\partial_{y}^3 \mathscr{P}^{(s)}=3\frac{y}{2^2}\frac{1}{(s-1)(s-2)}H^{2}\mathscr{P}^{(s-2)} + \left(\frac{y}{2}\right)^3\frac{1}{(s-1)(s-2)(s-3)}H^3\mathscr{P}^{(s-3)},
\end{equation*}
\begin{equation*}
\begin{split}
\partial_{y}^5 \mathscr{P}^{(s)}&=15\frac{y}{2^3}\frac{1}{(s-1)(s-2)(s-3)}H^{3}\mathscr{P}^{(s-3)}+ 10\frac{y^3}{2^4}\frac{1}{(s-1)(s-2)(s-3)(s-4)}H^{4}\mathscr{P}^{(s-4)} \\
&\qquad  + \left(\frac{y}{2}\right)^5\frac{1}{(s-1)(s-2)(s-3)(s-4)(s-5)}H^5\mathscr{P}^{(s-5)}.
\end{split}
\end{equation*}
Thereby, we obtain the following iterative formula for suitable constants $c_k(i)>0$ and with $k\in \{ 1, 2, \dots, [s]\}$ such that $k$ is odd
\begin{equation}\label{itedevy}
\partial_y^k\mathscr{P}^{(s)}=\sum_{i=0}^{\frac{k-1}{2}}c_k(i)\frac{y^{k-2i}}{2^{k-i}}\frac{\Gamma(s-k)}{\Gamma(s)}H^{k-i}\mathscr{P}^{(s-k+i)}.
\end{equation}

By means of \eqref{itedevy} we can write
\begin{equation*}
\begin{split}
\partial_y^k U(x,y,t)&=\partial_y^k\left(\int_0^{\infty}\int_{\Rn} \mathscr{P}^{(s)}(x-z,y,\tau) f(z,t-\tau)\,dz\,d\tau \right)\\
&=\int_0^{\infty}\int_{\Rn}\partial_y^k \mathscr{P}^{(s)}(x-z,y,\tau) f(z,t-\tau)\,dz\,d\tau \\
&=\sum_{i=0}^{\frac{k-1}{2}}c_k(i)\frac{y^{k-2i}}{2^{k-i}}\frac{\Gamma(s-k)}{\Gamma(s)}\int_0^{\infty}\int_{\Rn}H^{k-i} \mathscr{P}^{(s-k+i)}(x-z,y,\tau) f(z,t-\tau)\,dz\,d\tau\\
&=\sum_{i=0}^{\frac{k-1}{2}}c_k(i)\frac{y^{k-2i}}{2^{k-i}}\frac{\Gamma(s-k)}{\Gamma(s)}H^{k-i}\left(\int_0^{\infty}\int_{\Rn} \mathscr{P}^{(s-k+i)}(x-z,y,\tau) f(z,t-\tau)\,dz\,d\tau\right)\\
&=\sum_{i=0}^{\frac{k-1}{2}}c_k(i)\frac{y^{k-2i}}{2^{k-i}}\frac{\Gamma(s-k)}{\Gamma(s)}H^{k-i}\left(\frac{y^{2(s-k+i)}}{2^{2(s-k+i)}\Gamma(s-k+i)}\int_0^{\infty} \frac{1}{\tau^{1+s-k+i}}e^{-\frac{y^2}{4\tau}}P_{\tau}^Hf(x,t)\,d\tau \right)\\
&=\sum_{i=0}^{\frac{k-1}{2}}c_k(i)\frac{y^{k-2i}}{2^{k-i}}\frac{\Gamma(s-k)}{\Gamma(s)}\frac{y^{2(s-k+i)}}{2^{2(s-k+i)}\Gamma(s-k+i)}\int_0^{\infty} \frac{1}{\tau^{1+s-k+i}}e^{-\frac{y^2}{4\tau}}P_{\tau}^HH^{k-i}f(x,t)\,d\tau.
\end{split}
\end{equation*}
Then for 
\begin{equation*}
A_k(i,s)=\frac{c_k(i)}{2^{3k-3i-2s}}\frac{\Gamma(s-k)}{\Gamma(s)\Gamma(s-k+i)}>0,
\end{equation*}
we have
\begin{equation*}
\partial_y^k U(x,y,t)=\sum_{i=0}^{\frac{k-1}{2}}A_k(i,s)y^{2s-k}\int_0^{\infty} \frac{1}{\tau^{1+s-k+i}}e^{-\frac{y^2}{4\tau}}P_{\tau}^HH^{k-i}f(x,t)\,d\tau, 
\end{equation*}
and clearly
\begin{equation*}
y^a\partial_y^k U(x,y,t)=\sum_{i=0}^{\frac{k-1}{2}}A_k(i,s)y^{1+2[s]-k}\int_0^{\infty} \frac{1}{\tau^{1+s-k+i}}e^{-\frac{y^2}{4\tau}}P_{\tau}^HH^{k-i}f(x,t)\,d\tau .
\end{equation*}
Finally, we can compute
\begin{equation*}
\begin{split}
\norma*{y^a\partial_y^k U(\cdot,y,\cdot)}_{L^p(\mathbb{R}^{n+1})} &\le \sum_{i=0}^{\frac{k-1}{2}}A_k(i,s)y^{1+2[s]-k}\int_0^{\infty} \frac{1}{\tau^{1+s-k+i}}e^{-\frac{y^2}{4\tau}}\norma*{P_{\tau}^HH^{k-i}f}_{L^p(\mathbb{R}^{n+1})}\,d\tau \\
&\le \sum_{i=0}^{\frac{k-1}{2}}B_k(i,s)y^{1+2[s]-k}\int_0^{\infty} \frac{1}{\tau^{1+s-k+i}}e^{-\frac{y^2}{4\tau}}\,d\tau\\
&\le C_k(i,s)y^{1+2[s]+k-2s-2i} \to 0 \quad \text{ as } y\to 0.
\end{split}
\end{equation*}
As a matter of fact $k+1-2(i+s-[s])>0$ for every $i \in \{0,1,\dots,\frac{k-1}{2} \}$.\\
\\
Thus we have proved that \eqref{bd2} for $k$ odd such that $k\le[s]$.\\
\\
\\
Now we want to prove \eqref{tls}. Exploiting \eqref{newform} we obtain
\begin{equation*}
\begin{split}
H^{[s]}&\left(  \int_0^{\infty}\int_{\Rn} \mathscr{P}^{(s-[s])}(x-z,y,\tau) f(z,t-\tau)\,dz\,d\tau \right)\\
&\quad =H^{[s]}\left( \frac{y^{2(s-[s])}}{2^{2(s-[s])}\Gamma(s-[s])}\int_0^{\infty} \frac{1}{\tau^{1+s-[s]}}e^{-\frac{y^2}{4\tau}}P_{\tau}^Hf(x,t)\,d\tau \right)\\
&\quad =\frac{y^{2(s-[s])}}{2^{2(s-[s])}\Gamma(s-[s])}\int_0^{\infty} \frac{1}{\tau^{1+s-[s]}}e^{-\frac{y^2}{4\tau}}P_{\tau}^HH^{[s]}f(x,t)\,d\tau,
\end{split}
\end{equation*}
and consequently
\begin{equation}\label{calore4}
\begin{split}
\mathscr{H}_{(a)}^{[s]}U(x,y,t)&=\mathscr{H}_{(a)}^{[s]}\int_0^{\infty}\int_{\Rn}\mathscr{P}^{(s)}(x-z,y,\tau)f(z,t-\tau)\,dz\,d\tau\\
&=\int_0^{\infty}\int_{\Rn}\mathscr{H}_{(a)}^{[s]}\mathscr{P}^{(s)}(x-z,y,\tau)f(z,t-\tau)\,dz\,d\tau\\
&=[s]!\frac{\Gamma(s-[s])}{\Gamma(s)}\int_0^{\infty}\int_{\Rn}H^{[s]}\mathscr{P}^{(s-[s])}(x-z,y,\tau) f(z,t-\tau)\,dz\,d\tau\\
&=[s]!\frac{\Gamma(s-[s])}{\Gamma(s)}H^{[s]}\left(  \int_0^{\infty}\int_{\Rn} \mathscr{P}^{(s-[s])}(x-z,y,\tau) f(z,t-\tau)\,dz\,d\tau \right)\\
&=[s]!\frac{\Gamma(s-[s])}{\Gamma(s)}\frac{y^{2(s-[s])}}{2^{2(s-[s])}\Gamma(s-[s])}\int_0^{\infty} \frac{1}{\tau^{1+s-[s]}}e^{-\frac{y^2}{4\tau}}P_{\tau}^HH^{[s]}f(x,t)\,d\tau\\
&=[s]!\frac{y^{2(s-[s])}}{2^{2(s-[s])}\Gamma(s)}\int_0^{\infty} \frac{1}{\tau^{1+s-[s]}}e^{-\frac{y^2}{4\tau}}P_{\tau}^HH^{[s]}f(x,t)\,d\tau.
\end{split}
\end{equation}
Hence
\begin{equation}\label{calore5}
\begin{split}
&y^a\partial_y \mathscr{H}_{(a)}^{s}U(x,y,t)=\\
&\quad =y^a \partial_y \left([s]!\frac{\Gamma(s-[s])}{\Gamma(s)}H^{[s]}\left(  \int_0^{\infty}\int_{\Rn} \mathscr{P}^{(s-[s])}(x-z,y,\tau) f(z,t-\tau)\,dz\,d\tau \right)\right)\\
&\quad =y^a \partial_y \left([s]!\frac{y^{2(s-[s])}}{2^{2(s-[s])}\Gamma(s)}\int_0^{\infty} \frac{1}{\tau^{s+1-[s]}}e^{-\frac{y^2}{4\tau}}P_{\tau}^HH^{[s]}f(x,t)\,d\tau\right)\\
&\quad =\frac{(s-[s])[s]!}{2^{2(s-[s])-1}\Gamma(s)}\int_0^{\infty} \frac{1}{\tau^{1+s-[s]}}e^{-\frac{y^2}{4\tau}}P_{\tau}^HH^{[s]}f(x,t)\,d\tau\\
&\qquad - \frac{[s]!y^{2}}{2^{2(s-[s])+1}\Gamma(s)}\int_0^{\infty} \frac{1}{\tau^{2+s-[s]}}e^{-\frac{y^2}{4\tau}}P_{\tau}^HH^{[s]}f(x,t)\,d\tau.
\end{split}
\end{equation}

Now we observe that:
\begin{equation*}
\begin{split}
\partial_y \left( \mathscr{H}_{(a)}^{[s]}U(x,y,t)\right)&=\partial_y \left( \mathscr{H}_{(a)}^{[s]}U(x,y,t) - [s]!\frac{\Gamma(s-[s])}{\Gamma(s)}H^{[s]}f(x,t)\right)\\
&=\partial_y \left([s]!\frac{y^{2(s-[s])}}{2^{2(s-[s])}\Gamma(s)}\int_0^{\infty} \frac{1}{\tau^{1+s-[s]}}e^{-\frac{y^2}{4\tau}}\left(P_{\tau}^HH^{[s]}f(x,t)-H^{[s]}f(x,t)\right)\,d\tau\right),
\end{split}
\end{equation*}
and therefore
\begin{equation}\label{speranza2}
\begin{split}
y^a &\partial_y \mathscr{H}_{(a)}^{[s]}U(x,y,t)=y^a\partial_y\left( \mathscr{H}_{(a)}^{[s]}U(x,y,t) - [s]!\frac{\Gamma(s-[s])}{\Gamma(s)}H^{[s]}f(x,t)\right)\\
& =\frac{(s-[s])[s]!}{2^{2(s-[s])-1}\Gamma(s)}\int_0^{\infty} \frac{1}{\tau^{1+s-[s]}}e^{-\frac{y^2}{4\tau}}\left(P_{\tau}^HH^{[s]}f(x,t)-H^{[s]}f(x,t)\right)\,d\tau\\
&\qquad - \frac{[s]!y^{2}}{2^{2(s-[s])+1}\Gamma(s)}\int_0^{\infty} \frac{1}{\tau^{2+s-[s]}}e^{-\frac{y^2}{4\tau}}\left(P_{\tau}^HH^{[s]}f(x,t)-H^{[s]}f(x,t)\right)\,d\tau.
\end{split}
\end{equation}

Multiplying \eqref{speranza2} for the following constant
\begin{equation*}
K(s):=-(-1)^{[s]}\frac{\Gamma(s)}{\Gamma(1+[s]-s)}\frac{2^{2(s-[s])-1}}{[s]!},
\end{equation*}

 \eqref{speranza2} becomes:
\begin{equation}\label{speranza3}
\begin{split}
K(s)&y^a\partial_y \left( \mathscr{H}_{(a)}^{[s]}U(x,y,t)\right)\\
& =-(-1)^{[s]}\frac{s-[s]}{\Gamma(1+[s]-s)}\int_0^{\infty} \frac{1}{\tau^{1+s-[s]}}e^{-\frac{y^2}{4\tau}}\left(P_{\tau}^HH^{[s]}f(x,t)-H^{[s]}f(x,t)\right)\,d\tau\\
&\qquad + (-1)^{[s]}\frac{y^{2}}{4\Gamma(1+[s]-s)}\int_0^{\infty} \frac{1}{\tau^{2+s-[s]}}e^{-\frac{y^2}{4\tau}}\left(P_{\tau}^HH^{[s]}f(x,t)-H^{[s]}f(x,t)\right)\,d\tau.
\end{split}
\end{equation}

From the Definition \ref{d:43} of the fractional heat operator of higher order, for $k=[s]$ and $\sigma=s-[s]$ we have
\begin{equation*}
\begin{split}
(-H)^{s}f(x,t)&=(-1)^{[s]}(-H)^{s-[s]}(H^{[s]}f)(x,t)=\\
&=-(-1)^{[s]}\frac{s-[s]}{\Gamma \left( 1- s +[s] \right)} \int_0^{\infty} \frac{1}{\tau^{1+\sigma}}(P_{\tau}^HH^{[s]}f(x,t)-H^{[s]}f(x,t))\,d\tau,
\end{split}
\end{equation*}

and finally we can compute:
\begin{equation*}
\begin{split}
&\norma*{K(s)y^a\partial_y \left( \mathscr{H}_{(a)}^{[s]}U(\cdot,y,\cdot)\right)- (-H)^{s}f}_{L^p(\mathbb{R}^{n+1})}\le\\
&\quad \le \frac{s-[s]}{\Gamma(1-s+[s])}\int_0^{\infty} \frac{1}{\tau^{1+s-[s]}}\abs*{e^{-\frac{y^2}{4\tau}}-1}\norma*{P_{\tau}^HH^{[s]}f-H^{[s]}f}_{L^p(\mathbb{R}^{n+1})}\,d\tau\\
&\qquad +\frac{y^{2}}{4\Gamma(1+[s]-s)}\int_0^{\infty} \frac{1}{\tau^{2+s-[s]}}e^{-\frac{y^2}{4\tau}}\norma*{P_{\tau}^HH^{[s]}f-H^{[s]}f}_{L^p(\mathbb{R}^{n+1})}\,d\tau\\
&\quad= \mathbb{I}(y) + \mathbb{II}(y) .
\end{split}
\end{equation*}

Thus we are left with showing that $\mathbb{I}(y),\mathbb{II}(y)\to 0$ for $y\to 0$.

Indeed, for $\mathbb{II}(y)$ we have:
\begin{equation*}
\begin{split}
\mathbb{II}(y)&\sim y^2\int_0^{1} \frac{1}{\tau^{2+s-[s]}}e^{-\frac{y^2}{4\tau}}\norma*{P_{\tau}^HH^{[s]}f-H^{[s]}f}_{L^p(\mathbb{R}^{n+1})}\,d\tau \\
&\qquad+ y^2\int_1^{\infty} \frac{1}{\tau^{2+s-[s]}}e^{-\frac{y^2}{4\tau}}\norma*{P_{\tau}^HH^{[s]}f-H^{[s]}f}_{L^p(\mathbb{R}^{n+1})}\,d\tau,
\end{split}
\end{equation*}
where for the first integral we use the formula \eqref{cruciale} to obtain the following estimate
\begin{equation*}
 y^2\int_0^{1} \frac{1}{\tau^{2+s-[s]}}e^{-\frac{y^2}{4\tau}}\norma*{P_{\tau}^HH^{[s]}f-H^{[s]}f}_{L^p(\mathbb{R}^{n+1})}\,d\tau \le C_2 y^2\int_0^{1} \frac{1}{\tau^{1+s-[s]}}e^{-\frac{y^2}{4\tau}}\,d\tau.
\end{equation*}
Now, after the change of variables $\omega=\frac{1}{\tau}$ and using the formula 3.381 of \citep{GR80} we obtain
\begin{equation*}
C_2 y^2\int_0^{1} \frac{1}{\tau^{1+s-[s]}}e^{-\frac{y^2}{4\tau}}\,d\tau \le \frac{C_2\Gamma(s-[s])}{4^{[s]-s}} y^{2([s]-s+1)} \to 0 \quad \text{as }\, t \to 0,
\end{equation*}
since $[s]-s+1>0$. Instead, the second integral is finite for $2+s-[s]>1$ and therefore
\begin{equation*}
\mathbb{II}(y) \to 0 \quad \text{ as } y\to 0.
\end{equation*}

While for $\mathbb{I}(y)$ we consider:
\begin{equation*}
0\le g_y(\tau):=\frac{1}{\tau^{1+s-[s]}}\abs*{e^{-\frac{y^2}{4\tau}}-1}\norma*{P_{\tau}^HH^{[s]}f-H^{[s]}f}_{L^p(\mathbb{R}^{n+1})}.
\end{equation*}
Clearly, we have $g_y(\tau)\to0$ as $y\to0^+$ for every $\tau>0$. 
On the other hand, it exists a constant $C>0$ and a function $g\in L^1(0,\infty)$ such that $0\le g_y(\tau)\le Cg(\tau)$ for every $\tau >0$. Recalling that $1+s-[s]\in(1,2)$, it suffices to take
\begin{equation*}
g(\tau)=
\begin{cases}
\frac{1}{\tau^{s-[s]}} \qquad \quad \text{ per } 0<\tau\le 1\\
\frac{1}{\tau^{s-[s]+1}} \qquad \, \text{ per }  1<\tau<\infty,
\end{cases}
\end{equation*}
and from Lebesgue dominated convergence we conclude that $\mathbb{I}(y)\to0$ for $y\to 0^+$.


\chapter{Conclusion}
\label{chp:c}

Considering the proof of the Theorem \ref{t:our}, we would like to generalize our result to a broader family of hypoelliptic operators, namely the one introduced by H\"{o}rmander \citep{Ho67}
\begin{equation}
    \label{gt1}
    \mathscr{K}u:= \text{tr}\,(Q\nabla^2u)+ \braket{BX,\nabla u} -\partial_t u.
\end{equation}
It was proven by H\"{o}rmander that $\mathscr{K}$ is hypoelliptic if and only if the covariance matrix
\begin{equation}
    \label{gt2}
    K(t)=\frac{1}{t}\int_0^t e^{sB}Qe^{sB^*}\,ds
\end{equation}
is invertible, i.e., $\det K(t)>0$ for every $t>0$. In \eqref{gt1} $Q$ and $B$ are $N\times N$ matrices with real, constant coefficients, with $Q\ge 0, Q=Q^*$. We have denoted by $X$ the variable in $\mathbb{R}^N$, and by $A^*$ the transpose of a matrix $A$.

The class of operators \eqref{gt1} includes several examples of interest in analysis, physics and the applied sciences. The simplest one is of course the ubiquitous heat equation, corresponding to the nondegenerate case when $Q=\mathbb{I}_N$, $B=\mathbb{O}_N$. When $Q=\mathbb{I}_N$, $B=-\mathbb{I}_N$ one has the Ornstein-Uhlenbeck operator, which is of great interest in the probability literature. Another example is the degenerate Kolmogorov operator in $\mathbb{R}^{N+1}$ with $N=2n$, with the choices $Q= \begin{bmatrix}\mathbb{I}_n & \mathbb{O}_n \\ \mathbb{O}_n & \mathbb{O}_n \end{bmatrix}$, and $B= \begin{bmatrix}\mathbb{O}_n & \mathbb{O}_n \\ \mathbb{I}_n & \mathbb{O}_n \end{bmatrix}$, which arose in the seminal paper \citep{DPZ} on Brownian motion and the theory of gases.

Then, we would like to establish results analogous (at least on the formal level) to Chapter \ref{chp:4} for the nonlocal operators $(-\mathscr{K})^s$ for $s>0$. In particular, the case $0<s<1$ has already been proven by Garofalo and Tralli, as can be seen in \citep{GT21}.


\phantomsection
\addcontentsline{toc}{chapter}{Bibliography}
\bibliographystyle{natbib}


\clearpage
\phantomsection
\noindent
\vspace*{\fill}
\chapter*{Acknowledgments}
Innanzitutto, vorrei ringraziare il prof. Nicola Garofalo non solo per la sua gentilezza e disponibilità nell’accompagnarmi per tutta la stesura della tesi, ma anche per la sua capacità di trasmettere tutta la passione che ha per la matematica. Per me è stato di grande aiuto ed esempio.\\
\\
Ringrazio il prof. Giulio Tralli per il suo prezioso confronto.  \\
\\
Ora vorrei ringraziare tutti i miei amici, che mi sono stati accanto in ogni momento.\\ 
Inizio con il ringraziare Davide, che da 24 anni è più scarso di me a Risiko. Forse un giorno riuscirà a vincere anche lui.\\
Ringrazio la Calle e tutti i film della Marvel visti insieme, spero non finiscano mai. \\
Ringrazio Ida per il supporto emotivo ed in particolare per le birre bevute insieme a casa sua che ci hanno\\ accompagnato in mille discorsi senza fine.\\
Ringrazio Simone e la sua laurea in psicologia che mi hanno permesso di raccontare un sacco di nuove storie. Battute a parte, mi ritengo fortunato ad essermi seduto accanto a lui il primo giorno di magistrale.\\
Ringrazio Franz di aver condiviso con me la passione per gli aquiloni e il Sarrismo.\\
Ringrazio Soato, alla prossima festa a casa sua prometto che porterò gli occhiali. \\
\\
Ringrazio Elisa, che ha dato una svolta alla mia vita. Prima di incontrarla non pensavo che una persona potesse essere così centrale in ogni mia pianificazione del futuro, e mi sono reso conto che non ha senso preoccuparmi di ciò che sarà la vita, se alla fine lei sarà lì, accanto a me.  \\
\\
Un pensiero speciale non può che andare alla mia famiglia che ha costruito la strada per permettermi di raggiungere questo primo traguardo.\\                             Ringrazio mia madre, Alice, che mi ha trasmesso il suo modo di vivere a colori, l'allegria e il lato artistico. Devo a lei la mia parte stravagante, alla quale tengo molto.\\
Ringrazio mio padre, Andrea, perché tutte le volte in cui mi chiedo che uomo voglio diventare, lui rimane sempre la mia risposta. \\
\\
Ringrazio i miei nonni per essere stati il braccio destro dei miei genitori nel prendersi cura di me. Mi hanno dato tanto,  e questo lavoro è anche merito loro.

\end{document}